\documentclass{article}
\usepackage{amsmath,color}
\usepackage{amsfonts,amsthm}
\usepackage{amssymb,enumerate,enumitem,verbatim}
\usepackage{graphicx}
\usepackage{amsmath,setspace,scalefnt}
\usepackage[usenames,dvipsnames,svgnames,table]{xcolor}
\usepackage{amsfonts,amsthm}
\usepackage{amssymb,enumitem,verbatim}
\usepackage{graphicx}
\usepackage{tikz,caption,subcaption}
\usepackage{morefloats}
\usetikzlibrary{shapes}

\newcounter{capitalcounter}
\newcounter{capitalcounterbackup}

\newcounter{STEPJcounter}

\newcounter{claimcounter}

\newcounter{myfootnote}[page]

\newtheorem{lemma}{Lemma}[section]
\newtheorem{corollary}[lemma]{Corollary}
\newtheorem{theorem}[lemma]{Theorem}

\newtheorem{prop}[lemma]{Proposition}

\theoremstyle{definition}
\newtheorem{defn}[lemma]{Definition}

\newtheorem{claimalpha}[claimcounter]{Claim}

\theoremstyle{remark}
\newtheorem*{rmk}{Remark}

\newtheorem*{example}{Example}
\newtheorem*{nte}{Note}

\global\long\def\a{\alpha}

\global\long\def\d{\delta}
\global\long\def\l{\lambda}

\global\long\def\e{\varepsilon}

\global\long\def\N{\mathbb{N}}
\global\long\def\R{\mathbb{R}}

\global\long\def\P{\mathbb{P}}

\global\long\def\GG{G}
\global\long\def\HH{\mathcal{H}}

\global\long\def\T{\mathcal{T}}

\global\long\def\BB{\mathcal{B}}

\global\long\def\E{\mathbb{E}}

\global\long\def\TT{\mathcal{T}}

\global\long\def\PP{\mathcal{P}}
\global\long\def\UU{\mathcal{U}}

\global\long\def\re{\begin{rmk}}
\global\long\def\mark{\end{rmk}}
\global\long\def\ex{\begin{example}}
\global\long\def\ple{\end{example}}
\global\long\def\no{\begin{nte}}
\global\long\def\ted{\end{nte}}
\global\long\def\en{\begin{compactenum}}
\global\long\def\um{\end{compactenum}}
\global\long\def\li{\begin{compactitem}}
\global\long\def\st{\end{compactitem}}
\global\long\def\de{\begin{defn}}
\global\long\def\fn{\end{defn}}
\global\long\def\cor{\begin{corollary}}
\global\long\def\ary{\end{corollary}}
\global\long\def\lem{\begin{lemma}}
\global\long\def\ma{\end{lemma}}
\global\long\def\arr{\begin{array}}
\global\long\def\ay{\end{array}}
\global\long\def\pr{\begin{proof}}
\global\long\def\oof{\end{proof}}


\newif\ifdraft
\draftfalse 

\newif\ifspeed
\speedtrue 

\newcommand{\newcolour}{black}

\newif\ifdraft
\newif\ifpdraft
\drafttrue 
\draftfalse
\pdrafttrue
\newif\ifdone
\newif\ifdiss
\dissfalse
\donefalse

\addtolength{\textwidth}{1.5in}
\addtolength{\hoffset}{-0.75in}
\addtolength{\textheight}{1in}
\addtolength{\voffset}{-0.7in}

\title{Spanning trees in random graphs}

\author{Richard Montgomery\footnote{University of Birmingham, Birmingham, B15 2TT, UK; r.h.montgomery@bham.ac.uk}}
\date{}


\begin{document}
\maketitle

\begin{abstract}
For each $\Delta>0$, we prove that there exists some $C=C(\Delta)$ for which the binomial random graph $G(n,C\log n/n)$  almost surely contains a copy of every tree with $n$ vertices and maximum degree at most $\Delta$. In doing so, we confirm a conjecture by Kahn.
\end{abstract}

\section{Introduction}\label{1intro}

Since its inception by Erd\H{o}s and R\'enyi~\cite{ER59} in 1959, a major focus of the study of random graphs has concerned when any particular subgraph is likely to appear in the binomial random graph $G(n,p)$. We denote by $G(n,p)$ the random graph with $n$ vertices where each possible edge is included independently at random with probability $p$. In their early work, Erd\H{o}s and R\'enyi~\cite{ER59} asked, essentially, how large need $p=p(n)$ be before $G(n,p)$ asymptotically almost surely contains a Hamilton cycle? That is, here, a cycle with $n$ vertices. In 1976, P\'osa~\cite{posa76} and Korshunov~\cite{kor76} independently proved that we may take some $p=O(\log n/n)$ such that $G(n,p)$ almost surely contains a Hamilton cycle.
Refinements in the allowed probability~$p$ by Koml\'os and Szemer\'edi~\cite{KS83}, and Bollob\'as~\cite{bollo83}, followed, before Bollob\'as~\cite{bollo84} showed that if edges are added randomly one-by-one to the empty graph with $n$ vertices (forming the \emph{random graph process}), then the very edge whose addition increases the minimum degree to 2 almost surely creates a Hamilton cycle. Clearly, any previous graph in this process contains no Hamilton cycle, and thus here the containment of a Hamilton cycle is almost surely concurrent with the absence of a vertex with degree~0 or 1.

Beyond the Hamilton cycle, when are more general subgraphs $H$ with $n$ vertices likely to appear in the random graph $G(n,p)$?
Kahn and Kalai~\cite{KK07} conjectured in 2007 that, if $p_E$ is such that the expected number of copies of any subgraph of $H$ in $G(n,p_E)$ is at least $1$, then we may take some $p=O(p_E\log n)$ such that a copy of~$H$ almost surely appears in $G(n,p)$.
Riordan~\cite{Riordan00} had already given a second moment method capable of determining when many such subgraphs $H$ are likely to appear, typically those subgraphs which are not locally more dense than the whole subgraph. In a remarkable paper in 2008, Johansson, Kahn and Vu~\cite{JKV08} determined when $G(n,p)$ is likely to contain an $H$-factor, that is, $n/|H|$ vertex-disjoint copies of a fixed (strictly balanced) graph $H$. Recently, there has been much interest in determining when any particular graph with $n$ vertices and maximum degree at most $\Delta$ is likely to appear in $G(n,p)$ (see work by Alon and F\"uredi~\cite{AF92}, and others~\cite{CFNS16,DKRR08,FLN16}, as well as the recent comprehensive survey by B\"ottcher~\cite{Bot17}).
For the likely appearance of all these subgraphs, however, the edge probability must usually be significantly higher than that required to almost surely guarantee a Hamilton cycle.

Which other subgraphs can we expect to appear at around the same edge probability as the Hamilton cycle?
The random graph $G(n,O(\log n/n))$ almost surely has relatively few short cycles and maximum degree $O(\log n)$. Kahn~\cite{KLW14b} made the natural conjecture that, for each $\Delta>0$, there should be some $C>0$ such that, given any tree with $n$ vertices and maximum degree at most $\Delta$, $G(n,C\log n/n)$ almost surely contains a copy of that tree. Let us be precise with what we mean by `almost surely' here. More formally, for each $\Delta>0$, a constant $C=C(\Delta)$ was conjectured to exist such that, given any sequence of trees $\{T_n\}_{n\geq 1}$, where~$T_n$ has $n$ vertices and maximum degree at most $\Delta$, we have $\P(T_n\subset \GG(n,C\log n/ n))\to 1$ as $n\to \infty$.
Erd\H{o}s and R\'enyi~\cite{ER59} showed that we require $p=(\log n+\omega(1))/n$ if $G(n,p)$ is to be almost surely connected, or, equivalently, contain \emph{some} tree with $n$ vertices. Thus, this conjecture would be tight up to the constant $C=C(\Delta)$.

Let $\TT(n,\Delta)$ be the class of trees with $n$ vertices and maximum degree at most $\Delta$. We say that a subgraph of a graph $G$ is \emph{spanning} if it includes every vertex of $G$. Thus, we are concerned with when particular spanning trees are likely to appear in the random graph $G(n,p)$. The first progress towards a good understanding of this was made by Alon, Krivelevich and Sudakov~\cite{AKS07}, who studied the appearance of \emph{almost}-spanning trees. They showed that, for each $\e,\Delta>0$, and for sufficiently large $c=c(\e,\Delta)$, the random graph $\GG(n,c/n)$ almost surely contains a copy of every tree in $\TT((1-\e)n,\Delta)$.
This already gave a good answer to when almost-spanning trees can be expected to appear in the random graph, and the constant $c(\e,\Delta)$ was further improved by Balogh, Csaba, Pei and Samotij~\cite{BCPS10}, using a tree embedding theorem by Haxell~\cite{PH01}.
Note that almost-spanning trees may typically appear at lower edge probabilities than spanning trees, as their containment does not require connectivity. Note also that this result by Alon, Krivelevich and Sudakov shows that such a random graph almost surely contains a copy of every tree in $\TT((1-\e)n,\Delta)$ \emph{simultaneously}. We say that such a graph is $\TT((1-\e)n,\Delta)$-\emph{universal}.

Krivelevich~\cite{MK10} made the first substantial progress towards finding any particular spanning tree in the random graph, showing that, for any $\e>0$ and $T\in \T(n,\Delta)$, $\GG(n,\max\{(40\Delta/\e)\log n,n^\e\}/n)$ almost surely contains a copy of $T$.
This result is essentially tight if $\Delta=n^{\Omega(1)}$, but when $\Delta$ is small this is far from the expected bound $p=\Omega_\Delta(\log n/n)$. Seeking all such trees simultaneously,
Johannsen, Krivelevich and Samotij~\cite{JKS12} showed that there is some constant $C>0$ for which, if $\Delta\geq \log n$ and $p\geq C\Delta n^{-1/3}\log n$, then $\GG(n,p)$ is almost surely $\TT(n,\Delta)$-universal. Ferber, Nenadov and Peter~\cite{FNP13} showed that, for any $\Delta>0$, if $p=\omega(\Delta^{12} n^{-1/2}\log^3 n)$, then $\GG(n,p)$ is almost surely $\TT(n,\Delta)$-universal. For small $\Delta$, this improved the probability for which $G(n,p)$ is known to be almost surely $\TT(n,\Delta)$-universal, but it still left a large gap to the anticipated requirement $p=\Omega_\Delta(\log n/n)$.
Prior to this paper, these three results constituted the only general results on bounded degree spanning trees in random graphs, but further progress had been made for certain subclasses of $\TT(n,\Delta)$, as follows.

Alon, Krivelevich and Sudakov~\cite{AKS07} remarked that their almost-spanning tree result can be used to show that, if a tree $T\in \TT(n,\Delta)$ has linearly many leaves (at least $\e n$, say), then a copy of $T$ almost surely appears in $G(n,C\log n/n)$, for some $C=C(\e)$.
Hefetz, Krivelevich and Szab\'o~\cite{HKS12} improved this, showing that we may take $C=1+o(1)$ when $T\in \TT(n,\Delta)$ has linearly many leaves, or, alternatively, a linear length \emph{bare path}. Here, a bare path within a tree is one with no vertices branching off the interior vertices of the path into the tree.
Recent work by Glebov, Johannsen and Krivelevich~\cite{RomanPhD,GJK14} has proved an even sharper result for both these subclasses of trees, determining that in the random graph process such trees will appear at least as early as the Hamilton path. Importantly, their work finds all such trees simultaneously in the random graph, showing that $\GG(n,(1+o(1))\log n/ n)$ is almost surely universal for the class of trees with linearly many leaves or a linear length bare path. Using a delicate probabilistic argument, Kahn, Lubetzky and Wormald~\cite{KLW14a,KLW14b} were able to additionally determine that, for large $C>0$, $G(n,C\log n/n)$ is likely to contain spanning trees known as \emph{combs}. Roughly speaking, these combs have many medium length bare paths which each end in one leaf, and thus they interpolate in some sense between trees with linearly many leaves and trees with a long bare path.

In this paper, we show that a copy of every tree in $\TT(n,\Delta)$ is likely to appear in $G(n,\Theta_\Delta(\log n/n))$. This confirms the bound on the probability conjectured by Kahn, while additionally finding all such trees simultaneously.

\begin{theorem}\label{unithres}
For every $\Delta>0$, there is a constant $C$ such that the random graph $\GG(n,C\log n/ n)$ almost surely contains a copy of every tree with $n$ vertices and maximum degree at most $\Delta$.
\end{theorem}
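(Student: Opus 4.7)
My plan is to establish a handful of robust pseudo-random properties for $G(n,C\log n/n)$ that almost surely hold, and then deterministically embed every tree in $\TT(n,\Delta)$ into any graph enjoying these properties. The strategy is guided by the standard structural dichotomy that every tree $T\in\TT(n,\Delta)$ either has at least $\mu n$ leaves or else contains at least $\mu n$ vertex-disjoint bare paths of some fixed length $\ell$, for an appropriate $\mu=\mu(\Delta)>0$. Via sprinkling, the random graph factors as $G_1\cup G_2\cup G_3$ with each $G_i\sim G(n,\Theta(\log n/n))$ independent; I would reserve $G_1$ for building an absorbing structure, $G_2$ for the bulk of the embedding, and $G_3$ for a final matching step.

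In the leafy case the argument is fairly orthodox: remove a set $L$ of about $\mu n/2$ leaves to obtain an almost-spanning tree $T-L$, embed it in $G_2$ using the almost-spanning tree theorem of Alon--Krivelevich--Sudakov~\cite{AKS07}, and restore the leaves by finding a perfect matching in the bipartite graph between their parents and the leftover vertices, using $G_3$ together with a Hall-type argument that exploits the abundance of edges at probability $\Theta(\log n/n)$. The bare-path case is the heart of the problem, because the tree is now essentially rigid and there is very little freedom in the embedding. Here I would construct an \emph{absorbing template}: a fixed bounded-degree auxiliary graph $H$ on $o(n)$ vertices with a distinguished set $A$ of ``flexible'' slots, embedded into $G_1$ so that for any small $X\subseteq V(G)$ the embedding can be locally rewired to replace some subset of $A$ by $X$. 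Attach this template to $T$ by routing its edges along selected bare paths, so that toggling a flexible slot corresponds to a small detour in one embedded bare path, and then use $G_2$ to embed the rest of $T$ almost-spanningly and $G_3$ to activate the absorber and finish the spanning copy.

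The main obstacle is the construction of this absorbing template in a host graph of average degree only $\Theta(\log n)$: it must be sparse enough to embed into $G_1$, flexible enough to absorb up to $\Theta(n)$ leftover vertices, and its switching operation must be compatible with the bare-path structure of an arbitrary target tree. The natural attack is a two-scale recursive construction in which very sparse matching-based gadgets each absorb a single vertex, many such gadgets are composed through the long bare paths of the template, and Hall's theorem stitches the local absorptions into a global one. Once such a template exists with high probability, universality should come essentially for free: the embedding lemma is deterministic in the pseudo-random properties of $G$, so a single collection of almost-sure events handles every $T\in\TT(n,\Delta)$ simultaneously, and the proof reduces to checking that these expansion, matching, and absorber-existence properties all hold at probability $C\log n/n$ for sufficiently large $C=C(\Delta)$.
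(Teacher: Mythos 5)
Your proposal captures the broad outline -- structural dichotomy between leafy and bare-path trees, sprinkling into rounds, almost-spanning embedding plus a matching or an absorber -- but it elides essentially every point where the proof is hard, and the one structural commitment you do make (a single dichotomy) is too coarse to carry the argument.

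First, the paper does not split into ``many leaves'' versus ``many bare paths'': it needs a graded family of regimes (Cases A, B, C, D) indexed by the parameter $k(T)$ measuring the length of the available bare paths, because the number of leaves and the length of bare paths trade off continuously. In the intermediate range $k=\Theta(\log n)$ one has only $\Theta(n/\log n)$ leaves \emph{and} only bare paths of length $\Theta(\log n)$, and neither your leafy argument nor your absorber argument closes on its own there; the proof has to use both resources simultaneously and change strategy as $k$ grows. A single threshold between the two cases leaves that whole middle band uncovered.

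Second, your leafy case is not as simple as AKS plus Hall. After removing $\mu n$ leaves, the parents form a set of size $\mu n$ and the leftover vertices form a set of size $\mu n$, but in $G(n,C\log n/n)$ a Hall condition for a bipartite matching between two \emph{specific} sets of size $\Theta(n\log\log n/\log n)$ can fail -- two disjoint such sets may have no edge at all. The paper circumvents this by pre-designating two small ``matchmaker'' sets $X_1,X_2$ and then embedding $T-L$ so that the image of the parent set actually \emph{covers} $X_1$ (Lemma~\ref{embedparentsfinal}); this targeted-covering embedding is a nontrivial strengthening of the almost-spanning theorem, not an application of it. Your sketch never ensures that the parent set lands in a position where Hall's condition can be verified.

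Third, your absorbing template is the right idea but faces an obstacle you haven't confronted: the reservoir must have size $\Theta(n/k)$, and for $k\gg\log n$ this is $o(n/\log n)$, so the induced subgraph of $G(n,\Theta(\log n/n))$ on it is almost entirely isolated vertices. No expander-type gadget embeds there directly. The paper resolves this with two separate devices you have no analogue of: a subdivided connectivity structure (Lemma~\ref{connectlowdense}), where each auxiliary edge is replaced by a path of length three so the target graph has density compatible with the random graph, and $(l,\gamma)$-connectors (Section~\ref{connectors}), which are needed because a typical vertex has \emph{no} neighbour in the tiny reservoir and must be routed through a path-gadget before it can reach it. Without those two ingredients, the step ``activate the absorber and finish'' has no mechanism. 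Similarly, ``flexible enough to absorb up to $\Theta(n)$ leftover vertices'' is not achievable nor needed -- the absorber has capacity $\Theta(|R|)=o(n)$ -- so you also need an almost-spanning embedding lemma that leaves \emph{exactly} the vertices of the reservoir uncovered, which is Section~\ref{4almostspanning} of the paper and again is a substantial multi-stage argument (cover remaining vertices by a cycle, break it into paths, splice them into the tree through $R$), not a black-box invocation. As written, your plan is a sensible hour-one sketch but each of its three steps hides a genuine theorem.
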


It is very likely that the constant $C$ in Theorem~\ref{unithres} need not depend on $\Delta$; indeed, some $C=1+o(1)$ likely suffices. Further still, Glebov, Johannsen and Krivelevich~\cite{RomanPhD,GJK14} have conjectured that, for each $\Delta>0$, in almost every graph process the first graph containing a Hamilton path will contain a copy of every tree in $\TT(n,\Delta)$. Such improvements appear, however, beyond the current reach of the methods used here.

Krivelevich~\cite{MK10} observed that any tree with few leaves has, in compensation, many long bare paths. In particular, for each $k$, a tree with $n$ vertices must have  at least $n/4k$ leaves or at least $n/4k$ disjoint bare paths with length $k$. Spanning trees with many leaves may be embedded simultaneously and precisely using the work by Glebov, Johannsen and Krivelevich~\cite{RomanPhD,GJK14} mentioned above.
This reduces proving Theorem~\ref{unithres} to embedding spanning trees with many long bare paths.
We will embed these trees using a version of \emph{absorption}, which was introduced as a general method by R\"odl, Ruci\'nski and Szemer\'edi~\cite{RRS08}. In their terminology, we use a natural construction for \emph{absorbers}, similar in spirit to that used elsewhere by K\"uhn and Osthus~\cite{KO12} and Allen, B\"{o}ttcher, Kohayakawa and Person~\cite{ABKP13}. Our methods differ from previous implementations in using a system of paths, rather than a single long path, to create the \emph{reservoir} from the absorbers, while achieving this in a comparatively sparse graph.
Using absorption in this way, and relying on the work by Glebov, Johannsen and Krivelevich~\cite{GJK14}, it could be shown that $\GG(n,\log^2 n/n)$ is almost surely $\TT(n,\Delta)$-universal. However, we require several more ideas to reduce the edge probability to $C\log n/n$, for some large constant $C=C(\Delta)$.
\color{\newcolour}~Some of these ideas which may find use elsewhere include methods to, almost surely in a binomial random graph, embed almost-spanning bounded degree trees so that the uncovered vertices lie in a prespecified vertex set (see Section~\ref{4almostspanning}), give a sufficient expansion condition for Hamiltonian subsets (see Section~\ref{3hamcycles}) and find a very sparse yet well connected reservoir set (see Section~\ref{6connectionlemmas}). Additionally, we introduce connectors -- paths with additional structure to boost the chance they can be connected into other structures in random graphs (see Section~\ref{connectors}).\color{black}

As remarked above, we do not expect the best bound on the constant $C(\Delta)$ that can be shown using our current methods to be optimal. Consequently, we focus on the presentation of our methods over any potential reduction in the constant.
In the rest of this section, we will cover our notation. In Section~\ref{1boutline}, we will give an overview of the proof of Theorem~\ref{unithres} and an outline of the rest of the paper.

\subsection{Notation}\label{intronot}

A graph $G$ has vertex set $V(G)$, edge set $E(G)$ and order $|G|=|V(G)|$. For a subset $U\subset V(G)$, $G[U]$ is the subgraph of $G$ induced on the set $U$. The set of neighbours of a vertex $v$ is denoted by $N(v)$, and we use $d(v)=|N(v)|$ for the degree of a vertex. The maximum degree of a graph $G$ is denoted by $\Delta(G)$, and the minimum degree by $\delta(G)$. Where $U\subset V(G)$, we write $N(v,U)=N(v)\cap U$ for the set of neighbours of $v$ in $U$, and take $d(v,U)=|N(v,U)|$. Where $U\subset V(G)$, we set $N(U)=(\cup_{v\in U}N(v))\setminus U$ and $N'(U)=\cup_{v\in U}N(v)$. We mostly use $N(U)$, the \emph{exterior neighbourhood} of a set $U$, and remark when $N'(U)$ is used, to avoid confusion.
 We use further notation that naturally extends the notation above, such as $N(U,W)=N(U)\cap W$, without further description. Where multiple graphs are used, we indicate the graph considered in the subscript, for example using $d_G(v)$.

In a graph $G$, we say a vertex set $X\subset V(G)$ is \emph{independent} if there are no edges between vertices of $X$ in $G$. 
For each $n\in \N$, we write $[n]$ for $\{1,\ldots,n\}$, and, when $0\leq p\leq 1$, we let $\GG(n,p)$ be the random graph with vertex set $[n]$ where each possible edge is selected independently at random with probability~$p$. 
For any two subsets $A,B\subset V(G)$, we let $e_G(A,B)=\sum_{x\in A}d_G(x,B)$, noting that edges are counted twice if both their endvertices are in $A \cap B$. We say a path with $l$ vertices has \emph{length} $l-1$. An \emph{$x,y$-path} has endvertices $x$ and $y$. When we remove a path from a graph we will remove all the edges of the path and delete any resulting isolated vertices.

Given two subgraphs $H_1,H_2\subset G$, we say that $H_1\cup H_2$ is the subgraph of $G$ with vertex set $V(H_1)\cup V(H_2)$ and edge set $E(H_1)\cup E(H_2)$. Where $P\subset G$ is a path and $H\subset G$ is disjoint from~$P$ except for on one endvertex of $P$, we think of the graph $H\cup P$ as $H$ with the path $P$ added and set $H+P=H\cup P$. When $e=xy$ is an edge of a graph $G$ and $H\subset G$, we use $H+e$ for the graph with vertex set $V(H)\cup\{x,y\}$ and edge set $E(H)\cup\{xy\}$. Similarly, we use $H-e$ for the graph with vertex set $V(H)$ and edge set $E(H)\setminus\{xy\}$. 
For any set $W\subset V(G)$, we abbreviate $G[V(G)\setminus W]$ by $G-W$.

We say $f(n)=o(g(n))$ or $g(n)=\omega(f(n))$ if $f(n)/g(n)\to 0$ as $n\to\infty$. In particular, $\omega(1)$ denotes a function which tends to infinity with $n$. We say $f(n)=O(g(n))$ or $g(n)=\Omega(f(n))$ if there exists some constant $C$ for which $f(n)\leq Cg(n)$ holds for all $n$. If $f(n)=O(g(n))$ and $f(n)=\Omega(g(n))$, then we say that $f(n)=\Theta(g(n))$. Throughout this paper, we consider $\Delta$ to be an arbitrary constant while taking $n$ to be large. We sometimes use $\Delta$ in the subscript of our asymptotic notation to emphasise that the implicit constant depends on $\Delta$, e.g., using $\Theta_\Delta(g(n))$.

We use $\log$ for the natural logarithm, and often omit rounding signs when they are not crucial.




\section{Outline of the proof of Theorem \ref{unithres}}\label{1boutline}

We will find certain properties and structures within a typical random graph $G=\GG(n,C\log n/ n)$, with $C$ a large constant, before using these properties and structures to embed any tree $T\in \TT(n,\Delta)$. We will take a tree $T$, find within it some leaves or bare paths, remove these from $T$ to get a forest $T'$, embed $T'$ using almost-spanning tree embedding techniques, and, to finish, embed the removed leaves or bare paths from~$T$. Often, we will need to embed~$T'$ carefully to assist the later completion of the embedding. For this, we will use \emph{extendability methods} introduced by Glebov, Johannsen and Krivelevich~\cite{RomanPhD,GJK14}. These methods are detailed in Section~\ref{prelims} along with many other preliminary results we will need. We will now outline the extendability methods briefly, before dividing the proof of Theorem~\ref{unithres} into cases, and sketching our embedding in each case.

\subsection{Almost-spanning tree embedding methods}\label{firstalmostspan}
The embeddings used in this paper could be applied using a tree embedding theorem of Haxell~\cite{PH01} and some path connection methods like those of Broder, Frieze, Suen, and Upfal~\cite{BFSU96}. The embeddings we use are relatively complex, however, and therefore we are grateful for the recent developments by Glebov, Johannsen and Krivelevich~\cite{RomanPhD,GJK14} which allow us to carry out our embeddings with minimal technicalities. By developing methods used by Haxell~\cite{PH01}, Glebov, Johannsen and Krivelevich~\cite{RomanPhD,GJK14} introduced a refined and versatile method for growing trees by the addition of leaves or long bare paths, a method which can be applied to find trees in graphs which satisfy some simple expansion properties. We will refer to these methods as \emph{extendability methods}, and they are introduced in full in Section~\ref{johannsen}.

In short, we begin with a small subgraph satisfying some \emph{extendability property} within a larger graph~$G$ which itself satisfies some simple further conditions. Two key lemmas (Lemma~\ref{extendablevertex} and Lemma~\ref{extendableconnect}) allow us to add, respectively, leaves and bare paths to the subgraph while maintaining this extendability property. Using this, we can build up a copy of a large subgraph of the tree we are seeking. To apply these extendability methods, we need some spare vertices -- the subgraph we build typically must have $\Omega(n\log\log n/\log n)$ fewer vertices than~$G$. However, the embedding techniques are flexible enough that we can not only find any large subtree of~$T$ in~$G$, but, furthermore, we can root such a subtree of~$T$ at any chosen vertex in the graph. More generally, under some weak constraints, given a subset $Q$ of the vertices of~$T$ and a smaller subset $X$ of the vertices of~$G$, we can find an embedding of almost all of~$T$ in which the image of $Q$ contains $X$ (see Lemma~\ref{embedparentsfinal}). The flexibility in these embedding techniques will allow us to carry out the embeddings described below for each case of Theorem~\ref{unithres}.

\subsection{Division into cases}\label{casedivision}
Our embedding of a tree $T$ varies depending on the structure of $T$. To aid our embedding, we seek simple structures in $T$, for which we use the following key observation by Krivelevich~\cite{MK10} (as quoted in Section~\ref{sec33}).

\begin{lemma}\label{split} For any integers $n,k>2$, a tree with~$n$ vertices either has at least $n/4k$ leaves or a collection of at least $n/4k$ vertex disjoint bare paths, each with length $k$.
\end{lemma}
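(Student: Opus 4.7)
The plan is to dichotomise on the number of leaves $L$ of $T$: if $L \ge n/(4k)$ there is nothing to prove, so assume $L < n/(4k)$ and try to produce the desired collection of bare paths of length $k$.

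My first step is to bound the number of \emph{maximal} bare paths of $T$ in terms of $L$. Combining the handshake identity $\sum_d d\cdot n_d = 2(n-1)$ with $\sum_d n_d = n$ (where $n_d$ denotes the number of vertices of degree $d$) gives $\sum_{d\ge 3}(d-2)n_d = L-2$, so $T$ has at most $L-2$ vertices of degree at least three. Hence the topological reduction $T'$, obtained from $T$ by suppressing every degree-$2$ vertex, is a tree whose vertex set consists precisely of the leaves and branching vertices of $T$. In particular $|V(T')|\le 2L-2$ and so $|E(T')|\le 2L-3 < n/(2k)$. The edges of $T'$ correspond bijectively to the maximal bare paths of $T$ (maximal subpaths whose interior vertices all have degree $2$), and these paths partition $E(T)$; writing their lengths as $\ell_1,\ldots,\ell_m$, one has $m < n/(2k)$ and $\sum_i \ell_i = n-1$.

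The second step is to slice each maximal bare path of length $\ell$ into $\lfloor(\ell+1)/(k+1)\rfloor$ vertex-disjoint bare sub-paths of length $k$ by taking consecutive blocks of $k+1$ vertices; sub-paths taken from different maximal bare paths are automatically vertex-disjoint. Using $\lfloor x\rfloor > x-1$,
\[
\sum_{i=1}^{m}\left\lfloor\frac{\ell_i+1}{k+1}\right\rfloor \;>\; \frac{(n-1)+m}{k+1}-m \;=\; \frac{n-1-mk}{k+1} \;>\; \frac{n/2-1}{k+1},
\]
which exceeds $n/(4k)$ once $n(k-1) \ge 4k$, i.e.\ for all $n\ge 6$ (and the assumption $L<n/(4k)$ together with $L\ge 2$ already forces $n>8k$, so we are safely in this range).

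The hard part, such as it is, is purely bookkeeping: the $-m$ incurred by $\lfloor\cdot\rfloor > (\cdot)-1$ must be absorbed into the slack $n-mk \ge n/2$ provided by $L<n/(4k)$, and the final inequality $(n/2-1)/(k+1)\ge n/(4k)$ has to be verified cleanly. Everything else — the bound on branching vertices, the bijection between edges of $T'$ and maximal bare paths, and the partition of $E(T)$ — is routine tree surgery.
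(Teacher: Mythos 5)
Your proposal takes a genuinely different route from the paper: the paper dispatches this statement as an immediate corollary of Krivelevich's result (quoted as Lemma~\ref{Kleaves}), simply plugging in $l = n/4k - 1$, whereas you reconstruct the underlying counting argument from scratch. That is fine in principle — your degree accounting $\sum_{d\ge 3}(d-2)n_d = L-2$, the bound $|E(T')|\le 2L-3$, and the observation that the edges of the topological reduction $T'$ biject with the maximal bare paths of $T$ are all correct, and they are essentially the ingredients of Krivelevich's proof.

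However, there is a genuine gap. The assertion that ``sub-paths taken from different maximal bare paths are automatically vertex-disjoint'' is false. Distinct maximal bare paths are \emph{edge}-disjoint, but a branching vertex $v$ of degree $d\ge 3$ is a shared \emph{endpoint} of $d$ of them. Your slicing takes consecutive blocks of $k+1$ vertices, so the first block of a maximal bare path always contains its first endpoint, and when $(k+1)\mid(\ell+1)$ the last block contains the second endpoint as well. Hence blocks carved from two maximal bare paths meeting at a branching vertex can (and typically do) collide there. This cannot be repaired by naively deleting one block from each conflicting pair: the number of deletions is bounded only by $\sum_{d\ge 3}(d-1)n_d \le 2L-4$, which is of the same order $n/2k$ as the target $n/4k$, and the resulting inequality fails for all sufficiently large $n$ once $k$ is fixed.

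The gap is fixable, but it requires a choice rather than being automatic. For instance, root $T'$ at a leaf and, for each maximal bare path, carve only $\lfloor\ell/(k+1)\rfloor$ blocks of $k+1$ vertices starting from the endpoint farther from the root; then the parent endpoint is never touched, each branching vertex lies in at most one block (the one coming up its unique parent edge), and leaves and degree-$2$ vertices cause no conflicts. The arithmetic still closes: one has $\sum_i\lfloor\ell_i/(k+1)\rfloor \ge \frac{(n-1)-mk}{k+1}$, and using $m\le 2L-3 < n/2k - 3$ (keeping the $-3$ you dropped) gives $\frac{(n-1)-mk}{k+1} > \frac{n/2+3k-1}{k+1}$, and $\frac{n/2+3k-1}{k+1}\ge\frac{n}{4k}$ is equivalent to $n(k-1)+4k(3k-1)\ge 0$, which holds for all $k\ge 1$. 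So the statement is safe, but your proof as written does not establish it.
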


Essentially, we are concerned only with the leaves and bare paths in a tree. The other, more complex, structure remaining after their (selective) deletion will be embedded easily using the extendability methods outlined in Section~\ref{johannsen}. Our challenge is to embed the deleted leaves and bare paths, which must necessarily cover exactly the remaining vertices in the random graph.

We divide our embeddings into four cases. These are defined precisely below, but we will first introduce them informally to explain how they arise.
By modifying the techniques introduced by Glebov, Johannsen and Krivelevich~\cite{RomanPhD,GJK14}, we will be able to embed trees in $\TT(n,\Delta)$ with $\Omega(n\log\log n/\log n)$ leaves in $G(n,\Theta(\log n/n))$. These trees comprise our first case, Case A. It follows from Lemma~\ref{split} that the remaining trees in $\TT(n,\Delta)$ have a significant number of bare paths with length $\Omega(\log n/\log\log n)$. This is fortunate, as in $G(n,\Omega(\log n/n))$ we may expect to connect any two arbritrary vertices with paths with length $O(\log n/\log\log n)$. This will allow us to manipulate bare paths with length $\Omega(\log n/\log\log n)$ as we embed them.

Essentially, within a tree $T$, we ask how long can we find disjoint bare paths which cover at least $|T|/90$ vertices in $T$. The longer the bare paths, the more they aid us. If they have length between $\Theta(\log n/\log\log n)$ and $\Theta(\log^2 n/\log\log n)$ then we continue to use the leaves of~$T$ as well as these bare paths in our embedding. The trees with many bare paths of this medium length split naturally into two classes, giving Case B and Case C. Loosely speaking, trees in the latter case have many bare paths of length $\Omega(\log n)$. We cannot guarantee such trees have more than $O(n/\log n)$ leaves, and they may have many fewer. This leads us to consider sets in the random graph with size $O(n/\log n)$. Such sets in $G(n,\Theta(\log n/n))$ typically have average degree below 1, and this causes additional challenges which merit consideration in a separate case, that is, in Case C. Our final case, Case D, will cover trees with many bare paths with length $\Omega(\log^2 n/\log\log n)$. Such paths are sufficiently long that we may use them to finish the embedding of the tree without using its leaves in any special manner.

Our embeddings are easier if the leaves we find are pairwise a little distance from each other. To describe this, we use the following definition.
\de\label{sepdefn}
Given a graph~$G$, and a subset $Q\subset V(G)$, we say the set~$Q$ is \emph{$k$-separated in~$G$} if each pair of vertices from~$Q$ are a distance at least $k$ apart in~$G$.
\fn
In order to make our cases precise, we use this definition and the following parameters $\l(T)$ and $k(T)$, where $k(T)$ is defined using the further parameter $k'(T)$.




Given a tree~$T$, let $\l(T)$ be the size of a largest 20-separated set of leaves in~$T$ and let
\[
k'(T)=\max\{k:T\text{ contains at least }n/90k\text{ vertex disjoint bare paths with length }k\}.
\]
Our embedding is the same for all trees~$T$ with $k'(T)\geq 10^{-6}\log^2 n/\log\log n$, so, with this in mind, let
\[
k(T)=\min\{k'(T),10^{-6}\log^2n/\log\log n\}.
\]

We consider the following four subclasses of $\TT(n,\Delta)$.
\begin{itemize}
\item $\TT_A(n,\Delta)=\{T\in \TT(n,\Delta):\l(T)\geq n\log\log n/10^5\log n\}$
\item $\TT_B(n,\Delta)=\{T\in \TT(n,\Delta):10^3\log n/\log\log n \leq k(T)< 10^2\log n\}$
\item $\TT_C(n,\Delta)=\{T\in \TT(n,\Delta):10^2\log n \leq k(T)< 10^{-6}\log^2 n/\log\log n\}$
\item $\TT_D(n,\Delta)=\{T\in \TT(n,\Delta):k(T)= 10^{-6}\log^2 n/\log\log n\}$
\end{itemize}

In Section~\ref{sec33} we prove the following lemma, which confirms that, for sufficiently large $n$, these subclasses cover $\TT(n,\Delta)$.

\lem\label{splitclass}
Let $\Delta>0$. For sufficiently large $n$,
\[
\TT(n,\Delta)=\TT_A(n,\Delta)\cup\TT_B(n,\Delta)\cup \TT_C(n,\Delta)\cup \TT_D(n,\Delta).
\]
\ma

By this lemma, then, we can prove Theorem~\ref{unithres} separately for each subclass of $\TT(n,\Delta)$, as the sufficiently large condition for $n$ is satisfied naturally in the limit we consider. We refer to proving Theorem~\ref{unithres} for these subclasses as Case A to D of the theorem, respectively. We will now discuss our approach in each case. As described later, our actual implementation will vary slightly from these simplified sketches, but they cover our basic methods.

\subsection{Our embeddings in Cases A and B}
We will embed the trees in Case~A using a method developed by Glebov, Johannsen and Krivelevich~\cite{RomanPhD,GJK14}. We find in~$G$ two disjoint small sets $X_1$ and $X_2$ so that we can match any set $U_1\subset V(G)$ into any set $U_2\subset V(G)$ if $X_1\subset U_1$, $X_2\subset U_2$, $U_1\cap U_2=\emptyset$ and $|U_1|=|U_2|$. We will call such sets $X_1$ and $X_2$ \emph{matchmaker sets}. Next, we remove a suitably large and separated set of leaves from~$T$, calling the remaining subtree~$T'$. We let $Q$ be the set of the neighbours of the removed leaves in~$T$, so that~$T$ is formed from~$T'$ by adding a matching to $Q$. We embed~$T'$ in $G-X_2$ so that the image of $Q$, which we call $U_1$, contains $X_1$. To do this, we use a result of Glebov, Johannsen and Krivelevich~\cite{RomanPhD,GJK14}, which is given a new, more efficient, proof in Section~\ref{5embedparents}. We let $U_2$ be the vertices in~$G$ not covered by the image of~$T'$, so that $X_2\subset U_2$. By the properties of the matchmaker sets~$X_1$ and~$X_2$, we can then find a matching between $U_1$ and $U_2$, and use this to embed the leaves that we removed from~$T$, completing the embedding. This embedding is carried out in Section~\ref{7caseA}, where we prove Case~A of Theorem~\ref{unithres}.

As we need to cover $X_1$ by the image of $Q$, we wish the matchmaker sets $X_1$ and $X_2$ to be as small as possible. The minimum possible size of such sets~$X_1$ and~$X_2$ is closely related to the parameter $m$, which appears in some form throughout each of our embeddings. This parameter is the smallest integer such that there is an edge in~$G$ between any two disjoint sets with size $m$. As defined in Section~\ref{johannsen}, we say this is the smallest integer $m$ such that $G$ is $m$-joined. A good bound on the value of $m$ is given later in Proposition~\ref{generalprops}. In the embeddings in Case~A and Case~B, this gives
\begin{equation}\label{mrough}
m=\Theta(\log (np)/p)=\Theta(n\log\log n/\log n).
\end{equation}
In order to find a matching between $U_1$ and $U_2$, we estimate the size of the neighbourhood in $U_2$ of subsets in $U_1$, and thus we may show that Hall's matching condition holds. Subsets in $U_1$ with size between $m$ and $|U_1|-m$ can be shown to have a sufficiently large neighbourhood as the graph is $m$-joined; the matchmaker sets need only handle the remaining subsets of $U_1$ (more details are given in Lemma~\ref{matchings}). This means that the size of the sets $X_1$ and $X_2$ cannot be smaller than~$m$, but we are able to construct such sets with size comparable to $m$ (as shown in Proposition~\ref{goodset}).

The embedding described for Case~A does not work if there are too few leaves, as then the set $Q$ is too small for its image to cover the $\Theta(m)$ vertices in $X_1$. In Case~B, therefore, we first select a subset~$R$ of $V(G)$, so that~$R$ is large but contains $o(n)$ vertices. Let $m'$ be the smallest integer such that there is an edge in~$G$ between any two disjoint subsets of~$R$ with size $m'$. As we have restricted the number of vertices from which we choose these disjoint subsets, we are able to take a stronger bound on $m'$ than~$m$. This can best be seen by considering the far reaches of Case~B, where we will use $|R|=\Theta(n/\log n)$ and (from Proposition~\ref{generalprops}) get $m'=\Theta(n/\log n)\ll m$.

In general, we can then find subsets $X_1$ and $X_2$ with size $\Theta(m')$ so that we can match a set $U_1\supset X_1$ into a set $U_2\supset X_2$ whenever $U_1\cap U_2=\emptyset$, $|U_1|=|U_2|$, and $U_1\setminus X_1$ and $U_2\setminus X_2$ are subsets of~$R$. We remove some leaves from our tree~$T$ in Case~B, and embed the resulting subtree~$T'$ so that the image of the set $Q$, which is the set of neighbours of the removed leaves, contains the set $X_1$. Since the sizes of $X_1$ and $X_2$ are related to $m'$, not $m$, they are now both smaller than the set $Q$. We can then complete the embedding of~$T$ as in we did in Case~A, by finding a matching from the copy of $Q$ to the set of the unused vertices, provided we have ensured that the embedding of~$T'$ covers all of the vertices in $V(G)\setminus (R\cup X_2)$.

That we are able to embed~$T'$ to cover $V(G)\setminus (R\cup X_2)$ is shown by Lemma~\ref{almostspan}, which will form a standard part of the remaining embeddings. The covering is achieved using the bare paths in~$T'$. Roughly speaking, part of $T'$ is embedded in $G-X_2$, before a cycle covering the unused vertices in $V(G)\setminus (R\cup X_2)$ is found (using expansion techniques and P\'osa rotation). This cycle is then split into subpaths, which are connected up as bare paths in the embedding of~$T'$, using the extendability methods, and in particular Lemma~\ref{extendableconnect}. To allow room to build up the first part of~$T'$, and to connect up these paths, we use some of the vertices in~$R$. We connect the subpaths of the cycle into the embedding by first matching their endvertices into~$R$, and, similarly, matching into~$R$ the vertices from the partial embedding of~$T'$ to which these paths need connected. We then link the ends of these matchings appropriately using paths inside~$R$. This completes the embedding of~$T$ apart from the leaves attached to $Q$. There will then be the right number of vertices left unused in~$R$ to add to $X_2$ and we can find a matching, as described above, to complete the embedding.
This embedding of~$T'$ is given in Section~\ref{4almostspanning}, using work in Section~\ref{3hamcycles} which finds cycles covering subsets in a random graph. We prove Case~B of Theorem~\ref{unithres} in Section~\ref{8caseB}.

As we consider trees~$T$ in Case~B with an increasing value of $k(T)$, the size of the set $Q$ decreases. Therefore, to carry out the embedding, we need to decrease the size of $X_1$ and $X_2$, and so we must decrease the size of~$R$. It is therefore increasingly hard to cover all the vertices outside of $R\cup X_2$ by the embedding of~$T'$. Fortunately, Lemma~\ref{almostspan} is able to take advantage of the increasing length $k(T)$ of the bare paths in~$T'$. Eventually, the size of~$R$ will decrease until it contains almost exclusively (internally) isolated vertices. Due to this, we cannot find the paths required within~$R$ to connect the subpaths from the cycle into the partial embedding of~$T'$, and so the method outlined above breaks down. This is our first problem, but two other problems also arise at around the same place, that is, when $k=\Theta(\log n)$. The second problem is that our method for creating matchmaker sets $X_1$ and $X_2$ which are sufficiently small that they can be covered by the image of $Q$ also fails. Finally, our third problem is that, as the size of the set~$R$ decreases, we eventually cannot match the endvertices of the subpaths of the cycle into the set~$R$.

\subsection{Our embeddings in Cases C and D}
As mentioned above, we need to surmount three problems if we are to modify the embedding for Case~B to use for Case~C.
After solving these problems, a small conceptual change is sufficient to embed the trees in Case~D.

At the start of our embedding in Case~C, we will carefully choose a set~$R$ in our random graph that is small but internally well connected (as detailed in Section~\ref{6connectionlemmas}). This will address our first problem by enabling us to connect many different vertex pairs using paths in~$R$. We then build what we call a \emph{$\lambda$-device}, which is in effect a construction of matchmaker sets $X_1$ and $X_2$, so that, given any appropriately sized subset~$U$ of~$R$, we can find a matching between the sets~$X_1$ and $X_2\cup U$. The construction of the $\l$-device (in Section~\ref{9absorbC}) is more involved than the construction of the matchmakers sets for Case~B, but does not contain much more technical difficulty. The use of a $\l$-device overcomes the second problem mentioned above, as the sets $X_1$, $X_2$ and~$R$ are much smaller than they previously were. We then proceed with an embedding similar to the embedding in Case~B. We take a tree in Case~C and remove some of its leaves to get a tree~$T'$, before embedding~$T'$ into the graph~$G-X_2$ so that the image of the set $Q$ of the neighbours of the leaves removed from~$T$ contains the set $X_1$. We make sure when embedding the tree~$T'$ that we cover all the vertices not in $R\cup X_2$, before using the properties of the $\l$-device to find an appropriate matching to attach the deleted leaves and finish the embedding of~$T$.

To embed~$T'$ and cover all the vertices not in $R\cup X_2$, we use the same method as we did in Case~B. That is, we embed most of~$T'$ and cover the remaining vertices not in $R\cup X_2$ by a cycle, which we break into subpaths. Here we encounter the third, and final, problem, where we cannot find a matching from the endvertices of these subpaths into~$R$.
This difficulty is overcome by the use of \emph{$(l,\gamma)$-connectors} (constructed in Section~\ref{connectors}). These are small subgraphs which have spanning paths between many different pairs of endvertices. Instead of matching the endvertices of the subpaths from the cycle into~$R$ directly, we connect each of them into~$R$ via an $(l,\gamma)$-connector in an appropriate way, before finding paths within the set~$R$ to attach the subpaths and the $(l,\gamma)$-connectors into the embedding of~$T'$.

All this allows us to progress further, and embed the trees in Case~C. The point where we switch to Case~D is somewhat arbitrary, but a switch is necessary somewhere (which can be delayed by increasing our constant $C(\Delta)$). For Case~D, the only change we make to our embedding is to use a different $\l$-device $(X_1,X_2)$. Given an appropriately sized subset $U$ of~$R$, this new $\l$-device joins specified pairs of vertices in~$X_1$ by disjoint paths with a specified length, whose internal vertices together cover exactly the vertices in $X_2\cup U$. This exact covering will allow us to cover the final vertices in the graph~$G$ and complete the embedding. The tree~$T'$ in this case is formed by removing bare paths, rather than leaves, from~$T$ and these bare paths are reinserted using the $\l$-device and the remaining vertices from~$R$.

The construction of the $\l$-device for Case~D shares some ideas with the construction of the $\l$-device for Case~C, and is also given in Section~\ref{9absorbC}. The $(l,\gamma)$-connectors we use are defined and constructed at the start of Section~\ref{11caseCD}, before we put our methods together to prove Cases~C and~D of Theorem~\ref{unithres}.

In Cases~C and~D, our embeddings are using a new application of the absorption method, which was introduced as a general principle by R\"odl, Ruci\'nski and Szemer\'edi~\cite{RRS08}. In this method, a subgraph to be found is partially embedded while a \emph{reservoir} is developed. The difficulties in embedding what remains of the subgraph into the decreasing space in the parent graph are mitigated by allowing the use of some vertices from the reservoir. The remaining vertices are then \emph{absorbed} into the early parts of the embedding by using the special properties of the reservoir. In our methods outlined above, the set $R$ functions as the reservoir, while the remaining vertices at the end of the embedding are absorbed into the $\lambda$-device.

\subsection{Further complications}
The outlines above give an overview of the proof of Theorem~\ref{unithres}, while making certain simplifications to avoid the more technical details.
In particular, in Cases~B,~C, and~D, the embedding of the tree~$T'$ to cover all the vertices outside the set $R\cup X_2$ is more intricate than described here, though our description conveys the basic idea. Similarly, complications arise with the use of $(l,\gamma)$-connectors in Cases C and D. We will need many of these connectors to carry out the sketch above, more than are eventually used for the stated purpose. Therefore, we will need to absorb the unused connectors somehow into the embedding of~$T$. We also often split our tree into large pieces, and accomplish different tasks while embedding each piece. This avoids looking for many different properties while embedding any one subtree.

Our embeddings are also complicated by the universality of Theorem~\ref{unithres} --
we must reveal all the edges of our graph and gain the properties we require to embed any spanning tree. We will reveal the edges of our graph in multiple rounds to find these properties.
Our embedding is effectively the same for all the trees in Case~B, but the properties we require from our random graph differ slightly based on the size of the set~$R$, which is in turn determined for each tree~$T$ by the value of $k(T)$. We therefore show that, for each relevant value of $k$, we can, with probability $1-o(n^{-1})$, find in our random graph a copy of all the trees $T\in \TT_B(n,\Delta)$ with $k(T)=k$. Thus, a union bound over these different events will show that our random graph almost surely contains every tree in $\TT_B(n,\Delta)$. We also take a similar approach in Case~C.

\section{Preliminaries}\label{prelims}

As mentioned in Section~\ref{1boutline}, we will use a flexible framework developed by Glebov, Johannsen and Krivelevich~\cite{RomanPhD, GJK14} for embedding a tree into a larger graph; we detail this framework in Section~\ref{johannsen}. We will use several results on leaves and bare paths in trees; these results are developed in  Section~\ref{sec33}. In Section~\ref{secsplittree}, we will show how a tree can be divided into subtrees satisfying certain properties. The probabilistic results we need for $G(n,p)$ are given in Sections~\ref{sec00} and~\ref{shortpathsgnp}.
Expansion properties and matchings will be important, for example when using the extendability methods or embedding leaves.
In Sections~\ref{sec11} to~\ref{secfinalmatch} we prove a mixture of results concerning matchings and expansion properties.

\subsection{Embedding trees and $(d,m)$-extendability}\label{johannsen}
Friedman and Pippenger~\cite{FP87} showed that trees can be found in graphs with certain natural expansion conditions. These expansion conditions, however, require the graph to have many more vertices than the tree. Their method was developed by Haxell~\cite{PH01} to give a general theorem for embedding trees into graphs with certain expansion conditions, which in some cases allows the tree~$T$ to have almost as many vertices as the graph. This theorem was first used by Balogh, Csaba, Pei and Samotij~\cite{BCPS10} to embed almost-spanning bounded degree trees in the random graph.

Glebov, Johannsen and Krivelevich~\cite{RomanPhD,GJK14} recently modified Haxell's method to develop a very flexible approach for embedding bounded degree trees. The key definition is that of a \emph{$(d,m)$-extendable subgraph}. Given a copy of a subtree of a tree with maximum degree~$d$ in a graph~$G$, if the copy of the subtree is $(d,m)$-extendable in $G$ then, subject to certain conditions, we can add a further vertex of $G$ to extend the copy to a larger subtree by attaching a leaf appropriately. Furthermore, we can do this so that the copied subtree remains $(d,m)$-extendable. This allows us to iterate, building up a copy of the full tree vertex-by-vertex (see Corollary~\ref{treebuild}). The following definition refers to a subgraph~$S$ of a graph~$G$, and to the degree $d_S(x)$ of a vertex $x$ in the subgraph~$S$. Furthermore, for this definition we recall that $N'(U)=\cup_{u\in U}N(u)$.

\de\label{extendabledefn}
Let $d,m\in \N$ satisfy $d\geq 3$ and $m\geq 1$, let~$G$ be a graph, and let $S\subset G$ be a subgraph of~$G$. We say that~$S$ is \emph{$(d,m)$-extendable} if~$S$ has maximum degree at most~$d$ and
\begin{equation}\label{extendthing}
|N'(U)\setminus V(S)|\geq(d-1)|U|-\sum_{x\in U\cap V(S)}(d_S(x)-1)
\end{equation}
for all sets $U\subset V(G)$ with $|U|\leq 2m$.
\fn

Away from this definition, we will mostly use the external neighbourhood, $N(U)$, noting that $|N'(U)|\geq |N(U)|$ for any vertex set $U$ of any graph. Typically, we only turn to $N'(U)$ when deducing properties from the $(d,m)$-extendability of a subgraph. Note that to show $S\subset G$, with $\Delta(S)\leq d$, is $(d,m)$-extendable it is sufficient to show that for every subset $U\subset V(G)$, with $|U|\leq 2m$, we have $|N(U)\setminus V(S)|\geq d|U|$, so that (\ref{extendthing}) then holds. Typically, we will use this stronger condition to show that a subgraph is $(d,m)$-extendable, and we record this as follows.

\begin{prop}\label{uttriv}
Let $d,m\in \N$ satisfy $d\geq 3$ and $m\geq 1$. Suppose the graph $G$ contains a subgraph~$S$ which has maximum degree at most $d$. If every set $U\subset V(G)$ with $|U|\leq 2m$ satisfies $|N(U,V(G)\setminus V(S))|\geq d|U|$, then $S$ is $(d,m)$-extendable in~$G$.\hfill\qed
\end{prop}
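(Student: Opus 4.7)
The plan is to unpack the definition of $(d,m)$-extendability and show that the hypothesis gives us more than enough. Fix an arbitrary $U\subset V(G)$ with $|U|\leq 2m$; we must verify inequality~(\ref{extendthing}).

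First I would observe that, directly from the definitions, $N'(U)\setminus V(S)\supseteq N(U)\setminus V(S)=N(U,V(G)\setminus V(S))$, so the hypothesis gives
\[
|N'(U)\setminus V(S)|\ \geq\ |N(U,V(G)\setminus V(S))|\ \geq\ d|U|.
\]
It therefore suffices to show
\[
d|U|\ \geq\ (d-1)|U|-\sum_{x\in U\cap V(S)}(d_S(x)-1),
\]
which rearranges to $|U|\geq \sum_{x\in U\cap V(S)}(1-d_S(x))$. Since $d_S(x)\geq 0$, each summand is at most $1$, so the right-hand side is at most $|U\cap V(S)|\leq |U|$, as required.

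Combining this with the assumption that $\Delta(S)\leq d$ shows that $S$ satisfies Definition~\ref{extendabledefn} and is $(d,m)$-extendable in $G$. There is no real obstacle here: the only point that needs a moment's care is that $d_S(x)$ may equal $0$ when $x$ is an isolated vertex of $S$ (so that $1-d_S(x)$ contributes a positive $1$ to the sum), but even in that extreme case the sum is bounded by $|U\cap V(S)|\leq|U|$, which is exactly the slack the factor of $d$ (rather than $d-1$) on the left gives us.
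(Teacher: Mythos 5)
Your argument is correct and takes essentially the same approach as the paper's (which is implicit in the discussion preceding Proposition~\ref{uttriv}): one simply checks~(\ref{extendthing}) from the stronger hypothesis, using $N'(U)\setminus V(S)\supseteq N(U,V(G)\setminus V(S))$ and the observation that the subtracted sum only makes the right-hand side smaller. Your extra remark about isolated vertices of $S$ contributing $+1$ to the sum is a reasonable point of care, handled correctly by the bound $|U\cap V(S)|\leq |U|$.
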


The following lemma by Glebov, Johannsen and Krivelevich~\cite{RomanPhD,GJK14} gives conditions under which a $(d,m)$-extendable subgraph can be extended by a leaf yet remain $(d,m)$-extendable.

\lem \label{extendablevertex} \cite[Lemma 5.2.6]{RomanPhD} Let $d,m\in \N$ satisfy $d\geq 3$ and $m\geq 1$, let~$G$ be a graph, and let~$S$ be a $(d,m)$-extendable subgraph of~$G$. Suppose every subset $U\subset V(G)$, with $m\leq |U|\leq 2m$, satisfies
\begin{equation}\label{blahd}
|N_G'(U)|\geq |S|+2dm+1.
\end{equation}
Then, for every vertex $s\in V(S)$ with $d_S(s)\leq d-1$, there exists a vertex $y\in N_G(s)\setminus V(S)$ such that the graph $S+sy$ is $(d,m)$-extendable.\hfill\qed
\ma

We will often wish to apply Lemma~\ref{extendablevertex} when we have the following crucial graph property.
\begin{defn}\label{mjoined}
When $m\in \N$, we say a graph~$G$ is \emph{$m$-joined} if there is an edge in~$G$ between any two disjoint vertex sets which each contain at least~$m$ vertices.
\end{defn}
In order to apply Lemma~\ref{extendablevertex} when a graph is~$m$-joined, we will need the following corollary.
\begin{corollary}\label{mextend}
Let $d,m\in \N$ satisfy $d\geq 3$ and $m\geq 1$, and let~$G$ be an~$m$-joined graph with~$n$ vertices. Let~$S$ be a $(d,m)$-extendable subgraph of~$G$ which satisfies $|S|\leq n-2dm-3m-1$. Then, for every vertex $s\in V(S)$ with $d_S(s)\leq d-1$, there exists a vertex $y\in N_G(s)\setminus V(S)$ such that the graph $S+sy$ is $(d,m)$-extendable.
\end{corollary}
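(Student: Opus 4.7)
The plan is to verify the hypothesis of Lemma~\ref{extendablevertex} using the $m$-joined property of $G$ and the size bound on $S$, and then invoke that lemma directly. The only thing to check is that every set $U \subset V(G)$ with $m \leq |U| \leq 2m$ satisfies $|N_G'(U)| \geq |S| + 2dm + 1$.

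To establish this, I would first argue that $|N_G'(U) \cup U| \geq n - m + 1$ whenever $|U| \geq m$. Indeed, let $W = V(G) \setminus (N_G'(U) \cup U)$. Then $W$ is disjoint from $U$ and every vertex of $W$ has no neighbour in $U$. If $|W| \geq m$ then $U$ and $W$ would be disjoint sets of size at least $m$ with no edge between them, contradicting the assumption that $G$ is $m$-joined. Hence $|W| \leq m - 1$, which gives $|N_G'(U) \cup U| \geq n - m + 1$.

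From this it follows that
\[
|N_G'(U)| \geq |N_G'(U) \cup U| - |U| \geq n - m + 1 - 2m = n - 3m + 1,
\]
using $|U| \leq 2m$. On the other hand, the hypothesis $|S| \leq n - 2dm - 3m - 1$ gives $|S| + 2dm + 1 \leq n - 3m$, so indeed $|N_G'(U)| \geq |S| + 2dm + 1$. Thus the expansion condition~(\ref{blahd}) of Lemma~\ref{extendablevertex} holds, and that lemma produces the desired $y \in N_G(s) \setminus V(S)$ with $S + sy$ still $(d,m)$-extendable.

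There is no real obstacle here; the whole content of the corollary is the translation of the abstract neighbourhood condition in Lemma~\ref{extendablevertex} into the more convenient global assumptions that $G$ is $m$-joined and $|S|$ is not too close to $n$. The only point worth care is keeping track of the distinction between $N_G'(U)$ and $N_G'(U) \cup U$, since the $m$-joined condition gives information about the latter and we need a bound on the former; the loss of $|U| \leq 2m$ in that step is precisely what forces the extra $-3m$ term in the hypothesis $|S| \leq n - 2dm - 3m - 1$.
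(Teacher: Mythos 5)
Your proof is correct and follows essentially the same route as the paper's: both use the $m$-joined property to bound the complement of $U \cup N'(U)$ (or $U \cup N(U)$) and then combine with $|U| \leq 2m$ and the size bound on $S$ to verify condition (\ref{blahd}) of Lemma~\ref{extendablevertex}. The only cosmetic difference is that the paper bounds the exterior neighbourhood $|N(U)|$ and relies on $|N'(U)| \geq |N(U)|$, while you track $|N'(U)|$ directly; the accounting is identical.
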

\pr If $U\subset V(G)$, then there are no edges between $U$ and $V(G)\setminus (U\cup N(U))$. Therefore, as~$G$ is~$m$-joined, if $|U|\geq m$, then $|V(G)\setminus (U\cup N(U))|<m$. Therefore, if $m\leq |U|\leq 2m$, then
\[
|N(U)|\geq n-|U|-m\geq n-3m\geq |S|+2dm+1.
\]
Thus, \eqref{blahd} holds for each set $U\subset V(G)$ with $m\leq |U|\leq 2m$. The corollary follows by Lemma~\ref{extendablevertex}.
\oof

We will often use an extendable subgraph which has no edges, for which we use the following definition.

\de Given a graph~$G$ and a subset $U\subset V(G)$, the subgraph $I(U)\subset G$ has the vertex set~$U$ and no edges. That is, $I(U)$ is the vertex set $U$ considered to be an empty graph, so that~$U$ is an independent set.
\fn

Corollary~\ref{mextend} can be used repeatedly to build up a copy of a tree by adding leaves. A typical use of this is given in the following corollary.

\begin{corollary}\label{treebuild} Let $d,m\in \N$ satisfy $d\geq 3$ and $m\geq 1$, and let~$T$ be a tree, with maximum degree at most $d/2$, which contains the vertex $t\in V(T)$. Let~$G$ be an~$m$-joined graph and suppose~$R$ is a $(d,m)$-extendable subgraph of~$G$ with maximum degree $d/2$. Let $v\in V(R)$ and suppose $|R|+|T|\leq |G|-2dm-3m$.
Then, there is a copy~$S$ of~$T$ in $G-(V(R)\setminus \{v\})$, in which~$t$ is copied to~$v$, so that $R\cup S$ is $(d,m)$-extendable in~$G$.
\end{corollary}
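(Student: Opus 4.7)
The plan is to build the copy of $T$ one vertex at a time by iterating Corollary~\ref{mextend}, using $R$ as the initial $(d,m)$-extendable subgraph. First, enumerate $V(T)$ as $t = t_1, t_2, \ldots, t_{|T|}$ so that for each $i \geq 2$ the vertex $t_i$ has a unique neighbour $t_{p(i)}$ in $\{t_1, \ldots, t_{i-1}\}$ (a BFS or DFS order rooted at $t$ works). Set $\phi(t_1) = v$ and let $S^{(1)}$ be the single-vertex empty graph on $\{v\}$, so $R \cup S^{(1)} = R$ is $(d,m)$-extendable. For $i = 2, \ldots, |T|$, assuming inductively that $R \cup S^{(i-1)}$ is $(d,m)$-extendable, I would apply Corollary~\ref{mextend} to this subgraph at the vertex $s = \phi(t_{p(i)})$ to obtain a vertex $y \in N_G(s) \setminus V(R \cup S^{(i-1)})$ such that $(R \cup S^{(i-1)}) + sy$ is $(d,m)$-extendable. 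Then set $\phi(t_i) = y$ and let $S^{(i)}$ be $S^{(i-1)}$ together with the new vertex $y$ and the edge $sy$, so that $R \cup S^{(i)} = (R \cup S^{(i-1)}) + sy$ remains $(d,m)$-extendable.

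Next, I would verify the hypotheses of Corollary~\ref{mextend} at each step. The graph $G$ is $m$-joined by assumption. The size bound holds since $|R \cup S^{(i-1)}| = |R| + (i-2) \leq |R| + |T| - 2 \leq |G| - 2dm - 3m - 2$, which is at most $|G| - 2dm - 3m - 1$. The only nontrivial point is the degree bound $d_{R \cup S^{(i-1)}}(s) \leq d - 1$. If $s \neq v$, then $s \notin V(R)$ (by the induction that the new vertices all lie outside $V(R)$), so $d_{R \cup S^{(i-1)}}(s) = d_{S^{(i-1)}}(s) \leq d_T(t_{p(i)}) \leq d/2 \leq d-1$. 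If $s = v$, i.e.\ $t_{p(i)} = t$, then $d_R(v) \leq d/2$ while $d_{S^{(i-1)}}(v) \leq d_T(t) - 1 \leq d/2 - 1$ since $t_i$ is a child of $t$ not yet embedded, giving $d_{R \cup S^{(i-1)}}(v) \leq d - 1$.

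Finally, since at each step $y$ is chosen outside $V(R \cup S^{(i-1)})$, induction yields $V(S^{(|T|)}) \cap V(R) = \{v\}$, so $S := S^{(|T|)} \subset G - (V(R) \setminus \{v\})$ is a copy of $T$ with $t$ mapped to $v$, and $R \cup S$ is $(d,m)$-extendable. The argument is essentially routine bookkeeping on top of Corollary~\ref{mextend}; the only delicate point is the degree accounting at $v$, where contributions from $R$ and from the growing copy of $T$ both appear, and the hypotheses $\Delta(T), \Delta(R) \leq d/2$ are exactly what is needed to keep the sum at most $d - 1$ throughout the construction.
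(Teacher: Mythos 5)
Your proposal is correct and follows essentially the same approach as the paper: both build up the copy of $T$ one leaf at a time by iterating Corollary~\ref{mextend}, the paper via iterated leaf-removal producing a nested sequence of subtrees $T_1\subset\cdots\subset T_n=T$, and you via a BFS/DFS ordering rooted at $t$, which amounts to the same thing. Your write-up is somewhat more explicit than the paper's in verifying the size bound and the degree condition at the vertex $v$ (where contributions from $R$ and from the growing tree combine), but the underlying argument is identical.
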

\pr As is well known, every tree with at least two vertices has at least two leaves. Therefore, we can iteratively remove leaves not equal to~$t$ from~$T$, and produce a sequence of trees $T_n=T\supset T_{n-1}\supset\ldots\supset T_1=I(\{t\})$, where $n=|T|$, so that, for each $i$, $T_i$ is formed from $T_{i-1}$ by adding a leaf.

Recall that $v\in V(R)$. Starting with $S_0:=I(v)$, for each $2\leq i\leq n$, by applying Corollary~\ref{mextend} to the $(d,m)$-extendable subgraph $R\cup S_{i-1}$, we can extend $S_{i-1}$ to give $S_i$, a copy of $T_i$,  in $G-(V(R)\setminus \{v\})$ so that $R\cup S_i$ is $(d,m)$-extendable in~$G$, and~$t$ is copied to $v$. When we are finished, $S=S_n$ satisfies the requirements of the corollary.
\oof


As shown by Glebov, Johannsen and Krivelevich~\cite{RomanPhD,GJK14}, removing a leaf from, or adding an edge between two vertices with degree at most $d-1$ in, a $(d,m)$-extendable subgraph maintains the extendability, as follows.
\lem \cite[Lemma 5.2.7]{RomanPhD} \label{extendreverse} Let $d,m\in\N$ satisfy $d \geq 3$ and $m \geq 1$, let~$G$ be a graph,
and let~$S$ be a subgraph of~$G$. Furthermore, suppose there exist vertices $s \in V (S)$ and
$y \in N_G(s)\setminus V(S)$ so that the graph $S+ys$ is $(d, m)$-extendable.
Then~$S$ is $(d, m)$-extendable.\hfill\qed
\ma
\lem \cite[Lemma 5.2.8]{RomanPhD} \label{edgeinsert} Let $d,m\in\N$ satisfy $d\geq 3$ and $m\geq 1$, let~$G$ be a graph, and let~$S$ be a $(d,m)$-extendable subgraph of~$G$. If $s,t\in V(S)$ with $d_S(s),d_S(t)\leq d-1$ and $st\in E(G)$, then $S+st$ is $(d,m)$-extendable in~$G$.\hfill\qed
\ma

In addition, Glebov, Johannsen and Krivelevich~\cite{RomanPhD,GJK14} proved the following connection lemma, which allows a path to be added to a $(d,m)$-extendable subgraph while keeping the extendability. We record a slightly stronger statement in Lemma~\ref{extendableconnect} than in~\cite{RomanPhD}, but this follows with an almost identical proof. We remark why we require a stronger statement at the end of this section.

\lem \cite[Lemma 5.2.9]{RomanPhD} \label{extendableconnect} Let $d,m\in\N$ satisfy $m\geq 1$ and $d\geq 3$ and let~$G$ be an~$m$-joined graph. Let~$S$ be a $(d,m)$-extendable subgraph of~$G$ with at most $|G|-10dm$ vertices.

Let $k=\lceil\log (2m)/ \log(d-1)\rceil$ and let $j$ satisfy $1\leq j\leq k$. Suppose there are two disjoint vertex sets~$A$ and~$B$ in~$S$, each with size at least $2m/(d-1)^j$, so that all the vertices in~$A$ and $B$ have degree at most $d/2$ in~$S$. Then there exist vertices $a\in A$, $b\in B$, an $a,b$-path $P$ with length $2j+1$ and interior vertices in $V(G)\setminus V(S)$, and an edge $e$ of the path $P$, so that $S+P-e$ is $(d,m)$-extendable in~$G$.\hfill\qed
\ma

In combination, Lemmas~\ref{extendablevertex},~\ref{extendreverse},~\ref{edgeinsert} and~\ref{extendableconnect} provide a very flexible framework for manipulating almost-spanning trees as we embed them into our graph. We will use Lemma~\ref{extendablevertex} through Corollary~\ref{treebuild}, and Lemma~\ref{extendableconnect} through the following two corollaries.

\begin{corollary} \label{extendableconnectcor} Let $d,m,l\in\N$ satisfy $m\geq 1$ and $d\geq 3$, and let~$G$ be an~$m$-joined graph. Let~$S$ be a $(d,m)$-extendable subgraph of~$G$ with at most $|G|-10dm-2l$ vertices.

Suppose there are two disjoint vertex sets~$A$ and~$B$ in~$S$, each with size at least $2m/(d-1)^{l}$, so that all the vertices in~$A$ and $B$ have degree at most $d/2$ in~$S$. Then there exist vertices $a\in A$, $b\in B$, an $a,b$-path $P$ with length $2l+1$ and interior vertices in $V(G)\setminus V(S)$, and an edge $e$ of the path $P$, so that $S+P-e$ is $(d,m)$-extendable.
\end{corollary}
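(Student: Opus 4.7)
The plan is to reduce to Lemma~\ref{extendableconnect}. Let $k=\lceil \log(2m)/\log(d-1)\rceil$, as in that lemma, and split on whether $l \leq k$ or $l > k$. If $l\leq k$, then Lemma~\ref{extendableconnect} applies directly with $j=l$: the hypotheses $|A|,|B|\geq 2m/(d-1)^l$ and $|S|\leq |G|-10dm-2l\leq |G|-10dm$ are both satisfied, producing the required path of length $2l+1$ and the edge $e$ to delete.

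If instead $l>k$, the strategy is to first grow a path of length $l-k$ out from some $a\in A$ into $V(G)\setminus V(S)$, to grow another disjoint path of length $l-k$ out from some $b\in B$, and then to apply Lemma~\ref{extendableconnect} with $j=k$ to connect the far endpoints of these two tails. Pick arbitrary $a\in A$ and $b\in B$. Since $d_S(a),d_S(b)\leq d/2\leq d-1$, Corollary~\ref{mextend} lets me add a new neighbour $a_1\in V(G)\setminus V(S)$ of $a$ so that $S+aa_1$ is $(d,m)$-extendable. The new vertex $a_1$ has degree $1\leq d-1$, so I can iterate, adding $a_2,\ldots,a_{l-k}$ in turn; the resulting path $P_a$ from $a$ to $a':=a_{l-k}$ has length $l-k$ and interior in $V(G)\setminus V(S)$. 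I then repeat disjointly from $b$ to build $P_b$ ending at $b'$. Throughout, the size of the current extendable subgraph is at most $|S|+2(l-k)\leq |G|-10dm-2l+2(l-k)=|G|-10dm-2k$, which is comfortably below $|G|-2dm-3m-1$ as long as $d\geq 3$; this validates each call of Corollary~\ref{mextend}.

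Now $S':=S\cup P_a\cup P_b$ is $(d,m)$-extendable with $|S'|\leq |G|-10dm-2k\leq |G|-10dm$, and the singletons $\{a'\},\{b'\}$ both have size $1\geq 2m/(d-1)^k$ and consist of a vertex of $S'$-degree $1\leq d/2$. Applying Lemma~\ref{extendableconnect} to $S'$ with $j=k$ produces an $a',b'$-path $P^*$ of length $2k+1$ with interior vertices in $V(G)\setminus V(S')$, together with an edge $e\in E(P^*)$, such that $S'+P^*-e$ is $(d,m)$-extendable. Concatenation gives an $a,b$-path $P=P_a\cup P^*\cup P_b$ of length $(l-k)+(2k+1)+(l-k)=2l+1$ with interior in $V(G)\setminus V(S)$, and $S+P-e=S'+P^*-e$ is $(d,m)$-extendable, as required.

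There is no real obstacle: the argument is purely bookkeeping on top of Lemma~\ref{extendableconnect}. The only things to verify carefully are that the size inequality $|S|\leq |G|-10dm-2l$ absorbs the $2(l-k)$ vertices added by the tails so Corollary~\ref{mextend} and Lemma~\ref{extendableconnect} remain applicable at each step, and that the newly added vertices always have low enough degree in the growing subgraph to serve as the next extension point.
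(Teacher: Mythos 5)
Your proof is correct and takes essentially the same route as the paper: handle $l\leq k$ directly via Lemma~\ref{extendableconnect}, and for $l>k$ pad out the length with leaf-by-leaf extensions before applying Lemma~\ref{extendableconnect} with $j=k$. The only cosmetic difference is that the paper grows a single tail of length $2l-2k$ from $a$ (via Corollary~\ref{treebuild}) whereas you grow two tails of length $l-k$ from $a$ and $b$; the bookkeeping is equivalent and your size verifications are all valid.
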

\pr Let $k=\lceil\log (2m)/ \log(d-1)\rceil$. If $l\leq k$ then the result of the corollary follows directly from Lemma~\ref{extendableconnect}. Let us assume then that $l>k$, so that the condition $|A|,|B|\geq 2m/(d-1)^{l}$ is equivalent to $|A|,|B|>0$, and pick two vertices $a\in A$ and $b\in B$.

By Corollary~\ref{treebuild} we can find a path~$Q$ with length $2l-2k\geq 2$ which has $a$ as an endvertex, and whose other vertices lie outside~$S$, so that $S+Q$ is $(d,m)$-extendable. Let the endvertex of~$Q$ which is not $a$ be $a_0$.

Note that $|S+Q|=|S|+2l-2k\leq |G|-10dm$. Applying Lemma~\ref{extendableconnect} to $\{a_0\}$, $\{b\}$ and $S+Q$, with $j=k$, we can find an $a_0,b$-path $P$, with interior vertices not in $V(S+Q)$ and length $2k+1$, and an edge $e$ of $P$, so that $S+Q+P-e$ is $(d,m)$-extendable. Then $Q+P$, which is an $a,b$-path with length $2l+1$ and interior vertices outside of~$S$, is as required.
\oof

\begin{corollary}\label{trivial}
Let $d,m,l\in\N$ satisfy $m\geq 1$ and $d\geq 3$. Letting $k=\lceil\log (2m)/ \log(d-1)\rceil$, suppose $l\geq 2k+1$. Let~$G$ be an~$m$-joined graph. Let~$S$ be a $(d,m)$-extendable subgraph of~$G$ with at most $|G|-10dm-(l-2k-1)$ vertices.

Suppose $a$ and $b$ are two distinct vertices in~$S$, both with degree at most $d/2$ in~$S$. Then there is an $a$,$b$-path $P$, with length $l$ and internal vertices outside of~$S$, so that $S+P$ is $(d,m)$-extendable.
\end{corollary}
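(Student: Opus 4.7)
The plan is to reduce this to Lemma \ref{extendableconnect} by first growing a pendant path from $a$ of the right extra length, then using the connection lemma to bridge its far end to $b$ with a path of length $2k+1$, and finally reinserting the edge dropped by the connection lemma via Lemma \ref{edgeinsert}. Set $l' = l - (2k+1) \geq 0$. Starting from $S$ and the vertex $a$ (which has $d_S(a) \leq d/2 \leq d-1$), I would iteratively apply Corollary \ref{mextend} to attach a pendant $a, a'$-path $P_1$ of length $l'$ with all new vertices outside $V(S)$; at each step the running subgraph has at most $|S| + l' \leq |G| - 10dm \leq |G| - 2dm - 3m - 1$ vertices (using $d \geq 3$ and $m \geq 1$), and the vertex being extended has degree at most $\max(d_S(a), 1) \leq d - 1$, so the corollary applies and $S_1 := S \cup P_1$ is $(d, m)$-extendable. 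If $l' = 0$, simply take $S_1 = S$ and $a' = a$.

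Next, I would apply Lemma \ref{extendableconnect} to $S_1$ with $A = \{a'\}$, $B = \{b\}$, and $j = k$. The size bound $|S_1| \leq |G| - 10dm$ is exactly the hypothesis of the corollary. The condition $|A|, |B| \geq 2m/(d-1)^k$ reduces to $1 \geq 2m/(d-1)^k$, which holds by the definition $k = \lceil \log(2m)/\log(d-1) \rceil$. The vertex $a'$ has degree at most $1$ in $S_1$ when $l' \geq 1$, and degree $d_S(a) \leq d/2$ when $l' = 0$; either way it satisfies the required $d/2$ bound. Since $V(P_1) \setminus \{a\}$ lies outside $V(S)$, $b$ has the same degree in $S_1$ as in $S$, so $d_{S_1}(b) \leq d/2$, and in particular $a' \neq b$. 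The lemma therefore yields an $a', b$-path $P_2$ of length $2k+1$ with interior in $V(G) \setminus V(S_1)$ and an edge $e$ of $P_2$ such that $S_1 + P_2 - e$ is $(d, m)$-extendable.

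Concatenating gives an $a, b$-path $P = P_1 \cup P_2$ of length $l' + (2k+1) = l$, with all internal vertices outside $V(S)$, and by construction $S + P - e = S_1 + P_2 - e$ is $(d, m)$-extendable. Each endpoint of $e$ is either $a'$, $b$, or an interior vertex of $P_2$; in $S_1 + P_2 - e$ such a vertex has degree at most $d/2 + 1 \leq d - 1$ or at most $1$ respectively (using $d \geq 3$). Lemma \ref{edgeinsert} then shows $S + P = (S + P - e) + e$ is $(d, m)$-extendable, completing the proof. The only real bookkeeping is verifying the size and degree conditions cleanly across the corner cases $l' = 0$ and ``$e$ incident to a path endpoint,'' but I anticipate no conceptual obstacle: this corollary is essentially a parity-and-length adjustment of Lemma \ref{extendableconnect}.
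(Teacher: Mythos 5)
Your proof is correct and follows essentially the same route as the paper: grow a pendant path of length $l-2k-1$ from $a$ (you iterate Corollary~\ref{mextend}; the paper invokes Corollary~\ref{treebuild}, which packages exactly that iteration), then apply Lemma~\ref{extendableconnect} with $j=k$ to bridge to $b$, and finally restore the dropped edge via Lemma~\ref{edgeinsert}. Your extra bookkeeping is fine, though the inequality $d/2+1\leq d-1$ you cite fails for $d=3$; the argument still goes through because the degree of each endpoint of $e$ in $S_1+P_2-e$ (before reinserting $e$) is already at most $d/2\leq d-1$, which is all Lemma~\ref{edgeinsert} requires.
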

\pr By Corollary~\ref{treebuild}, we can find a path~$Q$, with length $l-2k-1$ which has $a$ as an endvertex, and whose other vertices lie outside~$S$, so that $S+Q$ is $(d,m)$-extendable. Let $a_0$ be the endvertex of~$Q$ which is not $a$, unless $l-2k-1=0$, when~$Q$ is a single vertex, in which case let $a_0=a$.

Applying Lemma~\ref{extendableconnect} to $\{a_0\}$, $\{b\}$ and $S+Q$, with $j=k$, we can find an $a_0,b$-path $P$, with interior vertices not in $V(S+Q)$, and an edge $e$ of $P$, so that $S+Q+P-e$ is $(d,m)$-extendable. Then $Q+P$ is an $a,b$-path and the interior vertices of $Q+P$ lie outside of~$S$. Furthermore, by Lemma~\ref{edgeinsert} with the edge $e$, $S+Q+P$ is $(d,m)$-extendable, as required.
\oof

\re When applying the lemmas and corollaries concerning extendability, we will often refer to the extendable subgraph that we are growing as the \emph{working subgraph}. When checking the conditions of the corollaries apply, we will refer to the condition that requires an upper bound on the size of the working subgraph as the \emph{size condition}. When applying Corollary~\ref{trivial}, we will refer to the condition that $l\geq 2\lceil\log (2m)/ \log(d-1)\rceil+1$ as the \emph{length condition}.
\mark

\re In proving an important lemma later, Lemma~\ref{coverlots}, we will wish to add paths to a $(d,m)$-extendable subgraph, before later removing some of these paths while maintaining extendability. Our only tool to remove vertices while retaining extendability is Lemma~\ref{extendreverse}, which can remove leaves. If we have added a path between two vertices $x$, $y$ in the subgraph, both of which have other neighbours in the subgraph, then we cannot remove that path by removing leaves. However, instead, we will add a path between $x$ and $y$ with a missing edge to get a $(d,m)$-extendable structure, using Corollary~\ref{extendableconnectcor}. As the subgraph without the missing edge is $(d,m)$-extendable and was created by adding two separate paths (whose new endpoints lie in the missing edge), we can remove these paths, leaf by leaf, while maintaining extendability. If, on the other hand, we decide that we do want the $x,y$-path we can add the additional edge, which, by Lemma~\ref{edgeinsert}, also maintains extendability.
\mark

\subsection{Leaves and bare paths in trees}\label{sec33}
The following lemma by Krivelevich~\cite{MK10} shows that trees contain either many leaves or many vertex disjoint bare paths, that is, vertex disjoint paths which each have no branching points as interior vertices.

\lem \cite[Lemma 2.1]{MK10}\label{Kleaves} Let $k,l,n\in \N$. Let~$T$ be a tree with~$n$ vertices and at most $l$ leaves. Then~$T$ contains at least $n/(k+1)-(2l-2)$ vertex disjoint bare paths with length $k$.
\ma

By taking $l=n/4k-1$ in this lemma, we get that a tree $T$ with $n\geq 2$ vertices
either has at least $n/4k$ leaves or at least $n/4k$ vertex disjoint bare paths with length $k$. This is Lemma~\ref{split}, stated in Section~\ref{casedivision}. We will now augment Lemma~\ref{split} to show, more precisely, that, if a tree does not have many leaves, then it contains either many disjoint bare paths or a large set of well separated leaves (see Definition~\ref{sepdefn}).
Using this, in Lemma~\ref{farleaves}, we then show that the subclasses in Section~\ref{casedivision} do indeed cover $\TT(n,\Delta)$, for $n$ large, as claimed by Lemma~\ref{splitclass}.

\lem\label{farleaves}
Let $n,k,d\in\N$ satisfy $n\geq 60k$ and $k\geq 4d$. Suppose a tree~$T$ with~$n$ vertices has at most $n/5d$ leaves. Then~$T$ either contains a $2d$-separated set of at least $n/40k$ leaves or a collection of $n/40k$ vertex disjoint bare paths with length $k$.
\ma
\ifdraft
\else
\pr
By Lemma~\ref{split}, as~$T$ has at most $n/5d$ leaves, we may take $m:=\lceil n/4d\rceil$ disjoint bare paths~$P_i$, $i\in[m]$, with length~$d$ in~$T$. Contract each path $P_i$ to a single edge $e_i$, and contract all the other edges in~$T$ which are not in a path $P_i$. The resulting tree,~$S$ say, has one edge, $e_i$, for each path $P_i$, so $|S|= m+1$. Let $k_0:=\lceil k/d\rceil\leq 5k/4d$. By Lemma~\ref{split}, the tree~$S$ either has at least $m/4k_0$ leaves or contains at least $m/4k_0$ vertex disjoint bare paths with length $k_0$.

If~$S$ has at least $m/4k_0\geq n/16dk_0\geq n/20k$ leaves, then let these leaves be the vertices $v_i$, $i\in I$, contained in the edges $e_i$, $i\in I$, so that $|I|\geq n/20k$. As $n\geq 60 k$, we have that $|S|\geq 3$, and thus each edge $e_i$ can contain at most one leaf. For each $i\in I$, consider the tree which was contracted to get the vertex $v_i$, and call it $T_i$. Either $|T_i|=1$, in which case the vertex is unchanged and is a leaf in~$T$, which we call $u_i$, or $T_i$ has at least  two leaves, one of which must be a leaf in~$T$, which we call $u_i$. If $i\neq j$ then $u_i$ and $u_j$ are separated from each other in the tree by both $P_i$ and $P_j$, so are a distance at least $2d$ apart in~$T$, and thus we have a suitable set of leaves.

Suppose then that~$S$ has at least $m/4k_0\geq n/20k$ vertex disjoint bare paths with length $k_0$, and fix such a set of paths. Take a path in this set and look at the trees that were contracted together to get each internal vertex. If none of these trees had any leaves in~$T$ then the bare path in~$S$ came from a bare path with length at least $dk_0\geq k$ in~$T$. If this is true for at least $n/40k$ of the paths in~$S$ then we have the required number of vertex disjoint paths with length $k$. If not, then at least $n/40k$ of the paths have an internal vertex which came from contracting a tree that contains a leaf of~$T$. Collecting a leaf like this from each such path gives a set of leaves which are pairwise a distance at least $2d$ apart in~$T$.
\oof
\fi

\pr[Proof of Lemma~\ref{splitclass}] Let $\delta=1/400\Delta^{20}$ and suppose $T\in\TT(n,\Delta)$ satisfies $\lambda(T)\leq \d n$. If~$T$ has at least $n/100$ leaves, then, as the ball of radius 20 around any leaf in~$T$ contains at most $2\Delta^{20}$ vertices, we have that $\lambda(T)\geq n/200\Delta^{20}>\d n$, a contradiction. Let $k'=k'(T)$. Note that if $k'\leq 40$, then, by the definition of $k'(T)$ and Lemma~\ref{split}, as $T$ does not have $n/90(k'+1)$ vertex disjoint bare paths with length $k'+1$, $T$ has at least $n/4(k'+1)>\d n$ leaves, a contradiction.

Therefore $k'\geq 40$, and hence by the definition of $k'$ and Lemma~\ref{farleaves}, applied with $d=10$, we have that $\l(T)\geq n/40(k'+1)$. Therefore, $k'(T)\l(T)\geq n/100$, and, thus, for sufficiently large~$n$, Cases~A and~B cover all the trees $T\in \TT(n,\Delta)$ with $k(T)<10^2\log n$.
\oof

We will also wish to find a set of vertices which are far apart, even if the tree has many leaves. For this, we will use Lemma~\ref{Qfinder} and its following corollary.

\lem\label{Qfinder} Let $k\geq 0$ and $\Delta\geq 2$, and let~$T$ be a tree, with maximum degree at most $\Delta$, which contains at least $3\Delta^k$ vertices. Then there exists a subset $Q\subset V(T)$ of vertices which is $(2k+2)$-separated in~$T$, with $|Q|\geq |T|/(8k+8)\Delta^k$. Furthermore, we can find such a set~$Q$ so that each vertex is a leaf of~$T$, or is an interior vertex of a bare path in~$T$.
\ma
\pr Remove leaves from~$T$ repeatedly in $k$ rounds, to get the subtree~$S$, with $|S|\geq |T|/\Delta^k$. By Lemma~\ref{split},~$S$ either has at least $|S|/(8k+8)$ vertex disjoint bare paths of length $2k+2$ or at least $|S|/(8k+8)$ leaves.

If the first possibility occurs, then let~$Q_0$ be a set of vertices, with one vertex taken from the middle of each of the vertex disjoint bare paths of length $2k+2$. The vertices in~$Q_0$ are therefore pairwise a distance at least $2k+2$ apart in~$T$. For each vertex $q\in Q_0$, if $q$ has degree 2 in $T$ then let $l_q=q$, and, otherwise, there must be some leaf, $l_q$ say, of the tree attached to $q$ in $T$ which is removed in the formation of $S$. The set $Q=\{l_q:q\in Q_0\}$ then satisfies the requirements of the lemma.

If the second possibility occurs, then, for each leaf $r$ of~$S$, there must be a leaf, $l_r$ say, for which there is an $r,l_r$-path with length $k$ in~$T$ and edges outside of~$S$, otherwise $r$ would have been removed in the creation of~$S$. Let $Q=\{l_r:r\in L(S)\}$, where $L(S)$ is the set of leaves of~$S$. As $|S|\geq 3$, the leaves of~$S$ are pairwise a distance at least 2 apart in~$T$, and, therefore, the vertices in~$Q$ are pairwise a distance at least $2k+2$ apart in~$T$, so form a set as required by the lemma.
\oof

\begin{corollary}\label{Qfinder2} Let $k\geq 0$ and $\Delta\geq 2$, and let~$T$ be a tree, with maximum degree at most $\Delta$, containing the vertex set $X$, where $|X|\geq 3\Delta^k$. Then there exists a subset $Q\subset X$ of vertices which is $(2k+2)$-separated in~$T$, with $|Q|\geq |X|/(8k+8)\Delta^k$.
\end{corollary}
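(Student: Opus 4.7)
The plan is to reduce Corollary~\ref{Qfinder2} to Lemma~\ref{Qfinder} by replacing $T$ with a ``contracted'' tree whose vertex set is exactly $X$. Define the auxiliary graph $T'$ on vertex set $X$ by joining $x,y \in X$ by an edge whenever the (unique) $x,y$-path in $T$ contains no other vertex of $X$. I would first check that $T'$ is a tree: it is connected, since walking along the $T$-path between any two vertices of $X$ and recording each $X$-vertex encountered yields a walk in $T'$, and it has exactly $|X|-1$ edges, since the edges of $T'$ are in bijection with the ``$X$-pieces'' of the minimal subtree of $T$ spanning $X$. I would also check that $\Delta(T') \leq \Delta$: the $T'$-neighbours of $x \in X$ lie in distinct branches of $T$ at $x$, and the number of such branches is at most $d_T(x) \leq \Delta$.

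Next I would observe the key monotonicity between the two metrics: if $x,y \in X$ are at distance $d$ in $T'$, then the $T$-path from $x$ to $y$ passes through $d-1$ other vertices of $X$ (the intermediate vertices of the $T'$-path) and is itself a concatenation of $d$ subpaths, each of $T$-length at least $1$. Hence $\mathrm{dist}_T(x,y) \geq \mathrm{dist}_{T'}(x,y)$, so any set that is $(2k+2)$-separated in $T'$ is automatically $(2k+2)$-separated in $T$.

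Finally, since $|T'| = |X| \geq 3\Delta^k$ and $\Delta(T') \leq \Delta$, Lemma~\ref{Qfinder} applied to $T'$ produces a $(2k+2)$-separated set $Q \subset V(T') = X$ with $|Q| \geq |T'|/(8k+8)\Delta^k = |X|/(8k+8)\Delta^k$, and by the previous paragraph this $Q$ is also $(2k+2)$-separated in $T$, which is exactly what is required. There is no real obstacle here; the only point that needs care is the verification that the contraction $T'$ really is a tree of maximum degree at most $\Delta$ and that distances in $T'$ bound distances in $T$ from below, after which the corollary is just Lemma~\ref{Qfinder} applied in the right place.
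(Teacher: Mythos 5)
Your reduction has a genuine gap: the auxiliary graph $T'$ you define on vertex set $X$ need not be a tree, and its maximum degree need not be at most $\Delta$. The failure arises exactly when the Steiner tree of $X$ in $T$ has a branching vertex that is not itself in $X$. For a concrete counterexample, take $T$ to be a star with centre $c$ and leaves $\ell_1,\ldots,\ell_n$, and let $X=\{\ell_1,\ldots,\ell_n\}$ (so $c\notin X$). For any two leaves $\ell_i,\ell_j$ the unique $T$-path $\ell_i c \ell_j$ contains no other vertex of $X$, so your $T'$ is the complete graph $K_n$ on $X$; it has $\binom{n}{2}$ edges rather than $|X|-1$, and $\Delta(T')=n-1$ even though in $T$ every vertex of $X$ has degree $1$. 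Your edge-count claim (``the edges of $T'$ are in bijection with the $X$-pieces of the minimal subtree'') and your degree bound (``$T'$-neighbours of $x$ lie in distinct branches of $T$ at $x$'') both fail in this example, and both for the same reason: several $T'$-neighbours of $x$ may reach $x$ through the same branch, splitting only at a vertex outside $X$. With $T'$ not a tree, Lemma~\ref{Qfinder} does not apply.

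The paper's proof sidesteps this by \emph{not} insisting that the contracted tree have vertex set exactly $X$. It takes the Steiner tree of $X$ and suppresses only the degree-$2$ vertices \emph{not} in $X$, retaining the branching vertices outside $X$; the result $T''$ is genuinely a tree with $\Delta(T'')\le\Delta$ and $|T''|\ge|X|$. This comes at the cost that $V(T'')\supsetneq X$ in general, so one cannot just say $Q\subset V(T'')\subset X$. The paper recovers $Q\subset X$ by using the extra conclusion built into Lemma~\ref{Qfinder} — that the set it produces consists only of leaves and interior vertices of bare paths — and by observing that in $T''$ all such vertices lie in $X$ (leaves of the Steiner tree are in $X$, and every degree-$2$ vertex of $T''$ is in $X$ by construction). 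That extra clause in Lemma~\ref{Qfinder} is precisely what makes the corollary go through, and your proposal does not use it. To repair your argument along the lines you sketched, you would need to keep the non-$X$ branching vertices (giving the paper's $T''$) and then invoke the ``leaf or interior vertex of a bare path'' guarantee to place $Q$ inside $X$.
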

\pr Let $T'$ be the smallest subtree of~$T$ which contains every vertex in $X$. If $T'$ contains a vertex not in $X$ which has degree two, then pick such a vertex, delete it, and join its two neighbours by an edge. Repeat this until no such vertex exists, and call the resulting tree $T''$. Note that $X\subset V(T'')$, and that the vertices in~$T$ are at least as far apart as they are in $T''$. Every leaf of~$T'$, and hence of~$T''$, is a vertex in~$X$, and every interior vertex in a bare path in $T''$ is also in~$X$. Therefore, by applying Lemma~\ref{Qfinder}, we can find a set satisfying our requirements.
\oof

Once we have found a well separated vertex set~$Q$ in a tree~$T$, we will embed~$T$ by starting with a single vertex and building up a tree by adding paths with a copy of some vertex from~$Q$ at the end. The following lemma will give us a good order in which to embed the vertices in~$Q$.

\lem\label{order} Let $k,l\in\N$. Suppose~$T$ is a tree containing the vertex $t\in V(T)$. Suppose $Q\subset V(T)$, with $|Q|=l$, is a $2k$-separated set in~$T$. Then, there is a sequence $T_1,\ldots,T_l$ of subtrees of~$T$ so that $t\in V(T_1)$ and, for each $i$, $1< i\leq l$, $T_{i}$ is formed from $T_{i-1}$ by adding a bare path of length at least $k$ which adds a vertex from~$Q$ as a leaf of $T_i$.
\ma
\pr
Given a sequence of vertices $x_1,\ldots,x_i\in V(T)$, for any $i$, let $T(x_1,\ldots,x_i)$ be the smallest subtree of~$T$ containing these vertices. Let $T_0=T(t)=I(\{t\})$. For each $i\in[l]$, pick a vertex $q_i$ in $Q\setminus\{q_1,\ldots,q_{i-1}\}$ so that, if $T_i=T(t,q_1,\ldots,q_i)$, then $|T_i|-|T_{i-1}|$ is minimised. Note that we allow $q_1=t$. We will show that these subtrees satisfy the lemma.

For each $i\in[l]$, the tree $T_{i}$ is formed from $T_{i-1}$ by adding a bare path to $T_i$ which ends in~$q_i$. Say, for each $i$, that path is $P_i$ with endvertices $v_i\in V(T_{i-1})$ and $q_i$. Suppose for some $i$ this path has length $k_1<k$. Let $j\geq 1$ be the smallest such $j$ for which $v_i\in V(T_j)$, so that $j<i$. The vertex $v_i$ must then belong on the path $P_j$. Let the distance between $v_i$ and $q_j$ on $P_j$ be $k_2$. If $k_1<k_2$, then at stage $j$ the vertex $q_i$ would have been added instead of $q_j$, contradicting the above process. We must therefore have $k_1\geq k_2$. There is then a path between $q_i$ and $q_j$ in $P_i\cup P_j$, which has length $k_1+k_2<2k$, a contradiction.
\oof

\subsection{Dividing trees}\label{secsplittree}
We will often divide a tree into pieces with different properties, for which we use the following definition.
\de
Where~$S$ is a tree, we say two subtrees $S_1$ and $S_2$ \emph{divide~$S$} if they cover~$S$ and intersect on precisely one vertex.
\fn
Dividing trees so that the subtrees satisfy certain properties is shown to be possible by the following proposition, after which we record some particularly useful consquences.

\begin{prop}\label{divide}
Given a tree~$S$, with a subset $Q\subset V(S)$, we can find two trees $S_1$ and $S_2$ which divide~$S$ for which $|Q\cap V(S_1)|, |Q\cap V(S_2)|\geq |Q|/3$.
\end{prop}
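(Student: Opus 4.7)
The plan is to find a pivot vertex $v_0 \in V(S)$ that will be the unique shared vertex of $S_1$ and $S_2$, and then to distribute the components of $S - v_0$ into two groups which, when attached to $v_0$, give the two subtrees. The pivot $v_0$ is chosen so that every component of $S - v_0$ contains at most $2|Q|/3$ vertices of $Q$, after which a careful partition of the components balances the $Q$-counts.

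To find $v_0$ I would run a centroid-style walk on $S$. Starting from an arbitrary vertex of $S$, repeat the step: if some component $C$ of $S - v$, where $v$ denotes the current vertex, contains more than $2|Q|/3$ vertices of $Q$, move from $v$ to the neighbour of $v$ lying in $C$. Such a component $C$ is unique, since two disjoint components of $S - v$ each with $Q$-count exceeding $2|Q|/3$ would together account for more than $|Q|$ vertices of $Q$. Moreover, if we move from $v$ to its neighbour $w$ in $C$, then the component of $S - v$ containing $w$ and the component of $S - w$ containing $v$ together partition $V(S)$, so their $Q$-counts sum to $|Q|$; the former exceeds $2|Q|/3$, so the latter is less than $|Q|/3$, and the walk cannot reverse across the edge $vw$. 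The walk therefore uses each edge of $S$ at most once, must terminate, and yields the desired $v_0$.

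With $v_0$ in hand, let $C_1, \ldots, C_r$ be the components of $S - v_0$. If some $C_i$ has $Q$-count at least $|Q|/3$, take $\mathcal{G}_1 = \{C_i\}$; otherwise every $C_i$ has $Q$-count strictly less than $|Q|/3$, and I would greedily add components to $\mathcal{G}_1$ until its cumulative $Q$-count first reaches $|Q|/3$ (the last-added component then being less than $|Q|/3$ keeps the overshoot under $|Q|/3$). In either case $\mathcal{G}_1$ has $Q$-count in $[|Q|/3, 2|Q|/3]$, and since the components collectively contain $|Q|$ or $|Q|-1$ vertices of $Q$ (according to whether $v_0 \notin Q$ or $v_0 \in Q$), the group $\mathcal{G}_2$ of remaining components has $Q$-count at least $|Q|/3 - 1$. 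Defining $S_j$ as the subtree of $S$ on $\{v_0\} \cup \bigcup_{C_i \in \mathcal{G}_j} V(C_i)$, these two subtrees cover $V(S)$ and meet exactly in $\{v_0\}$; a short case analysis on whether $v_0 \in Q$ then confirms $|Q \cap V(S_j)| \geq |Q|/3$, the potential one-vertex shortfall in $\mathcal{G}_2$ when $v_0 \in Q$ being absorbed by $v_0$ itself lying in both $V(S_1)$ and $V(S_2)$. The main conceptual step is producing the pivot $v_0$ with the $2|Q|/3$-threshold; the packing and verification are then routine.
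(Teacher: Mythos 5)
Your proof is correct and takes a genuinely different route from the paper. The paper uses an extremal argument: among all divides $S_1,S_2$ (with $|Q_1|\geq|Q_2|$), pick one minimizing $|Q_1|-|Q_2|$ and, among ties, minimizing $|S_1|-|S_2|$; assuming the bound fails, it then obtains a contradiction by shifting one vertex across the cut (when the shared vertex has a unique neighbour in $S_1$) or by further subdividing $S_1$ (when it has at least two). Your proof is a centroid-type constructive argument: a non-reversing walk locates a pivot $v_0$ with every component of $S-v_0$ carrying at most $2|Q|/3$ of $Q$, and a greedy packing of components into two groups then delivers the split. Your approach is more explicit and arguably easier to follow, essentially specialising the classical tree-centroid argument to the counting measure on $Q$; the paper's approach is a compact minimality argument that avoids setting up the walk. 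Both are valid. One small point worth tightening in your write-up: as stated, the greedy ``add until cumulative $Q$-count first reaches $|Q|/3$'' can fail to terminate (or terminate with nothing remaining) when the total $Q$-count over components is below $|Q|/3$, which happens precisely for $|Q|\leq 1$ (and only with $v_0\in Q$); you should explicitly allow the stopping condition ``or all components used,'' noting that the $v_0\in Q$ absorption you already invoke for $\mathcal{G}_2$ then also rescues $\mathcal{G}_1$. The paper dispatches this with a separate case $|Q|\leq 3$ up front.
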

\pr Note that, if $0<|Q|\leq 3$, then we may easily find such subtrees by selecting $q\in Q$ and taking the subtrees~$S$ and $I(\{q\})$. Let us assume then that $|Q|\geq 4$.
In this proof, when we have a subtree $S_i\subset S$, indexed by any $i$, we will use $Q_i=Q\cap V(S_i)$ without further definition.

Take two subtrees $S_1$ and $S_2$, subject to $S_1$ and $S_2$ dividing~$S$ and $|Q_1|\geq |Q_2|$, so that $|Q_1|-|Q_2|$ is minimised and, among all such pairs of subtrees, so that $|S_1|-|S_2|$ is then minimised. Note that, if $s\in V(S)$, then~$S$ and $I(\{s\})$ divide~$S$, and hence such subtrees $S_1$ and $S_2$ must exist.

Suppose, for contradiction, that there are no subtrees satisfying the requirements of the lemma. As $S_1$ and $S_2$ are then not suitable subtrees, we have $|Q_2|<|Q|/3$, and, as $|Q_1|+|Q_2|\geq |Q|$, we have $|Q_1|>2|Q|/3$. Therefore,
\begin{equation}\label{tocontra}
|Q_1|-|Q_2|\geq \lceil 2|Q|/3\rceil- \lfloor |Q|/3\rfloor\geq 2.
\end{equation}
Let $v$ be the single vertex common to both $S_1$ and $S_2$. If $v$ has only one neighbour in $S_1$, $x$ say, then consider the two trees $S_3$ and $S_4$ with the vertex sets $V(S_1)\setminus\{v\}$ and $V(S_2)\cup\{x\}$ respectively. Then, as we have removed one vertex from $S_1$ to get $S_3$ and added one vertex to $S_2$ to get $S_4$, we have that, using (\ref{tocontra}),
\[
|Q_1|-|Q_2|\geq|Q_3|-|Q_4|\geq |Q_1|-|Q_2|-2\geq 0.
\]
Therefore, as $S_3$ and $S_4$ divide~$S$ and $|S_3|-|S_4|< |S_1|-|S_2|$, this contradicts the choice of $S_1$ and $S_2$.

Therefore, $v$ has at least two neighbours in $S_1$, so we may divide $S_1$ into two trees $S_5$ and $S_6$, with $|S_5|,|S_6|\geq 2$, so that $S_5$ and $S_6$ intersect only on $v$. Without loss of generality, say that $|Q_5|\geq |Q_6|$. Then, as $|Q_5|+|Q_6|\geq |Q_1|>2|Q|/3$, we must have that $|Q_5|>|Q|/3$. Consider the trees $S_5$ and $S_7=S_2\cup S_6$. As $|S_6|\geq 2$ and $|S_5|<|S_1|$, we have that $|S_5|-|S_7|<|S_1|-|S_2|$.
Therefore, as $|Q_5|-|Q_7|\leq |Q_1|-|Q_2|$, to avoid contradicting the choice of $S_1$ and $S_2$ we must have that $|Q_5|-|Q_7|<0$, and hence $|Q_7|>|Q_5|>|Q|/3$. Thus, as $S_5$ and $S_7$ divide $S$, these subtrees satisfy the requirements of the lemma, a contradiction.
\oof

The simplest use of Proposition~\ref{divide} demonstrates, as follows, that we may divide a tree into two large pieces.
\begin{corollary}\label{divideeven} Given a tree~$S$, we can find two trees $S_1$ and $S_2$ which divide~$S$ for which $|S_1|, |S_2|\geq |S|/3$.
\end{corollary}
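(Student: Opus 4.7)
The plan is to simply invoke Proposition~\ref{divide} with the choice $Q=V(S)$. Then the proposition produces two subtrees $S_1$ and $S_2$ which divide $S$ and satisfy
\[
|V(S_i)| \;=\; |V(S)\cap V(S_i)| \;\geq\; |V(S)|/3 \;=\; |S|/3
\]
for each $i\in\{1,2\}$, which is exactly the claim.

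The only thing to check is that Proposition~\ref{divide} applies; since it is stated for arbitrary $Q\subset V(S)$ and there is no nontrivial hypothesis, no obstacle arises. There is no main difficulty here — the corollary is essentially a relabelling of the proposition in the special case where the distinguished set $Q$ equals the whole vertex set.
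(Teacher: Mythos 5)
Your proof is correct and is exactly the argument the paper gives: apply Proposition~\ref{divide} with $Q=V(S)$ and note $|Q\cap V(S_i)|=|S_i|$. Nothing more to add.
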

\pr Taking $Q=V(S)$, by Proposition~\ref{divide} there exist two trees $S_1$ and $S_2$ which divide~$S$ for which $|S_1|,|S_2|\geq |S|/3$, as required.
\oof

It will be useful to divide a tree so that each subtree contains plenty of vertex disjoint bare paths. The following corollary shows that this can be done, if the original tree contains sufficently many such paths.
\begin{corollary}\label{dividepath} Let $l\in \N$. Given a tree~$S$, which contains the vertex disjoint paths $P_i$, $i\in [l]$, we can find two trees $S_1$ and $S_2$ which divide~$S$ so that both $S_1$ and $S_2$ contain at least $l/3-1$ of the paths $P_i$.
\end{corollary}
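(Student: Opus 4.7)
The plan is to apply Proposition~\ref{divide} with a carefully chosen marker set $Q$. For each $i \in [l]$, I would pick a single representative vertex $q_i \in V(P_i)$ and set $Q = \{q_i : i \in [l]\}$; since the paths $P_i$ are vertex disjoint, $|Q| = l$. Feeding this $Q$ into Proposition~\ref{divide} yields two subtrees $S_1, S_2$ dividing $S$ with $|Q \cap V(S_j)| \geq l/3$ for $j = 1, 2$.

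The next step is to convert a representative being in $V(S_j)$ into the full path $P_i$ lying in $S_j$. Let $v$ denote the unique vertex shared by $S_1$ and $S_2$. If a path $P_i$ has $q_i \in V(S_j)$ but is not contained in $S_j$, then $P_i$ must also meet $V(S_{3-j}) \setminus \{v\}$; traversing $P_i$ from $q_i$ to that vertex forces $P_i$ to pass through $v$, since any path from $V(S_j)$ to $V(S_{3-j}) \setminus \{v\}$ in $S$ uses $v$ (as $V(S_1) \cap V(S_2) = \{v\}$ and $S_1, S_2$ cover all edges of $S$).

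The key observation is that the vertex disjointness of the $P_i$ forces at most one path to contain $v$. Hence, within the set of paths whose representatives lie in $V(S_j)$, at most one fails to be contained in $S_j$, giving at least $\lceil l/3 \rceil - 1 \geq l/3 - 1$ of the paths $P_i$ lying inside $S_j$, as required.

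I do not anticipate a serious obstacle: the only subtle point is handling the degenerate case where some representative $q_i$ happens to coincide with $v$ itself, but even then $q_i = v$ forces $P_i$ to pass through $v$, and by vertex disjointness this can happen for at most one index $i$, so it is absorbed into the same $-1$ loss above. The proof is essentially a clean application of Proposition~\ref{divide} together with this one-path-through-$v$ observation.
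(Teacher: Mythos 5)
Your proof is correct and follows essentially the same route as the paper's: pick one representative vertex per path, apply Proposition~\ref{divide} to the set of representatives, and observe that any path whose representative lands in $S_j$ but which is not contained in $S_j$ must pass through the unique shared vertex, of which there is at most one among the vertex-disjoint paths. The paper phrases the key step as the contrapositive (if $w\notin V(P_i)$ then $P_i\subset S_1$), but the argument is the same.
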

\pr For each $i\in [l]$, let $v_i$ be some vertex in $V(P_i)$. Let $Q=\cup_i\{v_i\}$. Then, by Proposition~\ref{divide}, there exist two trees $S_1$ and $S_2$ which divide~$S$ for which $|Q\cap V(S_1)|,|Q\cap V(S_2)|\geq |Q|/3=l/3$. Let $w$ be the single vertex in the set $V(S_1)\cap V(S_2)$. For each $i\in [l]$ with $v_i\in Q\cap V(S_1)$, if $w\notin V(P_i)$, then the path $P_i$ must be a subgraph of $S_1$. Therefore, as $w$ is in at most one of the paths~$P_i$, at least $|Q\cap V(S_1)|-1\geq l/3-1$ of the paths $P_i$ must be a subgraph of $S_1$. Similarly, at least $|Q\cap V(S_2)|-1\geq l/3-1$ of the paths $P_i$ must be a subgraph of~$S_2$.
\oof

Using Corollary~\ref{divideeven}, we can find a subtree with, approximately, a certain number of vertices.

\begin{prop}\label{littletree} Let $n,m\in \N$ satisfy $1\leq m\leq n/3$. Given any tree~$T$ with~$n$ vertices and a vertex $t\in V(T)$, we can find two trees $T_1$ and $T_2$ which divide~$T$ so that $t\in V(T_1)$ and $m\leq |T_2|\leq 3m$.
\end{prop}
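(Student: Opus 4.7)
The plan is to iteratively shrink the piece that will become $T_2$ using Corollary~\ref{divideeven}, stopping as soon as its size lands in $[m,3m]$. Concretely, I would build a sequence of pairs of subtrees $(A_i,B_i)$ which divide $T$, with $t\in V(A_i)$ maintained throughout, starting from $A_0=I(\{t\})$ and $B_0=T$. At stage $i$, if $|B_i|\le 3m$ I halt and output $T_1=A_i$, $T_2=B_i$. Otherwise I apply Corollary~\ref{divideeven} to $B_i$ to obtain subtrees $C_1,C_2$ which divide $B_i$ with $|C_1|,|C_2|\ge |B_i|/3$, sharing a unique vertex $v'$. Writing $v_i$ for the unique vertex of $A_i\cap B_i$, I pick $j\in\{1,2\}$ with $v_i\in V(C_j)$ (such a $j$ exists because $v_i\in V(B_i)=V(C_1)\cup V(C_2)$), and set $A_{i+1}=A_i\cup C_j$ and $B_{i+1}=C_{3-j}$.

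The verification is routine but merits a short case analysis on whether $v_i=v'$ or not. If $v_i\ne v'$, then $v_i\in V(C_j)\setminus V(C_{3-j})$, so $A_i\cap C_{3-j}\subseteq A_i\cap B_i=\{v_i\}$ is empty, while $C_j\cap C_{3-j}=\{v'\}$; hence $A_{i+1}\cap B_{i+1}=\{v'\}$ and $A_{i+1}$ is connected (since $A_i$ and $C_j$ share $v_i$). If $v_i=v'$, then $v_i\in V(C_1)\cap V(C_2)$, and again $A_i\cap C_{3-j}\subseteq\{v_i\}=\{v'\}\subseteq V(C_{3-j})$ gives $A_{i+1}\cap B_{i+1}=\{v'\}$. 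In both cases $A_{i+1}$ and $B_{i+1}$ divide $T$, with $t\in V(A_i)\subseteq V(A_{i+1})$, and $|B_{i+1}|=|B_i|+1-|C_j|\le 2|B_i|/3+1$ while $|B_{i+1}|\ge |B_i|/3$.

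Since $|B_{i+1}|<|B_i|$ whenever $|B_i|>3m\ge 3$, the process halts at some step $k$. If $k\ge 1$, then by the stopping rule $|B_k|\le 3m$ while $|B_{k-1}|>3m$, and the lower bound $|B_k|\ge |B_{k-1}|/3>m$ gives the required $m\le |T_2|\le 3m$. The only remaining case is $k=0$: then $n=|B_0|\le 3m$, and combined with the hypothesis $m\le n/3$ this forces $n=3m$, so $T_1=I(\{t\})$ and $T_2=T$ already satisfy $m\le |T_2|=3m$. The main (and fairly minor) obstacle is simply the bookkeeping that $A_{i+1}$ really attaches to $B_{i+1}$ at a single vertex, which the case analysis above handles cleanly.
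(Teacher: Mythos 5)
Your proof is correct and is in essence the same argument as the paper's: both rely on Corollary~\ref{divideeven} to split the current $T_2$-piece into two halves of size at least $|T_2|/3$, observing that the attachment vertex must land in one of them so the other can be peeled off. The only difference is framing — the paper phrases it as an extremal argument (take $T_2$ with $|T_2|$ minimal subject to $|T_2|\geq m$ and derive a contradiction if $|T_2|>3m$), while you run the shrinking process explicitly and track the iteration; your case analysis on whether $v_i = v'$ and the $k=0$ boundary case are correct, if slightly more verbose than the extremal phrasing requires.
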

\pr Find trees $T_1$ and $T_2$, with $t\in V(T_1)$, which divide~$T$, so that $|T_2|$ is minimised subject to $|T_2|\geq m$. Some such trees exist as $I(\{t\})$ and~$T$ satisfy these conditions. Suppose $|T_2|>3m\geq 3$. By Corollary~\ref{divideeven}, we can find trees $T_3$ and $T_4$ which divide $T_2$ so that $|T_3|,|T_4|\geq \lceil |T_2|/3\rceil\geq m+1\geq 2$. Let~$t'$ be the single vertex in both $T_1$ and $T_2$, and suppose, without loss of generality, that $t'\in V(T_3)$. Consider the trees $T_5=T_1\cup T_3$ and $T_4$ which divide~$T$. The tree $T_5$ contains~$t$, and $|T_2|>|T_4|\geq m$, a contradiction. Therefore, $|T_2|\leq 3m$.
\oof

\subsection{Edges between sets in the random graph}\label{sec00}
To confirm some standard facts concerning the random graph, we will use the following form of Chernoff's inequality (see, for example, Janson, \L uczak and Ruci\'nski~\cite[Corollary 2.3]{jlr11}).
\lem\label{chernoff} If $X$ is a binomial variable with standard parameters~$n$ and $p$, denoted $X=\mathrm{Bin}(n,p)$, and $\e$ satisfies $0<\e\leq 3/2$, then
\[
\P(|X-\E X|\geq \e \E X)\leq 2\exp\left(-\e^2\E X/3\right).\hfill\qedhere
\]
\ma

We will later require some of the properties of a random graph to hold with higher probability than the typical `almost sure' requirement, for which we use the following definition.
\begin{defn}
In a random graph $G=G(n,p)$, we say a property $P$ \emph{holds with very high probability} if
\[
\P(G\text{ has property }P)\geq 1-n^{-\omega(1)}.
\]
\end{defn}

A simple application of Lemma~\ref{chernoff} gives the following loose estimate on the number of edges we can expect in a random graph.

\begin{prop}\label{alledges} If $p=\omega(\log n/n^2)$, then, with very high probability, the random graph $\GG(n,p)$ has between $pn^2/4$ and $pn^2$ edges.
\end{prop}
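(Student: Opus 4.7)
The plan is to apply Lemma~\ref{chernoff} directly to the edge count. Let $X=e(G(n,p))$, so that $X$ is a binomial variable with parameters $\binom{n}{2}$ and $p$, and hence $\E X = \binom{n}{2}p = pn(n-1)/2$. The hypothesis $p = \omega(\log n/n^2)$ immediately gives $\E X = \omega(\log n)$, which is exactly what is needed to upgrade a standard Chernoff bound from an ``almost surely'' statement to a ``with very high probability'' one.

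Concretely, I would apply Lemma~\ref{chernoff} with a fixed small $\e$, say $\e = 1/4$, to obtain
\[
\P\bigl(|X - \E X| \geq \tfrac{1}{4}\E X\bigr) \leq 2\exp(-\E X/48).
\]
Since $\E X = \omega(\log n)$, the right-hand side is $n^{-\omega(1)}$, so with very high probability $\tfrac{3}{4}\E X \leq X \leq \tfrac{5}{4}\E X$. A trivial arithmetic check then converts this into the absolute bounds requested: for $n$ sufficiently large,
\[
\tfrac{3}{4}\E X = \tfrac{3}{8}pn(n-1) \geq \tfrac{1}{4}pn^2 \quad\text{and}\quad \tfrac{5}{4}\E X = \tfrac{5}{8}pn(n-1) \leq pn^2,
\]
the first requiring only $n\geq 4$ and the second being immediate.

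There is no real obstacle here; the only point that merits attention is verifying that the Chernoff failure probability really is $n^{-\omega(1)}$ rather than just $o(1)$, and this is a one-line consequence of the assumption $p = \omega(\log n/n^2)$. The bounds $pn^2/4$ and $pn^2$ are comfortably slack enough that the precise choice of $\e$ is immaterial, as long as it is a fixed constant in $(0, 1/2)$.
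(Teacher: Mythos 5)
Your proof is correct and follows essentially the same route as the paper's: apply Lemma~\ref{chernoff} to the binomial edge count with a fixed small $\e$ (the paper uses $\e=1/5$, you use $\e=1/4$; both work), note that $\E X = p\binom{n}{2} = \omega(\log n)$ forces the failure probability to be $n^{-\omega(1)}$, and then do a trivial arithmetic step to pass from relative to absolute bounds.
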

\pr If $G=\GG(n,p)$, then $\E|E(G)|=p\binom{n}{2}$. Therefore, by Lemma~\ref{chernoff} with $\e=1/5$, if~$n$ is large, then
\[
\P(pn^2/4\leq |E(G)|\leq pn^2)\geq 1-2\exp\left(-p\binom{n}{2}/75\right)=1-n^{-\omega(1)}.\qedhere
\]
\oof

Throughout this paper
we will be very interested in a parameter denoted by~$m$. We will take~$m$ to be small, subject to the condition that we can reasonably expect an edge between any two disjoint subsets with size~$m$. The following proposition will allow us to take a good value for~$m$.

\begin{prop}\label{generalprops} If $np>20$, then, with very high probability, any two disjoint sets~$A$ and~$B$ of vertices of $G=\GG(n,p)$ with $|A|=|B|=\lceil 5\log(np)/ p\rceil$ have some edge between them.
\end{prop}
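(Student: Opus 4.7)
The plan is a straightforward first-moment (union bound) argument. Fix $m = \lceil 5\log(np)/p \rceil$, and, for any two disjoint vertex sets $A, B \subset [n]$ of size $m$, let $E_{A,B}$ denote the event that no edge of $G(n,p)$ runs between $A$ and $B$. Since the $m^2$ potential edges between $A$ and $B$ are independent,
\[
\P(E_{A,B}) = (1-p)^{m^2} \leq \exp(-pm^2) = \exp(-5m\log(np)),
\]
using $pm \geq 5\log(np)$. The number of ordered pairs of disjoint $m$-sets is at most $\binom{n}{m}^2 \leq (en/m)^{2m}$, so a union bound yields
\[
\P\Bigl(\bigcup_{A,B} E_{A,B}\Bigr) \leq \exp\bigl(2m\log(en/m) - 5m\log(np)\bigr).
\]

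The main (and only real) task is to check that $2\log(en/m)$ is safely smaller than $5\log(np)$ whenever $np>20$. Since $m \geq 5\log(np)/p$, we have
\[
\frac{en}{m} \leq \frac{e\,np}{5\log(np)},
\]
so $\log(en/m) \leq \log(np) + 1 - \log(5\log(np))$. For $np>20$ we have $\log(np)>\log 20 > 3$, so $\log(5\log(np))>1$, giving $\log(en/m) \leq \log(np)$. Hence
\[
2m\log(en/m) - 5m\log(np) \leq -3m\log(np),
\]
and therefore $\P(\bigcup_{A,B} E_{A,B}) \leq \exp(-3m\log(np))$.

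To finish, I need to verify $3m\log(np) = \omega(\log n)$ uniformly in the allowed range of $p$, so that the bound is $n^{-\omega(1)}$. Using $m \geq 5\log(np)/p$, this quantity is at least $15(\log(np))^2/p = 15n(\log(np))^2/(np)$. For $np \to \infty$ the function $(\log x)^2/x$ is eventually decreasing, so this is minimised (in the relevant regime) around $np$ as large as possible, i.e.\ $p$ close to $1$, where it still equals $15(\log n)^2 = \omega(\log n)$; and for $np$ bounded (e.g.\ $np$ close to the threshold $20$), $m = \Theta(n)$ and $3m\log(np) = \Theta(n) = \omega(\log n)$. The intermediate regime, including the main case $p = \Theta(\log n/n)$ where $m = \Theta(n\log\log n/\log n)$, is handled similarly. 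I do not anticipate any real obstacle beyond organising these case checks cleanly; the whole argument is a one-line Chernoff-free estimate combined with a careful monotonicity observation on $(\log(np))^2/(np)$.
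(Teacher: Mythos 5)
Your proposal is correct and follows essentially the same route as the paper's proof: a union bound, the estimate $\binom{n}{m}^2(1-p)^{m^2}\leq (np)^{2m}e^{-pm^2}=e^{-3m\log(np)}$, and then checking $m\log(np)=\omega(\log n)$. The only cosmetic difference is in this last check: the paper splits into the cases $p\leq n^{-1/2}$ and $p\geq n^{-1/2}$, whereas you observe that $(\log x)^2/x$ is decreasing for $x>e^2$ and so the infimum over $20<np\leq n$ is attained at $np=n$, giving $(\log n)^2$; both are equally quick, and your version avoids the case split.
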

\pr
Let $m=\lceil 5\log(np)/ p\rceil$. If $q$ is the probability that there exist two disjoint subsets of size~$m$ which have no edge between them, then
\begin{equation}\label{qqqqqqqqqqqqqq}
q\leq \binom{n}{m}^2(1-p)^{m^2}\leq \left(\frac{en}{m}\right)^{2m}e^{-pm^2}
\leq (np)^{2m}e^{-5m\log(np)}= e^{-3m\log(np)}.
\end{equation}
If $p\leq n^{-1/2}$, then $m\log(np)\geq \log^2(np)/p\geq n^{1/2}$, and if $p\geq n^{-1/2}$, then $m\log(np)\geq\log^2(np)\geq \log^2 n/4$. Therefore, $m\log(np)=\omega(\log n)$, and hence, by (\ref{qqqqqqqqqqqqqq}), $q\leq n^{-\omega(1)}$, as required.
\oof

Proposition~\ref{generalprops} will cover most of the situations where we wish to have some edge between two large subsets in a random graph. However, we will also use a result concerning the number of edges we are likely to find between any two large sets. The following proposition is part of Proposition 3.2 in~\cite{AKS07} by Alon, Krivelevich and Sudakov, but as we require the property to hold with a higher probability we include a proof here.

\begin{prop}[]\label{AKS1}
Let $np>20$ and $G=\GG(n,p)$. Then, with very high probability,
the number of edges between any two disjoint sets of vertices~$A$ and $B$ with $p|A||B|\geq 24n$ is at least $p|A||B|/2$ and at most $3p|A||B|/2$.
\end{prop}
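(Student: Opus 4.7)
The plan is to combine a Chernoff tail estimate for each fixed pair $(A,B)$ with a union bound over all relevant pairs. For fixed disjoint sets $A$, $B$ of sizes $a$ and $b$, the edge count $e_G(A,B)$ is a binomial random variable with parameters $ab$ and $p$ and mean $pab$, so applying Lemma~\ref{chernoff} with $\varepsilon = 1/2$ yields
\[
\P\bigl(|e_G(A,B) - pab| \geq pab/2\bigr) \leq 2\exp(-pab/12).
\]
I would then union bound over all ordered pairs $(A,B)$ of disjoint subsets of $V(G)$ with $pab \geq 24n$; note that this handles both the lower bound $pab/2$ and the upper bound $3pab/2$ simultaneously.

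The main (modest) obstacle is verifying that the union bound does not blow up, and this is where the constant $24$ plays its role. For sizes $(a,b)$, the number of ordered pairs of disjoint subsets is
\[
\binom{n}{a}\binom{n-a}{b} = \binom{n}{a+b}\binom{a+b}{a} \leq \left(\frac{2en}{a+b}\right)^{a+b}.
\]
Setting $s = a+b$, the function $s \mapsto s\log(2en/s)$ has derivative $\log(2n/s)$, hence is increasing on $[1,n]$, so its maximum on that range is $n\log(2e) < 1.7\,n$. This gives the crude but uniform bound $\binom{n}{a}\binom{n-a}{b} \leq e^{1.7\,n}$.

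Putting the pieces together, whenever $pab \geq 24n$ we have $pab/12 \geq 2n$, so each contribution to the union bound is at most $2\exp(1.7\,n - 2n) = 2e^{-0.3\,n}$. Summing over the at most $n^2$ choices of $(a,b)$ yields a failure probability of at most $2n^2 e^{-0.3\,n} = n^{-\omega(1)}$, which is the \emph{very high probability} bound required by the statement.
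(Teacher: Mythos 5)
Your proof is correct and takes essentially the same route as the paper: a Chernoff tail bound for each fixed pair via Lemma~\ref{chernoff} with $\varepsilon = 1/2$, followed by a union bound, with the constant $24$ chosen so that $pab/12 \geq 2n$ beats the exponential count of set pairs. The paper's union bound is slightly more streamlined (it just bounds the number of ordered pairs of disjoint vertex sets by $2^n \cdot 2^n = 4^n$ and notes $2(4/e^2)^n = n^{-\omega(1)}$), whereas you parametrize by the sizes $(a,b)$ and optimize $\binom{n}{a}\binom{n-a}{b}$; your counting is valid but adds a step without improving the conclusion.
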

\pr For any two disjoint sets of vertices~$A$ and $B$, $e_G(A,B)$ is a binomially distributed variable with mean $p|A||B|$. Therefore, if $p|A||B|\geq 24n$ then, by Lemma~\ref{chernoff} with $\e=1/2$,
\[
\P(|e_G(A,B)-p|A||B||\geq p|A||B|/2)\leq 2\exp\left(-p|A||B|/12\right)\leq 2\exp(-2n).
\]
Therefore, the probability there are two disjoint sets of vertices~$A$ and $B$ with $p|A||B|\geq 24n$ and either $e_G(A,B)< p|A||B|/2$ or $e_G(A,B)>3p|A||B|/2$ is at most
\[
2^n\cdot 2^n\cdot 2e^{-2n}=2(4/e^2)^{-n}=n^{-\omega(1)}.\hfill\qedhere
\]
\oof


\subsection{Short paths in the random graph}\label{shortpathsgnp}
We will also need an upper bound on the number of paths of length 2 we can expect to find in a sparse random graph. Each potential such path does not appear independently of the others, complicating the proof of any such bound. For convenience, we will use the following result by Vu~\cite{Vu01} on the number of subgraphs of a random graph  isomorphic to some fixed balanced subgraph $H$ (a balanced subgraph $H$ is one whose density, calculated as $|E(H)|/|V(H)|$, is at least as large as the density of any subgraph). Theorem~\ref{subcount} records the part of Theorem 1.1 in \cite{Vu01}  we wish to use, and Corollary~\ref{fewpathslengthtwo} confirms the specific case we need.
\begin{theorem}\label{subcount}\cite[Theorem 1.1]{Vu01}
Let $0<\e\leq 1$. Suppose $H$ is a balanced graph with $k$ vertices, and $Y$ is the random variable counting the number of subgraphs isomorphic to $H$ in a random graph $\GG(n,p)$. Suppose further that $\e^2(\E Y)^{1/(k-1)}=\omega(\log n)$. Then there is a positive constant $c=c(H)$ such that
\[
\P(Y\geq (1+\e)\mathbb{E} Y)\leq \exp(-c\e^2(\mathbb{E}Y)^{1/(k-1)}).
\]
\end{theorem}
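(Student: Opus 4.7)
The statement reproduced here is Vu's theorem from~\cite{Vu01}, and as is standard for imported concentration results of this type, the expectation is that the paper will simply quote it rather than prove it. Nevertheless, sketching how I would reproduce the proof, the natural plan — and essentially Vu's own — is to apply the polynomial concentration inequality of Kim and Vu. I would begin by viewing $Y$ as a polynomial of degree $e(H)$ in the independent Bernoulli edge variables $X_e$ of $\GG(n,p)$:
\[
Y = \frac{1}{|\mathrm{Aut}(H)|}\sum_{\phi : V(H)\hookrightarrow [n]} \prod_{e\in E(H)} X_{\phi(e)}.
\]

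To feed this into the Kim--Vu machinery, I would need bounds on the partial-derivative expectations $\mathbb{E}_j(Y) = \max_{|A|=j} \mathbb{E}[\partial_A Y]$, where $\partial_A$ differentiates with respect to the edge variables indexed by $A$. A direct combinatorial computation gives that $\partial_A Y$ counts labelled copies of $H$ whose edge set contains $A$; this vanishes unless $A$ embeds in $H$ as a subgraph after suitable vertex identification, and otherwise is of order $n^{k-v(A)} p^{e(H)-|A|}$, where $v(A)$ is the number of vertices spanned by $A$. In particular, $\mathbb{E}_j(Y)/\E Y$ is of order $1/(n^{v(A)} p^{|A|})$.

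The main obstacle, and the place where the balanced hypothesis is essential, is matching these partial-derivative bounds against the thresholds required by Kim--Vu to deliver a tail of the stated form. Balancedness says $e(H')/v(H') \leq e(H)/v(H)$ for every subgraph $H'\subseteq H$, which is precisely the arithmetic identity needed to verify that the ratio $\mathbb{E}_j(Y)/(\E Y)^{(k-1-j)/(k-1)}$ stays bounded (up to constants depending on $H$) uniformly in $j$; without this, the densest proper subgraph of $H$ could create a dominant fluctuation and force a weaker tail bound. With these $\mathbb{E}_j$ estimates in hand, the Kim--Vu polynomial concentration inequality delivers an upper-tail bound of the form $\exp(-c\e^2 (\E Y)^{1/(k-1)})$, the exponent $1/(k-1)$ being characteristic of the polynomial concentration inequality for a polynomial of degree $e(H)$ whose relevant moments scale with $(\E Y)^{1/(k-1)}$. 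The hypothesis $\e^2(\E Y)^{1/(k-1)} = \omega(\log n)$ is there to absorb the $O(\log n)$ slack that Kim--Vu leaves in the exponent, so that the statement can be written cleanly without a $\log n$ term.
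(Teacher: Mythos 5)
The paper does not prove this statement; it is quoted verbatim as Theorem 1.1 of Vu~\cite{Vu01} and used as a black box (feeding into Corollary~\ref{fewpathslengthtwo}). You correctly recognize this, and your sketch of how Vu's own argument goes — casting $Y$ as a low-degree polynomial in the edge indicators and invoking Kim--Vu polynomial concentration, with balancedness controlling the partial-derivative expectations $\mathbb{E}_j(Y)$ — is a fair high-level summary of the method in~\cite{Vu01}, so there is nothing to compare against within this paper.
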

\begin{corollary}\label{fewpathslengthtwo}
If $n^3p^2=\omega(\log^2 n)$, then, with very high probability, $\GG(n,p)$ contains at most $n^3p^2$ paths of length two.
\end{corollary}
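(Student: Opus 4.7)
The plan is to apply Theorem~\ref{subcount} directly with $H=P_3$, the path on three vertices. First I would check that $P_3$ is balanced: its density is $2/3$, while its proper subgraphs (a single edge, a pair of vertices, a single vertex) have density $1/2$ or $0$, so $P_3$ is indeed balanced.

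Next I would compute $\mathbb{E} Y$ where $Y$ counts copies of $P_3$ in $G(n,p)$. A copy of $P_3$ is determined by choosing a centre vertex in one of $n$ ways and an unordered pair of distinct neighbours in one of $\binom{n-1}{2}$ ways; each such configuration appears with probability $p^2$, giving
\[
\mathbb{E} Y \;=\; n\binom{n-1}{2}p^2 \;\leq\; \tfrac{1}{2}n^3 p^2.
\]
In particular $\mathbb{E} Y \leq n^3 p^2 / 2$, so it suffices to show that $\mathbb{P}(Y \geq 2\mathbb{E} Y) = n^{-\omega(1)}$, which is the conclusion of Theorem~\ref{subcount} applied with $\varepsilon = 1$ and $k=3$ (after absorbing constants into the asymptotic bound).

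To invoke Theorem~\ref{subcount} I would verify its hypothesis: with $k=3$ and $\varepsilon=1$, we need $(\mathbb{E} Y)^{1/(k-1)} = (\mathbb{E} Y)^{1/2} = \omega(\log n)$, i.e. $\mathbb{E} Y = \omega(\log^2 n)$. Since $n^3 p^2 = \omega(\log^2 n)$ by hypothesis and $\mathbb{E} Y = (1-o(1))\cdot n^3 p^2 / 2$, this is immediate. Theorem~\ref{subcount} then yields
\[
\mathbb{P}(Y \geq n^3 p^2) \;\leq\; \mathbb{P}(Y \geq 2\mathbb{E} Y) \;\leq\; \exp\!\bigl(-c (\mathbb{E} Y)^{1/2}\bigr) \;=\; \exp(-\omega(\log n)) \;=\; n^{-\omega(1)},
\]
which is the very-high-probability statement required.

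There is essentially no obstacle here: the corollary is a routine specialisation of Theorem~\ref{subcount}. The only minor point worth being careful about is the factor of $2$ between $\mathbb{E} Y$ and the threshold $n^3 p^2$, which is why we can comfortably take $\varepsilon = 1$ and absorb the small quadratic corrections coming from $n\binom{n-1}{2}$ versus $n^3/2$.
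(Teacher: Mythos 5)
Your proof is correct and follows essentially the same route as the paper: identify $P_3$ as balanced, compute $\mathbb{E}Y$ (your $n\binom{n-1}{2}p^2$ equals the paper's $3\binom{n}{3}p^2$), verify the hypothesis of Theorem~\ref{subcount} from $n^3p^2=\omega(\log^2 n)$, and apply it with $\varepsilon=1$. The only difference is trivial bookkeeping of constants in passing from $2\mathbb{E}Y$ to $n^3p^2$.
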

\ifdraft
\else
\pr
Let $P$ be the path of length two and let $Y$ be the number of subgraphs of $\GG(n,p)$ which are isomorphic to $P$. Then $(\mathbb{E}Y)^{1/2}=(3\binom{n}{3}p^2)^{1/2}=\omega(\log n)$. As $P$ is balanced, by Theorem~\ref{subcount} with $\e=1$, $\P(Y\geq 6\binom{n}{3}p^2)=n^{-\omega(1)}$.
\oof
\fi

\subsection{Matchings from expansion conditions}\label{sec11}
In what remains of this section, we will show several results on matchings and expansion. We use the common notion of a generalised matching, which, as is well-known (see, e.g., Bollob\'as~\cite{bollomod}) exists exactly when a \emph{generalised matching condition} holds, as follows.

\begin{defn}\label{genmatch}
Given a bipartite graph $G$ with vertex classes $A$ and $B$, and a function $f:A\to\N$, an \emph{$f$-matching from $A$ into~$B$} is a collection of disjoint sets $\{X_a\subset N(a):a\in A\}$ so that, for each $a\in A$, $|X_a|=f(a)$. If $d\in\N$, and $f(a)=d$ for each $a\in A$, we refer to an $f$-matching as a \emph{$d$-matching}. We refer to a $1$-matching as a \emph{matching}.
\end{defn}

\begin{theorem}\label{matchingtheorem} Given a bipartite graph $G$ with vertex classes $A$ and $B$, and a function $f:A\to\N$, if, for every subset $U\subset A$ we have $|N(U)|\geq \sum_{a\in U}f(a)$, then there is an $f$-matching from $A$ into $B$.
\hfill\qed
\end{theorem}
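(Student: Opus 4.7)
The plan is to reduce the statement to the classical Hall marriage theorem by the standard trick of ``splitting'' each vertex on the $A$-side into $f(a)$ copies. I would construct an auxiliary bipartite graph $G'$ with parts $A'$ and $B$, where
\[
A' = \{(a,i) : a \in A,\ 1 \leq i \leq f(a)\},
\]
and where $(a,i)$ is adjacent to $b \in B$ in $G'$ precisely when $ab \in E(G)$. An ordinary matching from $A'$ into $B$ in $G'$, when re-grouped according to the first coordinate, is exactly an $f$-matching from $A$ into $B$ in $G$, because the $f(a)$ neighbours assigned to the copies $(a,1),\ldots,(a,f(a))$ must be pairwise distinct and all lie in $N_G(a)$.

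The key step is to verify that Hall's matching condition holds in $G'$. Let $U' \subset A'$ be an arbitrary subset, and let $U = \{a \in A : (a,i) \in U' \text{ for some } i\}$ be its projection. By construction $N_{G'}(U') = N_G(U)$, and trivially
\[
|U'| \leq \sum_{a \in U} f(a) \leq |N_G(U)| = |N_{G'}(U')|,
\]
where the second inequality is precisely the hypothesis of the theorem applied to $U$. Hence $|N_{G'}(U')| \geq |U'|$ for every $U' \subset A'$, so by Hall's theorem $G'$ admits a matching saturating $A'$.

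Finally, I would translate this matching back: if $M$ is the matching in $G'$, define $X_a = \{b \in B : (a,i)b \in M \text{ for some } i\}$ for each $a \in A$. Then the sets $X_a$ are pairwise disjoint (since $M$ is a matching and different $a$'s give different copies), each $X_a \subset N_G(a)$, and $|X_a| = f(a)$ since $M$ saturates all $f(a)$ copies of $a$. Thus $\{X_a : a \in A\}$ is the desired $f$-matching. There is no real obstacle here beyond bookkeeping; the only thing to check carefully is that Hall's condition need only be verified on ``closed'' $U'$ (those obtained by including all copies of some $U \subset A$), which is what the display above accomplishes.
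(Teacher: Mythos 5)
Your proof is correct; it is the standard vertex-splitting reduction to Hall's theorem, which is precisely the well-known argument the paper implicitly relies on (the paper gives no proof, citing Bollob\'as). Your closing remark about only needing to check Hall's condition on ``closed'' sets $U'$ is superfluous --- the display already verifies the condition for arbitrary $U'$ by projecting to $U$ --- but it does no harm.
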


We can find matchings in graphs using certain simple expansion conditions, as follows.
\lem\label{matchings} Let $n,m,\Delta\in \N$ and $f:[n]\to[\Delta]$. Let $D=\sum_{i\in[n]}f(i)$. Suppose $H$ is a bipartite graph with vertex classes $A=[n]$ and $B=[D]$ where the following properties hold.
\begin{itemize} \refstepcounter{equation}\label{tah1}
\item If $U\subset A$ and $|U|\leq m$, then $|N(U)|\geq \Delta|U|$.\hfill(\arabic{equation})\refstepcounter{equation}\label{tah2}
\item If $U\subset B$ and $|U|\leq m$, then $|N(U)|\geq |U|$.\hfill(\arabic{equation})\refstepcounter{equation}\label{tah3}
\item If $U\subset A$ and $V\subset B$, with $|U|,|V|\geq m$, then $e(U,V)>0$.\hfill(\arabic{equation})
\end{itemize}
Then, there is an $f$-matching from~$A$ into $B$.
\ma
\pr We will show that the relevant general matching condition holds from~$A$ into $B$. Let $U\subset A$. If $|U|\leq m$, then, by (\ref{tah1}), we have $|N(U)|\geq \Delta|U|\geq \sum_{i\in U}f(i)$.

If $|U|\geq m$, then, as there are no edges between $U$ and $B\setminus N(U)$, by (\ref{tah3}), we have that $|B\setminus N(U)|\leq m$. Therefore, $|N(U)|=|B|-|B\setminus N(U)|\geq |B|-m$.
Thus, if $|U|\geq m$ and $\sum_{i\in U}f(i)\leq |B|-m$, then $|N(U)|\geq \sum_{i\in U}f(i)$.
If $|U|\geq m$ and $\sum_{i\in U}f(i) > |B|-m$, then let $U'=B\setminus N(U)$, so that, as $|N(U)|\geq |B|-m$, $|U'|\leq m$. Hence, by (\ref{tah2}), we have $|N(U')|\geq |U'|$. Then, as $U$ and $N(U')$ are disjoint subsets of~$A$, $|A\setminus U|\geq |N(U')|\geq |U'|$, so that
\begin{equation}
|N(U)|=|B\setminus U'|=|B|-|U'|\geq |B|-|A\setminus U|.\label{pah2}
\end{equation}
As $f(i)\geq 1$, for each $i\in A$,
\begin{equation}
|A\setminus U|\leq \sum_{i\in A\setminus U}f(i)=|B|-\sum_{i\in U}f(i).\label{pah3}
\end{equation}
Therefore, by (\ref{pah2}) and (\ref{pah3}), we have that $|N(U)|\geq \sum_{i\in U}f(i)$.
Thus, by Theorem~\ref{matchingtheorem}, an $f$-matching exists from~$A$ into~$B$.
\oof


\subsection{Expansion from large set connectivity}
We can find small set expansion conditions in $m$-joined graphs, as follows.

\begin{prop}\label{neat} Let $m,d\in \N$.  Suppose~$G$ is an~$m$-joined graph, and $W\subset V(G)$ satisfies $|W|\geq (3d+4)m$. Then, there is some set $B\subset V(G)$, with $|B|\leq m$, so that, if $U\subset V(G)\setminus B$ and $|U|\leq 2m$, then $|N(U,W\setminus B)|\geq d|U|$.
\end{prop}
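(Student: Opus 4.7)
\bigskip

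\noindent\textbf{Proof plan for Proposition~\ref{neat}.}
The plan is to construct~$B$ greedily by repeatedly absorbing offending sets, and to use the $m$-joined property of~$G$ to show that the size of~$B$ cannot cross a forbidden interval, so in fact it stays below~$m$ throughout. More precisely, I would start with $B_0 = \emptyset$ and, at each step $i \geq 0$, if there exists $U_i \subset V(G) \setminus B_i$ with $1 \leq |U_i| \leq 2m$ and $|N(U_i, W \setminus B_i)| < d|U_i|$, set $B_{i+1} = B_i \cup U_i$; otherwise stop and take $B = B_i$. By construction the terminal $B$ satisfies exactly the conclusion of the proposition, so the only task is to bound~$|B|$.

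The first step is a union-bound computation showing that at every stage $|N(B_i, W\setminus B_i)| < d|B_i|$. Indeed, any $w \in N(B_i, W\setminus B_i)$ is adjacent to some $v \in B_i$, and if $v \in U_j$ with $j \leq i$ minimal, then $w \in W\setminus B_i \subset W\setminus B_{j-1}$, so $w \in N(U_j, W\setminus B_{j-1})$. Summing the defining bad inequalities over $j$ gives the claim.

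The second step is the key structural observation from the $m$-joined hypothesis: \emph{if $|B_i| \geq m$, then in fact $|B_i| > 3m$}. To see this, note that $W \setminus (B_i \cup N(B_i))$ is disjoint from~$B_i$ and has no edges to~$B_i$, so by the $m$-joined property its size is less than~$m$ whenever $|B_i|\geq m$. Since $N(B_i) \cap B_i = \emptyset$, this rearranges to $|W| - |W\cap B_i| - |N(B_i, W\setminus B_i)| < m$, and plugging in $|W\cap B_i| \leq |B_i|$ together with the expansion bound $|N(B_i, W\setminus B_i)| < d|B_i|$ yields $(d+1)|B_i| > |W| - m \geq (3d+3)m$, i.e.\ $|B_i| > 3m$.

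The final step is to combine these with the fact that each step enlarges~$B_i$ by at most $2m$ vertices. An easy induction shows $|B_i| < m$ throughout: if $|B_i| < m$ then $|B_{i+1}| \leq |B_i| + 2m \leq 3m-1$, which by the forbidden-interval claim forces $|B_{i+1}| < m$. Since $|B_i|$ strictly increases at each non-terminal step, the process must terminate, and the resulting set satisfies $|B| \leq m$ together with the required expansion. The only step that requires genuine thought is the forbidden-interval argument in the second paragraph; once one sees that a single greedy step cannot produce a jump of more than $2m$, the choice of the constant $(3d+4)m$ in the hypothesis on $|W|$ fits exactly so that $(3d+3)m/(d+1) = 3m$ gives a clean gap between $m$ and $3m$, which is the only delicate part of the bookkeeping.
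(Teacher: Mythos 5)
Your proposal is correct and rests on the same core observation as the paper: a set $B$ with expansion defect (meaning $|N(B,W\setminus B)|<d|B|$, or equivalently $|N(B,W)|<d|B|$) has size either below $m$ or above $3m$, since $m$-joinedness plus $|W|\geq(3d+4)m$ forbids the intermediate range. The paper reaches this slightly more compactly by fixing a maximal $B$ with $|B|\leq m$ and noting that a bad $U$ would make $B\cup U$ another bad set whose size lands in the forbidden interval, whereas you build $B$ greedily and track the accumulated defect across stages (with a harmless off-by-one in the $B_{j-1}$ versus $B_j$ subscripts); both are the same argument in different dressing.
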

\pr Let $B\subset V(G)$ be a maximal subset subject to $|B|\leq m$ and $|N(B,W)|< d|B|$. We will show that $B$ satisfies the conditions of the proposition. Let $U\subset V(G)\setminus B$ satisfy $0<|U|\leq 2m$, and suppose that $|N(U,W\setminus B)|<d|U|$. Then $|N(B\cup U,W)|< d(|B|+|U|)=d|B\cup U|$, and $|B\cup U|>|B|$. Therefore, by the choice of $B$, $|B\cup U|\geq m$. There are no edges between $B\cup U$ and $W\setminus (B\cup U\cup N(B\cup U))$, so, as $|B\cup U|\geq m$, $|W\setminus (B\cup U\cup N(B\cup U))|< m$. Therefore, as $|B\cup U|\leq 3m$,
\[
|N(B\cup U,W)|\geq |W|-|B\cup U|-m\geq (3d+4)m-3m-m\geq d|B\cup U|,
\]
a contradiction. Therefore, if $U\subset V(G)\setminus B$ and $|U|\leq 2m$, then $|N(U,W\setminus B)|\geq d|U|$, as required.
\oof

We will also use the following bipartite version of Proposition~\ref{neat}.

\begin{prop}\label{neat2} Let $m,d\in \N$. Suppose $H$ is a bipartite graph with vertex classes $X$ and $Y$, in which any two disjoint subsets with size~$m$ from $X$ and $Y$ respectively have some edge between them. Suppose $X_0\subset X$ and $Y_0\subset Y$ satisfy $|X_0|,|Y_0|\geq (3d+4)m$. Then, there is some set $B\subset V(H)$, with $|B|\leq 2m$, so that if $U\subset V(H)\setminus B$ and $|U|\leq m$, then $|N(U,(X_0\cup Y_0)\setminus B)|\geq d|U|$.
\end{prop}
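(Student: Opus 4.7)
The plan is to mimic the proof of Proposition~\ref{neat}, adapted to the bipartite setting. The key observation is that the bipartite $m$-joined hypothesis still lets us bound complements of neighbourhoods of any set of size at least $m$ in $X$ (or $Y$) by $m$, which is the only feature of $m$-joinedness that was actually used there.

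First I would take $B\subset V(H)$ of maximum cardinality subject to $|B|\le 2m$ and $|N(B, X_0\cup Y_0)|<d|B|$, taking $B=\emptyset$ if no nonempty set satisfies this (in which case the conclusion follows directly: any bad $U$ of size at most $m\le 2m$ would contradict this maximum being empty). Suppose for contradiction some $U\subset V(H)\setminus B$ with $|U|\le m$ satisfies $|N(U,(X_0\cup Y_0)\setminus B)|<d|U|$. A routine consequence of the external-neighbourhood convention and the disjointness of $B$ and $U$ is the inclusion
\[
N(B\cup U, X_0\cup Y_0)\subset N(B, X_0\cup Y_0)\cup N(U,(X_0\cup Y_0)\setminus B),
\]
which gives $|N(B\cup U, X_0\cup Y_0)|<d|B|+d|U|=d|B\cup U|$, since $B$ and $U$ are disjoint.

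If $|B\cup U|\le 2m$, then $B\cup U$ is a strictly larger witness to the defining property, contradicting the choice of $B$. Otherwise $2m<|B\cup U|\le 3m$, and at least one of $(B\cup U)\cap X$ and $(B\cup U)\cap Y$ has size at least $m$; WLOG $|(B\cup U)\cap X|\ge m$. Applying the bipartite $m$-joined hypothesis to $(B\cup U)\cap X$ and any $m$-subset of $Y$ disjoint from $N((B\cup U)\cap X)$ forces $|Y\setminus N((B\cup U)\cap X)|<m$, so
\[
|N((B\cup U)\cap X, Y_0)|>|Y_0|-m\ge (3d+3)m.
\]
Since $H$ is bipartite, neighbours of $B\cup U$ in $Y_0$ come only from $(B\cup U)\cap X$, and the external neighbourhood only subtracts $(B\cup U)\cap Y$ from these, so
\[
|N(B\cup U, X_0\cup Y_0)|\ge (3d+3)m-|(B\cup U)\cap Y|\ge 3dm\ge d|B\cup U|,
\]
again a contradiction.

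The only real obstacle is keeping careful track of the external-neighbourhood notation through the bipartite split (in particular, verifying the inclusion used to obtain $|N(B\cup U,X_0\cup Y_0)|<d|B\cup U|$, and checking that after the split only the $(B\cup U)\cap Y$ piece gets subtracted). Once that bookkeeping is done, the expansion argument is essentially that of Proposition~\ref{neat}, with $|B|$ doubled to $2m$ and $|U|$ reduced to $m$ to accommodate both sides.
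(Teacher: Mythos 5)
Your proof is correct and follows essentially the same route as the paper's: pick a maximal $B$ with $|B|\le 2m$ and $|N(B,X_0\cup Y_0)|<d|B|$, show a bad $U$ would force $|N(B\cup U, X_0\cup Y_0)|<d|B\cup U|$ with $|B\cup U|>2m$, split on which side $(B\cup U)$ hits in size $\ge m$, and use the bipartite $m$-joined condition to derive a contradiction. You spell out a couple of details the paper leaves implicit (the case $B=\emptyset$, and the inclusion $N(B\cup U, X_0\cup Y_0)\subset N(B, X_0\cup Y_0)\cup N(U,(X_0\cup Y_0)\setminus B)$), but the argument is the same.
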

\pr Let $B\subset V(H)$ be a maximal subset subject to $|B|\leq 2m$ and $|N(B,X_0\cup Y_0)|< d|B|$. We will show that $B$ satisfies the conditions of the proposition. Let $U\subset V(H)\setminus B$ satisfy $0<|U|\leq m$, and suppose that $|N(U,(X_0\cup Y_0)\setminus B)|<d|U|$. Then, $|N(B\cup U,X_0\cup Y_0)|< d(|B|+|U|)=d|B\cup U|$, and $|B\cup U|>|B|$. Therefore, by the choice of $B$, $|B\cup U|\geq 2m$. Then, either $|(B\cup U)\cap X|\geq m$ or $|(B\cup U)\cap Y|\geq m$. If $|(B\cup U)\cap X|\geq m$, then, as there are no edges between $(B\cup U)\cap X$ and $Y_0\setminus N((B\cup U)\cap X)$, we must have that $|Y_0\setminus N((B\cup U)\cap X)|\leq m$. Therefore, as $|B\cup U|\leq 3m$,
\begin{align*}
|N(B\cup U,X_0\cup Y_0)|&\geq |N((B\cup U)\cap X,Y_0)|-|(B\cup U)\cap Y_0|\geq |Y_0|-m-|B\cup U|\\
&\geq (3d+4)m-m-3m\geq d|B\cup U|,
\end{align*}
a contradiction. A similar contradiction follows if $|(B\cup U)\cap Y|\geq m$. Therefore, if $U\subset V(H)\setminus B$ and $|U|\leq m$, then $|N(U,(X_0\cup Y_0)\setminus B)|\geq d|U|$.
\oof

A further bipartite version of Proposition~\ref{neat} follows, simplified to give the specific case that we will use.

\begin{prop}\label{neat3} Let $m,d\in \N$. Suppose $H$ is a bipartite graph with vertex classes $X$ and $Y$, in which any two disjoint subsets with size~$m$ from $X$ and $Y$ respectively have some edge between them. If $|Y|\geq (3d+4)m$, then there is some set $B\subset X$, with $|B|\leq m$, so that, for any subset $U\subset X\setminus B$ with $|U|\leq 2m$, we have $|N(U)|\geq d|U|$.
\end{prop}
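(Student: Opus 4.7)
The plan is to mirror the proof of Proposition~\ref{neat} almost verbatim, with the bipartite edge condition playing the role of the $m$-joined assumption. Specifically, we will take $B$ to be a subset of $X$ that is maximal subject to $|B|\leq m$ and $|N(B)|<d|B|$ (with $B=\emptyset$ if no nonempty subset of $X$ satisfies these two constraints). Note that $N(B)\subset Y$ in the bipartite setting, so the condition $|N(B)|<d|B|$ is the natural analogue of the bound appearing in Proposition~\ref{neat}. We then verify that this $B$ works.

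For the verification, suppose for a contradiction that some $U\subset X\setminus B$ has $1\leq|U|\leq 2m$ and $|N(U)|<d|U|$. Then, since $B$ and $U$ are disjoint,
\[
|N(B\cup U)|\leq |N(B)|+|N(U)|<d|B|+d|U|=d|B\cup U|,
\]
and $|B\cup U|>|B|$. We now split into the two cases that appear in the proof of Proposition~\ref{neat}. If $|B\cup U|\leq m$, then $B\cup U$ is a subset of $X$ with $|B\cup U|\leq m$, $|N(B\cup U)|<d|B\cup U|$, and $|B\cup U|>|B|$, contradicting the maximality of~$B$.

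If instead $|B\cup U|\geq m$, we invoke the bipartite edge hypothesis: since there are no edges in $H$ between $B\cup U\subset X$ and $Y\setminus N(B\cup U)$, and $|B\cup U|\geq m$, we must have $|Y\setminus N(B\cup U)|<m$. Combined with $|Y|\geq(3d+4)m$, this gives
\[
|N(B\cup U)|\geq |Y|-m\geq (3d+3)m\geq 3dm\geq d|B\cup U|,
\]
using $|B\cup U|\leq|B|+|U|\leq m+2m=3m$ in the last step. This contradicts $|N(B\cup U)|<d|B\cup U|$, completing the proof.

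The main obstacle is essentially bookkeeping: one has to be sure the bipartite condition is strong enough to play the same role as $m$-joinedness, which it is precisely because $B\cup U\subset X$ whenever $B\subset X$ and $U\subset X$, so the bipartite hypothesis applies with $B\cup U$ on the $X$-side and the non-neighbourhood on the $Y$-side. There is no substantive difficulty beyond this, which is why the argument reduces to the same two-case maximality contradiction as in Proposition~\ref{neat}.
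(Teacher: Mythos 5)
Your proof is correct, and it takes a genuinely different route from the paper's. The paper proves Proposition~\ref{neat3} by a reduction: it adds all edges within $X$ and all edges within $Y$ to make an auxiliary $m$-joined graph $G$, applies Proposition~\ref{neat} with $W=Y$ to get a set $B\subset V(G)$, and then translates the resulting expansion condition back to the bipartite graph $H$. You instead replay the argument underlying Proposition~\ref{neat} directly in $H$: take $B\subset X$ maximal with $|B|\leq m$ and $|N(B)|<d|B|$, and derive a contradiction in the two familiar cases $|B\cup U|\leq m$ (maximality) and $|B\cup U|\geq m$ (the bipartite edge hypothesis forces $|Y\setminus N(B\cup U)|<m$). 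Both arguments do essentially the same work, and your case analysis, constants, and use of $|B\cup U|\leq 3m$ all check out. One small thing your approach buys for free: since you build $B$ inside $X$ from the start, the conclusion $B\subset X$ of the proposition holds automatically; the paper's reduction produces $B\subset V(G)$, which implicitly needs to be intersected with $X$ (harmless, since $X\setminus B=X\setminus(B\cap X)$, but left unsaid). The paper's reduction is shorter as written and reuses Proposition~\ref{neat} as a black box; yours is more self-contained and avoids the auxiliary graph.
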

\pr Add edges between every pair of vertices from $X$ and every pair of vertices from $Y$, and call the resulting $m$-joined graph~$G$. As $|Y|\geq (3d+4)m$, by Proposition~\ref{neat}, there is a subset $B\subset V(G)$, with $|B|\leq m$, so that for every subset $U\subset V(G)\setminus B$ with $|U|\leq 2m$, we have $|N_G(U,Y\setminus B)|\geq d|U|$. Therefore, for every subset $U\subset X\setminus B$ with $|U|\leq 2m$, $|N_H(U,Y)|\geq |N_G(U,Y\setminus B)|\geq d|U|$, as required.
\oof

\subsection{Matchings and expansion in random graphs}
In a random graph, it is likely that we can find a general matching from a small set into a larger set, as follows.
\lem\label{logmatching} Let $n,d\in \N$ and $p>0$. Let $A,W\subset [n]$ be disjoint sets satisfying $p|W|\geq 80\log n$, $1\leq d\leq p|W|/4$ and $|A|\leq |W|/2d$. Then, with probability $1-o(n^{-2})$, there is a~$d$-matching from~$A$ into $W$ in the random graph $\GG(n,p)$.
\ma
\pr Let $l=|A|$ and label the vertices in~$A$ as $a_1,\ldots,a_l$. Let $W_0=\emptyset$. Reveal the edges between each vertex $a_i$ and the set $W$ in $\GG(n,p)$ in turn. At each reveal, if it is possible to find~$d$ neighbours of $a_i$ in $W\setminus W_{i-1}$, then do so, adding them to $W_{i-1}$ to get $W_i$. For each $i\in[l]$, we have $d(i-1)\leq |W|/2$. Therefore, by Lemma~\ref{chernoff}, for each $i\in [l]$, the probability we will not find~$d$ neighbours in $W\setminus W_{i-1}$ for the vertex $a_i$ is
\[
\P(\mathrm{Bin}(|W\setminus W_{i-1}|,p)< d)\leq \P(\mathrm{Bin}(|W|/2,p)\leq p|W|/4)\leq 2\exp(-p|W|/24)=o(n^{-3}).
\]
Therefore, the probability that this process fails to find a~$d$-matching from~$A$ into $W$ is $o(n^{-2})$.
\oof

Expansion properties are likely to exist in random graphs, as shown by the following lemma.

\lem\label{generalexpand} Let $n\in \N$, $p>0$ and $W\subset [n]$ satisfy $p|W|\geq 200\log n$. Let~$d$ satisfy $1\leq d\leq p|W|/240\log(np)$. Then, taking $G=\GG(n,p)$, with probability $1-o(n^{-2})$, any subset $U\subset V(G)$ with $|U|\leq |W|/4d$ satisfies $|N(U,W)|\geq d|U|$.
\ma
\pr Let $m=15\log(np)/ p$, so that $d\leq |W|/16m$. Reveal edges with probability $p/2$ among the vertex set $[n]$ to get the graph $G_1$. By Proposition~\ref{generalprops}, with probability $1-o(n^{-2})$, $G_1$ is~$m$-joined. Therefore, as $|W|\geq 16dm\geq (3(4d)+4)m$, there is, by Proposition~\ref{neat}, a subset $B\subset [n]$, with $|B|\leq m$, so that, if $U\subset [n]\setminus B$ and $|U|\leq m$, then $|N(U,W)|\geq 4d|U|$.

Reveal more edges with probability $p/2$ between $B$ and $W\setminus B$ in the graph $G_1$ to complete the random graph~$G$, in which each edge has been revealed with probability at most $p$. Note that
\begin{equation}\label{randomname}
|W\setminus B|=|W|-|B|\geq 16dm-m\geq 15dm,
\end{equation}
so that $|B|\leq |W|/15d$ and thus $(p/2)\cdot |W\setminus B|\geq 80\log n$. Using (\ref{randomname}), we have that
\[
(p/2)\cdot |W\setminus B|/4\geq 15dmp/8\geq 20d\log(np)\geq 4d.
\]
Therefore, by Lemma~\ref{logmatching}, with probability $1-o(n^{-2})$, there is a $4d$-matching from $B$ into $W\setminus B$. Given $U\subset V(G)$ with $|U|\leq m$, and noting either $|U\cap B|\geq |U|/2$ or $|U\setminus B|\geq |U|/2$, we have
\[
|N(U,W)|\geq \max\{|N(U\cap B,W)|,|N(U\setminus B,W)|\}-|U|\geq 4d\cdot |U|/2-|U|\geq d|U|.
\]

If $U\subset V(G)$ with $m\leq |U|\leq |W|/4d$, then, as $G_1$, and hence $G$, is~$m$-joined,
\[
|N(U,W)|\geq |W|-m-|U|\geq |W|/2\geq d|U|.\hfill\qedhere
\]
\oof

We will often use Lemma~\ref{generalexpand} through the following corollary.
\begin{corollary}\label{generalexpandcor} Let $n\in \N$, $p>0$ and $W\subset [n]$ satisfy $p|W|\geq 200\log n$. Let~$d$ satisfy $1\leq d\leq p|W|/240\log(np)$ and let $m\leq |W|/8d$. Then, taking $G=\GG(n,p)$, with probability $1-o(n^{-2})$, given any subset $X\subset V(G)\setminus W$, the subgraph $I(X)$ is $(d,m)$-extendable in $G[X\cup W]$.
\end{corollary}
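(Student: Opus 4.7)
The plan is to show this is an almost immediate consequence of Lemma~\ref{generalexpand} combined with the criterion for extendability given by Proposition~\ref{uttriv}. The key point is that the $(d,m)$-extendability of $I(X)$ is a purely neighbourhood-expansion condition, so it will be deduced uniformly in $X$ from a single expansion property of $G$ that is quantified only over subsets $U$ of $V(G)$ of size at most $2m$.

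First, I would verify the hypotheses of Lemma~\ref{generalexpand}. The conditions $p|W|\geq 200\log n$ and $1\leq d\leq p|W|/240\log(np)$ are given. So Lemma~\ref{generalexpand} applies and yields, with probability $1-o(n^{-2})$, that every $U\subset V(G)$ with $|U|\leq |W|/4d$ satisfies $|N_G(U,W)|\geq d|U|$. Since $m\leq |W|/8d$, any set $U$ with $|U|\leq 2m$ automatically satisfies $|U|\leq |W|/4d$, so the expansion bound $|N_G(U,W)|\geq d|U|$ holds for every such $U$.

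Now fix any $X\subset V(G)\setminus W$. The graph $I(X)$ has maximum degree $0\leq d$, so by Proposition~\ref{uttriv} it suffices to check that for every $U\subset V(G[X\cup W])$ with $|U|\leq 2m$ one has
\[
\bigl|N_{G[X\cup W]}\bigl(U,\,(X\cup W)\setminus V(I(X))\bigr)\bigr|\geq d|U|.
\]
Since $V(I(X))=X$ and $W$ is disjoint from $X$, the right-hand neighbourhood target is simply $W$, and neighbours from $U\subset X\cup W$ into $W$ in the induced subgraph $G[X\cup W]$ agree with neighbours in $G$. Thus the required inequality reads $|N_G(U,W)|\geq d|U|$, which is exactly what we just obtained. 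Applying Proposition~\ref{uttriv} yields the $(d,m)$-extendability of $I(X)$ in $G[X\cup W]$.

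There is no real obstacle here: the event produced by Lemma~\ref{generalexpand} depends only on $G$ and $W$, not on $X$, so the statement holds simultaneously for every admissible $X$ on the same $1-o(n^{-2})$-probability event. The only thing to be careful about is the arithmetic $2m\leq |W|/4d$, which follows from the hypothesis $m\leq |W|/8d$, and the observation that restricting to the induced subgraph $G[X\cup W]$ does not change the neighbourhoods from $U$ into $W$.
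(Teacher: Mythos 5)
Your proof is correct and takes essentially the same route as the paper: apply Lemma~\ref{generalexpand} to get the expansion of sets of size at most $|W|/4d\geq 2m$ into $W$, then conclude via Proposition~\ref{uttriv} for any fixed $X$, noting the event depends only on $G$ and $W$. This matches the paper's argument.
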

\pr By Lemma~\ref{generalexpand}, with probability $1-o(n^{-2})$, any subset $U\subset V(G)$ with $|U|\leq |W|/4d$ satisfies $|N(U,W)|\geq d|U|$. Given any subset $X\subset V(G)\setminus W$, let $H=G[X\cup W]$. For every subset $U\subset V(H)$ with $|U|\leq 2m$,
\[
|N_H(U)\setminus X|\geq |N_H(U,W)|\geq d|U|.
\]
Therefore, by Proposition~\ref{uttriv}, $I(X)$ is $(d,m)$-extendable in $H$.
\oof

The following simple lemma shows that a matching is likely to exist between two equal sized large sets in a random graph.

\lem\label{matchingthres} Let $H$ be a random bipartite graph with classes~$A$ and $B$, with $|A|=|B|=n$, which has edges between~$A$ and $B$ present independently with probability $p\geq 250\log n/ n$. Then, with probability $1-o(n^{-2})$, there is a matching between~$A$ and $B$.
\ma
\pr We will show that, with probability $1-o(n^{-2})$, the conditions required for Lemma~\ref{matchings} hold for a matching between~$A$ and $B$.

By considering $H$ as a subgraph of $G(2n,p)$, and noting that, for sufficiently large~$n$, $p|B|\geq 200\log(2n)$, by Lemma~\ref{generalexpand}, with probability $1-o(n^{-2})$, any subset $U\subset A$ with $|U|\leq n/4$ satisfies $|N(U,B)|\geq |U|$. Similarly, with probability $1-o(n^{-2})$, any subset $U\subset B$ with $|U|\leq n/4$ satisfies $|N(U,A)|\geq |U|$.
By Proposition~\ref{generalprops}, with probability $1-o(n^{-2})$ any two disjoint subsets of~$A$ and $B$ respectively which each have at least $n/4$ vertices must have some edge between them in $H$.

Therefore, by Lemma~\ref{matchings} with $m=n/4$, with probability $1-o(n^{-2})$ there is a matching between~$A$ and $B$ in $H$.
\oof

As mentioned in Section~\ref{1boutline} (where we call them matchmaker sets), we will want to find a small subset in our random graph into which every sufficiently small vertex set expands, as follows.

\begin{prop}\label{goodset} Let $n\in \N$ and suppose $np\geq 10^4\log n$ and $m=10\log(np)/p$. Let $W\subset [n]$ satisfy $|W|\geq 3n/8$. With probability $1-o(n^{-2})$, in the random graph $G=\GG(n,p)$ there is some set $X\subset W$ with $|X|\leq 8m$ such that, if $U\subset V(G)\setminus X$ and $|U|\leq m$, then $|N(U,X)|\geq |U|$.
\end{prop}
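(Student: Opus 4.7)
I would construct $X$ as a union of a random subset of $W$ plus a small deterministic fix. By Proposition~\ref{generalprops}, with probability $1-o(n^{-2})$ the graph $G$ is $\lceil 5\log(np)/p\rceil$-joined, hence in particular $m/2$-joined; I condition on this event. Let $X_0\subset W$ be a uniformly random subset of size $7m$.

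I aim to verify $|N(U,X_0)|\geq|U|$ for every $U\subset V(G)\setminus X_0$ with $|U|\leq m$, splitting by $|U|$. When $|U|\geq m/2$, the $m/2$-joined property forces $|V(G)\setminus(U\cup N(U))|<m/2$, so $|N(U,X_0)|\geq|X_0|-m/2>6m>|U|$ deterministically. When $|U|$ is small enough that $p|U|\leq 1$, the variable $|N(U,X_0)|$, treated as a function of the joint randomness of $G$ and $X_0$, is binomial with mean $\mu_U\geq 35|U|\log(np)$, and a multiplicative Chernoff bound gives $\Pr(|N(U,X_0)|<|U|)\leq (np)^{-\Omega(|U|)}$; combined with a union bound over at most $n^{|U|}$ such sets, this yields $o(n^{-2})$ total failure probability in the regime where $\log(np)$ is a sufficiently large constant multiple of $\log n$. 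For the remaining intermediate range of $|U|$, I would use the binomial concentration of $|W\setminus N(U,W)|$, whose mean $|W|(1-p)^{|U|}$ becomes $\ll m$ once $p|U|$ exceeds roughly $\log(np)$: this gives $|X_0\cap N(U,W)|\geq|X_0|-m=6m\gg|U|$ with very high probability, again closed by Chernoff and a union bound.

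Finally, set $X=X_0\cup B$, where $B=\{v\in V(G)\setminus X_0:|N(v,X_0)|=0\}$ is the set of undominated vertices (which handles the $|U|=1$ case trivially). Its expected size satisfies $\E|B|\leq n(1-p)^{7m}\leq n/(np)^{70}$, which is much smaller than $m$, and Chernoff concentration on the conditionally-independent Bernoulli sum (once $X_0$ is fixed) gives $|B|\leq m$ with probability $1-o(n^{-2})$. Thus $|X|\leq 7m+m=8m$, and the required expansion property holds for every $U\subset V(G)\setminus X$ with $|U|\leq m$. The main obstacle I anticipate is covering the intermediate range of $|U|$ cleanly: the three methods (direct Chernoff, the $m/2$-joined bound, and near-full neighborhood concentration) each handle a different sub-range, and matching the thresholds requires care; getting the constant $8m$ rather than a larger multiple of $m$ may require splitting $G$ into two independent rounds of density $p/2$, using the first round for the structural properties and the second for the analysis of $X_0$ against edges independent of the construction.
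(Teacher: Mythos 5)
Your proposal runs into a genuine gap precisely where you flag the risk: the union bound over small sets $U$ does not close in the parameter regime of the statement. With $|X_0|=7m=70\log(np)/p$, a set $U$ of size $t$ (with $pt\lesssim 1$) has $\E|N(U,X_0)|\approx 70t\log(np)$, so Chernoff gives a per-set failure probability of roughly $(np)^{-\Theta(t)}$; summing over $\binom{n}{t}\leq n^t$ sets gives a total of $\exp(t\log n-\Theta(t)\log(np))$, which is $o(n^{-2})$ only when $\log(np)\gtrsim\log n$. But the proposition must cover $np$ as small as $10^4\log n$, where $\log(np)=\Theta(\log\log n)\ll\log n$, so the union bound fails for every $t$ from $2$ up to roughly $\log(np)/p$. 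The same problem recurs in your intermediate range: for $t\approx m/10$ there are $n^{m/10}$ sets, and the concentration of $|W\setminus N(U,W)|$ provides only an $\exp(-\Theta(m))$ failure per set, which again does not beat the $n^{m/10}$ factor. Your deterministic fix $B=\{v:\;N(v,X_0)=\emptyset\}$ patches $|U|=1$ but does nothing for $|U|\geq 2$ sets whose neighbourhoods collapse onto a few common vertices, so the burden falls entirely on the probabilistic estimates that don't close.

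The paper avoids every one of these union bounds by working structurally. It applies Lemma~\ref{generalexpand} (to get expansion of every small set into $A=W\setminus X_0$, itself proved through the deterministic ``maximal bad set'' Proposition~\ref{neat} plus one fresh $d$-matching, not a union bound over small sets) and Proposition~\ref{generalprops} (so $G$ is $m$-joined, which \emph{is} proved by a union bound, but only over pairs of \emph{large} sets, where the bound does close). Proposition~\ref{neat} then extracts, deterministically from $m$-joinedness, a bad set $B$ of size at most $m$ outside of which every $U$ of size at most $m$ expands into $X_0\setminus B$; the maximality argument replaces your per-set union bound entirely. Finally, because this $B$ need not lie in $W$, the paper finds a matching from $B\setminus A$ into $A$ (using the expansion from Lemma~\ref{generalexpand}) and replaces $B\setminus A$ by its matched image $D\subset W$, giving $X=X_0\cup D\cup(B\cap A)\subset W$ of size at most $8m$. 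The replacement of your probabilistic small-set union bound by the maximality argument in Proposition~\ref{neat}, together with the matching step to keep $X$ inside $W$, are the two ideas your proposal is missing.
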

\pr
Let $X_0\subset W$ satisfy $|X_0|=7m$ and $A=W\setminus X_0$. By Lemma~\ref{generalexpand}, with probability $1-o(n^{-2})$, any subset $U\subset V(G)$ with $|U|\leq m$ satisfies $|N(U,A)|\geq |U|$. By Proposition~\ref{generalprops}, with probability $1-o(n^{-2})$, the graph~$G$ is~$m$-joined.


By Proposition~\ref{neat}, there is a subset $B\subset V(G)$ with $|B|\leq m$ so that, if $U\subset V(G)\setminus B$ and $|U|\leq m$, then $|N(U, X_0\setminus B)|\geq |U|$.
Find a matching from $B\setminus A$ into $A$, using Theorem~\ref{matchingtheorem}, and let $D\subset A$ be the image of $B\setminus A$ under this matching, so that $|D|+|B\cap A|\leq m$.

Let $X=X_0\cup D\cup (B\cap A)$, so that $X\subset W$ and $|X|\leq 8m$. Given any subset $U\subset V(G)\setminus X$, we have $|N(U,X)|\geq |N(U\setminus B,X_0)|+|N(U\cap B,D)|\geq |U\setminus B|+|U\cap B|= |U|$, as required.
\oof

\subsection{Expansion from minimum degree conditions}
Our next lemma, Lemma~\ref{mindegexp3}, can be used to turn minimum degree conditions in a random graph into an expansion property. For certain values of~$m$,~$d$ and $D$ it implies that, in the random graph $G=\GG(n,p)$, if each vertex in a set~$A$ has at least $D$ neighbours in the set $B\subset V(G)\setminus A$, and $|A|\leq m$, then $|N(A,B)|\geq d|A|$.
The proof of the lemma follows a section of the proof by Alon, Krivelevich and Sudakov of Lemma 3.1 in \cite{AKS07}.

\lem \label{mindegexp3} Suppose $p$ satisfies $\log^{3}n/ n\geq p\geq 10^8/n$, and let $d=np/10^6\log(np)$, $m=160\log(np)/p$ and $D=np/100$. Then, with probability $1-o(n^{-2})$, $G=\GG(n,p)$ has no two disjoint sets~$A$ and $B$ with $0< |A|\leq m$, $|B|\leq d|A|$ and $e_G(A,B)\geq D|A|$.
\ma
\pr
If~$G$ does not have  this property then there exist two disjoint sets $A,B\subset V(G)$, where $0<|A|\leq m$, $|B|=d|A|$ and $e_{G}(A,B)\geq D|A|$ (adding vertices to $B$ if necessary). Let~$p_t$ be the probability two such sets occur with $|A|=t\leq m$. Then, we have
\begin{align}\allowdisplaybreaks
 p_t &\leq \binom{n}{t}\binom{n}{dt}\binom{dt^2}{Dt}p^{Dt} \nonumber \\
 &\leq\left(\frac{en}{t}\left(\frac{en}{dt}\right)^{d}\left(\frac{edtp}{D}\right)^{D}\right)^t \nonumber
\\ &\leq\left(\left(\frac{n}{t}\right)^{2d}\left(\frac{edtp}{D}\right)^{D}\right)^t \nonumber
\\ &=\left(\left(\frac{ednp}{D}\right)^{2d}\left(\frac{edtp}{D}\right)^{D-2d}\right)^t.\label{lastone}
\end{align}
If $t<10^4\log n$, then $tp\leq 10^4\log^{4}n/ n$, and hence, as $D\geq 10^4d$, by (\ref{lastone}), we have
\begin{align}
p_t&\leq \left(\left(np\right)^{2d}\left(\frac{e\log^4n}{n}\right)^{(10^{4}-2)d}\right)^t \nonumber \\
&\leq \left(\log^6 n\left(\frac{e\log^4n}{n}\right)^{10^{3}}\right)^{dt} \nonumber \\
&= o(n^{-3}).\label{lastone2}
\end{align}
If $t\geq 10^4\log n$, then, as $dtp\leq dmp=16D/10^3\leq D/20e$, by (\ref{lastone}) we have
\begin{align}\allowdisplaybreaks
 p_t &\leq\left(\left(np\right)^{2d}\left(\frac{1}{20}\right)^{np/200}\right)^t \nonumber \\
&= \left(\left(np\right)^{2np/10^6\log(np)}\left(\frac{1}{20}\right)^{np/200}\right)^t \nonumber \\
&\leq \left(e^{2np/10^6}\left(\frac{1}{20}\right)^{np/200}\right)^t \nonumber \\
&\leq \left(\frac{1}{20}\right)^{npt/400}\leq \left(\frac{1}{20}\right)^{2\log n}=o(n^{-3}).\label{lastone3}
\end{align}
Therefore, by (\ref{lastone2}) and (\ref{lastone3}), the probability such a pair of sets~$A$ and $B$ exists is at most $\sum_{t=1}^mp_t=o(n^{-2})$.
\oof

\subsection{Expansion from random matchings}\label{secfinalmatch}
We will use auxilliary graphs with certain expansion properties, yet small maximum degree. We can find such graphs by using random matchings, as follows.
\lem\label{randommatchingsexpand} Let $H$ be a random bipartite graph with classes $X$ and $Y$, $|X|=|Y|=n$, formed by taking the union of 25 independent random matchings between $X$ and $Y$. With probability $1-o(n^{-2})$, the following properties hold.
\begin{itemize}\refstepcounter{equation}\label{duh1}
\item If $A\subset X$ and $|A|\leq n/4$, then $|N(A)|\geq 2|A|$.\hfill(\arabic{equation})\refstepcounter{equation}\label{duh2}
\item If $B\subset Y$ and $|B|\leq n/4$, then $|N(B)|\geq 2|B|$.\hfill(\arabic{equation})\refstepcounter{equation}\label{duh3}
\item If $A\subset X$, $B\subset Y$ and $|A|,|B|\geq n/5$, then there is some edge between~$A$ and $B$.\hfill(\arabic{equation})
\end{itemize}
\ma
\ifdraft
\else
\pr
Given two sets $A\subset X$ and $B\subset Y$, and a random matching $M$ between $X$ and $Y$, the probability that $N_H(A)\subset B$ is
\begin{equation}\label{binoms}
\binom{|B|}{|A|}\binom{n}{|A|}^{-1}=\frac{|B|(|B|-1)\cdots(|B|-|A|+1)}{n(n-1)\cdots(n-|A|+1)}\leq \left(\frac{|B|}{n}\right)^{|A|}.
\end{equation}
Thus, for each $t\leq n/4$, the probability, $p_t$ say, that there is some set $A\subset X$ with $|A|=t$ and $|N_H(A)|<2t$ satisfies
\begin{equation}\label{EQNCCC}
p_t\leq \binom{n}{t}\binom{n}{2t}\left(\frac{2t}{n}\right)^{25t}
\leq \left(\frac{e^3}{4}\left(\frac nt\right)^{3}\left(\frac{2t}{n}\right)^{25}\right)^t
= \left(2e^3\left(\frac{2t}{n}\right)^{22}\right)^t.
\end{equation}
If $t\leq n^{1/2}$, then, by (\ref{EQNCCC}), $p_t\leq n^{-4}$. If $n^{1/2}\leq t\leq n/4$, then, for sufficiently large~$n$, by (\ref{EQNCCC}), $p_t\leq (2e^3(1/2)^{22})^t\leq (1/2)^t\leq n^{-4}$. Therefore, the probability that there is some set $A\subset X$ with $|A|\leq n/4$ and $|N_H(A)|<2|A|$ is at most $\sum_{t\leq n/4}p_t=o(n^{-2})$. A similar calculation shows that, with probability $1-o(n^{-2})$, $|N_H(B)|\geq 2|B|$ for every $B\subset Y$ with $|B|\leq n/4$.

Note that, if, for two sets $A\subset X$ and $B\subset Y$, we have $e(A,B)=0$, then $N_H(A)\subset Y\setminus B$.
The probability there are no two sets $A\subset X$ and $B\subset Y$, with $|A|=|B|=n/5$ and $e(A,B)=0$ is therefore, using (\ref{binoms}), at most
\[
\binom{n}{n/5}^2\left(\frac{4}{5}\right)^{25n/5}\leq (5e)^{2n/5}\left(\frac{4}{5}\right)^{5n}\leq \left(\frac{19}{20}\right)^{n/4}.
\]
Therefore, with probability $1-o(n^{-2})$, any two disjoint subsets $A\subset X$ and $B\subset Y$, each with size at least $n/5$, have some edge between them.
\oof
\fi

We will use Lemma~\ref{randommatchingsexpand} through the following lemma.
\lem\label{lotsrandmatch} Let $h\in \N$ with $3|h$, and let $m=2h/3$. Let $X$, $Y$ and $Z$ be disjoint vertex sets with $|X|=h$ and $|Y|=|Z|=m$. Let $X_1,X_2\subset X$ satisfy $|X_1|=|X_2|=m$ and $X=X_1\cup X_2$. Independently, take 25 random matchings between each of the pairs of sets $(X_1,Y)$, $(X_2,Y)$, $(X_1,Z)$, and $(X_2,Z)$, and let the graph $H$ be the union of these matchings. Then, with probablity $1-o(h^{-2})$, for every set $Z'\subset Z$ with $|Z'|=h/3$ there is a matching between $X$ and $Y\cup Z'$.
\ma
\pr By Lemma~\ref{randommatchingsexpand}, with probability $1-o(h^{-2})$, the edges in $H$ between each of the pairs $(X_1,Y)$, $(X_2,Y)$, $(X_1,Z)$, and $(X_2,Z)$ satisfy the corresponding properties to (\ref{duh1})-(\ref{duh3}). We will now show that~$H$ has the property we require.
Let $Z'\subset Z$ be any set with $|Z'|=h/3$. We will verify the relevant conditions in the graph between~$X$ and $Y\cup Z'$ to show a matching exists between these two sets using Lemma~\ref{matchings}.

Let $A\subset X$, and take $A'$ to be a subset of either $A\cap X_1$ or $A\cap X_2$, with $|A'|=\lceil |A|/2\rceil$. Let $B\subset Y\cup Z'$, and take $B'$ to be a subset of either $B\cap Y$ or $B\cap Z'$, with $|B'|=\lceil |B|/2\rceil$.

If $|A|\leq 4m/9$, then $|A'|\leq m/4$, so $|N_H(A,Y\cup Z')|\geq|N_H(A',Y)|\geq 2|A'|\geq |A|$. If $|B|\leq  4m/9$, then $|B'|\leq m/4$, so $|N_H(B,X)|\geq|N_H(B',X_1)|\geq 2|B'|\geq |B|$.
If $|A|\geq  4m/9$ and $|B|\geq 4m/9$, then $|A'|\geq m/5$ and $|B'|\geq m/5$, so $e_H(A,B)\geq e_H(A',B')>0$. Therefore, by Lemma~\ref{matchings} there is a matching between $X$ and $Y\cup Z'$.
\oof

\section{Covering vertex sets while embedding trees}\label{5embedparents}

In this section, we develop tools to embed a tree~$T$ into a graph~$G$ so that a chosen vertex subset~$X$ in the graph is covered by a chosen vertex subset~$Q$ in the tree, where $T$ has distinctly fewer vertices than~$G$. We will use a variation on a result by Glebov, Johannsen and Krivelevich~\cite[Theorem 5.1]{GJK14}, which we adapt to our particular circumstance while giving a new, more efficient, proof.

Before proving the main result of this section (Lemma~\ref{embedparentsfinal}), we will first give an approximate version (Lemma~\ref{coverlots}), where we can find a copy of a tree~$T$ in which the copy of~$Q$ \emph{mostly} covers~$X$. Our working subgraph is the graph $I(X)$ (the edgeless graph with vertex set $X$) along with a copy of the subtree of~$T$ that we are building. We start building our copy of~$T$, and extend the working subgraph by repeatedly adding a path with a copy of a vertex in~$Q$ at the end (in an order determined by Lemma~\ref{order}), all while maintaining a $(d,m)$-extendability property. If possible, we always let an uncovered vertex from~$X$ be the copy of the vertex from~$Q$.  Where this is not possible, we find a copy of the path with only one endvertex in our working graph, using Corollary~\ref{treebuild}. We then analyse this process and show that most of the vertices in~$X$ are covered.

In this analysis, however, we wish to remove some of the paths we added, while retaining extendability. A path with both endvertices in the working subgraph cannot be removed by Lemma~\ref{extendreverse} -- we may only remove leaves while remaining extendable, and thus need a small change to the above outline. Instead of adding a path $P$ covering a new vertex in $X$, we find an edge $e$ of $P$ so that we can add the two disjoint paths in $P-e$ to the subgraph and remain extendable.
This allows us to remove the path $P-e$ in our analysis, and while our working subgraph is not quite a copy of a subtree of~$T$, the missing edges do exist in the parent graph~$G$.

The statement, and proof, of Lemma~\ref{coverlots} is slightly more complicated again than suggested above. The copy of the tree~$T$ is found appropriately attached to a $(d,m)$-extendable subgraph of $G$, so that we may use this embedding iteratively to prove Lemma~\ref{embedparentsfinal}.

\lem\label{coverlots}
Let $k,d,m\in \N$ with $d\geq 20$. Let~$G$ be an $m$-joined graph and let the subgraph $S\subset G$ satisfy $\Delta(S)\leq d/4$. Suppose $X\subset V(G)\setminus V(S)$ is such that $S\cup I(X)$ is $(d,m)$-extendable in~$G$.

Let~$T$ be a tree, with $\Delta(T)\leq d/4$, which satisfies $|S|+|X|+|T|\leq |G|-10dm-2k$. Suppose $Q\subset V(T)$ is a $(4k+4)$-separated set in~$T$ which satisfies $|Q|\geq 3|X|$.

Let $t\in V(T)$ and $s\in V(S)$. Then, there is a copy, $T'$ say, of~$T$ in $G-(V(S)\setminus\{s\})$ so that~$t$ is copied to $s$, $S\cup I(X)\cup T'$ is $(d,m)$-extendable in~$G$, $|X\setminus V(T')|\leq 2m/(d-1)^k$, and all the vertices in $X\cap V(T')$ have a vertex from~$Q$ copied to them.
\ma
\pr Let $q=|Q|$. Using Lemma~\ref{order}, take subtrees $T_1,\ldots,T_q$ of~$T$ so that $t\in V(T_1)$ and, for each~$i$, $2\leq i\leq q$, $T_{i}$ is formed from $T_{i-1}$ by adding a bare path with length $l_i\geq 2k+2$ with endvertices $w_i\in V(T_{i-1})$ and $q_i\in Q$.

In the applications of Corollary~\ref{treebuild} below, our working subgraph will be the tree $S\cup I(X)$ combined with some copy of a subgraph of~$T$. Therefore the working subgraph will always have maximum degree at most $\Delta(T)+\Delta(S)\leq d/2$, and at each application the subgraph we are aiming to achieve will contain at most $|S|+|X|+|T|\leq |G|-10dm$ vertices. If the working subgraph is $(d,m)$-extendable, then the conditions to apply Corollary~\ref{treebuild} will therefore be satisfied, which will allow us to attach a further tree to our working subgraph while remaining $(d,m)$-extendable.

Using Corollary~\ref{treebuild}, then, find a subtree $S_1$ in $G-(X\cup (V(S)\setminus\{s\}))$ which is a copy of $T_1$ so that~$t$ is copied to $s$ and $S\cup I(X)\cup S_1$ is $(d,m)$-extendable in~$G$.

\stepcounter{capitalcounter}
\setcounter{capitalcounterbackup}{\value{capitalcounter}}

Let $I_1=\emptyset$ and $X_1=X$. We say we have a \emph{stage $i$ situation}, where $1\leq i\leq q$, if we have a subgraph $S_{i}$, sets $I_{i}\subset [q]$ and $X_{i}\subset X$, and edges $e_{j}\in E(S_{i})$, $j\in I_{i}$, where $S_{i}$ is a copy of~$T_{i}$ with~$t$ copied to $s$, $(S\cup I(X_{i})\cup S_{i})-(\cup_{j\in I_{i}}e_{j})$ is $(d,m)$-extendable, $X_{i}$ is disjoint from $S_{i}$ and $X\setminus X_{i}$ is a subset of the copy of~$Q$ in $S_{i}$. Note that we currently have a stage 1 situation (see Figure~\ref{spanpic1}).

For each $i$, $2\leq i\leq q$, carry out the following Step~{\bfseries \Alph{capitalcounterbackup}$_i$}, which takes a stage $i-1$ situation and produces a stage $i$ situation. Starting with our stage 1 situation, we will therefore reach a stage $q$ situation. This process is depicted in Figure~\ref{spanpic1}.

\begin{itemize}[label =\bfseries \Alph{capitalcounterbackup}$_i$ ]
\item Let $v_i$ be the copy of $w_i$ in $S_{i-1}$. Create a stage $i$ situation as follows.
\begin{enumerate}[label =(\roman{enumi})]
\item Suppose we can find a vertex $x_i\in X_{i-1}$, and a $v_i,x_i$-path $P_{i}$, with length $l_i$ and interior vertices in $V(G)\setminus (V(S)\cup X_{i-1}\cup V(S_{i-1}))$, and an edge $e_i\in E(P_i)$, so that $(S\cup I(X_{i-1})\cup S_{i-1})-(\cup_{j\in I_{i-1}}e_{j})+P_i-e_i$ is $(d,m)$-extendable. Then, let $S_i=S_{i-1}+P_i$, $I_{i}=I_{i-1}\cup\{i\}$ and $X_i=X_{i-1}\setminus \{x_{i}\}$, so that $S_i$ is a copy of $T_i$ in which $q_i$ has been copied to $x_i$. In this case, we call Step~{\bfseries \Alph{capitalcounterbackup}$_i$} a \emph{good} step.
\item If no such path exists, then use Corollary~\ref{treebuild} to attach a path $P_i$, with length $l_i$ and vertices in $(V(G)\setminus (V(S)\cup X_{i-1}\cup V(S_{i-1})))\cup\{v_i\}$ to $v_i$, so that $(S\cup I(X_{i-1})\cup S_{i-1})-(\cup_{j\in I_{i-1}}e_{j})+P_i$ is $(d,m)$-extendable. Let $S_i=S_{i-1}+P_i$, $X_i=X_{i-1}$, and $I_{i}=I_{i-1}$, so that $S_i$ is a copy of $T_i$. In this case, we call Step~{\bfseries \Alph{capitalcounterbackup}$_i$} a \emph{neutral} step.
\end{enumerate}
\end{itemize}

Having reached a stage $q$ situation, we will now show that the resulting tree $S_q$ must contain most of the vertices in $X$. The methods in this proof are depicted in Figure~\ref{spanpic2}.

\begin{figure}
\centering
    \begin{subfigure}[b]{0.5\textwidth}
        \centering
        \resizebox{\linewidth}{!}{
            \input{spanpic1A}
        }
\label{spanpicA}
    \end{subfigure}
\hspace{0.25cm}
\parbox[b]{.4\linewidth}{%
\subcaption{The initial starting stage 1 situation. The tree $S$ is represented as a triangle, and the subgraph $S\cup I(X)$ is $(d,m)$-extendable. To simplify the picture, we will assume that $S_1=I(\{s\})$.
\\
\textcolor{white}{.}}}

    \begin{subfigure}[b]{0.5\textwidth}
        \centering
        \resizebox{\linewidth}{!}{
            \input{spanpic1B}
        }
\label{spanpicB}
    \end{subfigure}
\hspace{0.25cm}
\parbox[b]{.4\linewidth}{%
\subcaption{After Step~{\bfseries \Alph{capitalcounterbackup}$_2$}. We have found the $v_2,x_2$-path~$P_2$ and the edge $e_2\in E(P_2)$, so that $S+P_2-e_2$ is $(d,m)$-extendable and $P_2$ embeds the vertex $q_2$ onto $x_2$. Step~{\bfseries \Alph{capitalcounterbackup}$_2$} is therefore a good step.\\
\textcolor{white}{.}}}

    \begin{subfigure}[b]{0.5\textwidth}
        \centering
        \resizebox{\linewidth}{!}{
            \input{spanpic1C}
        }
\label{spanpicC}
    \end{subfigure}
\hspace{0.25cm}
\parbox[b]{.4\linewidth}{%
\subcaption{After Step~{\bfseries \Alph{capitalcounterbackup}$_3$}. There was no path~$P_3$ and edge $e_3$ satisfying the conditions for Step~{\bfseries \Alph{capitalcounterbackup}$_3$} to be a good step. Therefore, we have found instead the path~$P_3$, so that $(S+P_2-e_2)+P_3$ is $(d,m)$-extendable. Step~{\bfseries \Alph{capitalcounterbackup}$_3$} is therefore a neutral step.\\
\textcolor{white}{.}}}

    \begin{subfigure}[b]{0.5\textwidth}
        \centering
        \resizebox{\linewidth}{!}{
            \input{spanpic1D}
        }
\label{spanpicD}
    \end{subfigure}
\hspace{0.25cm}
\parbox[b]{.4\linewidth}{%
\subcaption{After the final step, Step~{\bfseries \Alph{capitalcounterbackup}$_q$}, has been completed, where here $q=8$.
We have had in total four good steps (Steps~{\bfseries \Alph{capitalcounterbackup}$_2$},~{\bfseries \Alph{capitalcounterbackup}$_4$},~{\bfseries \Alph{capitalcounterbackup}$_6$} and~{\bfseries \Alph{capitalcounterbackup}$_7$}) and three neutral steps (Steps~{\bfseries \Alph{capitalcounterbackup}$_3$},~{\bfseries \Alph{capitalcounterbackup}$_5$} and~{\bfseries \Alph{capitalcounterbackup}$_8$}).\\
\textcolor{white}{.}}}

\caption{Developing a stage $q$ situation in (d) from a stage 1 situation in (a) by completing the Steps~{\bfseries \Alph{capitalcounterbackup}$_i$}, $2\leq i\leq q$, where here $q=8$.} \label{spanpic1}
\end{figure}

\begin{figure}
\centering
    \begin{subfigure}[b]{0.6\textwidth}
        \centering
        \resizebox{\linewidth}{!}{
            \input{spanpic2A}
        }
\label{spanpic2A}
    \end{subfigure}\parbox[b]{.4\linewidth}{%
\subcaption{The stage $q$ situation from Figure~\ref{spanpic1}, where $q=8$. The vertices in $X_q\subset X$ are grey, as are the vertices~$z_i$, $i\in I$. Corollary~\ref{extendableconnectcor} is then applied to the sets $\{z_i:i\in I\}$ and $X_q$, under the assumption that $X_q$ has at least a certain size, to reach the situation in (b).
\\
\textcolor{white}{.}}}

    \begin{subfigure}[b]{0.6\textwidth}
        \centering
        \resizebox{\linewidth}{!}{
            \input{spanpic2B}
        }
\label{spanpic2B}
    \end{subfigure}\parbox[b]{.4\linewidth}{%
\subcaption{Corollary~\ref{extendableconnectcor} has been used to find $j=5\in I$, $x\in X_q$, a $z_j,x$-path $R$ and an edge $e\in E(R)$ so that the working subgraph $(S\cup I(X_q)\cup S_q)-(\cup_{i\in I_{q}}e_{i})+R-e$ is $(d,m)$-extendable.  \\
\textcolor{white}{.}}}

    \begin{subfigure}[b]{0.6\textwidth}
        \centering
        \resizebox{\linewidth}{!}{
            \input{spanpic2C}
        }
\label{spanpic2C}
    \end{subfigure}\parbox[b]{.4\linewidth}{%
\subcaption{Lemma~\ref{extendreverse} has been used to remove $P_8$ from the working subgraph. The edge~$e_7$ is not part of the working subgraph, so that we may remove the vertices in $V(P_7)$ from the working subgraph by using Lemma~\ref{extendreverse} to remove the two disjoint paths comprising $P_7-e_7$.\\
\textcolor{white}{.}}}

    \begin{subfigure}[b]{0.6\textwidth}
        \centering
        \resizebox{\linewidth}{!}{
            \input{spanpic2D}
        }
\label{spanpic2D}
    \end{subfigure}\parbox[b]{.4\linewidth}{%
\subcaption{Lemma~\ref{extendreverse} has been used to remove all the vertices in $V(P_i)$, $i>j$, from the working subgraph, as well as the vertices on $P_j$ which are not in $P'_j$ (recalling $j=5$ here). As the depicted structure is $(d,m)$-extendable, the path $P_j'+R$ and the edge~$e$ demonstrates that Step~{\bfseries \Alph{capitalcounterbackup}$_j$} should have been a good step, giving a contradiction.\\
\textcolor{white}{.}}}

\caption{An illustration of the proof of Claim~\ref{claimy}.} \label{spanpic2}
\end{figure}

\stepcounter{capitalcounter}
\setcounter{claimcounter}{0}
\begin{claimalpha}\label{claimy}
We have $|X_q|< 2m/(d-1)^k$.
\end{claimalpha}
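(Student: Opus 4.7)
Suppose for contradiction that $|X_q|\geq 2m/(d-1)^k$. The plan is to find some neutral step index $j$ for which we can rebuild a path that would have made Step~{\bfseries \Alph{capitalcounterbackup}$_j$} a good step, contradicting its neutrality.

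\textbf{Setup.} For each neutral step $i\in\{2,\ldots,q\}\setminus I_q$, define $z_i$ to be the vertex on $P_i$ at distance exactly $l_i-2k-1$ from $v_i$; since $l_i\geq 2k+2$, $z_i$ is a well-defined interior vertex of $P_i$, distinct from $v_i$. The $z_i$ for distinct neutral $i$ are all distinct, since $z_i$ lies in the interior of $P_i$, which is disjoint from $V(S_{i-1})$ and hence from all previously added paths. Each $z_i$ has degree at most $\Delta(T)+\Delta(S)\leq d/2$ in the working subgraph $W:=(S\cup I(X_q)\cup S_q)-(\cup_{i\in I_q}e_i)$, and each $x\in X_q$ has degree $0$ in $W$.

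\textbf{Counting.} The number of neutral steps equals $(q-1)-|I_q|=(q-1)-(|X|-|X_q|)\geq 3|X|-1-|X|+|X_q|\geq |X_q|\geq 2m/(d-1)^k$ (the degenerate case $|X|=0$ being trivial). Let $A=\{z_i: i\in\{2,\ldots,q\}\setminus I_q\}$ and $B=X_q$; both sets have size at least $2m/(d-1)^k$, consist of vertices of degree at most $d/2$ in $W$, and lie in $V(W)$. Since $|V(W)|\leq |S|+|T|+|X|\leq |G|-10dm-2k$, Corollary~\ref{extendableconnectcor} applied with $l=k$ yields some $z_j\in A$ (so $j\notin I_q$ is a neutral step), some $x\in X_q$, a $z_j,x$-path $R$ of length $2k+1$ with interior disjoint from $V(W)$, and an edge $e\in E(R)$, such that $W+R-e$ is $(d,m)$-extendable.

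\textbf{Reconstruction.} Let $P_j'$ be the sub-path of $P_j$ from $v_j$ to $z_j$, so $P_j'$ has length $l_j-2k-1$; then $P_j'+R$ is a $v_j,x$-path of length exactly $l_j$ whose interior lies in $V(G)\setminus(V(S)\cup V(S_{j-1})\cup X_{j-1})$. Starting from the $(d,m)$-extendable subgraph $W+R-e$, repeatedly apply Lemma~\ref{extendreverse} to peel off leaves in the following order: first, for each $i\in\{j+1,\ldots,q\}$, remove the interior vertices of $P_i$ (and, if $i$ was neutral, the $q_i$-image), stopping on the $x_i$-side (when $i$ is good) as soon as $x_i$ becomes isolated; next, peel from the far end of $P_j$ back as far as (but not including) $z_j$. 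Each removal is a legitimate application of Lemma~\ref{extendreverse}, and the resulting subgraph is exactly
\[
(S\cup I(X_{j-1})\cup S_{j-1})-\bigl(\cup_{i\in I_{j-1}}e_i\bigr)+P_j'+R-e,
\]
which is therefore $(d,m)$-extendable. But this witnesses that the choice $x_j:=x$, $P_j^{\text{new}}:=P_j'+R$, $e_j^{\text{new}}:=e$ satisfies all the conditions required for Step~{\bfseries \Alph{capitalcounterbackup}$_j$} to be a good step, contradicting the fact that Step~{\bfseries \Alph{capitalcounterbackup}$_j$} was neutral. Hence $|X_q|<2m/(d-1)^k$.

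\textbf{Main obstacle.} The delicate point is the peeling argument: we must organise the removals so that each vertex is a leaf at the moment it is removed, and so that each $x_i$ for a previously-good step $i>j$ is left behind as an isolated vertex (rather than being removed itself), since it must re-appear in $I(X_{j-1})$. This is where using a missing edge $e_i$ in good steps (rather than adding the full path) is essential: it ensures that the $v_i$-side and $x_i$-side components of $P_i-e_i$ can each be peeled independently down to a single vertex.
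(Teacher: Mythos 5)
Your proposal is correct and takes essentially the same approach as the paper's proof: the same choice of vertices $z_i$ (at distance $l_i-2k-1$ from $v_i$), the same application of Corollary~\ref{extendableconnectcor} to the sets $\{z_i\}$ and $X_q$, and the same backwards peeling via Lemma~\ref{extendreverse} to reconstruct a path that would have made Step~$j$ good. Your elaboration in the final paragraph — that the peeling must be ordered from $q$ down to $j+1$ and that each $x_i$ for a good step $i>j$ must be left behind as an isolated vertex, which is exactly why the missing edges $e_i$ are needed — is a correct and slightly more explicit account of what the paper leaves implicit.
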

\pr[Proof of Claim~\ref{claimy}]
Suppose to the contrary that $|X_q|\geq 2m/(d-1)^k$, and let $I=\{2,\ldots,q\}\setminus I_q$, so that $|I|\geq q-1-|X| \geq 2|X|-1\geq |X_q|\geq 2m/(d-1)^k$. For each $i\in I$, let~$z_i$ be the vertex of~$P_i$ which is a distance $l_i-2k-1$ away from $v_i$. As $l_i-2k-1\geq 1$, for each $i\in I$, the vertices~$z_i$ are distinct, and there are thus at least $2m/(d-1)^k$ such vertices. Note that
\[
|(S\cup I(X_q)\cup S_q)-(\cup_{i\in I_{q}}e_{i})|\leq |S|+|X|+|T|\leq |G|-10dm-2k.
\]
Therefore, by Corollary~\ref{extendableconnectcor}, there is some index $j\in I$, and a vertex $x\in X_q$ for which there is a $z_j,x$-path $R$ in~$G$, with length $2k+1$ and interior vertices in $V(G)\setminus (V(S)\cup X \cup V(S_q))$, and an edge $e\in E(R)$ so that $(S\cup I(X_q)\cup S_q)-(\cup_{i\in I_{q}}e_{i})+R-e$ is $(d,m)$-extendable.

At each stage $i>j$ we added either the path $P_i$, or the two disjoint paths that comprise $P_i-e_i$, to some vertex, or  vertices, in $V(S)\cup X_{i-1}\cup V(S_{i-1})$. Therefore, by Lemma~\ref{extendreverse}, we can remove these paths from the subgraph $(S\cup I(X_q)\cup S_q)-(\cup_{i'\in I_{q}}e_{i'})+R-e$ one-by-one while maintaining $(d,m)$-extendability, working backwards through the Steps~{\bfseries \Alph{capitalcounterbackup}$_i$} with $i$ decreasing from $q$ to $j+1$. This demonstrates that $(S\cup I(X_j)\cup S_j)-(\cup_{i'\in I_{j}}e_{i'})+R-e$ is $(d,m)$-extendable.

Let $P_j'$ be the path containing exactly the initial $l_j-2k-1$ edges of $P_j$, starting from $v_j$. Using Lemma~\ref{extendreverse}, we can remove the rest of the path $P_j$, to show that $S\cup I(X_{j-1})\cup S_{j-1}-(\cup_{i\in I_{j-1}}e_{i})+P_j'+R-e$ is $(d,m)$-extendable. This contradicts the process above, as at Step~{\bfseries \Alph{capitalcounterbackup}$_j$} we should have found the $v_j,x$-path $P_j'+R$ and the edge $e$.
\oof

Therefore, we have the tree $S_q$, a copy of $T_q$ with $t$ copied to $s$, and the set $X_q\subset X$, so that the vertices of $X\setminus X_q$ are contained in the copy of~$Q$, $X_q$ is disjoint from $V(S_q)$, $S\cup I(X_q)\cup S_q$ is $(d,m)$-extendable, and, by Claim~\ref{claimy}, $|X_q|\leq 2m/(d-1)^k$. By Corollary~\ref{treebuild}, we can then extend~$S_q$ to a copy of~$T$ in $G-((V(S)\cup X_q)\setminus\{s\})$ to satisfy the lemma.
\oof

We will now divide our tree into a sequence of subtrees with decreasing size so that each subtree contains plenty of vertices from~$Q$. Taking increasingly well separated subsets of~$Q$ in these subtrees, we can use Lemma~\ref{coverlots} to embed each subtree to cover more and more of the set~$X$ until the whole set is covered.

\lem \label{embedparentsfinal} Let $\Delta\in \N$, and suppose that $n\in \N$ is sufficiently large. Suppose that $d,m\in \N$ satisfy $d\geq\log n/\log\log n$ and $m\leq n$. Let~$G$ be an $m$-joined graph containing the vertex set~$X$ and the vertex $v\in V(G)\setminus X$, so that $|X|\geq n/\log^2 n$ and $I(X\cup\{v\})$ is $(d,m)$-extendable in~$G$.

Let~$T$ be a tree, with $\Delta(T)\leq\Delta$ and $|T|\leq |G|-|X|-10dm-\log n$, which contains a vertex $t\in V(T)$ and a vertex set~$Q$ with $|Q|\geq 9|X|$, where~$Q$ is 16-separated in~$T$. Then,~$G$ contains a copy $S$ of~$T$, with~$t$ copied to $v$, so that $S$ is $(d,m)$-extendable and~$X$ is contained in the copy of~$Q$.
\ma

Note that in Lemma~\ref{embedparentsfinal} we do not require that $|G|=n$.
\pr Let $R_0=T$ and $t_0=t$. For each $i$, $1\leq i<\a:=2\log n/\log\log n$, using Proposition~\ref{divide}, divide~$R_{i-1}$ into two trees $T_i$ and $R_i$, so that $|Q\cap V(T_i)|,|Q\cap V(R_i)|\geq|Q\cap V(R_{i-1})|/3$ and $t_{i-1}\in V(T_{i})$, and let $t_i$ be the vertex shared by $T_i$ and $R_i$. Let $T_\a=R_{\a-1}$. Then, for each $i$, $|Q\cap V(T_i)|\geq |Q|/3^i$.

Let $Q_1=Q\cap V(T_1)$. For each $i$ with $2\leq i\leq \a$ we have $3^i\Delta^{2i+5}\leq 3^\a\Delta^{2\a+5}\leq n/3\log^2 n$, for sufficiently large~$n$. Therefore, as $|Q\cap V(T_i)|\geq n/3^i\log^2 n\geq 3\Delta^{2i+5}$, using Corollary~\ref{Qfinder2} with $k=2i+5$, we may let $Q_i$ be a $(4i+12)$-separated set of vertices in $Q\cap V(T_i)$ for which
\begin{equation}\label{Qbound}
|Q_i|\geq |Q|/3^i(16i+48)\Delta^{2i+5}.
\end{equation}

Note that $|Q_1|\geq |Q|/3\geq 3|X|$. As $Q$ is 16-separated, $Q_1$ is also 16-separated. Using Lemma~\ref{coverlots}, then, with $k=3$, find a copy $S_1$ of $T_1$ in which $t$ is copied to $v$, so that, letting $X_1$ be the set of vertices in~$X$ not covered by the copy of $Q\cap V(T_1)$ in $S_1$, we have that $X_1$ and $V(S_1)$ are disjoint, $S_1\cup I(X_1)$ is $(d,m)$-extendable, and $|X_1|\leq 2m/(d-1)^3$.

Now, for each $i$, $2\leq i\leq \a$, extend the working subgraph $S_{i-1}$ to get $S_i$, a copy of $\cup_{j=1}^iT_j$, so that as many of the remaining vertices in~$X$ as possible are covered by vertices in $Q_i$, and, letting~$X_i$ be the set of vertices in $X$ not in the copy of $Q\cap V(\cup_{j=1}^iT_j)$ in $S_i$,~$X_i$ is disjoint from $V(S_i)$ and $S_i\cup I(X_i)$ is $(d,m)$-extendable.

We will show, by induction, that, for each $i\in [\a]$, $|X_i|\leq 2m/(d-1)^{i+2}$. If this is true then, as $m\leq n$, for sufficiently large~$n$, $|X_\a|\leq 2m/(d-1)^\a<1$ and thus all of the vertices from~$X$ are covered by the copy of~$Q$ in $S_\a$, which is a copy of $\cup_iT_i=T$, as required. The induction statement is true for $i=1$, so suppose it is true for some $i$, $1\leq i<\a$. For sufficiently large~$n$, we have
\begin{equation}\label{nsufflarge}
n/100(3\Delta)^{9}\log^2 n\geq 6n/(d-1)^3\geq 6m/(d-1)^{3}.
\end{equation}
Then, by (\ref{Qbound}) and (\ref{nsufflarge}), as $|Q|\geq 9|X|\geq 9n/\log^2 n$, and by the induction statement for~$i$,
\[
|Q_{i+1}|\geq |Q|/(3^{i+1}(16i+64)\Delta^{2i+7})\geq n/100(3\Delta)^{2i+7}\log^2 n\geq 6m/(d-1)^{i+2}\geq 3|X_i|.
\]
Therefore, by Lemma~\ref{coverlots}, as $Q_{i+1}$ is $(4i+16)$-separated, $|X_{i+1}|\leq 2m/(d-1)^{i+3}$, as required.
\oof

\section{Case A}\label{7caseA}

We can now prove Case~A of Theorem~\ref{unithres}, where the trees have many leaves. We will remove leaves from such a tree, and find a copy of the remaining subtree. To complete the embedding, we need to attach the remaining uncovered vertices in the random graph onto the partial embedding, to replace the leaves we removed. We will attach these leaves using a matching, and to ensure its existence we embed the subtree so that the vertices that need leaves attached contain a particular set of vertices, which we will call a \emph{matchmaker set}.

\de We say that the set $X$ is a \emph{matchmaker set for $V$} in the graph~$G$ if, for every subset $U\subset V\setminus X$ with $|U|\leq |X|/8$, we have $|N(U,X)|\geq |U|$. 
\fn

We will reveal edges in our random graph with probability $\Theta(\log n/ n)$, and, as mentioned previously, take $m$ to be a small integer such that, almost surely, any two disjoint subsets in our random graph with size $m$ will have some edge between them (that is, the graph is $m$-joined). Using Proposition~\ref{generalprops}, we can take
\[
m=\Theta\left(\frac{n\log\log n}{\log n}\right).
\]
As defined in Section~\ref{casedivision}, trees in Case~A will have $\Omega(m)$ leaves that are 20-separated.

A matchmaker set $X$ for $V(G)$ in the random graph $\GG(n,\Theta(\log n/ n))$, can be found with $|X|=\Theta(m)$, using Proposition~\ref{goodset}. For Case A of Theorem~\ref{unithres}, we start by finding two disjoint matchmaker sets $X_1$ and $X_2$ for $V(G)$ with size $\Theta(m)$. We then take our tree $T\in\TT_A(n,\Delta)$, and divide it into two trees~$T_1$ and~$T_2$. The tree~$T_1$ will contain a 20-separated set of leaves, $L$, with size $\Theta(m)$. We remove the leaves in $L$ from~$T_1$ to get the tree $T_1'$. We then let $Q$ be the neighbours of the leaves in~$T_1$ which we removed, so that~$T_1$ is formed from $T_1'$ by adding a matching to $Q$. Next, we find a copy, $S'_1$, of~$T_1'$  in $G-X_2$ so that the image of $Q$ covers $X_1$ (using Lemma~\ref{embedparentsfinal}). We then use the extendability methods to attach an appropriate copy of~$T_2$, using more vertices in $V(G)\setminus X_2$. We will have $|L|-|X_2|=\Theta(m)$ spare vertices, so that, with well chosen constants, we can do this using Corollary~\ref{treebuild}. Finally, we use the properties of the matchmaker sets to find a matching from the image of $Q$, which contains $X_1$, into the set of uncovered vertices in the graph, which contains $X_2$, and thus complete the copy of~$T$.

\pr[Proof of Theorem~\ref{unithres} in Case A] 
\ifdraft
\else
We will reveal the edges within the vertex set $[n]$ in two rounds, each with probability $p=10^{12}\Delta\log n/2n$, and find certain properties in the resulting graph. In total therefore, each edge will have been revealed with probability at most $10^{12}\Delta\log n/ n$. Once the final edges are revealed and the graph~$G$ is fixed, we then take any tree $T\in\TT_A(n,\Delta)$ and embed it into~$G$. 

Let $m=10\log(np)/p$. For large $n$, $\log(np)=\log(10^{12}\Delta\log n/2)\leq 2\log\log n$, so that 
\[
m\leq n\log\log n/10^9\Delta\log n.
\]
Divide the vertex set $[n]$ into $V_1$ and $V_2$ as equally as possible. Reveal edges with probability $p$ within the vertex set $[n]$ to get the graph $G_1$. By Proposition~\ref{goodset}, there is, for each $i\in[2]$, a set $X_i\subset V_i$, with $|X_i|= 8m$ so that, if $U\subset [n]\setminus X_i$ and $|U|\leq m$, then $|N(U,X_i)|\geq |U|$. Thus, the sets~$X_1$ and $X_2$ are matchmaker sets for $V(G)$.

Let $d=n/100m$ and note that, as
\[
m=\Theta\left(\frac{n\log \log n}{\log n}\right),\;\;\;\text{ we have }\;\;\; d=\Theta\left(\frac{\log n}{\log\log n}\right).
\]
Reveal edges with probability $p$ to get the graph $G_2$ and let $G=G_1\cup G_2$. By Corollary~\ref{generalexpandcor}, applied with $X_1$ and $V(G)\setminus (X_1\cup X_2)$,
the subgraph $I(X_1)$ is almost surely $(d,m)$-extendable in $G-X_2$. By Proposition~\ref{generalprops},~$G$ is almost surely $m$-joined. This structure, and the following construction, is depicted in Figure~\ref{spanpic3}.

\begin{figure}
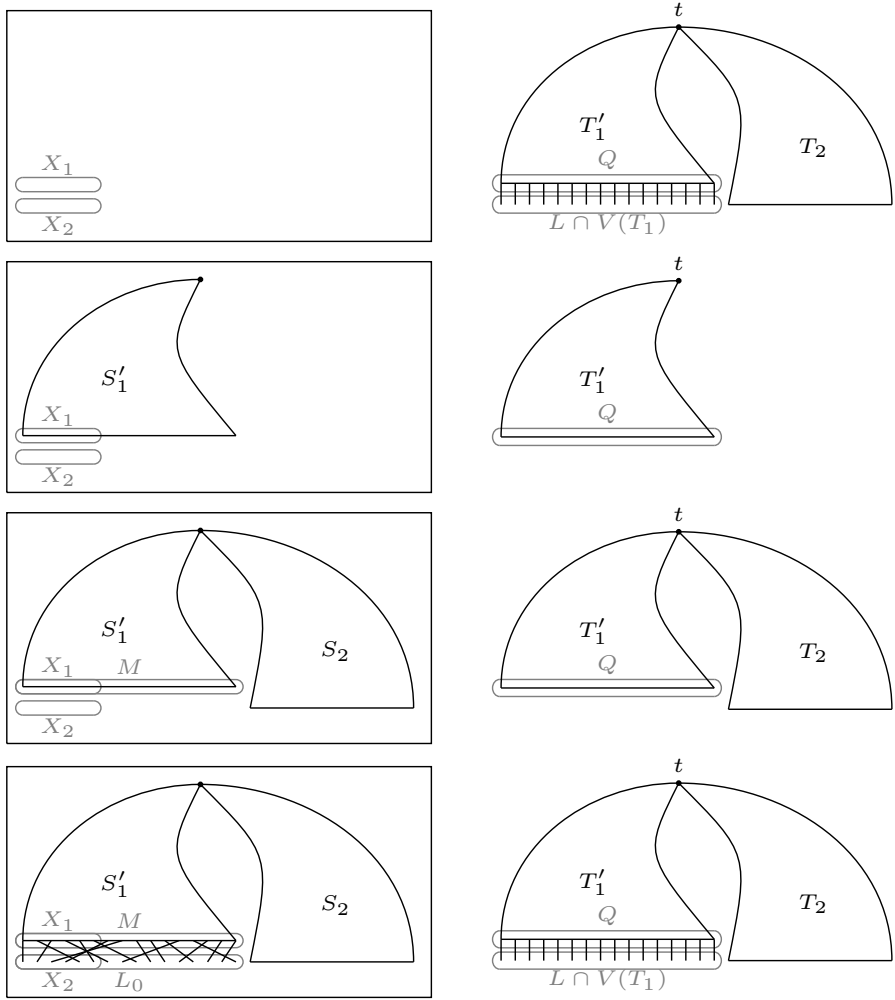

\centering
    \begin{subfigure}[b]{0.4\textwidth}
        \centering
        \resizebox{\linewidth}{!}{
            \input{spanpic3A1}
        }
\label{spanpic3A1}
    \end{subfigure}
\hspace{-0.25cm}
    \begin{subfigure}[b]{0.4\textwidth}
        \centering
        \resizebox{\linewidth}{!}{
            \input{spanpic3A2}
        }
\label{spanpic3A2}
    \end{subfigure}

\vspace{-0.4cm}


    \begin{subfigure}[b]{0.4\textwidth}
        \centering
        \resizebox{\linewidth}{!}{
            \input{spanpic3B1}
        }
\label{spanpic3B1}
    \end{subfigure}
\hspace{-0.25cm}
    \begin{subfigure}[b]{0.4\textwidth}
        \centering
        \resizebox{\linewidth}{!}{
            \input{spanpic3B2}
        }
\label{spanpic3B2}
    \end{subfigure}

\vspace{-0.4cm}


    \begin{subfigure}[b]{0.4\textwidth}
        \centering
        \resizebox{\linewidth}{!}{
            \input{spanpic3C1}
        }
\label{spanpic3C1}
    \end{subfigure}
\hspace{-0.25cm}
    \begin{subfigure}[b]{0.4\textwidth}
        \centering
        \resizebox{\linewidth}{!}{
            \input{spanpic3C2}
        }
\label{spanpic3C2}
    \end{subfigure}

\vspace{-0.4cm}

    \begin{subfigure}[b]{0.4\textwidth}
        \centering
        \resizebox{\linewidth}{!}{
            \input{spanpic3D1}
        }
\label{spanpic3D1}
    \end{subfigure}
\hspace{-0.25cm}
    \begin{subfigure}[b]{0.4\textwidth}
        \centering
        \resizebox{\linewidth}{!}{
            \input{spanpic3D2}
        }
\label{spanpic3D2}
    \end{subfigure}

\vspace{-0.4cm}



\caption{Embedding the tree in the proof of Theorem~\ref{unithres} in Case~A, depicted from top to bottom on the left, and starting with the sets found in the random graph $G$. On the right, from top to bottom, is the structure found in the tree $T$, followed by the subgraphs of $T$ embedded in the matching pictures on the left.} \label{spanpic3}
\end{figure}

Let $T$ be any tree in $\TT_A(n,\Delta)$. Let $L\subset V(T)$ be a set of 20-separated leaves of~$T$ with $|L|\geq n\log\log n/10^5\log n\geq 10^3\Delta m$. Using Proposition~\ref{divide}, find subtrees~$T_1$ and~$T_2$ which intersect only on one vertex,~$t$ say, cover~$T$ and which each contain at least $n/3$ vertices. Suppose, without loss of generality, that $|L\cap V(T_1)|\geq |L|/2\geq 500\Delta m$. This structure found in $T$ is also depicted in Figure~\ref{spanpic3}.

Let $Q=N_T(L)$, and let $T_1'=T_1-L$. Note that, if $n\geq 6$, then $t$ has a neighbour in both $T_1$ and $T_2$ and hence~$t$ is not a leaf of $T$. Thus, $t\in V(T_1')$. We have that 
\[
|T_1'|\leq |T|-|T_2|+1\leq 2n/3+1\leq (n-|X_2|)-|X_1|-10dm-\log n,
\]
$|Q\cap V(T_1')|\geq |L\cap V(T_1)|\geq 500\Delta m\geq 9|X_1|$ and $|X_1|=8m\geq n/\log^2 n$. By Lemma~\ref{embedparentsfinal}, then, there is a $(d,m)$-extendable copy of $T'_1$, $S'_1$ say, in $G-X_2$ so that $X_1$ is contained in the copy of $Q\cap V(T'_1)$. Let $M$ be the copy of $Q\cap V(T'_1)$ in $S'_1$.

The subgraph $S'_1$ is $(d,m)$-extendable in $G-X_2$, and thus, as it has maximum degree at most~$\Delta$, it is also $(\Delta,m)$-extendable. As $n=|T_1'|+|T_2|+|L\cap V(T_1)|-1$ and $|L\cap V(T_1)|\geq |X_2|+10\Delta m$, we have $n-|X_2|\geq |T-(L\cap V(T_1))|+10\Delta m$. Therefore, as $G$ is $m$-joined, using Corollary~\ref{treebuild}, we can extend the subgraph $S_1'$ by adding a copy of~$T_2$, $S_2$ say, in $G-X_2$.

We only need then to find a matching from $M$ to the vertices not in $V(S_1'\cup S_2)$ to complete the copy of~$T$. Let $L_0=V(G)\setminus V(S_1'\cup S_2)$.
By construction, $X_1\subset M$ and $X_2\subset L_0$. If $U\subset M$ and $|U|\leq m$, then $|N(U,L_0)|\geq|N(U,X_2)|\geq |U|$. If $U\subset L_0$ and $|U|\leq m$, then $|N(U,M)|\geq|N(U,X_1)|\geq |U|$. Therefore, as $G$ is $m$-joined, by Lemma~\ref{matchings}, there is a matching between $M$ and $L_0$, and thus we can complete the copy of $T$.
\fi
\oof

\section{Cycles in subgraphs}\label{3hamcycles}

As mentioned in Section~\ref{1boutline}, for Case B we need to embed a tree so that it covers a specific large subset of a graph. In essence, we cover a vertex set by finding a cycle covering that vertex set, breaking it into paths, and incorporating the paths into the embedding of the tree. In this section, we prove Lemma~\ref{hamcycleinsubgraph}, which gives a rule sufficient to guarantee certain linear-sized vertex subsets of a typical random graph $G(n,\Omega(1/n))$ are Hamiltonian.

To find Hamilton cycles in random graphs, P\'osa~\cite{posa76} introduced the celebrated rotation-extension technique, which has subsequently been instrumental in proving many results involving Hamilton cycles. The following lemma, using some brief definitions, can be easily deduced by any standard use of P\'osa's rotation-extension lemma (see, for example, Bollob\'as~\cite{bollo1}), and specifically can be taken directly from a lemma used by Krivelevich, Lubetzky and Sudakov~\cite[Lemma~2.6]{BKL14}.

\de
In a graph~$H$, we say a non-edge $e$ is a \emph{booster} if $H+e$ is Hamiltonian.
\fn
\de
Given a vertex set $U$ in a graph~$H$, we say that it has \emph{many boosters in~$H$} if there are at least $|U|^2/32$ boosters in $H[U]$.
\fn
\de
Given a graph~$H$ with $n$ vertices, we say~$H$ is an $(n,2)$-expander if, for every subset $A\subset V(H)$ with $|A|\leq n/4$, we have $|N(A)|\geq 2|A|$.
\fn

\lem\label{posa} Suppose~$H$, a graph with $n$ vertices, is an $(n,2)$-expander, and $P$ is a path of maximum length in~$H$. Then either $H[V(P)]$ is Hamiltonian, or $|P|\geq n/4$ and $V(P)$ has many boosters in~$H$.\hfill\qed
\ma

We will use this lemma through Corollary~\ref{posacor}, which we prove using the following simple proposition.

\begin{prop}\label{expconnect}
Let $n\geq 8$. If a graph~$H$, with $n$ vertices, is an $(n,2)$-expander, then it is connected.
\end{prop}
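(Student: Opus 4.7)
The plan is to argue by contradiction: assume $H$ is disconnected and take $C$ to be a smallest connected component, so that $|C| \le n/2$. The key observation is that if $A \subset C$, then $N_H(A) \subset C \setminus A$, since no edges leave the component $C$. I will then choose a convenient subset $A \subset C$ of size $|A| = \min\{|C|, \lfloor n/4 \rfloor\}$ so that the expansion hypothesis applies (we have $|A| \le n/4$), forcing $|N(A)| \ge 2|A|$. Combining this with the upper bound $|N(A)| \le |C| - |A|$ yields $|C| \ge 3|A|$, and the whole argument reduces to showing this inequality is impossible.

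In the case $|C| \le \lfloor n/4 \rfloor$ we take $A = C$, so $|A| = |C|$ and the inequality $|C| \ge 3|A|$ forces $|C| = 0$, contradicting the existence of a component. In the case $|C| > \lfloor n/4 \rfloor$, we set $|A| = \lfloor n/4 \rfloor$, and the inequality becomes $|C| \ge 3\lfloor n/4 \rfloor$; combined with $|C| \le n/2$, this gives $n/2 \ge 3\lfloor n/4 \rfloor$. A quick check shows $3\lfloor n/4 \rfloor > n/2$ for every $n \ge 8$ (verifiable either by the bound $\lfloor n/4 \rfloor \ge (n-3)/4$, which gives $3\lfloor n/4 \rfloor \ge (3n-9)/4 > n/2$ once $n \ge 10$, together with a direct check at $n=8,9$), delivering the contradiction.

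There is no real obstacle here; the only mild subtlety is that one cannot just apply the expansion to all of $C$, because $|C|$ may exceed $n/4$. Splitting into the two cases above, or equivalently choosing $|A| = \min\{|C|, \lfloor n/4 \rfloor\}$ from the start, handles this uniformly. The hypothesis $n \ge 8$ enters only to make the arithmetic $3\lfloor n/4 \rfloor > n/2$ work for small $n$, and is otherwise tight in spirit (for $n$ very small the component structure allows disconnected expanders).
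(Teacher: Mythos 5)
Your proposal is correct and follows essentially the same strategy as the paper's proof: identify a small vertex set whose exterior neighbourhood is confined (the paper uses an isolated set $A$ with $N(A)=\emptyset$, you use a smallest component $C$), take a subset of size $\min\{|C|,\lfloor n/4\rfloor\}$ to invoke the expansion hypothesis, and derive a contradiction from the arithmetic $3\lfloor n/4\rfloor > \lfloor n/2\rfloor$ for $n\geq 8$. The two arguments are interchangeable; your phrasing via components and the inequality $|N(A)|\leq |C|-|A|$ is simply a repackaging of the paper's bound $|N(A')|\leq |A\setminus A'|$.
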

\pr Suppose to the contrary that there is some set $A\subset V(H)$ with $0<|A|\leq \lfloor n/2\rfloor $ and $N(A)=\emptyset$. If $|A|\leq n/4$, then, as~$H$ is an $(n,2)$-expander, $|N(A)|\geq 2|A|>0$, a contradiction. Therefore, $|A|>n/4$, and we can take a set $A'\subset A$ with size $\lfloor n/4 \rfloor$. Then,
\[
|N(A)|\geq |N(A')|-|A\setminus A'|\geq 2\lfloor n/4 \rfloor-(\lfloor n/2\rfloor-\lfloor n/4 \rfloor)= 3\lfloor n/4 \rfloor-\lfloor n/2 \rfloor>0,
\]
a contradiction. Thus,~$H$ is connected.
\oof

\begin{corollary}\label{posacor}
Let $n\geq 8$. Suppose~$H$, a graph with $n$ vertices, is an $(n,2)$-expander, and $P$ is a path of maximum length in~$H$. Then, either~$H$ is Hamiltonian, or $|P|\geq n/4$ and $V(P)$ has many boosters in~$H$.
\end{corollary}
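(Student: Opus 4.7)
The plan is to derive this essentially as a direct consequence of Lemma~\ref{posa} and Proposition~\ref{expconnect}. Applying Lemma~\ref{posa} to $H$ and $P$, we are already in the desired conclusion in the second case, so the only thing to show is that the first alternative of Lemma~\ref{posa} (namely, that $H[V(P)]$ is Hamiltonian) actually forces $H$ itself to be Hamiltonian. Equivalently, we need to show that when $P$ is a longest path and $H[V(P)]$ contains a Hamilton cycle on $V(P)$, then $V(P)=V(H)$.

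The argument for this is a standard length-extension trick. Suppose for contradiction that $V(P)\subsetneq V(H)$, and let $C$ be a Hamilton cycle of $H[V(P)]$. By Proposition~\ref{expconnect}, $H$ is connected, so there exist vertices $v\in V(H)\setminus V(P)$ and $u\in V(P)$ with $uv\in E(H)$. Delete from $C$ an edge of $C$ incident to $u$: this produces a Hamilton path $P'$ of $H[V(P)]$ with $u$ as an endvertex. The path $P'+uv$ is then a path in $H$ with $|V(P)|+1$ vertices, contradicting the maximality of $|P|$. Hence $V(P)=V(H)$, and the Hamilton cycle of $H[V(P)]$ is a Hamilton cycle of $H$.

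The main (and only) obstacle is ensuring that $H$ is connected, which is exactly what Proposition~\ref{expconnect} provides (and why the hypothesis $n\geq 8$ is carried over). Once connectivity is in hand, the rotation/extension idea above is completely routine, and the two cases of the corollary match the two cases of Lemma~\ref{posa} exactly.
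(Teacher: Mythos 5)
Your proof is correct and follows essentially the same route as the paper's: apply Lemma~\ref{posa}, then in the case where $H[V(P)]$ is Hamiltonian, use Proposition~\ref{expconnect} for connectivity to find a vertex outside $V(P)$ adjacent to the cycle and extend to a longer path, contradicting the maximality of $P$.
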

\pr By Lemma~\ref{posa}, either $|P|\geq n/4$ and $V(P)$ has many boosters in~$H$, or $H[V(P)]$ is Hamiltonian. If we have the second case, then let $C$ be a cycle with the vertex set $V(P)$. If~$H$ is not Hamiltonian, then $|C|<n$. By Proposition~\ref{expconnect},~$H$ is connected. Therefore, there is some vertex outside of the cycle $C$ which has a neighbour in the cycle. Using this vertex we can find a path with $|C|+1=|P|+1$ vertices, contradicting that $P$ has maximum length. 
\oof

For the following lemma, we use a method of Sudakov and Lee~\cite[Lemma 3.5]{LS12} for showing that $\GG(n,p)$ is resiliently Hamiltonian if $p=\omega(\log n/n)$, adapting the technique for use in sparser random graphs.

\lem\label{booster}
Suppose $np\geq 10^{15}$. With probability $1-o(n^{-2})$, the following holds in the random graph $G=\GG(n,p)$. Given any set $U\subset V(G)$ with $|U|\geq n/10^{6}$, if~$H$ is a subgraph of~$G$ with $U\subset V(H)$ and $e(H)\leq  pn^2/10^{16}$ so that $U$ has many boosters in~$H$, then at least one of these boosters is an edge of $G$.
\ma
\ifdraft
\else
\pr Let $\d=10^{-16}$.
Let $\mathcal{U}$ be the set of pairs $(U,H)$ consisting of a vertex set $U\subset[n]$ and a graph~$H$ on the vertex set $[n]$, where $|U|\geq n/10^6$, $e(H)\leq \d n^2 p$ and $U$ has many boosters in~$H$. Let $B_H(U)$ be the set of boosters in $H[U]$ for each pair $(U,H)\in \mathcal{U}$, so that $|B_H(U)|\geq |U|^2/32\geq 2n^2/10^{14}$.

When the assertion in the lemma fails for the random graph $G=\GG(n,p)$, there must be some pair $(U,H)\in\mathcal{U}$ with $H\subset G$ and $B_H(U)\cap E(G)=\emptyset$. Let the probability that such a pair exists be $q$. Then
\[
q \leq \sum_{(U,H)\in\mathcal{U}}\P((H\subset G)\wedge (B_H(U)\cap E(G)=\emptyset))
=\sum_{(U,H)\in\mathcal{U}}\P(B_H(U)\cap E(G)=\emptyset|H\subset G)\P(H\subset G).
\]
Now, as $B_H(U)$ contains no edges in~$H$,
\[
\P(B_H(U)\cap E(G)=\emptyset|H\subset G)=\P(B_H(U)\cap E(G)=\emptyset)=(1-p)^{|B_H(U)|}\leq e^{-pn^2/10^{14}}.
\]
Therefore, as for each graph~$H$ there are certainly at most $2^n$ sets $U$ for which $(U,H)\in\mathcal{U}$,
\begin{align*}
q&\leq 2^ne^{-pn^2/10^{14}}\sum_{H\subset K_n,e(H)\leq \d pn^2}\P(H\subset G) \leq 2^ne^{-pn^2/10^{14}} \sum_{t=0}^{\d  pn^2}\binom{n^2}{t}p^{t} \\
&\leq 2^ne^{-pn^2/10^{14}}\sum_{t=0}^{\d  pn^2}\left(\frac{en^2p}{t}\right)^t \leq 2^ne^{-pn^2/10^{14}} n^2\left(\frac{e}{\d}\right)^{\d n^2p} \\
&\leq\exp\left(n-pn^2/10^{14}+\d\log(e/\d)n^2p\right) \leq \exp\left(-pn^2/10^{15}\right).
\end{align*}
As $pn\geq 10^{15}$, the probability $G$ does not have the property in the lemma is $o(n^{-2})$.
\oof
\fi

Putting Corollary~\ref{posacor} and Lemma~\ref{booster} together yields a condition on subsets $U$ of a random graph that guarantees the subset supports a cycle. This condition is, roughly speaking, that we can select very few edges of $G$ from those within $U$ so that the subgraph~$H$ formed by these edges is a $(|U|,2)$-expander.

\lem\label{hamcycleinsubgraph}
If $pn\geq 10^{17}$, then, with probability $1-o(n^{-2})$, the random graph $G=\GG(n,p)$ satisfies the following property for each subset $U\subset V(G)$ with $|U|\geq 4n/10^6$. If there is some subgraph $H\subset G$, with $V(H)=U$ and $e(H)\leq pn^2/10^{17}$, which is a $(|U|,2)$-expander, then the graph $G[U]$ is Hamiltonian.
\ma
\ifdraft
\else
\pr
With probability $1-o(n^{-2})$, $G=\GG(n,p)$ has the property from Lemma~\ref{booster}. Let $U\subset V(G)$ satisfy $|U|\geq 4n/10^6$, and suppose $H\subset G$ is a $(|U|,2)$-expander which satisfies $V(H)=U$ and $e(H)\leq pn^2/10^{17}$.

Let $\mu(K)$ denote the number of vertices in the longest path in a graph $K$, and let $H_0=H$. Let $l$ be the largest integer for which there exists some graph $H_l\subset G$ with $V(H_l)=U$, $H\subset H_l$, $e(H_l)=e(H)+l$ and $\mu(H_{l})\geq\mu(H)+l$. Such an $l$ exists as~$H$ satisfies the conditions with $l=0$, and, as $\mu(H_l)\leq |H|$, $l<n$. Fix such a graph $H_l$ for $l$.

We will show that in fact $\mu(H_l)=|H|$. Suppose, to the contrary, that $\mu(H_l)<|H|$. As it contains~$H$ and has the vertex set $U$, $H_l$ is a $(|U|,2)$-expander. Let~$P$ be a longest path in $H_l$. As $\mu(H_l)<|H|$, $H_l$ is not Hamiltonian, and so, by Corollary~\ref{posacor}, $|P|\geq |H_l|/4\geq n/10^6$, and $V(P)$ has many boosters in $H_l$.

As $e(H_l)\leq pn^2/10^{17} +n\leq pn^2/10^{16}$, by the property from Lemma~\ref{booster}, $G$ contains an edge~$e$ which is a booster in $H_l[V(P)]$. Let $H_{l+1}=H_l+e$. Taking a Hamilton cycle in $H_{l+1}[V(P)]$ and a neighbour to this cycle in $H_{l+1}-V(P)$, we can find a path with length $\mu(H_l)+1$ in $H_l$, which shows that $\mu(H_{l+1})\geq \mu(H_{l})+1$. This contradicts the definition of~$l$, so we must have that $\mu(H_{l})=|H_l|=|H|$.

Finally, if $H_l$ is not Hamiltonian, then, as $\mu(H_l)=|H|$, $U$ must have many boosters in $H_l$ by Lemma~\ref{posa}. As $e(H_l)\leq pn^2/10^{16}$, one of these boosters must be in~$G$, by the property from Lemma~\ref{booster}. Therefore, $G[U]$ is Hamiltonian.
\oof
\fi

\section{Almost spanning trees with long bare paths}\label{4almostspanning}

In this, one of our more technical sections, we set up the main tool we need to embed a tree in Case~B. The main result of this section is Lemma~\ref{almostspan}, which is stated so that it may also be used for Cases~C and D.\color{\newcolour}
~This section accomplishes two main tasks. Firstly, we seek to embed a tree into $G=G(n,p)$ with $p=\Theta(\log n/n)$ when this results in as few as $\Theta(n/\log n)$ uncovered vertices. In Cases~C and D we additionally want the uncovered vertices to form a cycle in $G$. Secondly, we wish to embed this tree so that the uncovered vertices lie within a small prespecified set (which we will call $W$). Achieving both of these tasks under the spectre of universality creates several technicalities. Therefore, for illustration, we will first prove a lemma capable of embedding any single bounded-degree tree with few uncovered vertices (see Lemma~\ref{newembedfew}). Our aim is to introduce how our first main task in this section can be accomplished. We do not use this illustrative lemma except for communicating the intuition behind the probabilities used for this main task (specifically, in the proof of Lemma~\ref{almostalmostspan2}).

\subsection{\textcolor{\newcolour}{Embedding a tree with few spare vertices}}
\textcolor{\newcolour}{Let us state our illustrative lemma before discussing the challenges of its proof.}

\begin{lemma}\label{newembedfew}\color{\newcolour} Let $d>0$ be fixed, let $p=10^{29}d^2\log n/n$ and let $T$ be an $n$-vertex tree with $n-n/\log n$ vertices and maximum degree at most $d$. Then, almost surely, $G=G(n,p)$ contains a copy of $T$.
\end{lemma}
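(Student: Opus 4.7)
The plan is to adapt the strategy used for Case~B of Theorem~\ref{unithres}, simplified considerably because we only need to embed a single tree rather than achieve universality. By Lemma~\ref{split} applied with $k_0 = \lfloor c\log n/\log\log n\rfloor$ for a suitably large constant $c$, the tree~$T$ either contains at least $n/(4k_0) = \Omega(n\log\log n/\log n)$ leaves or at least $n/(4k_0)$ vertex-disjoint bare paths of length $k_0$. I would treat these two possibilities separately, though the overall scheme is the same in both.

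First, I would reveal the edges of $G$ in a few independent rounds of probability $\Theta(\log n/n)$. Using Proposition~\ref{generalprops} the resulting graph is almost surely $m$-joined for $m = O(n\log\log n/\log n)$; using Corollary~\ref{generalexpandcor} it satisfies the $(d,m)$-extendability properties required for $d=\Theta(\log n/\log\log n)$; and using Lemma~\ref{hamcycleinsubgraph} together with Lemma~\ref{randommatchingsexpand} a carefully chosen linear-sized subset of $G$ admits a Hamilton cycle. In the many-leaves branch I would additionally reserve a matchmaker set $X$ in $G$ via Proposition~\ref{goodset}.

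In the many-leaves branch, remove a $20$-separated set of leaves $L\subset V(T)$ via Lemma~\ref{farleaves} to obtain a subtree $T_0$ with $|T_0|\le n - c'n\log\log n/\log n$; by Lemma~\ref{embedparentsfinal} embed $T_0$ into $G$ so that the parents $Q$ of the removed leaves cover $X$; then reattach the leaves using a matching from the image of $Q$ into an appropriately chosen subset of the uncovered vertices. Lemma~\ref{matchings}, coupled with the matchmaker property of $X$ and the $m$-joined property of $G$, gives such a matching, and the remaining $n/\log n$ vertices of $G$ are simply left uncovered.

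In the bare-paths branch, which is the main obstacle, I would remove $s=\Theta(n\log\log n/(k_0\log n))$ of the length-$k_0$ bare paths from $T$ to get a forest $F$ with $|F|\le n-c''n\log\log n/\log n$. After embedding $F$ by repeated applications of Corollary~\ref{treebuild} to remain $(d,m)$-extendable, I would cover all but exactly $n/\log n$ of the uncovered vertex set $W$ by a cycle in $G[W]$: the requisite $(|W|,2)$-expander inside $G[W]$ would be built as the union of a bounded number of random matchings between a bipartition of $W$ (Lemma~\ref{randommatchingsexpand}), and Lemma~\ref{hamcycleinsubgraph} then delivers a Hamilton cycle on the chosen subset of $W$. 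This cycle is split into $s$ subpaths of the exactly correct lengths, and each subpath is spliced into the embedding of $F$ between the appropriate images of the endvertices of the removed bare paths via short connecting paths, produced by Corollary~\ref{extendableconnectcor} while maintaining $(d,m)$-extendability throughout.

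The technically hardest point will be synchronising the lengths of the subpaths cut from the covering cycle with the prescribed lengths of the removed bare paths, while simultaneously reconnecting each subpath to the two prescribed endvertices in the image of~$F$ and leaving precisely $n/\log n$ vertices uncovered. I would absorb the modest length discrepancies into extra short detours through a small reservoir of vertices in $W$, chosen beforehand, and control the connection step by ensuring enough degree slack at each endvertex via the extendability framework from Section~\ref{johannsen}. Taking the constant $c$ in $k_0$ large compared with $d$ and $m/p$ makes all the error terms comfortably small, and a union bound over the two branches and the $O(1)$ probabilistic events above yields the almost-sure conclusion.
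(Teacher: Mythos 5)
Your proposal takes a completely different route from the paper's actual proof, which is an iterative bootstrap: divide $T$ into $\Theta((\log\log n)^2)$ subtrees of geometrically decreasing size, reveal random edges in that many rounds with geometrically increasing probabilities, and in round $i$ embed the $i$th subtree working only inside the shrinking set of uncovered vertices, so that the effective joinedness parameter $m_i$ (which governs how many spare vertices the extendability methods need) also shrinks geometrically from $\Theta(n\log\log n/\log n)$ down to $\Theta(n/\log n)$. Your branching on leaves versus bare paths is a natural instinct, but the bare-paths branch---the hard case, as you rightly flag---fails in a way that constant-chasing cannot fix.

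The central problem is that after embedding the forest $F$, the set $W$ of uncovered vertices has size $\Theta(n\log\log n/\log n)=o(n)$, whereas Lemma~\ref{hamcycleinsubgraph} only supplies Hamiltonicity on subsets of size at least $4n/10^{6}$, i.e.\ \emph{linear} in $n$. This is precisely the barrier the paper's iteration (and Lemma~\ref{almostalmostspan2}) is designed to circumvent, by shrinking the ambient set by only a constant factor each round so that Lemma~\ref{almostalmostspan} can always be invoked on a linear fraction of the current working set. You also cite Lemma~\ref{randommatchingsexpand} to supply the $(|W|,2)$-expander inside $G[W]$, but that lemma builds an \emph{abstract} random bipartite graph---it is used in the paper only for the auxiliary $\lambda$-device of Lemma~\ref{flexiblematching}---and says nothing about finding such a graph as a subgraph of $G(n,p)$; the actual source of the expander, as in the proof of Lemma~\ref{almostspan}, is the residual $(d,m)$-extendability of the embedded forest. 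A further gap affecting both branches: one application of Lemma~\ref{embedparentsfinal} or Corollary~\ref{treebuild} with $d=\Theta(\log n/\log\log n)$ and $m=\Theta(n\log\log n/\log n)$ demands $\Theta(dm)=\Theta(n)$ spare vertices, which swallows the whole tree; the paper's Case~A avoids this by first splitting the tree and then dropping to $(\Delta,m)$-extendability for the later pieces, a step your outline omits.
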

\color{\newcolour}
If we try to use the most direct application of the extendability methods to prove Lemma~\ref{newembedfew}, we run into the following problem. Note that $G=G(n,p)$ from Lemma~\ref{newembedfew} will, by Proposition~\ref{generalprops}, be almost surely $m$-joined for
\begin{equation}\label{mvalue}
m=\frac{10\log (np)}{p}=\Theta\left(\frac{n\log\log n}{\log n}\right).
\end{equation}
Choosing $v\in V(G)$ arbitrarily, we have, almost surely, that $I(\{v\})$ is $(2d,m)$-extendable in $G$ (by Corollary~\ref{generalexpandcor}). Now, to use the extendability methods to embed $T$ (for example, through Corollary~\ref{treebuild}), we must have $\Omega(dm)=\Omega(n\log\log n/\log n)$ spare vertices. However, in embedding $T$ we have only $n/\log n$ spare vertices.

We get around this by observing the following. Say we carry this out and embed as much of $T$ as possible while remaining $(2d,m)$-extendable, and that, once we do, we have $n'$ vertices uncovered by the embedding. From Corollary~\ref{treebuild}, we can have $n'=\Theta(dm)=\Theta(n\log\log n/\log n)$. Let $U$ be the set of uncovered vertices, and let $U'$ be the set of vertices in the partial embedding of $T$ which still need neighbours. Note that $|U'|\leq |U|=n'$. Reveal more edges among the vertex set of $G'$ with probability $p$ to get the graph $G'$. By Proposition~\ref{generalprops},  $G'[U\cup U']$ is almost surely $m'$-joined, with
\begin{equation}\label{mprimevalue}
m'=\frac{10\log (2n'p)}{p}=\Theta\left(\frac{n\log\log\log n}{\log n}\right).
\end{equation}
We can easily deduce from the definition of the $(2d,m)$-extendability of the partial embedding of $T$ into $G$ that $I(U')$ is $(d,m')$-extendable in $G[U\cup U']$, whereupon we can use the extendability methods to embed more vertices of $T$ in $G\cup G'$ until there are only $\Theta(dm')=\Theta(n\log\log\log n/\log n)$ vertices which have not yet been embedded.

The critical point here is that we can embed all but $\Theta(dm')$ vertices instead of all but $\Theta(dm)$ vertices, and, as seen in~\eqref{mvalue} and \eqref{mprimevalue}, $m'$ is distinctly smaller than $m$.
Of course, we want to embed $T$ until there are $O(n/\log n)$ vertices not used in the embedding, so we need to repeat this further. By iterating, each time focussing on a smaller subgraph, say with $n''$ vertices, we can deal with graphs that are $m''$-joined for even smaller values of $m''$ than $m'$.
The smallest value $n''$ can take for Proposition~\ref{generalprops} to apply with edge probability $p$ is $\Theta(1/p)$, so the best possible value of $m''$ we could hope for is $\Theta(n/\log n)$. Thus, when we cover all but $\Theta(n/\log n)$ vertices we will be extracting as much from these methods as possible.

More precisely, for each $i$ from $1$ up to some $\alpha$, we reveal more edges with probability $p_i$ to get an $m_i$-joined graph, with $m_i=10\log(2n_ip_i)/p_i$, which has at most $2n_i$ vertices and in which we have a $(d_i,m_i)$-extendable subgraph, which we then extend until we have $O(d_im_i)$ uncovered vertices. The key is to choose $d_i$, $p_i$ and $n_i$, $1\leq i\leq \alpha$, so that $d_i$ decreases by $d$ each time, ending in $d$, $n_i$ decreases down to $O(n/\log n)$ and the probabilities $p_i$ are such that in total no edge will be revealed with probability more than $O(\log n/n)$. Thus, we will eventually have only $O(d_\alpha m_\alpha)=O(n/\log n)$ uncovered vertices. This we achieve in the following proof, where we first divide $T$ into subtrees to embed in each stage.

\begin{proof}[Proof of Lemma~\ref{newembedfew}]\color{\newcolour}
Let $R_0=T$ and arbitrarily pick $t_0\in V(T)$. Let $T_0=I(\{t_0\})$ and do the following process. For each $i\geq 1$, if $|R_{i-1}|\geq 10^4n/\log n$ do the following. Using Proposition~\ref{littletree}, divide~$R_{i-1}$ into two trees $T_i$ and $R_i$, so that $|R_{i-1}|/10^4 \leq |R_i|\leq |R_{i-1}|/10^3$ and $t_{i-1}\in V(T_{i})$,
and let $t_i$ be the vertex shared by $T_i$ and $R_i$. If $|R_{i-1}|< 10^4n/\log n$, then let $T_i=R_{i-1}$ and stop the process. Say that this process stops with $i=\a$, where therefore $|T_\alpha|<10^4n/\log n$ and  $|T_{\alpha-1}|\geq |R_{\alpha-2}|-|R_{\alpha-1}|\geq |R_{\alpha-2}|/2\geq 5\cdot 10^3n/\log n$.
Note that $|T_\alpha|=|R_{\alpha}|\geq |R_{\alpha-1}|/10^4\geq n/\log n$.

For each $1\leq i\leq \alpha$, we have $|T_i|\geq |R_{i-1}|/2$ and thus $|R_i|\leq |R_{i-1}|/10^3\leq |T_i|/500$. Thus, for each $1\leq i<\alpha$, $|T_i|\geq 500|R_i|\geq 500|T_{i+1}|$. Thus, $|T_{\alpha-1}|\leq (1/500)^{\alpha-2}n$. As $|T_{\alpha-1}|\geq 5\cdot 10^3n/\log n$, for sufficiently large $n$, $\alpha\leq (\log\log n)^2$.

We now define, for each $1\leq i\leq \alpha$, $d_i$, $n_i$, $p_i$ and $m_i$, as discussed just before this proof.
Let $m_0=n$. For each $1\leq i\leq \alpha$, let $d_i=(\alpha+1-i)d$,
\begin{equation}\label{neweqeq}
n_i=n-|T_0\cup\ldots\cup T_{i-1}|+1\geq |T_i|, \;\;\;
p_i=\frac{10^{28}d_i^2}{n_i}
,\;\;\;\text{ and }\;\;\;
m_i=\min\left\{m_{i-1},\frac{10\log(n_ip_i)}{p_i}\right\}.
\end{equation}
Let $n_{\alpha+1}= n-|T_0\cup\ldots\cup T_{\alpha}|+1=n-|T|+1=(n/\log n)+1$. Note that $m_i$, $1\leq i\leq \alpha$, and $d_i$, $1\leq i\leq \alpha$, are decreasing sequences.
Note further that, for each $1\leq i\leq \alpha$,
\begin{equation}\label{damnyou}
n_{i}=(n-|T|)+|T_{i}\cup\ldots\cup T_\alpha|\leq\frac{n}{\log n}+ \sum_{j=0}^{\alpha-i} \frac{|T_{i}|}{500^{j}}\leq 3|T_{i}|,
\end{equation}
where we have used that $|T_{i}|\geq |T_\alpha|\geq n/\log n$.
Therefore, for each $1\leq i<\alpha$, we have
\[
n_{i+1}\leq 3|T_{i+1}|\leq \frac{3|T_i|}{500}\leq  \frac{n_i}{100},
\]
where we have used that  $n_i\geq |T_i|$.
Thus, as $d_i\leq 2d_{i+1}$, we have that, from~\eqref{neweqeq}, $p_{i}\leq p_{i+1}/10$.

Now, for each $1\leq i<\alpha$, we have
\[
n_{i+1}\geq |T_{i+1}|\geq \frac{|R_i|}{2}\geq \frac{|R_{i-1}|}{2\cdot 10^4}\geq \frac{|T_i|}{2\cdot 10^4}\overset{\eqref{damnyou}}{\geq} \frac{n_i}{10^6},
\]
and, as $n_{\alpha+1}\geq n/\log n$ and $|T_\alpha|< 10^4n/\log n$, $n_{\alpha+1}\geq (|T_\alpha|+n/\log n)/10^6= n_\alpha/10^6$.
Therefore, for each $1\leq i\leq\alpha$, we have
\begin{equation}\label{omar}
10d_im_i\overset{\eqref{neweqeq}}{\leq}  \frac{10^2d_i\log(n_ip_i)}{p_i}=\frac{n_i\log(10^{28}d_i^2)}{10^{26}d_i}\leq \frac{n_i}{10^{10}}\leq \frac{n_{i+1}}{10}.
\end{equation}

We will first reveal an initial random graph $G_0$ to almost surely get an extendability condition. Let $p_0=10^3\log n/n$, and note that
\[
1\leq d_1=d\alpha\leq d(\log\log n)^2\leq \frac{p_0(n-1)}{240\log (np_0)}\;\;\;\text{ and }\;\;\;m_1\leq \frac{10n_1\log (10^{28}d_1)}{10^{28}d_1^2}\leq \frac{(n-1)}{8d_1}.
\]
Let $G_0=G(n,p_0)$ and $V_0=V(G_0)$. Arbitrarily pick $s_0\in V(G_0)$. By Corollary~\ref{generalexpandcor} applied with $W=V(G_0)\setminus \{s_0\}$, with probability $1-o(n^{-1})$, $I(\{s_0\})$ is $(d_1,m_1)$-extendable in $G_0$. For each $i$, $0\leq i\leq \alpha$, let $\bar{T}_{i}=T_0\cup\ldots\cup T_{i}$. Let $\bar{S}_0=S_0=I(\{s_0\})$ be a copy of $\bar{T}_0=T_0=I(\{t_0\})$.

We will reveal edges in $\alpha$ stages, indexed by $1\leq i\leq \alpha$, in each stage revealing edges within the vertex set $V(G_0)$ with probability $p_i$ to get the graph $G_i$. Recall that, for each $1\leq i<\alpha$, $p_{i}\leq p_{i+1}/10$. Therefore, when this finishes, edges in $G_0\cup\ldots\cup G_\alpha$ will have been revealed independently at random with probability at most
\[
\sum_{i=0}^\alpha p_i\leq p_0+2p_\alpha\overset{\eqref{neweqeq}}{=}  \frac{10^3\log n}{n}+10^{28}\cdot\frac{2d_\alpha^2}{n_\alpha}\leq \frac{10^3\log n}{n}+10^{28}\frac{2d^2}{|T_\alpha|}\leq \frac{10^{29}d^2\log n}{n},
\]
using that $|T_\alpha|\geq n/\log n$. Therefore, if the required copy of $T$ almost surely exists in $G_0\cup\ldots\cup G_{\alpha}$, then it almost surely exists in $G(n,10^{29}d^2\log n/n)$.

For each $i$, $1\leq i\leq \alpha$, in turn, do the following. Suppose we have revealed the random graphs $G_1,\ldots, G_{i-1}$ and have set $\bar{G}_{i-1}=G_0\cup\ldots\cup G_{i-1}$, and have $\bar{S}_{i-1}$, a copy of $\bar{T}_{i-1}$ in which $t_{i-1}$ is copied to $s_{i-1}$ such that, letting
$V_i=(V(G_0)\setminus V(\bar{S}_{i-1}))\cup\{s_{i-1}\}$, $I(\{s_{i-1}\})$ is $(d_{i},m_{i})$-extendable in $\bar{G}_{i-1}[V_i]$. (Note that we already have this situation for $i=1$.) Reveal edges independently at random with probability $p_i$  within the vertex set $V(G_0)$ to get the graph $G_i$, and let $\bar{G}_i=\bar{G}_{i-1}\cup G_i$. As $|V_i|=n-|\bar{S}_{i-1}|+1=n-|\bar{T}_{i-1}|+1= n_i$, and $\bar{G}_{i-1}[V_i]$ is $m_{i-1}$-joined, by Proposition~\ref{generalprops}, with probability $1-o(n_i^{-2})=1-o(n^{-1})$, $\bar{G}_i[V_i]$ is $m_i$-joined. Note that, as we have at most $(\log\log n)^2$ stages, we can almost surely assume this holds at each stage.

Note that
\[
|V_i|=n-|\bar{S}_{i-1}|+1=n-|\bar{T}_i|+|T_i|=n_{i+1}-1+|T_i|\overset{\eqref{omar}}{\geq} |T_i|+10d_im_i.
\]
Therefore, as $I(\{s_{i-1}\})$ is $(d_i,m_i)$-extendable in $\bar{G}_i[V_i]$, and $d_i\geq d_\alpha=d$, by Corollary~\ref{treebuild}, we can find a copy $S_i$ of $T_i$ in which $t_{i-1}$ is copied to $s_{i-1}$ which is $(d_i,m_i)$-extendable in $\bar{G}_i[V_i]$. Let $\bar{S}_i=\bar{S}_{i-1}\cup S_i$. Note that $\bar{S}_i$ is a copy of $\bar{T}_i$. When we reach $i=\alpha$, we thus will have found a copy of $\bar{T}_\alpha=T$.

If $i<\alpha$, then let $s_{i}$ be the copy of $t_{i}$ in $S_i$. Let $V_{i+1}=(V_{i}\setminus V(S_{i}))\cup \{s_{i}\}$. As $S_{i}$ is $(d_i,m_{i})$-extendable in $\bar{G}_{i}[V_i]$, $s_{i}\in V(S_{i})$, $m_{i+1}\leq m_{i}$ and $d_{i+1}\leq d_i-\Delta(S_i)$, it follows simply from the definition of $(d_i,m_{i})$-extendability that $I(\{s_{i}\})$ is $(d_{i+1},m_{i+1})$-extendable in $\bar{G}_{i}[V_i]$. Thus, we have the required situation to reveal the stage $i+1$ edges.
\end{proof}

\subsection{\color{\newcolour} Using long bare paths}
\color{\newcolour}
Let us now focus on the second main task, where we have to embed a tree $T$ in a random graph $G$ so that the uncovered vertices in $G$ all lie within a prespecified set $W$. As above, we wish to achieve this in situations where $W$ is as small as $\Theta(n/\log n)$, and therefore we use an iterative procedure where we embed more and more of $T$ while pushing the uncovered vertices into smaller and smaller sets (reducing in size by a constant multiple each time) until the whole tree is embedded and the uncovered vertices lie in $W$.

All the trees we embed are large subtrees of trees in Cases B, C or D, and therefore we can choose them to have many long bare paths. Thus, in each iteration, we can embed several long bare paths between vertex pairs in a partial embedding of $T$. We separate this iterative step as Lemma~\ref{almostalmostspan}, which embeds a collection of paths between certain pairs of vertices, so that the uncovered vertices lie in a small linear-sized subset $W$. We then iterate this for Lemma~\ref{almostalmostspan2}, which effectively achieves the same thing but with a much smaller set $W$. Finally, we use Lemma~\ref{almostalmostspan2} to prove the main lemma of this section,
Lemma~\ref{almostspan}, which combines this with an embedding of the rest of the tree $T$ (that is, not just the long bare paths) into the form that we then use later, and which is the only one of the lemmas in this section used elsewhere.

We achieve the second of our main tasks for Lemma~\ref{almostalmostspan} -- ensuring all the vertices outside of $W$ are covered by the paths -- by finding a cycle containing all the uncovered vertices outside of $W$, breaking it into paths, and then joining these paths between vertex pairs using the vertices in $W$. We do this with Lemma~\ref{hamcycleinsubgraph}, where this use relies on $W$ being a linear-sized subset.

Before stating and proving Lemma~\ref{almostalmostspan}, let us comment on two further aspects of its proof. Firstly, the length of the paths in the tree we are embedding, and, secondly, the complications that arise in the lemma statements to allow for universality.

\smallskip

\textbf{The length of paths in $T$.} When we iterate  to push the uncovered vertices into a small set $W$ for Lemma~\ref{almostalmostspan2}, we do so at each stage by finding long bare paths for the tree $T$ that we are trying to embed. These paths can have length up to $\Theta(k(T))$. Furthermore, if $p=\Theta(\log n/n)$, $G=G(n,p)$, and $W\subset V(G)$ is a large set, $G[W]$ likely has average degree around $p|W|$. Therefore, as we embed the paths with length $\Theta(k(T))$ using vertices in $W$ and Corollary~\ref{trivial} (applying it with $\log m\approx \log |W|$
 and $d\approx p|W|$), we need
\begin{equation}\label{tricky}
k(T)=\Omega(\log|W|/\log (p|W|)).
\end{equation}
This restricts how small $W$ can be, and results in the bound on the size of $W$ seen in Lemmas~\ref{almostalmostspan2} and \ref{almostspan} and the value taken for the edge probability in Lemma~\ref{almostalmostspan}. In Cases C and D, as $k(T)=\Omega(\log n)$, $W$ can be taken to be as small as $\Theta(\log n/n)$. In Case B, $W$ cannot be as small, but here our embedding will be untroubled as, in compensation, the larger $W$ must be, the more leaves the tree will have. As, in this case, we embed leaves to cover the final uncovered vertices, we can allow $W$ to be larger.

Let us remark further that the bounds on $|W|$ coming from \eqref{tricky} in Case B can be difficult to interpret. These bounds are only close to tight when $k(T)=\Theta(\log n)$ or $k(T)=\Theta(\log n/\log\log n)$. Between these values (that is, in the span of Case B), the set $W$ in the random graph that arises naturally from the embedding comfortably has high enough average degree to connect vertices within $W$ with paths of length $\Theta(k(T))$. In particular, the logarithms in~\eqref{tricky} result in plenty of room in the bounds used for $|W|$.

\smallskip

\textbf{Universality.}\color{black}
~The actual lemmas have a more technical statement than suggested by this discussion, as, with universality in mind, we gather properties which have the potential to embed many different trees, rather than actually embedding a specific tree. Given a vertex set~$W$ in a certain random graph, Lemma~\ref{almostalmostspan} shows that the graph will contain two disjoint vertex sets~$V$ and~$A\subset W$ so that, given the correct number of pairs of vertices from $V$, there exist disjoint paths of some specified length between these pairs of vertices, with interior vertices in $A$, together with a disjoint cycle, so that the cycle covers exactly the vertices in $A$ which are not in the paths.
Lemma~\ref{almostalmostspan} also contains a variation of this result where we may take $V=V(G)\setminus A$ if we expose edges with an increased probability. The structure found in the lemma is illustrated in Figure~\ref{spanpic4}.

\lem\label{almostalmostspan} Let $n,l,r,w\in \N$ satisfy $l\geq 20\log n/\log\log n$, $rl+2w\leq n/2$ and $w\geq 4n/10^6$. Take $d=10\exp(10\log n/l)$. Suppose $W\subset[n]$ satisfies $|W|\geq rl+2w$. Then, with probability $1-o(n^{-2})$, the random graph $G=\GG(n, 10^{28}d\log d/n)$ contains subsets $A\subset W$ and $V\subset V(G)\setminus A$ with $|A|=rl+w$ and $|V|\geq n/2$, so that the following property holds.

Given any collection of $r$ disjoint pairs $(x_i,y_i)$, $i\in[r]$, of vertices in $V$, we can find disjoint $x_i,y_i$-paths in~$G$, with length $l+1$ and interior vertices in $A$, together with a cycle in~$G$ through exactly those $w$ vertices in $A$ which are not covered by the $x_i,y_i$-paths.

Furthermore, with probability $1-o(n^{-2})$, the random graph $G=\GG(n,10^{28}\log n/n)$ contains a set $A\subset W$, with $|A|=rl+w$, so that $A$ has this property with $V=V(G)\setminus A$.
\ma

\ifdraft
\else
\pr
Note that $d\geq 10$ and, as $l\geq 20\log n/\log\log n$, $d\leq 10\sqrt{\log n}$. For the first part of the lemma, let $p=10^{10}d\log d/ n$ and
\[
m=\frac{10\log(np)}{p}=\frac{10n\log(10^{10}d\log d)}{10^{10}d\log d}\leq \frac{n\log(10^{10})\log d^2}{10^{9}d\log d}\leq \frac{n}{10^7d}\leq \frac{w}{10d}.
\]
Reveal edges within the vertex set $[n]$ with probability $p$ to get the graph~$G_1$. By Proposition~\ref{alledges}, with probability $1-o(n^{-2})$, $G_1$ has at most $pn^2$ edges. By Proposition~\ref{generalprops}, with probability $1-o(n^{-2})$, $G_1$ is $m$-joined.

\begin{figure}[b!]
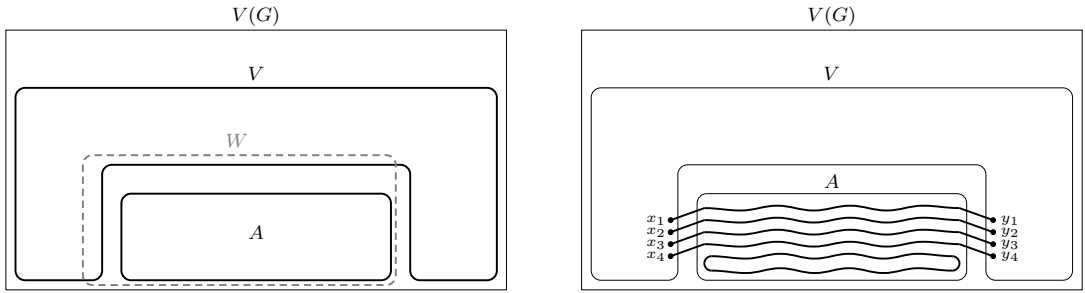

\centering
    \begin{subfigure}[b]{0.45\textwidth}
        \centering
        \resizebox{\linewidth}{!}{
            \input{spanpic4A1}
        }
\label{spanpic4A1}
    \end{subfigure}
\hspace{0.25cm}
    \begin{subfigure}[b]{0.45\textwidth}
        \centering
        \resizebox{\linewidth}{!}{
            \input{spanpic4A2}
        }
\label{spanpic4A2}
    \end{subfigure}

\vspace{-0.4cm}

\caption{A depiction of Lemma~\ref{almostalmostspan}. Given a set $W\subset [n]$ on the left, there are likely to be two disjoint sets $V$ and $A$ in $G$, with the property on the right. That is, given any disjoint pairs of vertices $(x_i,y_i)$, $i\in [r]$, in $V$ (here $r=4$), we can find disjoint $x_i,y_i$-paths with interior vertices in~$A$ and a fixed length, such that the remaining vertices in~$A$ can be covered exactly by a cycle.} \label{spanpic4}
\end{figure}

Take a subset $A_0\subset W$ with $|A_0|=rl+w$. As $|A_0|\geq w\geq 10dm$, by Proposition~\ref{neat}, we can take $B\subset V(G_1)$ to be a subset, with $|B|\leq m$, satisfying the following property.
\stepcounter{capitalcounter}
\begin{enumerate}[label=\bfseries \Alph{capitalcounter}\arabic*]\addtocounter{enumi}{0}
\item If $U\subset [n]\setminus B$ and $|U|\leq 2m$, then $|N_{G_1}(U,A_0\setminus B)|\geq d|U|$.\label{AA2}
\end{enumerate}

Choose $A\subset W\setminus B$ to satisfy $A_0\setminus B\subset A$ and $|A|=rl+w$. Let $V=V(G_1)\setminus (A\cup B)$, so that $|V|\geq n-rl-w-m\geq n-rl-2w\geq n/2$. Reveal more edges with probability $10^{17}p$ to get the graph~$G_2$, and let $G=G_1\cup G_2$. By Lemma~\ref{hamcycleinsubgraph}, with probability $1-o(n^{-2})$, we have that if a subgraph $H\subset G$ is a $(|H|,2)$-expander, with $e(H)\leq pn^2$ and $|H|\geq 4n/10^6$, then the graph $G[V(H)]$ is Hamiltonian. If $H\subset G_1$, then $e(H)\leq e(G_1)\leq pn^2$, so, as $w\geq 4n/10^6$, we have the following property.
\begin{enumerate}[label=\bfseries \Alph{capitalcounter}\arabic*]\addtocounter{enumi}{1}
\item If $H\subset G_1$ with $|H|=w$ is a $(w,2)$-expander then there is a cycle in~$G$ supported by $V(H)$.\label{AA3}
\end{enumerate}

We will now show that the property in the lemma holds for the sets $A$ and $V$. To see this, take any collection of $r$ disjoint pairs $(x_i,y_i)$, $i\in[r]$, of vertices in $V$. Let $X=\{x_i,y_i:i\in[r]\}$. By Property~\ref{AA2}, for any subset $U\subset X\cup A$, with $|U|\leq 2m$, we have that $|N_{G_1}(U, A)|\geq |N_{G_1}(U,A_0\setminus B)|\geq d|U|$. Therefore, the subgraph $I(X)$ (with vertex set $X$ and no edges, and thus maximum degree 0) is, by Proposition~\ref{uttriv}, $(d,m)$-extendable in $G_1[X\cup A]$.

Using this extendability, and as $G_1$ is $m$-joined, we can apply Corollary~\ref{trivial} repeatedly (with details checked below) to find disjoint $x_i,y_i$-paths $P_i$, $i\in[r]$, with length $l+1$ and interior vertices in $A$, so that $I(X)\cup (\cup_{i\in[r]}P_i)=\cup_{i\in[r]}P_i$ is $(d,m)$-extendable in $G_1[X\cup A]$. This is possible because, firstly, as $d-1\geq \exp(10\log n/l)$, we have that
\[
2\lceil \log (2m)/\log (d-1)\rceil+1\leq 3\log n/\log (d-1)\leq 3l/10\leq l,
\]
and, secondly, because each such path found adds $l$ vertices to the working subgraph, so that we will always have at least $|X\cup A|-(|X|+(r-1)l)=w+l\geq 10dm+l$ spare vertices at each application of Corollary~\ref{trivial}. Therefore, both the length condition and the size condition hold for these applications of Corollary~\ref{trivial} and we can find the paths as described. It is left then just to cover exactly the remaining vertices in~$A$ with a cycle.


Let $H=G_1[A\setminus V(\cup_{i\in[r]}P_i)]$, so that $|H|=w$. If $U\subset V(H)$ and $|U|\leq m$, then, as $\cup_{i\in[r]}P_i$ is $(d,m)$-extendable in $G_1[X\cup A]$ and $U\cap V(\cup_{i\in [r]}P_i)=\emptyset$, by the definition of $(d,m)$-extendability (Definition~\ref{extendabledefn}), we have that $|N'_{G_1[A\cup X]}(U)\setminus V(\cup_{i\in[r]}P_i)|\geq(d-1)|U|$. Therefore, if $U\subset V(H)$ and $|U|\leq m$, then
\[
|N_H(U)|\geq |N'_H(U)|-|U|\geq |N'_{G_1[A\cup X]}(U)\setminus V(\cup_{i\in[r]}P_i)|-|U|\geq(d-1)|U|-|U|\geq 2|U|,
\]
where we have used that $d\geq 10$. If $U\subset V(H)$ and $m\leq |U|\leq w/4$, then, as there are no edges in $H$ between $U$ and $V(H)\setminus (U\cup N_H(U))$, and the graph $G_1$ is $m$-joined, we have that $|V(H)\setminus (U\cup N_H(U))|\leq m$, and, hence, $|N_H(U)|\geq |H|-|U|-m\geq w/2\geq 2|U|$.
Therefore,~$H$ is a $(w,2)$-expander. By Property~\ref{AA3}, then, there is a cycle in~$G$ with vertex set $(X\cup A)\setminus V(\cup_{i\in[r]}P_i)$. This cycle, along with the disjoint paths $P_i$, $i\in[r]$, is the structure we require. Therefore, the sets $A$ and $V$ have the property in the lemma.

For the second part of the lemma, first recall that $d\leq 10\sqrt{\log n}$. Let $p=10^{10}\log n/n$ and $m=10\log(np)/p$, so that, for sufficiently large $n$,
\[
m=\frac{10\log(10^{10}\log n)}{p}\leq \frac{n\log\log n}{\log n}\leq \frac{n}{10^8d}\leq \frac{w}{10^2d}.
\]
Let $A\subset W$ be any set with $|A|=rl+w$ and reveal edges with probability $p$, to get a graph~$G_1$ which, with probability $1-o(n^{-2})$, has at most $pn^2$ edges and is $m$-joined. Note that $p|A|\geq pw\geq pn/10^6\geq 200\log n$, $d\leq w/24m=pw/240\log(np)$, and $2m\leq w/4d$. Therefore, by Lemma~\ref{generalexpand}, with probability $1-o(n^{-2})$, Property~\ref{AA2} holds with $A_0=A$ and $B=\emptyset$ (this is the crucial difference from the argument for the first part of the lemma). Revealing more edges with probability $10^{17}p$ to get~$G_2$, and letting $G=G_1\cup G_2$, by Lemma~\ref{hamcycleinsubgraph} we get, with probability $1-o(n^{-2})$, Property~\ref{AA3} for the new graphs $G_1$ and $G$. The argument for the first part of the lemma then shows that $V=V(G)\setminus (A\cup B)=V(G)\setminus A$ and $A$ have the property in the lemma, as required.
\oof
\fi

Lemma~\ref{almostalmostspan} allows us to embed some paths while forcing the remaining vertices into the set~$W$, where they can then be covered exactly by a cycle. Lemma~\ref{almostalmostspan} requires that $|W|\geq 2w \geq 8n/10^6$, which is too large for some of our applications, where we may have $|W|=\Theta(n/\log n)$. Roughly speaking, for a smaller set~$W$, by applying Lemma~\ref{almostalmostspan} repeatedly and finding the paths in stages we can force the uncovered vertices into smaller and smaller sets until they eventually lie within~$W$. This allows us to prove the following version of Lemma~\ref{almostalmostspan}, in which the set~$W$ can be smaller but one half of the vertex pairs are preselected. The lower bound for $w$ in Lemma~\ref{almostalmostspan2} is similar to the size of the set~$W$ in the discussion at the start of this section if $l=\Theta(k(T))$, and, similarly, it is important that we may expect our random graph induced on~$W$ to have sufficient average degree to allow us to form paths of length $l$ between many different vertex pairs.

Due to the desired universality (which will demand a choice of vertex pairs), we again construct a sequence of sets with increasing size and certain properties. Then, given any suitable collection of vertex pairs, we find the paths iteratively, while forcing the uncovered vertices into the sets in order of decreasing size. These sets will decrease by a constant multiple (of 10) each time, just like in the proof of Lemma~\ref{newembedfew}, while the probabilities we use in the iteration will similarly increase by a constant multiple each time, up to a final edge probability that is $O(\log n/n)$.

\lem\label{almostalmostspan2} Let $l,n,w\in \N$ satisfy $20\log n/\log\log n\leq l\leq 20\log n$ and
\[
\frac{n}{10^{20}\log n}\exp\left(\frac{20\log n}{l}\right)\leq w\leq \frac{n}{10^5}.
\]
Suppose $W,B\subset [n]$ are disjoint sets satisfying $|W|\geq 100w$ and $|B|\leq n/4$, and let $r= 4\lfloor n/4(10^4l)\rfloor$. With probability $1-o(n^{-1})$, the random graph $G=\GG(n,10^{50}\log n/2n)$ contains a subset $A\subset V(G)\setminus B$ with $|A|=3rl+w$, and distinct pairs $(x_i,y_i)$, $i\in[r/2]$, of vertices from $V(G)\setminus (B\cup A)$ so that the following property holds with $X=\cup_{i=1}^{r/2}\{x_i,y_i\}$.

Given any collection of $r/2$ disjoint pairs $(x_i,y_i)$, $i\in[r]\setminus[r/2]$, of vertices in $V(G)\setminus (A\cup X)$, we can find disjoint $x_i,y_i$-paths, $i\in[r]$, in~$G$, with length $3l+1$ and interior vertices in $A$, so that the $w$ vertices in $A$ which are not covered by the paths lie in~$W$ and there is a cycle in~$G$ through exactly those $w$ uncovered vertices in $A$.
\ma
\pr Let $\a\geq 0$ be the integer for which $n/10^5<5l\lfloor w/l\rfloor 10^{\a}/2\leq n/10^4$. We will apply the first part of Lemma~\ref{almostalmostspan} $\a$ times on sets with increasing size, before applying the second part of the lemma once. This is depicted in Figure~\ref{spanpic5A}, with the final sets we form depicted in Figure~\ref{spanpic5B}, while parts of the argument demonstrating these final sets satisfy the lemma are depicted in Figures~\ref{spanpic5C} and~\ref{spanpic5D}. For simplicity we assume in these figures that $B=\emptyset$; in general the subsets depicted in the figures are disjoint from $B$.

Let $w_0=w/l$, and, for each~$i\in[\a+1]$, let $w_i=\lfloor w/l\rfloor 10^{i}$. Let $W_0\subset W$ satisfy $|W_0|=10w_1l$. With probability $1-o(n^{-1})$, we will find disjoint sets $W_1,\ldots,W_\a\subset [n]$ and, for each $j\in[\a]$, a set $A_j\subset W_{j-1}$, so that $|A_j|=2w_{j}l+w_{j-1}l$, $|W_j|=10w_{j+1}l$, and the following property holds.

\stepcounter{capitalcounter}
\begin{enumerate}[label=\bfseries \Alph{capitalcounter}\arabic*]
\item Given any disjoint pairs $(a_i,b_i)$, $i\in[2w_j]$, of vertices from $W_{j}$, there are disjoint $a_i,b_i$-paths, with length $l+1$ and interior vertices in $A_{j}$, so that the $w_{j-1}l$ uncovered vertices in $A_{j}$ support a cycle in $G_j$, a subgraph of our final random graph $G$.
\label{B1}
\end{enumerate}

\setcounter{capitalcounterbackup}{\value{capitalcounter}}
\stepcounter{capitalcounterbackup}

To find these sets, let $d=10\exp(10\log n /l)\geq 10$ and carry out the following Step~{\bfseries \Alph{capitalcounterbackup}$_j$} for each~$j$,~$1\leq j\leq \a$. These steps are depicted in Figure~\ref{spanpic5A}.

\begin{figure}[t!]
\centering
    \begin{subfigure}[b]{0.6\textwidth}
        \centering
        \resizebox{\linewidth}{!}{
            \input{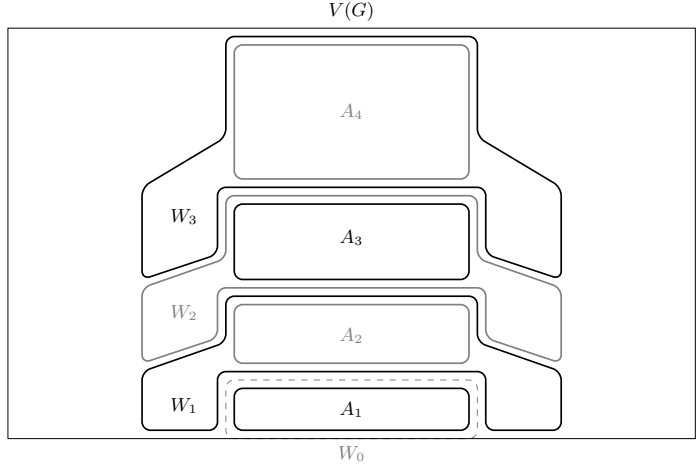}
        }
\label{spanpic4A1}
    \end{subfigure}

\vspace{-0.7cm}

\caption{The sets found in the random graph during the proof of Lemma~\ref{almostalmostspan2}, where here $\a$ is very small, with $\a=3$. Starting with the set $W_0\subset W$, we carry out Step~{\bfseries \Alph{capitalcounterbackup}}$_1$, to find the sets~$A_1$ and $W_1$ so that, given the appropriate number of disjoint pairs of vertices from $W_1$, we can connect these pairs with disjoint paths with length $l+1$ so that the remaining vertices in $A_1$ can be covered exactly with a cycle. Carrying out Steps~{\bfseries \Alph{capitalcounterbackup}}$_2$  and~{\bfseries \Alph{capitalcounterbackup}}$_3$, we similarly find $A_2$ and $W_2$, and then $A_3$ and~$W_3$. Finally, using Lemma~\ref{almostalmostspan}, we find the set $A_{\a+1}=A_4$ so that Property~\ref{B2} holds. That is,~$A_{\a+1}$ can similarly be covered by a cycle and paths connecting disjoint pairs of vertices, but the pairs of vertices may come from anywhere outside of $A_{\a+1}$.
} \label{spanpic5A}

\end{figure}

\begin{itemize}[label =\bfseries \Alph{capitalcounterbackup}$_j$ ]
\item Noting $\sum_{i=1}^{j-1}|W_i|= \sum_{i=1}^{j-1}10w_{i+1}l\leq 20w_j l\leq n/10^3$, pick $V_{j}\subset [n]\setminus(B\cup (\cup_{i< j-1}W_i))$ with $W_{j-1}\subset V_j$ and $|V_j|=10^3w_{j}l$. Reveal edges with probability $10^{28}d\log d/|V_{j}|$ to get the graph~$G_j$. Note that $w_{j-1}l\geq 4|V_{j}|/10^6$ and $|W_{j-1}|=10w_jl\geq 2w_jl+2w_{j-1}l$, so we may apply the first part of Lemma~\ref{almostalmostspan} to $G_j[V_{j}]$ and $W_{j-1}$ with values $(n,l,r,w)=(|V_{j}|,l,2w_j,w_{j-1}l)$. With probability $1-o(n^{-2})$, we can then find disjoint sets $A_{j}\subset W_{j-1}$ and $W_{j}\subset V_{j}\setminus W_{j-1}$ so that $|A_{j}|=2w_{j}l+w_{j-1}l$, $|W_{j}|=10w_{j+1}l\leq |V_{j}|/2-|W_{j-1}|$ and Property~\ref{B1} holds in~$G_j$. \label{Sj}
\end{itemize}

Note that, for each $j\in [\a]$, $W_{j}\subset V_{j}\subset V(G)\setminus B$, and recall that $W_0\subset W\subset V(G)\setminus B$. Eventually we will choose the desired set~$A$ as a subset of $\cup_{j=0}^\a W_j$, and thus we will have that $A\subset V(G)\setminus B$.

Note that the probability that the required set $A_j$ could be found for each $j$, $1\leq j\leq \a$, is $1-o(n^{-2}\a)=1-o(n^{-1})$.
Having completed these steps, let
\[
k'=\sum^{\a}_{i=1}w_j=\left\lfloor\frac wl\right\rfloor \sum_{i=1}^\a 10^i\leq \frac{5}{4}\cdot 10^\a\left\lfloor\frac wl\right\rfloor\leq \frac{n}{10^4(2l)}\leq \frac r2.
\]
Reveal edges between vertices in $[n]$ with probability $10^{28}\log n/n$ to get the graph $G_{\a+1}$ and let $G=\cup_{i=1}^{\a+1}G_i$. As $w_{\a}l=10^\a \lfloor w/l\rfloor l> 2n/5(10^5)=4n/10^6$ and
\begin{equation}\label{halften}
|W_{\a}|=10w_{\a+1} l> 200n/5(10^5)=4n/10^4\geq 3rl\geq 3(r-k')l+2w_{\a}l,
\end{equation}
we can apply the second part of Lemma~\ref{almostalmostspan} to $G_{\a+1}$ and $W_{\a}$ using $(n,l,r,w)=(n,3l,r-k',w_{\a}l)$. With probability $1-o(n^{-1})$, we get a set $A_{\a+1}\subset W_{\a}$ with $|A_{\a+1}|=3l(r-k')+w_{\a}l$ so that the following property holds.

\begin{enumerate}[label=\bfseries \Alph{capitalcounter}\arabic*]\addtocounter{enumi}{1}
\item Given any $r-k'$ disjoint pairs $(a_i,b_i)$, $i\in[r-k']$, of vertices from $V(G_{\a+1})\setminus A_{\a+1}$, there are disjoint $a_i,b_i$-paths, with length $3l+1$ and interior vertices in $A_{\a+1}$, so that the $w_{\a}l$ uncovered vertices in $A_{\a+1}$ support a cycle in $G_{\a+1}$.\label{B2}
\end{enumerate}

In total we have revealed edges with probability at most
\begin{align*}
10^{28}\frac{\log n}{n} +\sum_{i=1}^{\a} \frac{10^{28}d\log d}{|V_i|}&\leq 10^{28}\frac{\log n}{ n}+10^{28}\left(\frac{2}{|V_1|}\right)d^2\\
&\leq 10^{28}\frac{\log n}{n} + 10^{28}\left(\frac{1}{10^3w}\right)10^2\exp\left(\frac{20\log n}{l}\right)
\leq 10^{50}\frac{\log n}{2n},
\end{align*}
where we have used the lower bound for $w$ in the statement of the lemma.

For each $j\in[\a-1]$, note that $|W_j\setminus A_{j+1}|=10w_{j+1}l-(2w_{j+1}l+w_{j}l)\geq 2w_{j}$ and choose disjoint pairs $(u_{j,i},v_{j,i})$, $i\in [w_{j}]$, of vertices from $W_{j}\setminus A_{j+1}$. By~(\ref{halften}), $|W_\a|\geq 3l(r-k')+2w_\a l=|A_{\a+1}|+w_\a l$, so we may pick disjoint pairs $(u_{\a,i},v_{\a,i})$, $i\in [w_{\a}]$, of vertices from $W_{\a}\setminus A_{\a+1}\subset V(G)\setminus B$. Let $A=\cup_{i\leq\a+1}A_i$, noting that $|A|\leq 10w_\a l+3rl\leq n/2$. Choose disjoint pairs $(u_{\a+1,i},v_{\a+1,i})$, $i\in [r/2-k']$, of vertices from $V(G)\setminus (B\cup A \cup(\cup_{j\leq \a,i\in[w_j]} \{u_{j,i},v_{j,i}\}))$ (see Figure~\ref{spanpic5B}).
Label the vertices in the $r/2$ vertex pairs in both $\{(u_{j,i},v_{j,i}): j\in[\a],i\in [w_{j}]\}$ and $\{(u_{\a+1,i},v_{\a+1,i}):i\in [r/2-k']\}$, so that together we have the vertex pairs $\{(x_i,y_i):i\in[r/2]\}$.

\begin{figure}[t!]
\centering

\centering
    \begin{subfigure}[b]{0.6\textwidth}
        \centering
        \resizebox{\linewidth}{!}{
            \input{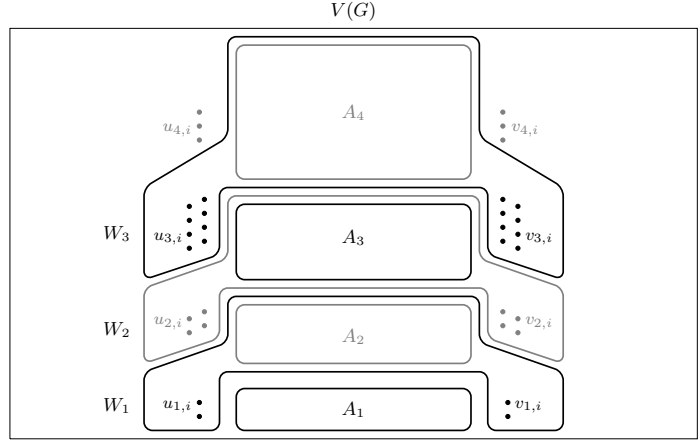}
        }
\label{spanpic4A1}
    \end{subfigure}

\vspace{-0.4cm}

\caption{Continuing with the structure found in Figure~\ref{spanpic5A} and with $\a=3$, for each $j\in[\a]$ we pick disjoint pairs $(u_{j,i},v_{j,i})$, $i\in [w_j]$, of vertices from $W_j\setminus A_{j+1}$, and, further disjoint pairs $(u_{\a+1,i},v_{\a+1,i})$, $i\in [r/2-k']$ in $V(G)\setminus (\cup_{i\in [\a+1]}A_i)$. We let $X$ be the set of all 
these vertices. We relabel these vertices as $(x_i,y_i)$, $i\in [r/2]$, for the statement of the lemma, but use the first labels in Figures~\ref{spanpic5C} and~\ref{spanpic5D}.
We then let $A=\cup_{i\in[\a+1]}A_i$, which, with the pairs of vertices in $X$, will satisfy the requirements of Lemma~\ref{almostspan}.
} \label{spanpic5B}
\end{figure}

\stepcounter{capitalcounter}
\stepcounter{capitalcounter}

\setcounter{STEPJcounter}{\value{capitalcounter}}

We claim the conclusion of the lemma holds with the vertex pairs $(x_i,y_i)$, $i\in[r/2]$, and the set~$A$. Indeed, suppose we have $r/2$ disjoint vertex pairs $(x_i,y_i)$, $i\in [r]\setminus[r/2]$ in $V(G)\setminus (A\cup X)$, where $X=\cup_{i=1}^{r/2}\{x_i,y_i\}$.
By Property~\ref{B2}, we can find disjoint $x_i,y_i$-paths, $i\in [r]\setminus[k']$, with length $3l+1$ and internal vertices in $A_{\a+1}$, so that the $w_{\a}l$ uncovered vertices in $A_{\a+1}$ support a cycle, $Q_{\a+1}$ say.
Now, for~$j$ decreasing from $\a$ down to 1, carry out the following Step~$\mathbf{\Alph{capitalcounter}}_j$ which, given a cycle $Q_{j+1}$ in~$A_{j+1}$, with $|Q_{j+1}|=w_jl$, finds disjoint $u_{j,i},v_{j,i}$-paths, $i\in[w_j]$, with length $3l+1$ and  interior vertices in $V(Q_{j+1})\cup A_j$ so that the $w_{j-1}l$ uncovered vertices lie in $A_j$ and support a cycle, which we call $Q_j$. A typical Step~{\bfseries \Alph{capitalcounter}$_j$} is depicted in Figures~\ref{spanpic5C} and~\ref{spanpic5D}.


\begin{itemize}[label =\bfseries \Alph{capitalcounter}$_j$ ]
\item Take the cycle $Q_{j+1}$ and break it into $w_j$ disjoint paths with length $l-1$. Let the pairs of endvertices of these paths be $(u'_{j,i},v'_{j,i})$, $i\in[w_j]$. These vertices lie in $A_{j+1}\subset W_j$, and the vertices $u_{j,i}$, $v_{j,i}$, $i\in[w_j]$ lie in $W_j\setminus A_{j+1}$. By Property~\ref{B1}, we can find disjoint $u_{j,i},u'_{j,i}$-paths and disjoint $v'_{j,i},v_{j,i}$-paths with length $l+1$ and interior vertices in $A_{j}$ so that the uncovered vertices in $A_j$ form a cycle, $Q_{j}$ say, with $|Q_j|=w_{j-1}l$. For each $i\in[w_j]$, by combining the $u_{j,i},u'_{j,i}$-path and the $v'_{j,i},v_{j,i}$-path with the $u'_{i,j},v'_{i,j}$-path from the cycle $Q_{j+1}$, we have a $u_{j,i},v_{j,i}$-path with length $3l+1$.
\end{itemize}


When all the steps are completed, we have a disjoint set of $x_i,y_i$-paths, $i\in [r]$, with length $3l+1$ and internal vertices in $A$, and the final cycle $Q_1$ covers the $w_0l=w$ unused vertices in $A$, which are all in $A_1\subset W_0\subset W$.
\oof

\begin{figure}[p]
\vspace{-0.5cm}

\centering
    \begin{subfigure}[b]{0.75\textwidth}
        \centering
        \resizebox{\linewidth}{!}{
            \input{spanpic5C1}
        }
\label{spanpic5C1}
    \end{subfigure}

\vspace{-0.5cm}

\caption{To show that the set $A=\cup_{i\in[\a]}A_i$ and vertex pairs in $X$, as found in the proof of Lemma~\ref{almostalmostspan2}, satisfy the conditions of that lemma, where here $\a=3$, we pick arbitrary disjoint pairs $(x_i,y_i)$, $i\in [r]\setminus [r/2]$, of vertices from $V(G)\setminus (A\cup X)$. Adding to these pairs the vertex pairs $(u_{\a+1,i},v_{\a+1,i})$, $i\in [r/2-k']$, from $X$, here shown in grey, we use Property~\ref{B2} to find paths connecting these vertex pairs using vertices in $A_{\a+1}$, and cover the remaining vertices in $A_{\a+1}$ by a cycle,~$Q_{\a+1}$. We are now ready to start the Steps~$\mathbf{\Alph{STEPJcounter}}_j$ for $j$ from $\a$ to $1$, as depicted in Figure~\ref{spanpic5D}.
} \label{spanpic5C}

\vspace{0.5cm}

\centering
    \begin{subfigure}[b]{0.75\textwidth}
        \centering
        \resizebox{\linewidth}{!}{
            \input{spanpic5D1}
        }
\label{spanpic5D1}
    \end{subfigure}

\vspace{-0.5cm}

\caption{A typical Step~$\mathbf{\Alph{STEPJcounter}}_j$ in the proof of Lemma~\ref{almostalmostspan2}, here shown for $j=3$. We take the cycle $Q_{j+1}$, as shown in Figure~\ref{spanpic5C}, and divide it into subpaths with new endvertices giving the pairs $(u'_{j,i},v'_{j,i})$, $i\in [w_j]$. Recalling the set $W_j$ from Figure~\ref{spanpic5B} we see that it contains these new vertices, as well as the vertices $u_{j,i}$, $v_{j,i}$, $i\in [w_j]$. Therefore, we can use Property~\ref{B1} to find the $u_{j,i},v_{j,i}$-paths, $i\in [w_j]$, using vertices from $A_j$ and the vertices in $Q_{j+1}$, while covering the other vertices in $A_j$ by the cycle $Q_j$. The paths found so far then cover $A_{j+1}$, and all of $A_j$ except for the cycle $Q_j$, which we use to start the next step. When all the Steps~$\mathbf{\Alph{capitalcounter}}_j$ from $j=\a$ to $j=1$ have been completed, all of $A=\cup_iA_i$ will be covered with paths, except for the final cycle $Q_1$, which lies in $A_1$, and hence~$W_0$, as required.  
} \label{spanpic5D}
\end{figure}

We can now prove the key result of this section. Given a subset~$W$ and a tree with many bare paths, we embed the tree so that it covers all the vertices in~$W$ while the uncovered vertices support a cycle. The statement of the lemma, again, caters to our universality requirements.

\lem\label{almostspan} Let $l,n,w,\Delta\in\N$ satisfy $20\log n/\log\log n\leq l\leq 20\log n$, and
\[
\frac{n}{10^{20}\log n}\exp\left(\frac{20\log n}{l}\right)\leq w\leq \frac{n}{10^5}.
\]
Suppose $W,B\subset [n]$ are disjoint sets satisfying $|W|\geq 100w$ and $|B|\leq n/4$. In the random graph $G=\GG(n,10^{50}\log n/n)$, there is, with probability $1-o(n^{-1})$, a subset $Z\subset V(G)\setminus B$ with $|Z|\leq 5n/10^4$ with the following property.

Let~$T$ be a tree with $\Delta(T)\leq \Delta$, which contains at least $n/2(10^4l)$ vertex disjoint bare paths with length $10l+1$. Suppose $V\subset V(G)$ satisfies $Z\subset V$ and $|V|=|T|+w$. Let $v\in V\setminus Z$ and $t\in V(T)$. Then there is a copy $S$ of~$T$ in $G[V]$ with~$t$ copied to $v$ so that $(V\setminus V(S))\subset W$ and  $G[V\setminus V(S)]$ is Hamiltonian.
\ma
\begin{rmk}
The Hamiltonicity of $G[V\setminus V(S)]$ in Lemma~\ref{almostspan} is only used in Cases C and D where after this lemma is used the vertices corresponding to $V\setminus V(S)$ are embedded as sections of bare paths. In Case B, these vertices are embedded as leaves, for which such Hamiltonicity is not used.
\end{rmk}
\pr[Proof of Lemma~\ref{almostspan}] 
\textbf{Revealing the random graph.}
Let $r=4\lfloor n/4(10^4l)\rfloor$. Reveal edges within the vertex set $[n]$ with probability $10^{50}\log n/2n$ to get the graph $G_1$. By Lemma~\ref{almostalmostspan2}, with probability $1-o(n^{-1})$ we can take a subset $A\subset [n]\setminus B$ and disjoint pairs $(x_i,y_i)$, $i\in[r/2]$, of vertices in $[n]\setminus (B\cup A)$, so that $|A|=3rl+w$ and the following holds with the set $X=\cup_{i=1}^{r/2}\{x_i,y_i\}$.
\stepcounter{capitalcounter}
\begin{enumerate}[label =\bfseries \Alph{capitalcounter}\arabic*]
\item Given any collection of $r/2$ disjoint pairs $(x_i,y_i)$, $i\in[r]\setminus[r/2]$, of vertices in $[n]\setminus (A\cup X)$, we can find vertex disjoint $x_i,y_i$-paths, $i\in[r]$, in $G_1$, with length $3l+1$ and interior vertices in $A$, so that the $w$ vertices in $A$ which are not covered by the paths lie in~$W$ and there is a cycle in~$G_1$ through exactly those $w$ uncovered vertices in $A$.
\label{P1}
\end{enumerate}

Take a set $Y\subset V(G_1)\setminus (B\cup A\cup X)$ with $|Y|=n/10^4$. Let $d=\log n/10^6\log\log n$ and $m=n/10^8d$. Reveal edges with probability $p=10^{20}\log n/n$ to get the graph~$G_2$. By Propositions~\ref{alledges} and~\ref{generalprops} and Lemma~\ref{generalexpand}, with probability $1-o(n^{-1})$,~$G_2$ is $m$-joined, has at most $pn^2$ edges and the following property holds.
\begin{enumerate}[label=\bfseries \Alph{capitalcounter}\arabic*]\addtocounter{enumi}{1}
\item If $U\subset V(G)$ and $|U|\leq 2m$, then $|N_{G_2}(U,Y)|\geq d|U|$. \label{P2}
\end{enumerate}

Reveal more edges with probability $10^{17}p$, and let~$G$ be the union of all the revealed edges. As $e(G_{2})\leq pn^2$, by Lemma~\ref{hamcycleinsubgraph}, with probability $1-o(n^{-1})$ the following property holds.
\begin{enumerate}[label=\bfseries \Alph{capitalcounter}\arabic*]\addtocounter{enumi}{2}
\item If $H\subset G_{2}$ is a $(|H|,2)$-expander, and $|H|\geq 4n/10^{6}$, then $V(H)$ supports a cycle in~$G$. \label{P4}
\end{enumerate}

Note that each edge has been revealed with probability at most $10^{50}\log n/2n+(10^{17}+1)p$, which, as required, is at most $10^{50}\log n/n$.

\textbf{Embedding the tree.}
Let $Z=Y\cup A\cup X\subset V(G)\setminus B$. We will show that the set $Z$ satisfies the lemma. Firstly,
\[
|Z|=\frac{n}{10^4}+(3rl+w)+r\leq \frac{n}{10^4}+\frac{3n}{10^4}+12l+w+r\leq\frac{5n}{10^4},
\]
as required. Suppose then that~$T$ is a tree, with $\Delta(T)\leq \Delta$, which has at least $n/2(10^4l)\geq r/2$ vertex disjoint bare paths with length $10l+1$, and let $V\subset V(G)$ be a set satisfying $Z\subset V$ and $|V|=|T|+w$. Suppose further that we have vertices $v\in V\setminus Z$ and $t\in V(T)$.

Disjointly, pick $r/2$ bare paths with length $5l-1$ and $r/4$ bare paths with length $10l+1$ from the tree~$T$, and remove them to get the forest $T'$. Replace these paths with dummy edges to get a tree,~$T''$ say, with maximum degree at most $\Delta$. Note that $|T|-|T''|=r(5l-2)/2+10rl/4=5rl-r$. By Property~\ref{P2}, and Proposition~\ref{uttriv}, $I(\{v\}\cup X)$ is $(d,m)$-extendable in $G_{2}[V\setminus A]$. Furthermore,
\[
|V\setminus A|-|T''|-|X|=|T|+w-|T''|-|A|-|X|= 5rl-r-3rl-r\geq rl\geq n/2(10^4)\geq 10dm.
\]
As~$G_2$ is $m$-joined, by Corollary~\ref{treebuild} applied to the subgraph $I(\{v\}\cup X)$ in the graph $G_2[V\setminus A]$, there is a copy $S''$ of $T''$ in $G_{2}[V\setminus(A\cup X)]$ so that $v$ is a copy of~$t$ and $S''\cup I(X)$ is $(d,m)$-extendable in $G_2[V\setminus A]$.

Remove from $S''$ the copies of the dummy edges in $T''$, to get a copy $S'$ of $T'$. By the definition of extendability, and as each vertex was in at most one of the removed paths, and hence at most one dummy edge, $S'\cup I(X)$ is $(d-1,m)$-extendable in $G_{2}[V\setminus A]$. Label vertices within $S''$ appropriately as $a_i$, $b_i$, $i\in [r/2]$, and $u_i$, $v_i$, $i\in[r/4]$, so that to extend $S'$ into a copy of~$T$ we need to find vertex disjoint $a_i,b_i$-paths, $i\in[r/2]$, of length $5l-1$ and vertex disjoint $u_i,v_i$-paths, $i\in[r/4]$, of length $10l+1$.

For each $i\in[r/2]$, use Corollary~\ref{trivial} (once the conditions are checked) to extend the subgraph $S'\cup I(X)$ by adding disjointly an $a_i,x_i$-path and a $y_i,b_i$-path in $G_{2}[V\setminus A]$, both with length $l-1$ and interior vertices in $V\setminus(A\cup V(S')\cup X)$, so that the final subgraph, $S$ say, is $(d-1,m)$-extendable in $G_{2}[V\setminus A]$. To check the conditions, observe that, as the paths have length $l-1\geq 20\log n/\log\log n-1\geq 2\lceil\log n/\log (d-2)\rceil+1$, the length condition holds in the applications of Corollary~\ref{trivial}. The working subgraphs to which we apply Corollary~\ref{trivial} have size at most
\[
|S|=|S'|+|X|+r(l-2)=|S'|+r(l-1)=|T|-(5rl-r)+r(l-1)=|T|-4rl.
\]
Therefore, the size condition for each application of Corollary~\ref{trivial} holds as
\begin{align}
|V\setminus A|-|S|= |T|+w-|A|-(|T|-4rl)=4rl-(|A|-w)=rl\geq 10dm+l.\label{tenforty}
\end{align}
As~$G_2$ is $m$-joined, we have thus all the conditions we need for the applications of Corollary~\ref{trivial}.

By~\eqref{tenforty}, we have $|V\setminus (V(S)\cup A)|=rl$. Using the definition of the $(d-1,m)$-extendability of~$S$ in $G_2[V\setminus A]$, for every set $U\subset V\setminus (V(S)\cup A)$ with $|U|\leq m$, we have
\[
|N_{G_2}(U,V\setminus (V(S)\cup A))|\geq |N'_{G_2[V\setminus A]}(U)\setminus V(S)|-|U|\geq (d-1)|U|-|U|\geq 2|U|,
\]
as $d\geq 10$. If $U\subset V\setminus (V(S)\cup A)$ with $m\leq |U|\leq rl/4$, then, as $G_2$ is $m$-joined,
\[
|N_{G_2}(U,V\setminus (V(S)\cup A))|\geq |V\setminus (V(S)\cup A)|-|U|-m\geq rl-rl/4-m\geq rl/2\geq 2|U|.
\]
Therefore, $G_{2}[V\setminus (V(S)\cup A)]$ is an $(rl,2)$-expander. By Property~\ref{P4}, there is a cycle, $Q$ say, in~$G$ with $V(Q)=V\setminus (V(S)\cup A)$.

Our final step closely resembles a typical Step~{\bfseries \Alph{STEPJcounter}$_j$}, which was used in the proof of Lemma~\ref{almostalmostspan2} and depicted in Figure~\ref{spanpic5D}. Take the cycle $Q$ and break it into $r/4$ vertex disjoint paths of length $4l-1$. Call the endvertices of each resulting path $u'_{i}$ and $v'_{i}$, $i\in[r/4]$. Take the $r/2$ disjoint pairs of vertices $(u_{i},u'_{i})$ and $(v_{i},v'_{i})$, $i\in[r/4]$, and, using Property~\ref{P1}, connect these pairs, and the pairs $(x_i,y_i)$, $i\in [r/2]$, with vertex disjoint paths with length $3l+1$ and interior vertices in $A$ so that the unused vertices in $A$ lie in~$W$ and form a cycle. For each $i\in[r/2]$, the $x_i,y_i$-path can be used, along with the $a_i,x_i$-path and the $y_i,b_i$-path we found, to extend the copy of $T'$ by replacing a deleted path of length $5l-1$. For each $i\in[r/4]$, the $u_i,u'_{i}$-path, the $u'_i,v'_i$-path from $Q$, and the $v_i',v_i$-path can be combined to extend the copy of $T'$ by replacing a deleted path of length $10l+1$. \nopagebreak This completes the copy of $T$ with the required properties.
\oof

\pagebreak[2]

\section{Case B}\label{8caseB}

The embedding for Case A works because we can find a matchmaker set for $V(G)$ (defined in Section~\ref{7caseA}) with size comfortably smaller than $\lambda(T)$, for any tree $T\in\TT_A(n,\Delta)$. Then, by the definition of $\l(T)$ we could find a large 20-separated collection of leaves, and embed part of~$T$ so that the neighbours of these leaves covered the matchmaker set. As remarked in Section~\ref{1boutline}, the minimum size for a matchmaker set in $V(G)$ is closely linked to the parameter $m$, the minimum integer $m$ such that $G$ is $m$-joined. Using calculations similar to those in Proposition~\ref{generalprops}, we can see that in $G=\GG(n,p)$, with $p=\Theta(\log n/ n)$, we almost surely have $m=\Theta(n\log\log n/\log n)$. We thus would need at least $\Theta(n\log\log n/\log n)$ neighbours of well separated leaves to cover a matchmaker set for $V(G)$. In Case~B, we can only guarantee at least $\Theta(n/k(T))$ well separated leaves. 

If we take a fixed subset $Z_1\subset [n]$, and reveal edges with probability $p=\Theta(\log n/n)$, then, by Proposition~\ref{generalprops}, we may expect an edge between any two subsets of $Z_1$ with size $\Theta(\log (|Z_1|p)/p)$. Thus, if $|Z_1|=(n/\log n)\exp(\Theta(\log n/k(T)))$, then we may expect an edge between any two subsets of $Z_1$ with size $\Theta(n/k(T))$. This will allow us to find a set $Z_1$ with such a size, along with two sets~$M$ and $L_0$, with $|M|=\Theta(n/k(T))$ and $|L_0|=\Theta(n/k(T))$, which function as our matchmaker sets for $Z_1$ (although, technically, by our definition~$M$ will be a matchmaker set for $Z_1\cup L_0$, while~$L_0$ will be a matchmaker set for~$M$). Therefore, with well chosen constants, for any tree in Case~B, we will be able to use Lemma~\ref{embedparentsfinal} to show that we can cover~$M$ with the neighbours of well separated leaves in~$T$. Importantly, the set $Z_1$ is large enough to allow us to apply Lemma~\ref{almostspan} with $l=\Theta(k(T))$.


More precisely, once we have found these sets $M$ and $L_0$, we will divide a tree $T\in\TT_B(n,\Delta)$ into three subtrees, $T_1$, $T_2$ and $T_3$, so that $|T_1|=\Theta(n)$, $|T_2|=\Theta(n)$, $|T_3|=\Theta(|Z_1|)$, $T_1$ has many disjoint bare paths with length $k=k(T)$ and~$T_2$ has a large 20-separated collection of leaves. In Stage 1, we remove some of these leaves from $T_2$, to get $T_2'$, and then embed $T_2'$ in such a way that the matchmaker set~$M$ is contained by the image of vertices in $T_2'$ which need leaves added in order to extend $T_2'$ to~$T_2$. In Stage 2, we use Lemma~\ref{almostspan} to extend the embedding to $T_1$ so that it covers the unused vertices not in $Z_1\cup L_0$. In Stage 3, we use vertices from $Z_1$ to embed $T_3$. In Stage 4, we complete the embedding of~$T_2$, and hence of~$T$, by attaching the final uncovered vertices as leaves. As previously, we reveal edges to find sets and properties we will need, working backwards through these stages, before taking a tree and carrying out the appropriate embedding.


\pr[Proof of Theorem~\ref{unithres} in Case B] For each tree $T\in \TT_B(n,\Delta)$, by definition, we have $10^3\log n/\log\log n\leq k(T)< 10^2\log n$. We will further divide the trees in this case based on the value of~$k(T)$, showing that, for each~$k$ in this range, with probability $1-o(n^{-1})$, all of the trees $T\in \TT(n,\Delta)$ with $k(T)=k$ can be found simultaneously in the random graph $\GG(n,10^{52}\Delta^2\log n/ n)$. Therefore, almost surely, all of the trees in $\TT_B(n,\Delta)$ can be found simultaneously in the random graph $\GG(n,10^{52}\Delta^2\log n/ n)$.

Let $k$ then satisfy $10^3\log n/\log\log n\leq k<10^2\log n$. We will reveal the edges within the vertex set $[n]$ in several rounds, with each possible edge present, in total, with probability at most $10^{52}\Delta^2\log n/ n$. In what follows we will use various different subsets while developing different properties. The construction of some of these sets are depicted in Figures~\ref{spanpic6A0} and~\ref{spanpic6AA}. The only sets that we need for the final embedding are depicted in Figure~\ref{spanpic6A}. 


\textbf{Revealing the random graph.}
Let $l=\lfloor (k-1)/10 \rfloor$, so that $50\log n/\log\log n\leq l\leq 10\log n$. Let
\[
w=\left\lceil \frac{n}{10^8\log n}\exp\left(\frac{20\log n}{l}\right)\right\rceil\leq \frac{2n}{10^8\log n}\exp\left(\frac{\log\log n}{2}\right)
= \frac{2n}{10^8\sqrt{\log n}}.
\]
Let $\lambda=n/10^{10}k$, so that, as $e^x\geq x$ for all $x\in \R$,
\begin{equation}\label{tenfourfive}
w\geq\frac{n}{10^8\log n}\exp\left(\frac{20\log n}{l}\right)\geq \frac{n}{10^8\log n}\left(\frac{20\log n}{l}\right)=\frac{2n}{10^7l}\geq 10\lambda.
\end{equation}
Take a subset $Z_0\subset [n]$ with $|Z_0|=250w$. Reveal edges within the vertex set $[n]$ with probability $p=10^{16}\Delta^2\log n/ n$ to get the graph $G_1$. Note that $wp\leq 10^9\Delta^2\exp(20\log n/l)$. Therefore, $\log(|Z_0|p)=\log(250wp)\leq \log(10^{12}\Delta^2)+ 20\log n/l\leq 40(\Delta+\log n/l)$. Let
\begin{equation}\label{tenstar}
m_{\l}=\frac{10}{p}\log(|Z_0|p)\leq\frac{400n}{10^{16}\Delta^2 \log n}\left(\Delta+\frac{\log n}{l}\right)\leq \frac{n}{10^{12}k\Delta}=\frac{\l}{10^2\Delta}.
\end{equation}
By Proposition~\ref{generalprops}, with probability $1-o(n^{-1})$ we have the following property.
\stepcounter{capitalcounter}
\begin{enumerate}[label =\bfseries \Alph{capitalcounter}\arabic*]
\item Any two disjoint subsets in $Z_0$ with size $m_{\l}$ have some edge between them in $G_1$.\label{notQ1}
\end{enumerate}
\textbf{Properties for Stage 4.}
Recalling (\ref{tenfourfive}), take disjoint subsets $A_1$, $A_2\subset Z_0$ so that $|A_1|=\l$ and $|A_2|=\l/2$. Consider the bipartite subgraph $H$ induced in $G_1$ by the vertex classes $A_1$ and $Z_0\setminus A_1$. Note that, by (\ref{tenstar}), $|A_1|,|A_2|\geq 50m_\l$. Using Property~\ref{notQ1}, and Proposition~\ref{neat2}, take $B_1\subset Z_0$ to be a subset, with $|B_1|\leq 2m_{\l}$, so that if $U\subset Z_0\setminus B_1$ and $|U|\leq m_\lambda$, then $|N_H(U, (A_1\cup A_2)\setminus B_1)|\geq|U|$. Let $M=A_1\setminus B_1$ and $L_0=A_2\setminus B_1$, so that we have the following property .
\begin{enumerate}[label=\bfseries \Alph{capitalcounter}\arabic*]\addtocounter{enumi}{1}
\item If $U\subset Z_0\setminus B_1$ and $|U|\leq m_\lambda$, then $|N_H(U, M\cup L_0)|\geq|U|$.\label{notQ2either}
\end{enumerate}
Note that $|M|-|L_0|\geq |A_1|-m_{\l}-|A_2|\geq \lambda/4$.

Let $Z_1=Z_0\setminus(A_1\cup A_2 \cup B_1)$, so that we have completed the construction shown in Figure~\ref{spanpic6A0}.
Suppose we have a subset $U_0\subset Z_1$ with $|U_0|=|M|-|L_0|$. If $U\subset U_0\cup L_0$ and $|U|\leq m_\lambda$, then, by Property~\ref{notQ2either}, $|N_{G_1}(U,M)|=|N_H(U,M\cup L_0)|\geq |U|$. If $U\subset M$ and $|U|\leq m_\lambda$, then, by Property~\ref{notQ2either},
\[
|N_{G_1}(U,U_0\cup L_0)|\geq |N_{G_1}(U,L_0)|=|N_H(U,M\cup L_0)|\geq |U|.
\]
Therefore, by Property~\ref{notQ1} and Lemma~\ref{matchings}, there is a matching between $U_0\cup L_0$ and~$M$. Thus, we have the following property.
\begin{enumerate}[label=\bfseries \Alph{capitalcounter}\arabic*]\addtocounter{enumi}{2}
\item If $U\subset Z_1$ and $|U|=|M|-|L_0|$, then there is a matching between $U\cup L_0$ and~$M$.\label{notQ2}
\end{enumerate}

\begin{figure}[t]
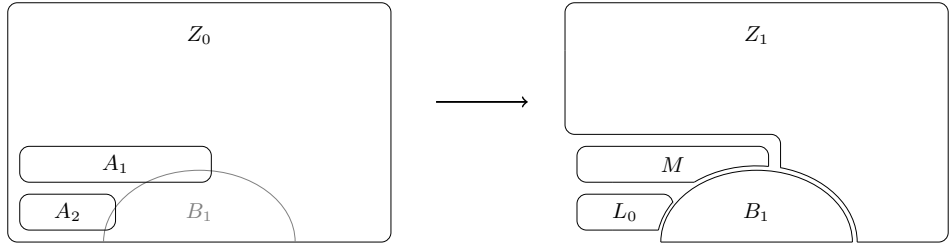

\centering
    \begin{subfigure}[b]{0.35\textwidth}
        \centering
        \resizebox{\linewidth}{!}{
            \input{spanpic6A01}
        }
\label{spanpic6A01}
    \end{subfigure}
\begin{subfigure}[b]{0.1\textwidth}
        \centering
        \resizebox{\linewidth}{!}{
{\scalefont{0.4}
\begin{tikzpicture}[scale=0.7]
\draw [white] (0,0) -- (0,6) -- (2,6) -- (2,0);
\draw [thick,->] (0,3.75) -- (2,3.75);
\end{tikzpicture}
}
}
    \end{subfigure}
    \begin{subfigure}[b]{0.35\textwidth}
        \centering
        \resizebox{\linewidth}{!}{
            \input{spanpic6A02}
        }
\label{spanpic6A02}
    \end{subfigure}

\vspace{-0.5cm}

\caption{Finding subsets $Z_1$, $M$, $L_0$ and $B_1$ of $Z_0$ in the proof of Theorem~\ref{unithres} in Case~B.} \label{spanpic6A0}
\end{figure}

Later, when we embed a tree, the set~$M$ will be covered by the neighbours of leaves. We will take vertices from our tree which need leaves attached to them and use them to cover~$M$. The set $L_0$ will be reserved until the end of our embedding and used, along with the final uncovered vertices, to attach leaves to the set~$M$ using Property~\ref{notQ2}.


\textbf{Properties for Stage 3.}
Note that, as $Z_1=Z_0\setminus(A_1\cup A_2 \cup B_1)$, we have
\[
|Z_1|\geq |Z_0|-|A_1|-|A_2|-|B_1|\geq 250w-2\l\geq 240w.
\]
Take a set $Z_2\subset Z_1$ with $|Z_2|=20w\geq 200\l\geq  10^4\Delta m_\l$, using (\ref{tenfourfive}) and~(\ref{tenstar}). Using Property~\ref{notQ1} and Proposition~\ref{neat}, take a subset $B_2\subset Z_1$, with $|B_2|\leq m_\l$, so that, for each set $U\subset Z_1\setminus B_2$ with $|U|\leq 2m_{\l}$, we have that $|N(U,Z_2\setminus B_2)|\geq 2\Delta|U|$. Let $Z_3=Z_2\setminus B_2$, so that $|Z_3|\leq 20w$. Let $Z_4=Z_1\setminus (Z_3\cup B_2)=Z_1\setminus (Z_2\cup B_2)$, and pick $v_0\in Z_3$. 

Suppose $U_0\subset Z_4$. For any subset $U\subset U_0\cup Z_3$ with $1\leq |U|\leq 2m_\lambda$, 
\[
|N(U,U_0\cup Z_3)\setminus\{v_0\}|\geq |N(U,Z_3)\setminus\{v_0\}|\geq 2\Delta|U|-1\geq \Delta|U|.
\]
Therefore, by Proposition~\ref{uttriv}, we have the following property.
\begin{enumerate}[label=\bfseries \Alph{capitalcounter}\arabic*]\addtocounter{enumi}{3}
\item If $U_0\subset Z_4$, then $I(\{v_0\})$ is $(\Delta,m_\lambda)$-extendable in $G_1[U_0\cup Z_3]$.\label{Q3}
\end{enumerate}

\textbf{Properties for Stage 2.}
Note that $|Z_4|= |Z_1|-|Z_3|-|B_2|\geq 200w$. Divide the set $Z_4$ into $Z_5$ and $Z_6$, so that $|Z_5|,|Z_6|\geq 100w$ and we have completed the construction shown in Figure~\ref{spanpic6AA}.
Let $Y=Z_3\cup Z_5\cup M\cup L_0\subset Z_0$, and note that
\begin{equation}\label{Ybound}
|Y|\leq |Z_0|= 250w=o(n).
\end{equation}
Reveal more edges among the vertex set $[n]$ with probability $10^{50}\log n/ n$ to get the graph $G_2$. With probability $1-o(n^{-1})$, by Lemma~\ref{almostspan} applied to the graph $G_2$ with the sets $W=Z_6\subset Z_4$ and~$B=Y$, there is a set $W_0\subset V(G_2)\setminus Y$ with $|W_0|\leq 5n/10^4$ which satisfies the following property.
\begin{enumerate}[label=\bfseries \Alph{capitalcounter}\arabic*]\addtocounter{enumi}{4}
\item If $V\subset [n]$ and $v\in V\setminus W_0$, with $W_0\subset V$, then, given any tree~$T$ with $|T|=|V|-w$ which contains at least $n/2(10^4)l$ bare paths of length $10l+1$ and a chosen vertex $t\in V(T)$, there is a copy $S$ of~$T$ in $G_2[V]$ so that $(V\setminus V(S))\subset Z_4$, and $t$ is copied to $v$.\label{Q4}
\end{enumerate}

\begin{figure}[b]
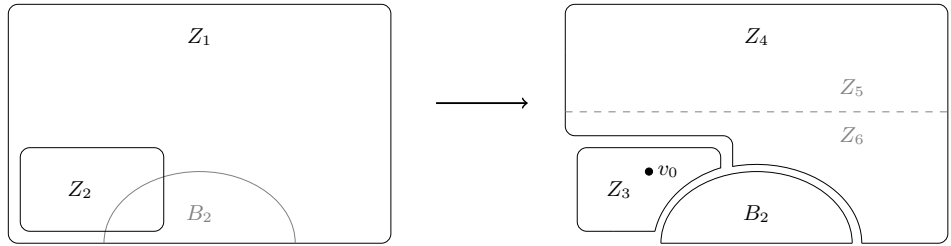

\centering
    \begin{subfigure}[b]{0.35\textwidth}
        \centering
        \resizebox{\linewidth}{!}{
            \input{spanpic6AA1}
        }
\label{spanpic6AA1}
    \end{subfigure}
\begin{subfigure}[b]{0.1\textwidth}
        \centering
        \resizebox{\linewidth}{!}{
{\scalefont{0.4}
\begin{tikzpicture}[scale=0.7]
\draw [white] (0,0) -- (0,6) -- (2,6) -- (2,0);
\draw [thick,->] (0,3.75) -- (2,3.75);
\end{tikzpicture}
}
}
    \end{subfigure}
\begin{subfigure}[b]{0.35\textwidth}
        \centering
        \resizebox{\linewidth}{!}{
            \input{spanpic6AA2}
        }
\label{spanpic6AA2}
    \end{subfigure}

\vspace{-0.5cm}

\caption{Finding subsets $Z_3$, $Z_4$, $Z_5$ and $Z_6$ of $Z_1$ in the proof of Theorem~\ref{unithres} in Case~B.} \label{spanpic6AA}
\end{figure}

\textbf{Properties for Stage 1.}
Let $X=[n]\setminus(W_0\cup Y)$, noting that, by (\ref{Ybound}), $|X|\geq n-5n/10^4-o(n)$. Let
\[
d=\log n/\log\log n\;\;\;\text{ and }\;\;\;m=n/10^6d.
\]
Reveal more edges among the vertex set $[n]$ with probability $10^{20}\log n/ n$ to get the graph $G_3$. By Corollary~\ref{generalexpandcor}, with probability $1-o(n^{-1})$ we have the following property.
\begin{enumerate}[label=\bfseries \Alph{capitalcounter}\arabic*]\addtocounter{enumi}{5}
\item The subgraph $I(M\cup\{v_0\})$ is $(d,m)$-extendable in $G_3[X\cup M\cup \{v_0\}]$. \label{Q5}
\end{enumerate}
 By Proposition~\ref{generalprops}, with probability $1-o(n^{-1})$ we have the following property.
\begin{enumerate}[label=\bfseries \Alph{capitalcounter}\arabic*]\addtocounter{enumi}{6}
\item The graph $G_3$ is $m$-joined.\label{Q6}
\end{enumerate}
Let $G=G_1\cup G_2\cup G_3$, noting that the properties we have accumulated continue to hold under the addition of edges, and, hence, hold for the graph $G$. Each possible edge has been revealed with total probability at most $10^{52}\Delta^2\log n/ n$. We are now ready to embed any tree $T\in\TT_B(n,\Delta)$ with $k(T)=k$. 

\begin{figure}[t!]
\centering
    \begin{subfigure}[b]{0.55\textwidth}
        \centering
        \resizebox{\linewidth}{!}{
            \input{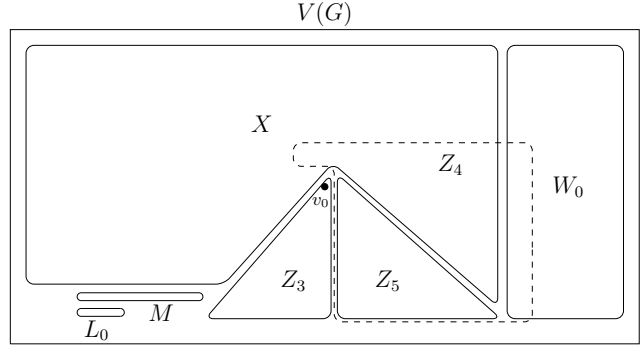}
        }
\label{spanpic6A1}
    \end{subfigure}

\vspace{-0.4cm}

\caption{Important sets found in the random graph $G$ in the proof of Theorem~\ref{unithres} in Case~B, along with the vertex $v_0\in Z_3$. The sets $L_0$, $M$, $Z_3$, $Z_5$, $W_0$ and $X$ partition $V(G)$. As well as keeping the set $Z_4$ in mind, we remember that $Z_3\cup Z_4\subset Z_1$.} \label{spanpic6A}
\end{figure}

\medskip

\textbf{Embedding the tree: split the tree.}
Let $T\in \TT_B(n,\Delta)$ satisfy $k(T)=k$, recalling that
\[
10^3\log n/\log\log n\leq k<10^2\log n.
\] 
By the definition of $k(T)$,~$T$ has at least $n/90k$ vertex disjoint bare paths of length $k$. Using Corollary~\ref{dividepath}, divide~$T$ into two trees~$T_1$ and~$R_1$ intersecting on a single vertex $t_1$ so that~$T_1$ and~$R_1$ each contain at least $n/270k-1\geq n/2(10^4l)$ vertex disjoint bare paths of length $k$. Note that, due to the paths they contain, $|T_1|,|R_1|\geq n/300$. Say, without loss of generality, that $|R_1|\geq n/2$. Using Proposition~\ref{littletree}, divide $R_1$ into two trees $T_2$ and $T_3$ which intersect on a single vertex $t_2$ so that $t_1\in V(T_2)$ and~$30w\leq |T_3|\leq 90w$. The structure found in $T$ is depicted in Figure~\ref{spanpic6B}, and the following embedding is depicted in Figure~\ref{spanpic6C}.

\begin{figure}[b]
\centering
    \begin{subfigure}[b]{0.5\textwidth}
        \centering
        \resizebox{\linewidth}{!}{
            \input{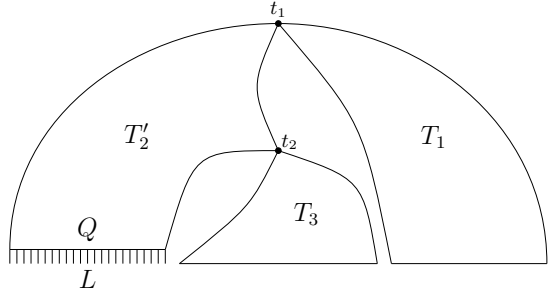}
        }
\label{spanpic6B1}
    \end{subfigure}

\vspace{-0.6cm}

\caption{The structure found in the tree $T\in \TT_B(n,\Delta)$.} \label{spanpic6B}
\end{figure}

\textbf{Stage 1.} As $k(T)=k$,~$T$ does not contain $n/90(k+1)$ vertex disjoint bare paths with length $k+1$. Therefore, $T_2$ does not contain $n/90(k+1)+1$ vertex disjoint bare paths with length $k+1$. Note that $|T_2|/40(k+1)\geq (n/2 -90w)/40(k+1)>n/90(k+1)+1$. If $T_2$ has at least $|T_2|/100$ leaves, then we can find a 20-separated collection of $n/400\Delta^{20}\geq 10\l$ leaves, for sufficiently large~$n$, because of the maximum degree of $T_2$.  If $T_2$ does not have at least $|T_2|/100$ leaves, then, by Lemma~\ref{farleaves}, $T_2$ must contain a 20-separated collection of at least $n/40(k+1)\geq 10\l$ leaves. Therefore, in either case, we can let $L$ be a 20-separated set of $9\l$ leaves of $T_2$ which does not contain $t_1$ or $t_2$. Let $Q=N_{T_2}(L)$, and let $T_2'=T_2-L$. We have then $|Q|=9\l\geq 9|M|$. As $X=V(G)\setminus (M\cup L_0\cup Z_3\cup Z_5\cup W_0)$ and $|M\cup L_0\cup Z_3\cup Z_5\cup W_0|\leq |Z_0|+|W_0|\leq n/10^3$, we have $|X|\geq n-n/10^3$, and thus
\begin{equation}\label{elevenalmost}
|T_2|\leq n-|T_1|\leq n-n/300\leq |X|-10dm-\log n.
\end{equation}
Therefore, using Lemma~\ref{embedparentsfinal} and Properties~\ref{Q5} and~\ref{Q6}, we can find a $(d,m)$-extendable copy of $T_2'$ in $G[X\cup M\cup \{v_0\}]$, so that the copy of $Q$ contains~$M$ and $t_2$ is copied to $v_0$. Using (\ref{elevenalmost}), Property~\ref{Q6}, and Corollary~\ref{mextend}, for each leaf $t\in L$ adjacent to a vertex not copied onto~$M$, extend the copy of $T_2'$ to cover $t$, while maintaining the $(d,m)$-extendability. Call the resulting subgraph $S'_2$, so that to make $S'_2$ into a copy of $T_2$ we need to attach a leaf to each of the vertices in~$M$.

\textbf{Stage 2.} Let $X'=X\setminus V(S'_2)$ and let $v_1$ be the copy of $t_1$ in $S'_2$. The vertices in $S'_2$ cover~$M$, and $|S'_2|=|T_2|-|M|$. As $V(G)=V(S'_2)\cup L_0\cup Z_3\cup Z_5\cup X'\cup W_0$, we have
\[ 
|X'\cup W_0|=n-(|T_2|-|M|)-|L_0|-|Z_3|-|Z_5|= |T_1|+|T_3|-2+(|M|-|L_0|)-|Z_3|-|Z_5|.
\]
Thus, as $|T_3|\geq 30w$ and $|Z_3|\leq 20w$, we have $|X'\cup W_0|+|Z_5|\geq |T_1|+30w-2-20w\geq |T_1|+w$. Furthermore, as $|Z_5|\geq 100w$ and $|T_3|\leq 90w$, $|X'\cup W_0|\leq |T_1|+90w+\l-100w\leq |T_1|$. Therefore, we may take vertices from~$Z_5$ and add them to $X'\cup W_0\cup \{v_1\}$ to get $V$ so that $|V|=|T_1|+w$.
The tree $T_1$ contains at least $n/2(10^4l)$ vertex disjoint bare paths of length $k\geq 10l+1$. Thus, by Property~\ref{Q4} there is a copy of~$T_1$,~$S_1$ say, in $G[V]$ so that $(V\setminus V(S_1))\subset Z_4$ and $t_1$ is copied to~$v_1$. The tree $S_1\cup S_2'$ covers every vertex in $V(G)$ except for the vertices in $L_0$, $Z_3\setminus\{v_0\}$, $Z_5\setminus V$ and $V\setminus V(S_1)$, and needs a leaf added to each vertex in $M$ to make it into a copy of $T_1\cup T_2$.

\textbf{Stage 3.} Let $U_0=(Z_5\setminus V)\cup (V\setminus V(S_1))\subset Z_4$. By Property~\ref{Q3}, $I(\{v_0\})$ is $(\Delta,m_\lambda)$-extendable in the graph $G[U_0\cup Z_3]$. The subgraph $S_1\cup S'_2$ needs a copy of $T_3$ and $|M|$ leaves added appropriately to get a copy of~$T$. Therefore, 
\begin{equation}\label{obvious}
|S_1\cup S'_2|=n-|T_3|-|M|+1.
\end{equation}
The set $U_0\cup Z_3$ contains the vertex $v_0\in V(S_1\cup S'_2)$ and all the vertices in $V(G)$ which are not in~$S_1$ or~$S'_2$, except for those in $L_0$. Therefore, using (\ref{obvious}), and then (\ref{tenstar}),
\begin{align}
|U_0\cup Z_3|&=1+(n-|S_1\cup S_2'|)-|L_0|=|T_3|+|M|-|L_0|\label{elevenLAB}
\\
&\geq|T_3|+ \l/4\geq |T_3|+10\Delta m_\lambda.\nonumber
\end{align}
Using Property~\ref{notQ1} and Corollary~\ref{treebuild}, find a copy $S_3$ of $T_3$ in $G[U_0\cup Z_3]$ with $t_2$ copied to $v_0$. 

\textbf{Stage 4.} Finally, let $U_1=(U_0\cup Z_3)\setminus V(S_3)$, so that $U_1\subset Z_4\cup Z_3\subset Z_1$ and, using (\ref{elevenLAB}), $|U_1|=|M|-|L_0|$. By Property~\ref{notQ2}, we can find a matching from~$M$ into $U_1\cup L_0$. Use this matching to add a leaf to each vertex in~$M$, making $S'_2$ into a copy of $T_2$. Together with~$S_1$ and~$S_3$, this gives a copy of~$T$.
\oof

\begin{figure}[p]
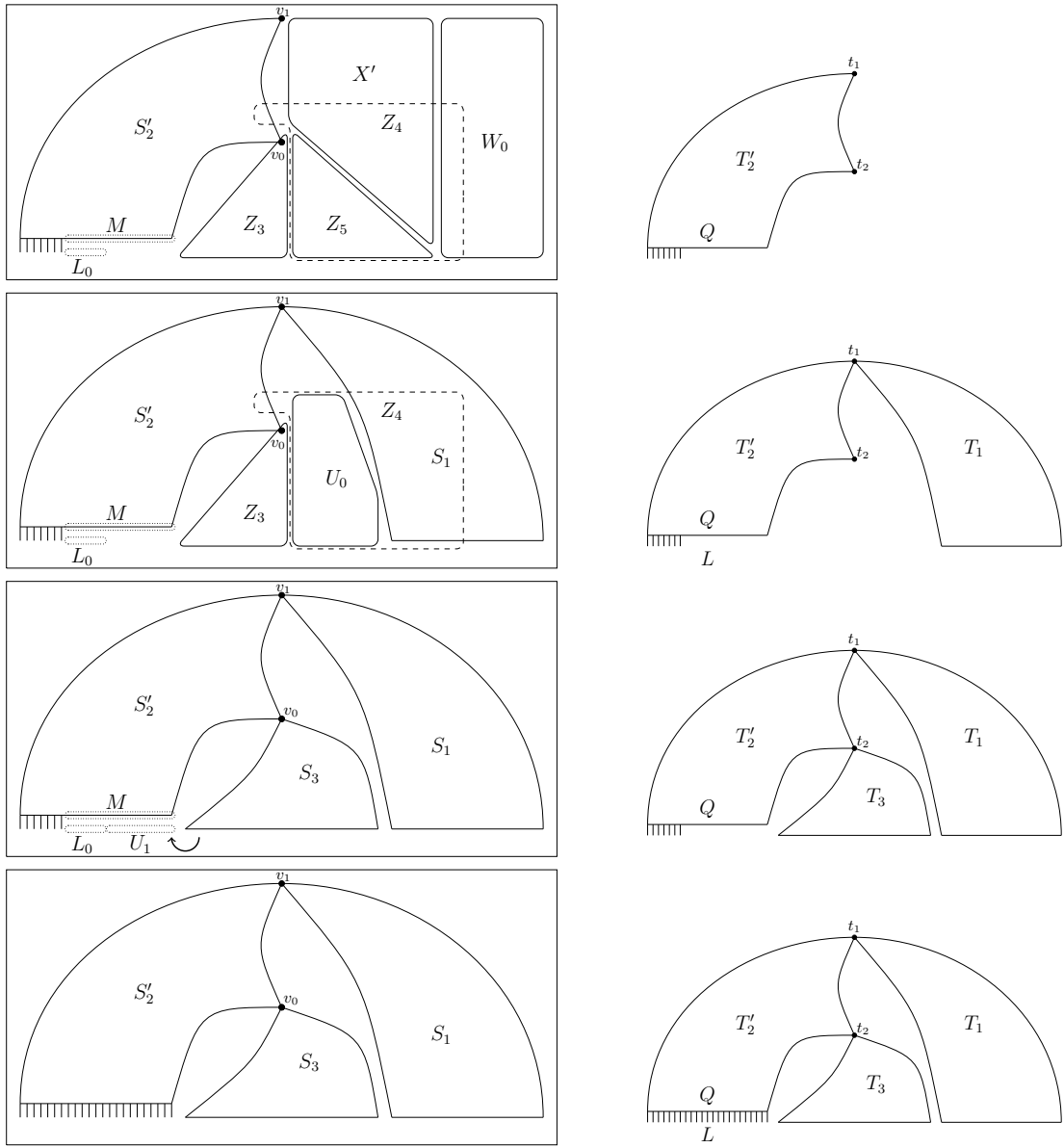

\centering
    \begin{subfigure}[b]{0.5\textwidth}
        \centering
        \resizebox{\linewidth}{!}{
            \input{new6c1}
        }
\label{new6c1}
    \end{subfigure}
\hspace{0.5cm}
    \begin{subfigure}[b]{0.4\textwidth}
        \centering
        \resizebox{\linewidth}{!}{
            \input{spanpic6D1}
        }
\label{spanpic6D1}
    \end{subfigure}

\vspace{-0.3cm}

    \begin{subfigure}[b]{0.5\textwidth}
        \centering
        \resizebox{\linewidth}{!}{
            \input{new6c2}
        }
\label{new6c2}
    \end{subfigure}
\hspace{0.5cm}
    \begin{subfigure}[b]{0.4\textwidth}
        \centering
        \resizebox{\linewidth}{!}{
            \input{spanpic6D2}
        }
\label{spanpic36B2}
    \end{subfigure}

\vspace{-0.3cm}


    \begin{subfigure}[b]{0.5\textwidth}
        \centering
        \resizebox{\linewidth}{!}{
            \input{new6c3}
        }
\label{new6c3}
    \end{subfigure}
\hspace{0.5cm}
    \begin{subfigure}[b]{0.4\textwidth}
        \centering
        \resizebox{\linewidth}{!}{
            \input{spanpic6D3}
        }
\label{spanpic36C2}
    \end{subfigure}

\vspace{-0.3cm}

    \begin{subfigure}[b]{0.5\textwidth}
        \centering
        \resizebox{\linewidth}{!}{
            \input{new6c4}
        }
\label{new6c4}
    \end{subfigure}
\hspace{0.5cm}
    \begin{subfigure}[b]{0.4\textwidth}
        \centering
        \resizebox{\linewidth}{!}{
            \input{spanpic6D4}
        }
\label{spanpic36D2}
    \end{subfigure}

\vspace{-0.5cm}

\caption{On the left, from top to bottom, following on from the structure in Figure~\ref{spanpic6A}, we show the situation in the embedding for a tree $T$ in Case~$B$ immediately after each stage, from Stage~1 to Stage~4. 
On the right are the subgraphs of $T$, as shown in Figure~\ref{spanpic6B}, that have been embedded in the matching pictures on the left. The arrow in the third picture on the left represents the vertices in $(U_0\cup Z_3)\setminus V(S_3)$ being used to form $U_1$.}
\label{spanpic6C}
\end{figure}

\section{Connecting vertex pairs in subsets}\label{6connectionlemmas}

As remarked in Section~\ref{1boutline}, for Cases C and D we need to solve several problems to develop a new embedding. Building a reservoir, defined precisely, and built, in Section~\ref{9absorbC}, is an important part of this. In this section, we choose a set to build into a reservoir, finding a small set in which we can connect many different pairs of vertices (chosen arbitrarily from a further subset) with disjoint paths of some specified length (in our use, length $6\log n$). We will want such paths to cover disjointly a positive proportion of the vertices in the reservoir (in our use, at least a fraction $1/10^9$ of the vertices). The subset used for our reservoir is shown to exist in Lemma~\ref{connectlowdense}, the key result of this section, for which we use the following lemma. Given only a simple expansion property in a graph, Lemma~\ref{connectlog} finds a subset with the kind of connection property just discussed.

\lem \label{connectlog} Let $n\in \N$ and $m=n/200$, and suppose $G$ is an $m$-joined graph with $n$ vertices. Then, for sufficiently large $n$, there is some set $Z\subset V(G)$ with $|Z|\geq n/4$, such that, for any collection of disjoint pairs $(x_i,y_i)$, $i\in [n/8\log n]$, of vertices in $Z$, there exist disjoint $x_i,y_i$-paths in $G$, with length $2\log n$ and internal vertices in $V(G)\setminus Z$.
\ma
\ifdraft
\else
\pr
Let $W$ be a set of $n/2=100m$ vertices in $G$. By Proposition~\ref{neat}, as $G$ is $m$-joined, we can take $B\subset V(G)$ to be a subset, with $|B|\leq m$, so that, for every set $U\subset V(G)\setminus B$ with $|U|\leq 2m$, we have $|N(U,W\setminus B)|\geq 4|U|$. We will show that $Z=V(G)\setminus(W\cup B)$ satisfies the requirements in the lemma. Firstly, note that $|Z|\geq n-|W\cup B|\geq n-n/2-m\geq n/4$, as required.

Suppose then we have disjoint pairs $(x_i,y_i)$, $i\in[n/8\log n]$, of vertices in $Z$. Let $X$ be the set consisting of the vertices in these pairs, and let $H=G[X\cup (W\setminus B)]$. If $U\subset V(H)$ and $|U|\leq 2m$, then $|N_H(U)\setminus X|\geq |N_G(U,W\setminus B)|\geq 4|U|$. As $I(X)$ has maximum degree $0$, by Proposition~\ref{uttriv}, $I(X)$ is a $(4,m)$-extendable subgraph of~$H$.

Repeatedly applying Corollary~\ref{trivial} (once we have checked the details), we can find the disjoint required $x_i,y_i$-paths by starting with the subgraph $I(X)$ and adding, for each $i$, an $x_i,y_i$-path with length $2\log n$ while keeping the subgraph $(4,m)$-extendable in~$H$. These paths have internal vertices in $V(H)\setminus X\subset V(G)\setminus Z$. As the paths we create have length $2\log n\geq 2\lceil \log(2m)/\log 3\rceil -1$, for sufficiently large $n$, the length condition holds. The final subgraph with all the paths added has size at most $(2\log n+1)(n/8\log n)\leq n/4+n/\log n\leq |H|-40m-2\log n$, for sufficiently large~$n$, so therefore the size condition also holds in each application of Corollary~\ref{trivial}. 
\oof
\fi

Lemma~\ref{connectlog} is a good start, but we want a subset with size $n'=\Theta(n/k(T))$ in a random graph $\GG(n,\Theta(\log n/n))$ with good connection properties, which may be as small as $n'=\Theta(n\log\log n/\log^2 n)$ in Case D.
Large graphs satisfying the condition in Lemma~\ref{connectlog} can be seen to have at least $50n$ edges.
The random graph $\GG(n,\Theta(\log n/n))$ will, almost surely, contain no vertex set with size $n'=\Theta(n\log\log n/\log^2 n)$ which contains more than $3n'/2$ edges. Hence, we can typically find no subgraph with~$n'$ vertices to which we can apply Lemma~\ref{connectlog}. 
Instead, we will find within $\GG(n,\Theta(\log n/n))$ a subgraph with $n'$ vertices, which looks like a graph satisfying the conditions of Lemma~\ref{connectlog}, but in which each edge has been replaced by a disjoint path of length 3 (i.e.\ a 2-subdivision of such a graph). Replacing the edges by these paths lowers the average degree of the graph and increases the girth, making the graph easier to find in our random graph.
Importantly, if we are careful, then adding the extra vertices in the middle of paths will only increase the number of vertices by a constant factor (as the graph we subdivide can have constant average degree).

\lem\label{connectlowdense} Let $n,\lambda \in \N$ satisfy $10^6n/\log^3 n\leq \l\leq n/10\log n$. 
With probability $1-o(n^{-1})$, the random graph $G=\GG(n,100\log n/ n)$ contains subsets $Z_2\subset Z_1\subset V(G)$ with $|Z_1|\leq 10^9\l$ and $|Z_2|\geq \l$ so that the following holds. Given any $\l/2\log n$ disjoint pairs $(x_i,y_i)$, $i\in [\l/2\log n]$, of vertices in~$Z_2$ there exist disjoint $x_i,y_i$-paths in $G[Z_1]$, with length $6\log n$ and internal vertices in $Z_1\setminus Z_2$.
\ma
\ifdraft
\else
\pr 
Take distinct vertices $a_1,\ldots,a_{4\l}$ in $ V(G)$. Reveal edges among the vertex set $[n]$ with probability $90\log n/ n$ to get the graph $G_1$. By Lemma~\ref{logmatching}, with probability $1-o(n^{-1})$, we can find disjoint sets $A_1,\ldots, A_{4\l}$ in $V(G_1)\setminus (\cup_{i\in[4\lambda]}\{a_i\})$, so that $A_i\subset N(a_i)$ and $|A_i|=\log n$, for each $i\in[4\l]$.

Reveal more edges with probability $p=10^7/\l\log^2 n\leq 10\log n/n$ to get the graph $G_2$, and let $G=G_1\cup G_2$. Consider the auxillary graph~$H$ on the vertex set $[4\l]$, where $ij$ is an edge in~$H$ if there is an edge between $A_i$ and $A_j$ in $G_2$. The graph~$H$ has edges present independently with probability $q=1-(1-p)^{\log^2 n}$, so that $p\log^2n/2\leq q\leq p\log^2 n= 10^7/\l$.
Therefore, by Proposition~\ref{AKS1}, with probability $1-o(|H|^{-2})=1-o(n^{-1})$, there are at least $\l$ edges between any two disjoint sets with size $\l/100$ in~$H$. By Proposition~\ref{alledges}, with probability $1-o(|H|^{-2})=1-o(n^{-1})$, the total number of edges in~$H$ is at most $10^7(16\l)$.
By Corollary~\ref{fewpathslengthtwo}, with probability $1-o((\l\log n)^{-2})=1-o(n^{-1})$, the graph $G_2[\cup_{i\in [4\l]}A_i]$ contains at most $(4\l\log n)^3p^2\leq 10^{16} \l/\log n\leq \l/2$ paths with length two. 

Delete an edge from each path with length two in $G_2[\cup_{i\in [4\l]}A_i]$ from~$G_2$ to get the graph~$G_2'$. Let $H'\subset H$ have an edge $ij$ exactly when there is an edge between $A_i$ and $A_j$ in $G'_2$.
Given any two disjoint sets $A,B\subset[4\l]$, each containing at least $\l/100$ vertices, as $e(H)-e(H')\leq \l/2$, there must be at least $\l-\l/2>0$ edges between $A$ and $B$ in $H'$. For sufficiently large~$n$, by applying Lemma~\ref{connectlog}, we can find $Z\subset [4\l]$ with $|Z|\geq \l$, such that, for any collection of disjoint pairs $(x_i,y_i)$, $i\in [\l/2\log n]$, of vertices in~$Z$, there are vertex disjoint $x_i,y_i$-paths in $H'$ with length $2\log n$ and internal vertices in $V(H')\setminus Z$.

Now, for each edge $e=ij$ in $H'$, take a path $P_e$ of length three between $a_i$ and $a_j$, say $a_ic_ed_ea_j$, so that $c_e\in A_i$, $d_e\in A_j$ and $c_ed_e\in E(G_2')$. This is possible for each edge $e$ by the definition of $H'$. As $G_2'$ has no paths with length two, these paths $P_e$, $e\in V(H')$, are internally vertex disjoint. Let $Z_1=\{a_i:i\in[4\l]\}\cup(\cup_{e\in E(H')}V(P_e))$. As $e(H')\leq e(H)\leq 10^7(16\l)$, we have $|Z_1|\leq 10^9\l$. Let $Z_2=\{a_i:i\in Z\}$, so that $|Z_2|\geq \l$.

The sets $Z_1$ and $Z_2$ have the properties required by the lemma. Indeed, given any $\l/2\log n$ disjoint pairs of vertices in $Z_2$, we can find vertex disjoint paths with length $2\log n$ between the corresponding vertices in $H'$. Replacing each edge $e$ in these paths by the path $P_e$ gives the required vertex disjoint paths with length $6\log n$ in $G$.
\oof
\fi

\section{The reservoir in Cases C and D}\label{9absorbC}

In this section, we develop the tools used to make a set into a reservoir in both Cases C and~D, in the sense of the following definition.

\de Given a graph~$G$, a vertex set $R\subset V(G)$, a tree $T$ and vertices $v\in V(G)\setminus R$ and $t\in V(T)$, we say $R$ is \emph{made into a reservoir} by $(G,v,T,t)$ if $|G|-|T|=|R|/2$ and, for every subset $U\subset R$ with $|U|=|R|/2$, there is a copy of $T$ in $G-U$ with $t$ copied to $v$.
\fn

That is, we may delete one half of the vertices in $R$ and find a copy of $T$ with $t$ copied to $v$ which covers exactly the remaining vertices in~$G$. We think of this as partially embedding $T$ in $G-R$ so that, given any such set $U\subset R$, we can \emph{absorb} the vertices in $R\setminus U$ into the partial embedding to create a copy of $T$ in $G-U$.
 In order to make sets into reservoirs we use the following structures, which we call \emph{$\l$-devices}. 

\de Let $n,\lambda\in \N$ satisfy $n\log\log n/\log^2 n\leq \l\leq n/100\log n$. Suppose we have a graph~$G$ with $n$ vertices and a set $R\subset V(G)$ with $|R|=6\l$. 

\begin{itemize}
\item If $\l> n\log\log n/\log^2 n$, then a \emph{$\l$-device for $R$ in~$G$} is a pair of subsets $(W_0,W_1)$ in $V(G)\setminus R$ with $9\l\leq |W_0|\leq 10^3\l$ and $|W_1|=|W_0|-3\l$ such that, for every subset $U\subset R$ with $|U|=|R|/2=3\l$, there is a matching between $W_0$ and $W_1\cup U$ in~$G$.
\item  If $\l=n\log\log n/\log^2 n$, then a \emph{$\l$-device for $R$ in~$G$} is a pair $(X,\{(x_i,y_i):i\in[9\l]\})$ consisting of a vertex set $X\subset V(G)\setminus R$ and disjoint pairs $(x_i,y_i)$, $i\in[9\l]$, of vertices in~$X$, such that the following holds with $l=10^4\log^2 n/(\log\log n)^2$. We have $|X|=9\l l-3\l$ and, given any subset $U\subset R$ with $|U|=|R|/2=3\l$, there is a set of $9\l$ vertex disjoint $x_i,y_i$-paths in $X\cup U$ with length $l-1$. In fact, therefore, these paths cover the set $X\cup U$.
\end{itemize}
Given a $\l$-device $D$ for $R$, if $\l>n\log\log n/\log^2 n$ and $D=(W_0,W_1)$, then let $V(D)=W_0\cup W_1$, and, if $\l=n\log\log n/\log^2 n$ and $D=(X,\{(x_i,y_i):i\in[9\l]\})$, then let $V(D)=X$.
\fn

Note that when $D$ is a $\l$-device, and $n\log\log n/\log^2 n\leq \l\leq n/100\log n$, we have
\begin{equation}\label{VDsize}
|V(D)|\leq \max\{2(10^3\l),9(n\log\log n/\log^2 n)\cdot (10^4\log^2 n/(\log\log n)^2)\}=o(n).
\end{equation}

Given a $\l$-device for a set $R$, we can use it to make the set $R$ into a reservoir under certain conditions. How a $\l$-device is found, and how it is used to make a set into a reservoir, is the only difference in our embeddings in Case~C and Case~D. The following lemma has two instances. The first (when $\l>n\log\log n/\log^2 n$) uses the leaves of a tree to make a set into a reservoir, and is used in Case~C. The second (when $\l=n\log\log n/\log^2 n$) uses the bare paths in a tree to make a set into a reservoir, and is used in Case~D.

\lem\label{reservoirfromdevice} Let $\Delta\in \N$, and let $n\in \N$ be sufficiently large. Let $d=2\log n/\log\log n$ and $m\leq n/100d$. Let $\lambda\in \N$, with $n\log\log n/\log^2 n\leq \l\leq n/100\log n$. Suppose a graph~$G$ contains a $\l$-device $D$ for $R\subset V(G)$, where $|R|=6\lambda$ and $I(R\cup V(D))$ is $(d,m)$-extendable in~$G$, and suppose that~$G$ is $m$-joined.

Let $T$ be a tree, with $|T|\leq |G|-|V(D)|-|R|-10dm-\log n$ and $\Delta(T)\leq \Delta$, let $t\in V(T)$ and suppose that the following properties hold.
\begin{itemize}
\item If $\l> n\log\log n/\log^2n$, then $\lambda(T)\geq 10^4\lambda$.
\item If $\l=n\log\log n/\log^2n$, then $T$ contains a collection of at least $9\l+1$ vertex disjoint bare paths with length $10^4\log^2 n/(\log\log n)^2+2\log n$.
\end{itemize}
Then, there exists a subgraph $H\subset G$ with $R\subset V(H)$ and a vertex $v\in V(H)\setminus R$ so that $R$ is made into a reservoir by $(H,v,T,t)$.
\ma
Note that in Lemma~\ref{reservoirfromdevice} we do not require that $|G|=n$.

\pr Firstly, suppose $\l> n\log\log n/\log^2n$. Let the $\l$-device $D$ for $R$ be $(W_0,W_1)$, and pick some $v\in V(G)\setminus (R\cup V(D))$. Let $L$ be a 20-separated set of leaves in $T$ which does not contain $t$, with $|L|$ maximised. By the definition of $\lambda(T)$, $|L|\geq\l(T)-1\geq 10^4\l-1\geq 9|W_0|$. As $I(R\cup V(D))$ is $(d,m)$-extendable in~$G$, if $U\subset V(G)$ and $0<|U|\leq 2m$, then
\[
|N'(U)\setminus (R\cup W_0\cup W_1\cup\{v\})|\geq (d-1)|U|-1\geq (d-2)|U|.
\]
Therefore, by Definition~\ref{extendabledefn}, we have that $I(W_0\cup\{v\})$ is $(d-2,m)$-extendable in $G-(W_1\cup R)$. As $|W_0|\geq 9\l> n/\log^2 n$, $|N_T(L)|=|L|\geq 9|W_0|$, and 
\begin{equation}\label{deer}
|T|\leq |G|-|W_1|-|W_0|-|R|-10dm-\log n=|G-(W_1\cup R)|-|W_0|-10dm-\log n,
\end{equation}
we can apply Lemma~\ref{embedparentsfinal} to $G-(W_1\cup R)$. That is, we can find a copy of $T-L$, $S$ say, so that~$W_0$ is in the copy of $N_T(L)$,~$S$ is $(d-2,m)$-extendable in $G-(W_1\cup R)$ and $t$ is copied to $v$.

Using Corollary~\ref{mextend} repeatedly, extend the subgraph $S$ in $G-(W_1\cup R)$ by adding a leaf to every vertex in the copy of $N_T(L)$ which is not in the set $W_0$, to get a tree, $S'$ say, which is $(d-2,m)$-extendable in $G-(W_1\cup R)$. Note that, for each application of Corollary~\ref{mextend} here we use a subgraph of $S'$, which, using~\eqref{deer}, has size at most $|T|\leq |G|-|W_1\cup R|-10dm$. 

Let $H=G[V(S')\cup W_1\cup R]$. We claim that $R$ is made into a reservoir by $(H,v,T,t)$. Indeed, given any subset $U\subset R$ with $|U|=|R|/2=3\l$, find a matching in the $\l$-device between $W_0$ and $W_1\cup(R\setminus U)$. Use this matching to add the final leaves to~$S'$ to get a copy of $T$ with the vertex set $V(H)\setminus U$. This completes the proof of the lemma in the case that $\l> n/\log^2n$.

Suppose secondly, then, that $\l=n\log\log n/\log^2n$ and $(X,\{(x_i,y_i):i\in [9\l]\})$ is the $\l$-device for~$R$. Let $l=10^4\log^2 n/(\log\log n)^2$. The tree $T$ contains at least $9\l$ vertex disjoint bare paths, of length $l-1+2\log n$, which do not contain the vertex $t$. Call these paths $P_i$, $i\in [9\l]$, and remove them from the tree to get $T'$. Pick a vertex $v\in V(G)\setminus (X\cup R)$. As $I(X\cup R)$ is $(d,m)$-extendable in~$G$, we have, by the definition of extendability, that $I(X)$ is $(d,m)$-extendable in $G-R$. Therefore, by the definition of extendability, $I(\{v\}\cup X)$ is $(d-2,m)$-extendable in $G-R$. As $|X|+|T'|\leq |T|\leq |G|-|R|-10dm$, by Corollary~\ref{treebuild} we can extend $\{v\}$ to get a copy $S$ of $T'$ in $V(G)\setminus (X\cup R)$, with $t$ copied to $v$, so that $S\cup I(X)$ is $(d-3,m)$-extendable in $G-R$ (using dummy edges to replace the deleted paths, as we did in the proof of Lemma~\ref{almostspan}).

Let $(a_i,b_i)$, $i\in [9\l]$, be the pairs of vertices in $S$ which need to be connected by paths with length $l-1+2\log n$ to make $S$ into a copy of $T$. Using Corollary~\ref{trivial} repeatedly (once we have checked the details), we can find vertex disjoint $a_i,x_i$-paths and $b_i,y_i$-paths, $i\in [9\l]$, with length $\log n$ and interior vertices in $V(G)\setminus(V(S)\cup X\cup R)$ so that if $S'$ is the subgraph gained by adding the paths to $S$, then $S'\cup I(X)$ is still $(d-3,m)$-extendable in $G-R$. We will now check the details for the applications of Corollary~\ref{trivial}. As $|S'\cup I(X)|\leq |T|+|V(D)|\leq |G|-|R|-10dm-\log n$, the size condition for Corollary~\ref{trivial} holds for each application. As $\log n\geq 2\lceil \log (2m)/\log(d-4)\rceil+1$, the length condition for Corollary~\ref{trivial} holds for each application.

Letting $H=G[V(S')\cup X\cup R]$, we claim $R$ is made into a reservoir by $(H,v,T,t)$. Indeed, if $U\subset R$ and $|U|=|R|/2=3\l$, then, by the definition of a $\l$-device, there are vertex disjoint $x_i,y_i$-paths with length $l-1$ covering $X\cup (R\setminus U)$ exactly. Adding these paths to $S'$ completes the required copy of $T$ with the vertex set $V(S')\cup X\cup (R\setminus U)$.
\oof

Thus, we have reduced the task of making a set into a reservoir to the task of finding an appropriate $\l$-device. Such a $\lambda$-device can be found using the following lemma, which is proved for $\l=n\log\log n/\log^2n$ and $\l>n\log\log n/\log^2 n$ in Section~\ref{deviceD} and Section~\ref{deviceC} respectively.

\lem\label{finddevice}
Let $\l,n,n'\in\N$ satisfy $n\log\log n/\log^2 n\leq \l\leq n/100\log n$ and $n'\geq 3n/4$. Let $R\subset [n']$ satisfy $|R|=6\l$. Then, with probability $1-o(n^{-1})$, the random graph $G=\GG(n',10^{5}\log n /n)$ contains a $\l$-device for $R$.
\ma

\subsection{Building the $\l$-device when $\l=n\log\log n/\log^2 n$}\label{deviceD}
We will first construct \emph{absorbers} in Lemma~\ref{absorbone}, which are capable of absorbing single vertices into a path to give a path which contains that additional vertex, but which has the same endvertices. We will then use these absorbers to build up a global absorption property to create a $\lambda$-device with $\l=n\log\log n/\log^2 n$.
\ifdiss
\else
\fi

\de In a graph~$G$, we say $(A,x,y)$ is an \emph{absorber} for a vertex $v\in V(G)$ if $x\neq y$, $\{x,y\}\subset A\subset V(G)$, $v\notin A$, and there is both an $x,y$-path in~$G$ with the vertex set $A$ and an $x,y$-path in~$G$ with the vertex set $A\cup\{v\}$. We say that $|A|$ is the \emph{size} of the absorber, and call the vertices~$x$ and $y$ the \emph{ends} of the absorber.
\fn

\lem\label{absorbone} Let $n\in\N$ be sufficiently large, and take $d=\log n/\log\log n$ and $m=n/200d$.
Suppose~$G$ is an $m$-joined graph, with at least $n/2$ vertices, containing the set $W\subset V(G)$. Suppose that $I(W)$ is $(d,m)$-extendable in~$G$ and $|W|\leq n(\log\log n)^2/10^4\log^2 n$. Then there is a set of 100 absorbers in $V(G)\setminus W$ with size $16\log^2n/(\log\log n)^2+2$ for each vertex $v\in W$, so that all of these $100|W|$ absorbers are disjoint.
\ma
\ifdraft
\else
\pr Noting that $200|W|=o(|G|)$, and using Corollary~\ref{treebuild} repeatedly, attach $200$ leaves to each vertex $v\in V(I(W))$ to get the star $S_v$, so that the stars $S_v$ are disjoint and $H=\cup_{v\in W}S_v$ is $(d,m)$-extendable in~$G$. In each star $S_v$ divide the leaves into 100 pairs of vertices.

We wish to find $100$ absorbers for each vertex $v\in W$, each using a different pair of leaves in~$S_v$. We will do this by adding vertex disjoint paths with length $2k+1$ or $k-1$ several times between different vertex pairs in the subgraph $H$, where $k=4\log n/\log\log n\geq 2\lceil\log 2m/\log (d-1)\rceil+2$. We~will~add~$k^2$ vertices~for~each~vertex pair, so the subgraph will always have size at most $n/4\leq |G|-10dm-\log n$. The subgraph created will always have maximum degree at most 100. Thus, by Corollary~\ref{trivial}, we will be able to add the described paths so that the subgraph remains $(d,m)$-extendable. We will describe the construction of one absorber for $v\in W$ using one pair of leaves $x_0,y_1$ in $S_v$, but, using Corollary~\ref{trivial}, all the absorbers can be constructed simultaneously and disjointly.

Find an $x_0,y_1$-path $Q$ with length $2k+1$ and label it as $x_0x_1x_2\ldots x_{k}y_0y_{k}y_{k-1}\ldots y_2y_1$. Find $k$ vertex disjoint $x_i,y_i$-paths $P_i$, $i\in[k]$ with length $k-1$ (see Figures~\ref{absorberpictureQ} and~\ref{absorberpicture}, where the heavy lines are paths and the light lines are edges).

\input{absorberpictureevenQ}

Let $A=\{x_0,y_0\}\cup(\cup_{i\in[k]}V(P_i))$, so that $|A|=k^2+2$. When $k$ is even, the following two $x_0,y_0$-paths have vertex sets $A$ and $A\cup\{v\}$ respectively. 
\[
x_0x_1P_1y_1y_2P_2x_2x_3P_3y_3y_4\ldots y_{k}P_{k}x_{k}y_0
\]
\[
x_0vy_1P_1x_1x_2P_2y_2y_3P_3x_3\ldots x_{k}P_{k}y_{k}y_0
\]
When $k$ is odd the following two $x_0,y_0$-paths have vertex sets $A$ and $A\cup\{v\}$ respectively.
\[
x_0x_1P_1y_1y_2P_2x_2x_3P_3y_3y_4\ldots x_{k}P_{k}y_{k}y_0
\]
\[
x_0vy_1P_1x_1x_2P_2y_2y_3P_3x_3\ldots y_{k}P_{k}x_{k}y_0
\]
Thus, $(A,x_0,y_0)$ is an absorber for $v$, with $|A|=k^2+2$, as required.
\oof
\fi

We will now use our absorbers for single vertices to build up a system of paths with a global absorption property. Suppose we have two absorbers, $(A,x,y)$ and $(A',x',y')$, for the vertices $v$ and $v'$ respectively. Given a $y,x'$-path with interior vertices not in $A\cup A'\cup\{v,v'\}$, suppose we add its vertices to $A\cup A'$ to get the set $A''$. It can be easily seen that $(A'',x,y')$ is an absorber both for $v$ and for $v'$. Similarly, given several absorbers with paths linking their endpoints in some order, we can merge them into a single absorber.

We will take absorbers, produced by Lemma~\ref{absorbone}, and divide them into groups of up to~100, connecting them as described above. This creates absorbers capable of absorbing up to~100 different vertices. When we come to absorb vertices, each path will absorb exactly one vertex so that the length of the resulting path is controlled. By controlling precisely which absorbers we group together, we can build up a $\l$-device.
Before doing this, we first show the following lemma, which finds a graph which will guide us in the grouping of absorbers.

\lem \label{flexiblematching}
There is a constant $h_0\in\N$ such that, for every $h\geq h_0$ with $3|h$, there exists a bipartite graph $H$ with maximum degree at most 100 and vertex classes $X$ and $Y\cup Z$, with $|X|=h$, and $|Y|=|Z|=2h/3$, so that the following is true. If $Z'\subset Z$ and $|Z'|=h/3$, then there is a matching between $X$ and $Y\cup Z'$.
\ma
\ifdraft
\else
\pr
Take disjoint vertex sets $X$, $Y$, $Z$ with $|X|=h$ and $|Y|=|Z|=2h/3$. Furthermore, let $X_1,X_2\subset X$ be subsets with size $2h/3$ so that $X=X_1\cup X_2$.

Independently, place 25 random matchings between each of the pairs of sets $(X_1,Y)$, $(X_2,Y)$, $(X_1,Z)$, and $(X_2,Z)$, and let $H$ be the union of the resulting edges. As each vertex is in at most 100 matchings, the maximum degree of $H$ is at most 100.
By Lemma~\ref{lotsrandmatch}, with probability at least $1-o(h^{-2})$, for every set $Z'\subset Z$ with $|Z'|=h/3$, there is a matching between $X$ and $Y\cup Z'$. Therefore, for sufficiently large $h$, there must be some graph $H$ which satisfies the properties in the lemma.
\oof
\fi

\pr[Proof of Lemma~\ref{finddevice} when $\l=n\log\log n/\log^2 n$.] Let $d=\log n/\log\log n$ and $m=n/100d$, and take $l_0=16\log^2n/(\log\log n)^2+2$ and $l=10^4\log^2n/(\log\log n)^2$. Pick disjoint subsets $Y\subset [n']\setminus R$ and $R'$ with $|R'|=|R|=6\lambda$ and $|Y|=18\l$ and label the vertices in $Y$ so that $Y=\{x_i,y_i:i\in[9\l]\}$.

Reveal edges within the vertex set $[n']$ with probability $10^4\log n/n$ to get the graph~$G_1$. By Corollary~\ref{generalexpandcor}, with probability $1-o(n'^{-1})=1-o(n^{-1})$, the subgraph $I(R\cup R')$ is $(d,m)$-extendable in $G_1-Y$. By Proposition~\ref{generalprops}, with probability $1-o(n^{-1})$, the graph $G_1$ is $m$-joined. Therefore, by Lemma~\ref{absorbone}, we can find disjoint absorbers $(A_{v,j},a_{v,j},b_{v,j})$, $v\in R\cup R'$, $j\in[100]$, so that each absorber~$A_{v,j}$ has size $l_0$ and can absorb $v$. Let $Y'=\{a_{v,j},b_{v,j}:v\in R\cup R', j\in[100]\}$ and take $W=(\cup_{v,j}(A_{v,j}\setminus\{a_{v,j},b_{v,j}\}))\cup R'\cup R$, so that $|W|\leq 12\lambda(100l_0+1)=o(n)$.

Reveal more edges within the vertex set $[n']$ with probability $10^4\log n/ n$, and let the graph containing all the edges revealed be~$G$.  By Corollary~\ref{generalexpandcor}, with probability $1-o(n^{-1})$, the subgraph $I(Y'\cup Y)$ is $(d,m)$-extendable in $G-W$. Using Corollary~\ref{trivial}, we will create disjoint paths between vertices in $I(Y'\cup Y)$ while maintaining this extendability in $G-W$. As we create paths with length at least $\log n\geq 2\lceil\log(2m)/\log(d-1)\rceil+1$, the length condition will hold in each application of Corollary~\ref{trivial}. The subgraph will have always have maximum degree at most 2 and will contain in total at most $9\l l\leq n/100\leq |G-W|-10dm-l$ vertices, so the size condition will hold in each application of Corollary~\ref{trivial}. For each pair of vertices $(x_i,y_i)$, we will pick up to 100 absorbers and link them end to end with paths between $x_i$ and $y_i$ to create an absorber with size $l-1$ and ends $x_i$ and $y_i$, as depicted in Figure~\ref{spanpic8}. By allocating each pair $(x_i,y_i)$ a disjoint set of absorbers, and using Corollary~\ref{trivial} to create the paths, we can ensure the large absorbers created are disjoint.

To describe how to allocate absorbers to the vertex pairs we refer to an auxillary bipartite graph $H$ with the following properties, provided by Lemma~\ref{flexiblematching} with $h=9\l$, for sufficiently large~$n$, and hence~$h$. The bipartite graph $H$ has maximum degree $100$ and vertex classes $[9\l]$ and $R\cup R'$, such that, for any subset $U\subset R$ with $|U|=3\l$, there is a matching between $[9\l]$ and $R'\cup U$ in $H$. For each vertex $v\in R\cup R'$, let $c_v:N_H(v)\to [100]$ be an injective function numbering the neighbours of $v$ in $H$.

For each $i\in[9\l]$, carry out the following. Take the absorbers $A_{v,c_v(i)}$, $v\in N_H(i)$. Take the set of vertices $\{a_{v,c_v(i)},b_{v,c_v(i)}:v\in N_H(i)\}\cup\{x_i,y_i\}\subset Y'\cup Y$. Using Corollary~\ref{trivial}, join pairs from these vertices by paths in $G-W$ with length at least $\log n$ in such a way as to create an absorber $(B_i,x_i,y_i)$, with $|B_i|=l-1$, which is an absorber for each vertex $v\in N_H(i)$. One of these absorbers, and an example of how it absorbs a vertex, is depicted in Figures~\ref{spanpic8} and~\ref{spanpic82}.
Note that, because the functions~$c_v$ are injective, the absorbers $B_i$, $i\in [9\l]$, can be kept disjoint.

\begin{figure}[b!]

\vspace{0.25cm}

\centering
    \begin{subfigure}{0.95\textwidth}
        \centering
        \resizebox{\linewidth}{!}{
            \input{spanpic8A1}
        }
    \end{subfigure}
\label{spanpic8A1}
\caption{Joining three absorbers $A_{v_j,c_{v_j}(i)}$, $j\in [3]$, together using paths labelled as $P_j$, $j\in[4]$, to create an absorber capable of absorbing any of the vertices $v_1$, $v_2$ and $v_3$. In the proof of Lemma~\ref{finddevice} when $\l=n\log\log n/\log^2 n$, if $N_H(i)=\{v_1,v_2,v_3\}$, then the structure depicted, except for the vertices~$v_1$,~$v_2$ and~$v_3$, represents the absorber $(B_i,x_i,y_i)$. As an example, a path through this absorber and the vertex~$v_2$ is depicted in Figure~\ref{spanpic82}.} \label{spanpic8}
    \begin{subfigure}{0.95\textwidth}
        \centering
        \resizebox{\linewidth}{!}{
            \input{spanpic8A2}
        }
    \end{subfigure}\label{spanpic8A2}
\caption{An $x_i,y_i$-path through the absorber $(B_i,x_i,y_i)$ and the vertex~$v_2$ (cf.\  Figure~\ref{spanpic8}). Similarly, we can find $x_i,y_i$-paths through $(B_i,x_i,y_i)$ and either the vertex~$v_1$ or the vertex~$v_3$.} \label{spanpic82}
\end{figure}

Let $X=(\cup_{i\in[9\lambda]}B_i)\cup R'$. We claim that $(X,\{(x_i,y_i):i\in[9\l]\})$ is a $\l$-device for $R$. Indeed, suppose $U\subset R$ satisfies $|U|=3\l$. From the property of the graph $H$, we can find a matching $M$ in $H$ between $[9\l]$ and $R'\cup U$. For each $i\in [9\l]$, find the vertex $v\in R'\cup U$ matched to $i$ in $M$ and take an $x_i,y_i$-path with length $l-1$ through $B_i\cup\{v\}$. These paths disjointly cover $X\cup U$, and have length $l-1$, as required.
\oof

\subsection{Building the $\l$-device when $\l>n\log\log n/\log^2 n$}\label{deviceC}
When $\l>n\log\log n/\log^2 n$, the $\l$-device bears some similarity to the graphs that were shown to exist in Lemma~\ref{flexiblematching}. In fact, for sufficiently large $\l$, by Lemma~\ref{flexiblematching} there is a graph $H$ with vertex classes~$X$ and $Y\cup Z$ with $|X|=9\l$ and $|Y|=|Z|=6\l$ which has the matching property stated in that lemma. Then we can observe that $(X,Y)$ is a $\l$-device for $Z$ in $H$. When $\l=o(n/\log n)$, such a graph is too dense to appear as a subgraph of our random graph. Instead, as we did in Lemma~\ref{connectlowdense}, we find a subgraph of our random graph which resembles such a graph~$H$, but with each edge replaced by a path of length $3$, so that these short paths are internally vertex disjoint. Subdividing the edges in this manner decreases the density so that such a subgraph will typically appear in our random graph, and, as given in detail below, such a subdivided graph can also function as a $\l$-device. Furthermore, where $R$ is the set we wish to make into a reservoir, we will find such a subgraph with $Z=R$. 


\ifdraft
\else
\pr[Proof of Lemma~\ref{finddevice} when $\l>n/\log^2 n$.]
Take disjoint vertex sets $X, Y\subset [n']\setminus R$ satisfying $|X|=9\l$ and $|Y|=6\l$ . Pick subsets $X_1,X_2\subset X$ with $|X_1|=|X_2|=6\l$ and $X=X_1\cup X_2$.

We will find a bipartite auxillary graph $H$, with maximum degree 100, and vertex classes $X$ and $Y\cup R$, so that the following property holds.

\stepcounter{capitalcounter}
\begin{enumerate}[label =\bfseries \Alph{capitalcounter}\arabic*]
\item If $U\subset R$ and $|U|=|R|/2=3\l$, then there is a matching between $X$ and $Y\cup U$ in $H$. \label{ZZZZZ}
\end{enumerate}
We will also find an accompanying set of disjoint edges $F=\{e_f=v_fw_f:f\in E(H)\}$ in $G-(X\cup Y \cup R)$, so that, if $f=ab\in E(H)$, with $a\in X$ and $b\in Y\cup R$, then $av_fw_fb$ is a path of length 3 in~$G$. Such a graph, with its auxillary graph, is depicted in Figure~\ref{lamb1}.

If we can find such a graph $H$ and an edge set $F$, then the sets $W_0=X\cup(\cup_{f\in E(H)}w_f)$ and $W_1=Y\cup (\cup_{f\in E(H)}v_f)$ form a $\l$-device for $R$. Indeed, firstly, as $H$ has maximum degree~100, $|W_0|\leq |X|+100|X|\leq 10^3\l$. Secondly, given any set $U\subset R$ with $|U|=3\l$, by Property~\ref{ZZZZZ}, we can find a matching between $Y\cup U$ and $X$ in $H$ and call it $M$. Then 
\[
M'=\{e_f:f\notin M\}\cup\{av_f,w_fb:ab=f\in M,a\in X, b\in Y\cup U\}
\]
is a matching between $W_0$ and $W_1\cup U$ in~$G$, as depicted in Figure~\ref{lamb2}. Indeed, none of the edges in~$M'$ can share a vertex in $X$ or $Y\cup U$ because $M$ is a matching, none of the edges in $M'$ can share a vertex in $(W_0\cup W_1)\setminus (X\cup Y)$ from the choice of $F$, and each vertex in $W_0\cup W_1\cup U$ appears in some edge in~$M'$.

It is left then to find such a graph $H$ and an edge set $F$. Reveal edges within the vertex set~$[n']$ with probability $10^3\log n/ n$ to get the graph $G_1$. By Lemma~\ref{logmatching}, with probability $1-o(n'^{-1})=1-o(n^{-1})$, we can find disjoint sets~$A_x$, $x\in X\cup Y\cup R$, in $[n']\setminus (X\cup Y\cup R)$ so that $A_x\subset N(x)$ and $|A_x|=\log n$ for each vertex $x\in X\cup Y\cup R$.

Let $H$ be the bipartite graph with no edges, and the vertex classes $X$ and $Y\cup R$. Reveal more edges with probability $p=100\log n/ n$ within the vertex set $[n']$ to get the graph $G_2$. Consider the auxillary bipartite graph~$K$ with vertex classes $X$ and $Y\cup R$ and an edge $xy\in E(K)$ with $x\in X$ and $y\in Y\cup R$ exactly if there is some edge between $A_x$ and $A_y$ in $G_2$. The graph $K$ has edges present independently with probability
\[
1-(1-p)^{\log^2 n}\geq 50\log^3 n/ n> 300\log n/6\l.
\]
Therefore, by Lemma~\ref{matchingthres}, with probability $1-o(\l^{-2})=1-o(n^{-1})$, there is some matching in the graph $K$ between $X_1$ and $Y$. Pick such a matching uniformly at random from all such matchings, to get~$M_1$ say, and add it to $H$. As the sets $A_x$ and $A_y$ with $x\in X$ and $y\in Y\cup R$ all have the same size, each edge in $K$ is present with exactly the same probability. By symmetry then, in revealing edges and selecting the matching $M_1$ we must have been equally likely to end up with any of the possible matchings between $X$ and $Y\cup R$.

For each edge $f=xy\in M_1$, with $x\in X_1$ and $y\in Y$, pick vertices $v_f\in A_x$ and $w_f\in A_y$ so that $v_fw_f\in E(G_2)$. This is possible by the definition of the graph $K$. Remove the vertices $v_f$ and $w_f$ from~$A_x$ and~$A_y$ respectively, and remove a vertex arbitrarily from $A_z$ for each $z\in (X\setminus X_1)\cup R$.

In summary, by revealing edges in the graph with probability $100\log n/ n$, we have added a random matching $M_1$ to $H$ between $X_1$ and $Y$ while finding a set of disjoint edges $\{v_fw_f:f\in M_1\}$ so that if $f=xy$ then $xv_fw_fy$ is a path in $G_1\cup G_2$. We then removed one vertex from each set~$A_x$, $x\in X\cup Y\cup R$, so that the vertices $v_f$ and $w_f$, $f\in M_1$, were all removed. This ensures that all the sets $A_x$, $x\in X\cup Y\cup R$, still have the same size, so that by running a similar process again we can find another uniformly random matching. It also ensures that any further matchings found in this way will contain edges which are disjoint from the edges in $\{v_fw_f:f\in M_1\}$. Note that the size of the sets $A_x$, $x\in X\cup Y\cup R$, has decreased slightly, but there is enough room in the above calculations that we may repeat the process and find another matching.

Revealing 99 more sets of edges with probability $100\log n/ n$, with probability $1-o(n^{-1})$, we can add in this manner 24 more matchings in $H$ between $X_1$ and $Y$ in $H$, chosen uniformly at random from all possible such matchings, and 25 matchings chosen uniformly at random between each of the pairs of sets $(X_2,Y)$, $(X_1,R)$ and $(X_2,R)$. By Lemma~\ref{lotsrandmatch}, with probability $1-o(\l^{-2})=1-o(n^{-1})$, Property~\ref{ZZZZZ} holds. Therefore, as detailed above, the required sets~$W_0$ and~$W_1$ can be found. In total, we revealed edges with probability at most $10^5\log n/n$.
\oof
\fi

\input{lambdadevice}

\section{Cases C and D}\label{11caseCD}

The final tool that we need for the proof of Cases~C and~D of Theorem~\ref{unithres} is the notion of $(l,\gamma)$-connectors. 
We define these graphs and construct them in Section~\ref{connectors}, before proving Cases~C and D of Theorem~\ref{unithres} in Section~\ref{CDproof}.

\subsection{$(l,\gamma)$-connectors}\label{connectors}
For Cases~C and~D of Theorem~\ref{unithres}, we will use $(l,\gamma)$-connectors, defined as follows.
\de\label{connectordefn}
A graph $H$ is an \emph{$(l,\gamma)$-connector} if $H$ has $l$ vertices and there are two disjoint subsets $H^+,H^-\subset V(H)$, with $|H^+|=|H^-|= \lceil\gamma l\rceil$, so that, given any pair of vertices $x\in H^+$ and $y\in H^-$, there is an $x,y$-path in $H$ with the vertex set $V(H)$.
\fn

In practice, when we find an $(l,\gamma)$-connector $H$, we will implicitly fix sets $H^+$ and $H^-$ which demonstrate that it is such a connector. In our embedding for a tree $T$ in Case C or D, we will wish to find an edge from a vertex $v$ into the reservoir $R$, where $|R|=\Theta(n/k(T))$, so that we can then form a path in the reservoir to connect~$v$ into the embedding. However, if $k(T)\gg \log n$, then such an edge will typically not exist in our random graph $\GG(n,\Theta(\log n/n))$. Instead, we will find a $(\Theta(k(T)),\log\log n/16\log n)$-connector $H$, so that there is a neighbour, $v_1$ say, of $v$ in $H^+$. We then find an edge, $v_2w$ say, with $v_2\in H^-$ and $w\in R$. By the definition of a connector, there is a $v_1,v_2$-path with the vertex set $V(H)$, and, with the edges $xv_1$ and $v_2w$, this gives a path from~$v$ into the reservoir $R$, which we can then join into the embedding using vertices in $R$.


In fact, we will find a collection~$\HH$ of $\Theta(n/k(T))$ disjoint $(l,\gamma)$-connectors, where $l=\Theta(k(T))$ and $\gamma=\log\log n/16\log n$. We will wish, for most vertices $v$, to be able to find $\Theta(\log\log n)$ such connectors in~$\HH$ which can be connected to $v$ in this way. This will give enough flexibility to take any sufficiently small subset of these vertices and connect each of them to a different connector. Fortunately, when $H\in \HH$ and $v\in V(G)$, we expect to find
\[
\Theta(p|H^+||\HH|)= \Theta(\log n/n)\cdot\Theta(k(T)\log\log n/16\log n)\cdot\Theta(n/k(T))=\Theta(\log\log n)
\]
connectors which could be connected to $v$ in this manner. Of course, for some vertices we will not find enough such neighbours in the connectors, but we will manipulate our embedding to cover these vertices earlier with part of the tree.

To find the collection of connectors $\HH$, we will use the following lemma. The construction of $(l,\gamma)$-connectors uses P\'osa rotation (cf.\ Section~\ref{3hamcycles}). Essentially, to get an $(l,\gamma)$-connector, we construct a path with additional edges so that the path can be rotated many times around its endvertices.

\lem \label{connectorfind} Let $n,l\in \N$ satisfy $\log n\leq l\leq \log^2 n/\log\log n$, and let $\gamma=\log\log n/16\log n$. With probability $1-o(n^{-1})$, in the random graph $G=\GG(n,10^{10}\log n/n)$ there are $n/2l$ disjoint $(l,\gamma)$-connectors.
\ma
\pr Let $r=n/2l$, $k=4\log n/\log\log n$, and $s=\lfloor (l-k-1)/2k\rfloor\geq l/4k=\gamma l$. Note that, as $l\leq \log^2 n/\log\log n$, we have that $s\leq \log n$. Let $X\subset [n]$ be a set of $2r$ vertices, and reveal edges within the vertex set $[n]$ with probability $100\log n/n$ to get the graph $G_1$. By Lemma~\ref{logmatching}, there is an $s$-matching from $X$ into $V(G_1)\setminus X$. Let $Y$ be the set of vertices in the image of this matching, and note that~$|X\cup Y|= 2r(s+1)\leq n/k$.

Let $d=\log n/\log\log n$ and $m=n/100d$. Reveal more edges with probability $10^{9}\log n/n$ to get the graph $G_2$, and let $G=G_1\cup G_2$. By Corollary~\ref{generalexpandcor}, with probability $1-o(n^{-1})$, $I(X\cup Y)$ is $(d,m)$-extendable in $G$. By Proposition~\ref{generalprops}, with probability $1-o(n^{-1})$, the graph $G$ is $m$-joined.

Given two vertices $x_0,y_0\in X$, we will describe how to add paths with length at least $k$ between the vertices in the image of $\{x_0,y_0\}$ under the matching, using Corollary~\ref{trivial}, so that the vertices~$x_0$ and~$y_0$, their adjacent edges in the matching and these additional paths form an $(l,\gamma)$-connector. We will describe the construction of just one $(l,\gamma)$-connector for simplicity, but, using Corollary~\ref{trivial} (once the details are checked), it will be clear that we can divide the set $X$ into $r$ pairs of vertices and carry this out for each vertex pair to simultaneously create $r$ disjoint $(l,\gamma)$-connectors. In total, the subgraphs we create will contain $rl=n/2\leq n-10dm-l$ vertices, so the size condition for the applications of Corollary~\ref{trivial} will hold. We will create paths which have length at least $k\geq 2\lceil\log (2m)/\log(d-1)\rceil+1$ and interior vertices outside of the working subgraph, so the length condition for the applications of Corollary~\ref{trivial} will hold as well.

Take then the vertices $x_0,y_0\in X$, and label the vertices in their image under the matching as $x_1,\ldots,x_s$, and $y_1,\ldots, y_s$ respectively. For each $i\in[s]$, find an $x_{i-1},x_i$-path, with length $k$, and label it as $x_{i-1}P_iv_ix_i$, where $v_i$ is a vertex and $P_i$ is an $x_{i-1},v_i$-path with length $k-1$. For each $i\in[s]$, find a $y_{i-1},y_i$-path with length $k$, and label it as $y_{i-1}Q_iw_iy_i$, where $w_i$ is a vertex and $Q_i$ is a $y_{i-1},w_i$-path with length $k-1$. Find an $x_s,y_s$-path, $R$ say, with length $l-2ks-1\geq k$. Finally, let $H=G[V(R)\cup(\cup_{i\in[s]}(V(P_i)\cup V(Q_i)))]$ and note that $|H|=(l-2ks)+2ks=l$ (see Figure~\ref{connectorpic}).

\input{connectorpicture}

We claim that the sets $H^+=\{v_i:i\in[s]\}$ and $H^-=\{w_i:i \in[s]\}$ demonstrate that $H$ is an $(l,\gamma)$-connector. We have, firstly, that $|H^+|=|H^-|=s\geq \gamma l$. Take then any integers $i,j\in[s]$. The following path is a $v_i,w_j$-path that covers exactly the vertices in $H$ (see Figure~\ref{connectorpic}, where the heavy lines are paths and the light lines are edges).
\[
v_iP_ix_{i-1}v_{i-1}P_{i-1}x_{i-2}\ldots P_1x_0x_iP_{i+1}v_{i+1}\ldots x_sRy_sw_sQ_sy_{s-1}\ldots Q_{j+1}y_jy_0Q_1w_1y_1\ldots Q_jw_j
\]
Therefore, $H$ satisfies all the requirements to be an $(l,\gamma)$-connector.
\oof


When we use an $(l,\gamma)$-connector $H$ in our construction, we will often be interested in the edges that may exist between vertices elsewhere in the graph and the sets $H^-$ and $H^+$. To describe the properties of these edges we will use the following definitions.

\de
Given a collection $\mathcal{A}$ of $(l,\gamma)$-connectors, we let $\mathcal{A}^+=\{P^+:P\in \mathcal{A}\}$ and $\mathcal{A}^-=\{P^-:P\in \mathcal{A}\}$.
\fn
\de
Given a collection of subsets $\mathcal{A}$ in the graph $G$, the \emph{grouped graph on the set $\mathcal{A}$ with respect to $G$} is the graph $H$ with the vertex set $\mathcal{A}$ which contains an edge between $A$ and~$B$ exactly when there is an edge between $A$ and $B$ in $G$.
\fn
In fact, we will use bipartite grouped graphs with two vertex classes $\mathcal{A}$ and $\mathcal{B}$, where we only consider edges between the classes. Abusing our notation slightly, when $\mathcal{A}$ is a collection of subsets of $V(G)$ and $V\subset V(G)$, we say the \emph{bipartite grouped graph with vertex classes $\mathcal{A}$ and $V$} is such a graph on vertex classes $\mathcal{A}$ and $\{\{v\}:v\in V\}$.

\subsection{Cases C and D of Theorem~\ref{unithres}}\label{CDproof}
For the moment we will ignore the role of the $(l,\gamma)$-connectors and recap the proof outline for Cases~C and~D given in Section~\ref{1boutline}, while giving more details. Given a tree~$T$ in Case C or Case D, we split~$T$ into three subtrees~$T_1$, $T_2$ and $T_3$ with certain properties. In Stage 1 of the embedding, we start by using $T_1$ to build a reservoir out of a set~$R$ (using Section~\ref{9absorbC}), where we have selected~$R$ so that $|R|=\Theta(n/k(T))$ and~$R$ has good connection properties (using Section~\ref{6connectionlemmas}). The tree $T_2$ will contain many bare paths with length $k(T)$. In Stage 2, we remove $\Theta(n/k(T)\log n)$ such paths from $T_2$ to get $T_2'$ and extend the embedding to cover~$T_2'$. In Stage~3, we embed $T_3$ (using Section~\ref{4almostspanning}), and find a cycle supported by the remaining $\Theta(n/\log n)$ vertices outside of the reservoir,~$R$. Dividing this cycle into $\Theta(n/k(T)\log n)$ subpaths with length $\Theta(k(T))$, in Stage 4, we join these sections into the embedding of $T_2'$ by using shorter paths of length $\Theta(\log n)$, with interior vertices in the reservoir,~$R$. These new paths will allow us to embed the $\Theta(n/k(T)\log n)$ paths from $T_2$ that we removed. Crucially, the paths that we ask for in~$R$ will use in total $\Theta(n/k(T)\log n)\cdot\Theta(\log n)=\Theta(|R|)$ vertices from the reservoir. The set~$R$, which becomes the reservoir, is chosen carefully so that we may ask it to connect any collection of disjoint pairs of vertices with disjoint paths of length $6\log n$, so long as the pairs of vertices are chosen from a certain smaller subset of~$R$ (which has itself size $\Theta(|R|)$) and the paths found cover only a small linear proportion of the vertices in~$R$. Therefore, with suitably chosen constants, the embedding we describe above does not ask for too many paths to be formed using vertices in~$R$.

\smallskip\color{\newcolour}

\textbf{Connecting one endvertex into the reservoir.} The one problem remaining is that the endvertices of the paths from the divided cycle may not have any neighbours in~$R$.
We use $(l,\gamma)$-connectors, with $l=\Theta(k(T))$ and $\gamma=\Theta(\log\log n/\log n)$, to solve this problem. For illustration, let us sketch how we can connect one endvertex into the reservoir, before discussing how we do this for all the endvertices of the paths simultaneously. We find a set $\HH$ containing $\Theta(n/l)$ disjoint $(l,\gamma)$-connectors and hold them in reserve (discussed below). When we break the cycle into pieces, let $v$ be an endvertex of one of the paths. Our issue is that $v$ may not have a neighbour in $R$, as this set has size $\Theta(n/k(T))$, where $k(T)$ can be much larger than $\log n$. However, the sets $H^+$, across each $H\in \HH$, have a union with size at least $\Theta(n/l)\cdot \gamma l=\Theta(n\log\log n/\log n)$, so it is likely that $v$ has some neighbour in $\cup_{H\in \HH}H^+$ in our random graph $G$. In fact, most vertices in $G$ will have a neighbour in $\cup_{H\in \HH}H^+$, and we can make sure during our embedding that any vertex which does not is already covered by the embedding.

Now, an edge between $v$ and some $v'\in H^+$, and the properties of an $(l,\gamma)$-connector allow us to find a path with vertex set $\{v\}\cup V(H)$ from $v$ to any vertex in $H^-$. Our task now is to find an edge between $H^-$ and the reservoir, $R$. As $|H^-|\geq \gamma l=\Theta(\log\log n/k(T)\log n)$ and $|R|=\Theta(n/k(T))$, we expect $\Theta(\log\log n)$ edges between $H^-$ and $R$. Indeed, by only taking connectors in $\HH$ for which this is true, we can ensure there is some such edge, say between $w\in R$ and $w'\in H^-$. Then, taking a $v',w'$-path with vertex set $V(H)$ in $H$ (which exists as $H$ is an $(l,\gamma)$-connector, $v'\in H^+$ and $w'\in H^-$) and adding the edges $vv'$ and $w'w$ gives a path from $v$ into the reservoir, using exactly the vertices in one connector in $\HH$.

\smallskip

\textbf{Connecting all the endvertices into the reservoir.} To connect all the  $\Theta(n/k(T)\log n)$ endvertices when we break the cycle into pieces, we carry out the procedure for the vertex $v$ above, but for every endvertex simultaneously. Say the set of endvertices is $E$. We find a matching from $E$ into $\HH$
so that, if an endvertex~$v$ is matched to $H\in \HH$, then there is an edge in~$G$ between~$v$ and $H^+$. Taking the matched $(l,\gamma)$-connectors, we, similarly, match them into the reservoir using the sets $H^-$, for each $H\in \HH$. Using these matchings and the properties of the connectors, we can then find, disjointly for each $v$, a path from $v$ into the reservoir using exactly the vertices in one connector in $\HH$.

\emph{Why} such matchings should exist, comes, essentially, from comparison to the following property which is likely to hold for a set $W$ of $\Theta(n\log\log n/\log n)$ vertices in $G'=G(n,\log n/n)$. Given any set $U$ of vertices outside of $W$, each with $\Theta(\log \log n)$ neighbours in $G'$ in $W$, then, if $|U|\leq |W|/2$, we can match $U$ into $W$.
Here, $\Theta(\log\log n)$ possible matches for each vertex in $U$ are needed because we are allowed to choose the vertices in $U$ from a set of vertices which is almost $\log n$ times as large as $W$. This property follows from Lemma~\ref{mindegexp3}. Essentially, the matchings described above exist because the relevant grouped graph we find them in behaves like a subgraph of the random graph $G'$.

\smallskip

\textbf{Connecting in the paths from the cycle.} Using connectors, we have now extended each path from the cycle using paths leading into the reservoir. We will have manipulated the embedding of~$T_2'$ so that the paths we removed from $T_2$ have endvertices embedded into the reservoir. Thus we can use the connection properties of the reservoir to connect each extended path from the cycle into the embedding of $T_2'$ in place of a missing path.



\smallskip

\textbf{Holding connectors in reserve and replacing used connectors.} We will hold the connectors in $\HH$ in reserve using the tree $T_2$. In Stage 2, when we embed $T_2$ with some paths missing, we will in fact embed $T_2$ with a further $|\HH|=\Theta(n/l)$ missing paths, each of length $l+1$. We will do this so that each missing path could be replaced by finding a path through a corresponding connector in $\HH$. At the end of the embedding, for each connector in $\HH$ that we do not use to connect vertices to the reservoir, we find a path through that connector to embed the corresponding path from~$T_2$, and call the connector \emph{unused}. We will then need to embed the deleted paths corresponding to the connectors that \emph{are} used. In our embedding we make sure that embedding each such deleted path will require two vertices from the reservoir to be connected by a path of the appropriate length. However, we cannot just use vertices from the reservoir to do this. Indeed, we will use $\Theta(n/k(T)\log n)$ of the connectors, so this would use $\Theta(n/\log n)$ vertices from the reservoir, which is far too many when $k(T)=\omega(\log n)$.

Therefore, when gathering the properties we need for Stage 4, we set aside paths of length $\Theta(l)$ to form the majority of the paths replacing the used connectors. We note that we do not know in advance which connectors we will use, but we \emph{do} know how many we will use. Thus, we can set aside the correct number of paths -- these paths are denoted~$P_{i,j}$, for varying $i$ and $j$, and we ensure their endvertices lie in the reservoir. When we find paths through the reservoir, we will by then have identified which connectors we are going to use, and we can also find paths through the reservoir to connect the paths $P_{i,j}$ in place of the used connectors. By choosing the paths $P_{i,j}$ to have length close to the size of the connectors, connecting in each path $P_{i,j}$ will use only $\Theta(\log n)$ vertices. As we use $\Theta(n/k(T)\log n)$ connectors, connecting in the paths $P_{i,j}$ requires only $\Theta(n/k(T))=\Theta(|R|)$ vertices from the reservoir.\color{black}



\pr[Proof of Theorem~\ref{unithres} in Cases C and D] For each $k$ with $10^2\log n\leq k\leq 10^{-6}\log^2 n/\log\log n$, we will show that, with probability $1-o(n^{-1})$, we can embed all the trees $T\in\TT(n,\Delta)$ with $k(T)=k$ into the random graph $\GG(n,10^{52}\log n/n)$. Therefore, almost surely, $\GG(n,10^{52}\log n/n)$ will contain a copy of every tree $T\in \TT(n,\Delta)$ with $k(T)$ in this range.
Let
\begin{equation}\label{halfeleven}
\l=n/10^{15}k,\;\;\;\; l=k/4\geq 24\log n,\;\;\;\;r=\lambda/12\log n\;\;\;\;\text{and}\;\;\;\;\gamma=\log\log n/16\log n.
\end{equation}
Let
\begin{equation}\label{dmdefn}
d=2\log n/\log\log n \;\text{ and }\; m=n/10^8d=5n\log\log n/10^9\log n.
\end{equation}
Recall that
\[
10^2\log n\leq k\leq 10^{-6}\log^2 n/\log\log n.
\]

Note that, in Case D, $10^9\l=n\log\log n/\log^2 n$.

\medskip

\textbf{Revealing the random graph.}
We will reveal edges within the vertex set $[n]$ in rounds with total edge probability at most $10^{52}\log n/n$. The final structure that we find with probability $1-o(n^{-1})$ is depicted in Figure~\ref{spanpic7A}.

\textbf{Stage 4 properties.}
Reveal edges with probability $100\log n/ n$ within the vertex set $[n]$ to get the graph $G_1$. By Lemma~\ref{connectlowdense}, there are subsets $R_0\subset R\subset [n]$ with $|R|=10^9\l$ and $|R_0|=\l$ so that the following property holds.
\stepcounter{capitalcounter}
\begin{enumerate}[label =\bfseries \Alph{capitalcounter}\arabic*]
\item Given any $6r$ disjoint pairs of vertices $(a_i,b_i)$, $i\in[6r]$, in $R_0$ there are disjoint $a_i,b_i$-paths in $G_1[R]$, with length $6\log n$ and internal vertices in $R\setminus R_0$.\label{R1}
\end{enumerate}

Reveal edges with probability $10^{20}\log n/ n$ to get the graph $G_2$. Recall the definition of $d$ and~$m$ from~(\ref{dmdefn}).
By Proposition~\ref{generalprops}, with probability $1-o(n^{-1})$, the graph $G_2$ is~$m$-joined. By Corollary~\ref{generalexpandcor}, with probability $1-o(n^{-1})$, $I(R)$ is $(d,m)$-extendable in~$G_2$.

Divide $R_0$ into two sets, $R_1$ and~$R_2$, with $|R_1|=6r$ and $|R_2|=|R_0|-6r$, and label the vertices of~$R_1$ so that $R_1=\{x_i,y_i,u_{i,1},v_{i,1},u_{i,2},v_{i,2}:i\in[r]\}$.
Note that $l-12\log n+1\geq 12\log n\geq 2\lceil \log(2m)/\log (d-1)\rceil+1$, and that $2rl=o(n)$. Start with the subgraph $I(R)$ and, by repeatedly applying Corollary~\ref{trivial}, for each $i\in[r]$ and $j\in[2]$, add a disjoint $u_{i,j},v_{i,j}$-path, $P_{i,j}$ say, with length $l-12\log n+1$ and internal vertices in $[n]\setminus R$, so that all the created paths are disjoint and the resulting subgraph is still $(d,m)$-extendable.

Reveal edges with probability $10^{12}\log n/ n$ among the vertex set $[n]$ to get the graph $G_3$. By Lemma~\ref{connectorfind}, there are, almost surely, $\l/6$ disjoint $(l,\gamma)$-connectors in $G_3-R-\cup_{i,j}V(P_{i,j})$, where~$l$ and~$\gamma$ are defined in (\ref{halfeleven}). Let $\HH$ be a set of such connectors.


Reveal more edges with probability $p=10^{24}\log n/ n$ to get the graph $G_4$. Consider the bipartite grouped graph $K$ with vertex classes $\HH^{\pm}=\mathcal{H}^+\cup \mathcal{H}^-$ and~$R_2$, with respect to $G_4$. Each set in~$\HH^{\pm}$ has size at least $l\gamma$ by the definition of an $(l,\gamma)$-connector. Therefore, the probability an edge is present between $U\in \HH^{\pm}$ and $v\in R_2$ in $K$ is at least
\[
1-(1-p)^{l\gamma}\geq pl\gamma/2\geq 10^6\log\log n/\l.
\]
As all the sets in $\HH^{\pm}$, and the set~$R_2$, are disjoint, each potential edge is present, or not, in $K$ independently.
Therefore, as $|\HH^{\pm}|+|R_2|=4\l/3-6r\leq n$, the graph $K$ can be viewed as a subgraph of some random graph $\GG(n, 10^6\log\log n/\l)$. If $q=10^6\log\log n/\l$, then
\[
\frac{10\log (nq)}{q}=\frac{\l\log(10^6n\log\log n/\l)}{10^5\log\log n}= \frac{\l\log(10^{21}k\log\log n)}{10^5\log\log n}\leq \frac{\l\log(10^{21}\log^3n)}{10^5\log\log n}\leq \frac{\l}{10^3},
\]
for large $n$.
By Proposition~\ref{generalprops}, with probability $1-o(n^{-1})$, any two subsets from $\HH^\pm$ and~$R_2$ respectively with size $m_\l:=\l/10^3$ have an edge between them. Therefore, as $|R_2|=\lambda-6r \geq 100m_\l$, using Proposition~\ref{neat3}, we can take a subset $\BB\subset\HH^{\pm}$, with $|\BB|\leq m_\l$, so that, if $\UU\subset \HH^{\pm}\setminus \BB$ and $|\UU|\leq m_\l$, then $|N_K(\UU)|\geq 2|\UU|$. If $\UU\subset  \HH^{\pm}\setminus \BB$ and $|\UU|\geq m_\l$, then, as there are no edges between~$\UU$ and $R_2\setminus N_K(\UU)$ in $K$, we must have that $|R_2\setminus N_K(\UU)|\leq m_\l$, and therefore, as $|\HH|=\l/6$, we have that
\[
|N_K(\UU)|\geq |R_2|-m_\l\geq \l-6r-m_\l\geq 4|\HH|\geq 2|\UU|.
\]
Therefore, by Theorem~\ref{matchingtheorem} there is a 2-matching from $\HH^{\pm}\setminus \BB$ into~$R_2$ in the graph $K$. Pick such a 2-matching.

Let $\HH_0=\{H\in \HH:H^+,H^-\in \HH^{\pm}\setminus\BB\}$, noting that $|\HH_0|\geq|\HH|-2m_\l\geq \l/8$. For each $H\in \HH_0$, let $a_H$ and $b_H$ be the vertices in~$R_2$ matched to $H^+$ in the 2-matching, and let $c_H$ be one of the vertices in~$R_2$ matched to $H^-$ in the 2-matching (we do not use the second vertex). The vertices $a_H$ and $c_H$ will be used to hold the connector $H\in \HH$ in reserve, while the vertex $b_H$ is used if $H$ is employed to connect a vertex through to the reservoir.

\textbf{Stage 3 properties.}
Let $W_0$ be the set of vertices not in any of the paths $P_{i,j}$, or in any of the connectors $H\in\HH_0$, or in the set~$R$, as depicted in Figure~\ref{spanpic7A0}. Note that
\begin{equation}\label{finallyy}
|W_0|\geq n-2rl-|\mathcal{H}_0|l-10^9\l\geq n-2n/10^{16}-o(n)\geq(1-10^{-15})n.
\end{equation}
Reveal more edges with probability $10^{24}\log n/ n$ to get the graph $G_5$. Consider the auxillary bipartite grouped graph $L$ on the vertex classes $\mathcal{H}_0^-$ and $W_0$, with respect to $G_5$. As in the graph~$K$, the edges of $L$ are present independently at random with probability at least $10^6\log\log n/\l$. Therefore, by considering $L$ as a subgraph of the random graph $\GG(n,10^6\log\log n/\l)$, and applying Proposition~\ref{generalprops}, with probability $1-o(n^{-1})$, any two subsets of $\mathcal{H}_0^-$ and $W_0$ respectively, with size $m_\l=\l/10^3$, have an edge between them.

Note that $|\mathcal{H}_0^-|\geq \lambda/8\geq 100m_\l$. Using Proposition~\ref{neat3}, take $B\subset W_0$ to be a subset satisfying $|B|\leq m_\l$ and the following property, where we set $W_1=W_0\setminus B$.
\begin{enumerate}[label=\bfseries \Alph{capitalcounter}\arabic*]\addtocounter{enumi}{1}
\item If $U\subset W_1$ and $|U|\leq m_\l$, then $|N_L(U)|\geq |U|$.\label{R3}
\end{enumerate}

Note that $rl= n/48(10^{15}\log n)\geq n\exp(4)/10^{20}\log n$ and $|W_1|\geq |W_0|-m_\l\geq n/2\geq 100rl$. Reveal more edges among the vertex set $[n]$ with probability $10^{50}\log n/ n$ to get the graph $G_6$. By~\eqref{finallyy}, and Lemma~\ref{almostspan} applied to the graph $G_6$ and the sets $W=W_1$ and $B=V(G)\setminus W_0$, with path length $5\log n$ and $w=rl$, with probability $1-o(n^{-1})$ we can find a subset $W_2\subset W_0$, with $|W_2|\leq 5n/10^4$, so that the following holds.
\begin{enumerate}[label=\bfseries \Alph{capitalcounter}\arabic*]\addtocounter{enumi}{2}
\item Suppose $S$ is a tree, with $\Delta(S)\leq \Delta$, which contains at least $n/10^5\log n$ disjoint bare paths with length $50\log n+1$, and a vertex $t\in V(S)$. Suppose $V\subset [n]$ and $v\in V\setminus W_2$, with $|V|=|S|+rl$ and $W_2\subset V$. Then, there is a copy $S'$ of~$S$ in $G_6[V]$ so that $V\setminus V(S')\subset W_1$,~$t$ is copied to~$v$, and $G_6[V\setminus V(S')]$ is Hamiltonian. \label{R4}
\end{enumerate}

\textbf{Stage 1 and 2 properties.}
Noting that $|W_0\setminus W_2|\geq n-n/10^{15}-5n/10^4$, divide $W_0\setminus W_2$ into the sets $Z_1$, $Z_2$, $Z_3$, $Z_4$ and~$R'$ so that $|Z_1|=|Z_2|=|Z_3|=n/10^5$ and $|R'|=5(10^9\l)=o(n)$. Reveal more edges among the vertex set $[n]$ with probability $10^{20}\log n/ n$ to get the graph $G_7$. By Lemma~\ref{finddevice}, we can find a $(10^9\l)$-device~$D$ for $R\cup R'$ in $G_7[R\cup R'\cup Z_4]$ (see Figure~\ref{spanpic7A}). By Lemma~\ref{generalexpand}, with probability $1-o(n^{-1})$, for every set $A\subset [n]$ with $|A|\leq 2m$ and each $i\in[3]$ we have $|N_{G_7}(A,Z_i)|\geq d|A|$, so we have, by Proposition~\ref{uttriv}, the following property.
\begin{enumerate}[label=\bfseries \Alph{capitalcounter}\arabic*]\addtocounter{enumi}{3}
\item If $i\in[3]$, and the subgraph $S$ and the set $U\subset [n]$ satisfy $S\subset G_7-Z_i$, $V(S)\cup Z_i\subset U$ and $\Delta(S)\leq d$, then $S$ is $(d,m)$-extendable in $G_7[U]$.\label{R5}
\end{enumerate}

\color{\newcolour}
\textbf{Recapping the relevant properties.} We have now all the properties we require to embed trees in Cases C and D. Letting $G$ be the graph containing all the edges revealed, we will now recap all the properties we will use for our embedding.
As depicted in Figure~\ref{spanpic7A}, we have found the following in $G$.
\begin{itemize}
\item Sets $R_1,R_2\subset R_0\subset R$ and $R'$ with $|R_1|=6r$, $|R_2|=\lambda-6r$, $|R_0|=\lambda$,  $|R|=10^9\lambda$ and $|R'|=5(10^9\lambda)$.
\item A collection of vertex-disjoint paths $\mathcal{P}=\{P_{i,j}:i\in [r],j\in [2]\}$, each with length $l-12\log n+1$, such that, for each $i\in [r]$ and $j\in [2]$, the endvertices $u_{i,j}$ and $v_{i,j}$ of $P_{i,j}$ are in $R_1$.
\item A collection of vertex-disjoint $(l,\gamma)$-connectors $\mathcal{H}_0$, where each $H\in \mathcal{H}_0$ contains sets $H^+$ and $H^-$ as in Definition~\ref{connectordefn}, and $\lambda/8\leq |\mathcal{H}_0|\leq \lambda/6$.
\item Distinct vertices $a_H,b_H,c_H$, over all $H\in \mathcal{H}_0$, in $R_2$, where there are edges from $H^+$ to both $a_H$ and $b_H$, and from $H^-$ to $c_H$,  in $G$.

\item Vertex sets $Z_1$, $Z_2$ and $Z_3$, each with size $n/10^5$, and vertex set $W_2$ with size at most $5n/10^4$.
\item A vertex set $Z_4$ containing a $\lambda$-device $D$.
\end{itemize}
The sets $R$, $R'$, $Z_1$, $Z_2$, $Z_3$, $Z_4$, $W_2$, $V(H)$, with $H\in \mathcal{H}_0$, and $V(P_{i,j})\setminus \{u_{i,j},v_{i,j}\}$, with $i\in [r]$ and $j\in [2]$, are pairwise disjoint and form a partition of $V(G)$.

The properties \ref{R1}--\ref{R5} are all unaffected by the addition of edges, and thus hold for $G$. For these properties, we need to remember two further elements, a set $W_1\supset W_2$ and an auxillary graph $L$, the latter of which we can easily see can be taken to be the grouped graph on the vertex classes $\mathcal{H}_0^-$ and $W_1$ with respect to $G$. The purpose of $W_1$ is that we can cover all the vertices outside of $W_1$ using \ref{R4}, and any set remaining in the uncovered vertices can expand in $L$ by \ref{R3}. We can now embed the trees in Cases C and D.
\color{black}

\medskip

\textbf{Embedding the tree: split the tree.} Let $T\in\TT(n,\Delta)$ satisfy $k(T)=k$. Let $k'=k'(T)$, so that $T$ contains at least $n/90k'$ vertex disjoint bare paths with length $k'$.
Using Corollary~\ref{dividepath}, divide~$T$ into two trees,~$T_1$ and $T'$, intersecting on a single vertex~$t_1$, so that both $T_1$ and $T'$ contain at least $(n/270k')-1$ disjoint bare paths with length~$k'$. Say, without loss of generality, that $|T_1|\geq n/2$. Using Corollary~\ref{dividepath}, divide $T'$ into two trees,~$T_2$ and~$T_3$, intersecting on a single vertex $t_2$, so that both~$T_2$ and $T_3$ contain at least $n/900k'$ disjoint bare paths with length~$k'$. Suppose, without loss of generality, that $t_1\in V(T_2)$. Note that, due to the paths they contain, $|T_2|,|T_3|\geq n/10^3+1$, and hence $|T_1|\leq n-2n/10^3$. The division of $T$ into subtrees is depicted in Figure~\ref{spanpic7B}. The following embedding is depicted in Figures~\ref{spanpic7D1} to~\ref{spanpic7F}.

\textbf{Stage 1.} Recall the choice of~$R$ from the start of the development of the Stage 4 properties, and the choice of $R'$, $Z_3$ and $Z_4$ from the development of the Stage 1 and 2 properties. By Property~\ref{R5}, $I(V(D)\cup R\cup R')$ is $(d,m)$-extendable in $G[Z_3\cup Z_4\cup R\cup R']$. Note that, as $D$ is a $(10^9\l)$-device, $|V(D)|=o(n)$ (see~\eqref{VDsize}). As $|R'|=5(10^9\l)$, we have that
\begin{equation*}\label{Z3Z4}
|Z_3\cup Z_4|=|W_0|-|W_2|-|Z_1|-|Z_2|-|R'|\geq n-n/10^3\geq |T_1|+|V(D)|+10dm+\log n.
\end{equation*}
In Case D, where $10^9\l=n\log\log n/\log^2 n$, $T_1$ contains at least $n/300k'$ vertex disjoint bare paths of length~$k'$. As $k'\geq k$, there are at least $k'/3k$ disjoint bare paths with length $k$ in each disjoint bare path with length $k'$. Thus, $T_1$ contains at least $n/900k$ vertex disjoint bare paths with length~$k$. Discarding some of these paths and shortening the rest, we can find at least $9(10^9\l)+1$ vertex disjoint bare paths which each have length $10^4\log^2 n/(\log\log n)^2+2\log n$. Thus, in Case~D, by Lemma~\ref{reservoirfromdevice} we can find a subset $Y_1\subset Z_3\cup Z_4$  with $|Y_1|=|T_1|-3(10^9\lambda)$, and a vertex $v_1\in Y_1$, so that $R\cup R'$ is made into a reservoir by $(G[Y_1\cup R\cup R'],v_1,T_1,t_1)$ (see Figure~\ref{spanpic7D1}).

In Case C, as $T_1$ does not contain $n/90(k+1)+1$ vertex disjoint bare paths with length $k+1$ (by the definition of $k(T)$), and $|T_1|\geq n/2$, by Lemma~\ref{farleaves}, $T_1$ must either contain at least $|T_1|/100\geq n/200$ leaves, or contain a 20-separated set of at least $n/80(k+1)$ leaves. For sufficiently large~$n$, in the first case, as $T_1$ has maximum degree $\Delta$, it must contain a 20-separated set of at least $n/400\Delta^{20}\geq n/80(k+1)$ leaves. Therefore, in both cases, $\lambda(T_1)\geq n/80(k+1)\geq n/10^2k=10^4(10^9\lambda)$. Therefore, by Lemma~\ref{reservoirfromdevice}, we can find a subset $Y_1\subset Z_3\cup Z_4$ with $|Y_1|=|T_1|-3(10^9\lambda)$, and a vertex $v_1\in Y_1$, so that $R\cup R'$ is made into a reservoir by $(G[Y_1\cup R\cup R'],v_1,T_1,t_1)$ (see Figure~\ref{spanpic7D1}).

In both cases, pick a subset $R''\subset R'$ with
\begin{equation}\label{eqrefnow}
|R''|=2(10^9\l)+6r(6\log n+1)+2(|\HH_0|-2r)\leq 2(10^9\l)+4\l+\l\leq 5(10^9\l)=|R'|.
\end{equation}
Recall that $|R|=10^9\lambda$ and, in both cases, $|Y_1|=|T_1|-3(10^9\lambda)$. Together with~\eqref{eqrefnow}, this implies that
\begin{equation}\label{Y1RR}
|Y_1|+|R|+|R''|=|T_1|+36r\log n+2|\HH_0|+2r.
\end{equation}
Note that, if $U\subset R$ and $|U|=36r\log n+2|\HH_0|+2r$, then $|U\cup (R'\setminus R'')|=3(10^9\l)$, and hence
\begin{equation*}
|(R\setminus U)\cup R''|=|R'\cup R|-|U\cup (R'\setminus R'')|=3(10^9\l).
\end{equation*}
Therefore, as $R\cup R'$ is made into a reservoir by $(G[Y_1\cup R\cup R'],v_1,T_1,t_1)$, we have the following property.
\begin{enumerate}[label=\bfseries \Alph{capitalcounter}\arabic*]\addtocounter{enumi}{4}
\item If $U\subset R$ with $|U|=36r\log n+2|\HH_0|+2r$, then there is a copy of $T_1$ with the vertex set $Y_1 \cup (R\setminus U)\cup R''$ in which $t_1$ is copied to $v_1$.\label{R6}
\end{enumerate}

\textbf{Stage 2.} Let $V_0=((Z_3\cup Z_4)\setminus Y_1)\cup (R'\setminus R'')$. Recall that $T_2$ has at least $n/900k'$  vertex disjoint bare paths of length $k'\geq k$, each of which contains at least $k'/3k$ disjoint bare paths with length $k$. Thus, $T_2$ contains at least $n/2700k\geq 2\l+2\geq |\HH_0|+r+2$ vertex disjoint bare paths of length $k$. Remove $|\HH_0|$ vertex disjoint bare paths of length $l+2\log n+1\leq k$, and~$r$ vertex disjoint bare paths of length $3l+14\log n+1\leq k$, from $T_2$ to get the forest $T_2'$, choosing the paths so that $t_1,t_2\in V(T_2')$. The subgraph $I(\{v_1\})$ is $(d,m)$-extendable in $G[V_0\cup Z_2\cup\{v_1\}]$ by Property~\ref{R5}. As $|Z_2|=n/10^5$, note that
\begin{align}
|V_0\cup Z_2\cup\{v_1\}|&=|Z_3\cup Z_4|-|Y_1|+|Z_2|+|R'\setminus R''|+1
=|W_0|-|Y_1|-|Z_1|-|R''|-|W_2|+1 \nonumber
\\
&\geq n-|Y_1|-5n/10^4-n/10^4
\geq n-|T_1|-|T_3|+n/10^4\geq |T_2|+10dm.\label{quartmid}
\end{align}
Therefore, we may, by Corollary~\ref{treebuild}, find a copy, $S_2'$ say, of $T_2'$ in $G[V_0\cup Z_2\cup\{v_1\}]$, in which $t_1$ is copied to $v_1$ (using dummy edges to replace the removed paths, as we did in the proof of Lemma~\ref{almostspan}), as shown in Figure~\ref{spanpic7D2}.

Let $V_1=(V_0\cup Z_2\cup\{v_1\})\setminus V(S_2')=(V_0\cup Z_2)\setminus V(S_2')$. Label some of the vertices in $V(S_2')$ appropriately as $a_H'$, $c_H'$, $H\in \HH'$, and $x'_i$, $y'_i$, $i\in[r]$, so that to make $S_2'$ into an embedding of $T_2$ we need to find vertex disjoint $a_H',c_H'$-paths, $H\in \HH_0$, of length $l+2\log n+1$, and vertex disjoint $x'_i,y'_i$-paths, $i\in[r]$, of length $3l+14\log n+1$. Let~$X$ be the set of vertices in the following vertex pairs, taken over all $i\in[r]$ and $H\in \HH_0$.
\begin{equation}\label{threeoclock}
 (a'_H,a_H),\;\; (c_H,c'_H),\;\; (x'_i,x_i),\;\; (y'_i,y_i)
\end{equation}
Note that $|X|=4|\HH_0|+4r\leq \l$.
The subgraph $I(X)$ is $(d,m)$-extendable in $G[X\cup V_1\cup Z_1]$ by Property~\ref{R5}. Using (\ref{quartmid}), note that
\[
|X\cup V_1\cup Z_1|\geq |V_0\cup Z_2\cup\{v_1\}|-|S_2'|+|Z_1|\geq |T_2|+10dm-|S_2'|+n/10^5\geq n/10^5+10dm.
\]
Note that $\log n\geq 2\lceil\log (2m)/\log (d-1)\rceil+1$, and $|X|(1+\log n)\leq 2\l\log n\leq n/10^{10}$. Therefore, by repeated use of Corollary~\ref{trivial}, we can connect the pairs of vertices from $X$ in (\ref{threeoclock}) by vertex disjoint paths with length $\log n$ in $G[X\cup V_1\cup Z_1]$. Use these paths to extend $S_2'$ into the subgraph~$S_2''$, so that to complete the embedding of $T_2$, we need to find vertex disjoint $a_H,c_H$-paths, $H\in\HH_0$, with length $l+1$, and vertex disjoint $x_i,y_i$-paths, $i\in [r]$, with length $3l+12\log n+1$. Note that the connector $H$ lies poised to create a path with length $l+1$ between $a_H$ and $c_H$ (see Figure~\ref{spanpic7D3}). Indeed, as $a_HH^+$ and $c_HH^-$ are edges in $K$, there are vertices $a_H''\in H^+$ and $c_H''\in H^-$ so that $a_Ha_H''$ and $c_Hc_H''$ are edges of~$G$. By the definition of an $(l,\gamma)$-connector, we can find a path with the vertex set $V(H)$ which connects~$a_H''$ to~$c_H''$. With the edges $a_Ha_H''$ and $c_Hc_H''$, this would give an $a_H,c_H$-path with length $l+1$, as required. This is how we will find the required $a_H,c_H$-path when the connector $H\in \HH_0$ is not used elsewhere in the embedding.

\textbf{Stage 3.} Let $V_2=(V_1\cup Z_1)\setminus V(S_2'')$. The set $V_2\cup W_2$ contains all the vertices not in $V(S_2'')$, $Y_1$, $R$, or $R''$, and not in the paths $P_{i,j}$, $i\in[r]$, $j\in[2]$, or in the connectors in $\HH_0$. By considering the paths that need to be added to make $S_2''$ a copy of $T_2$, we have that $|S_2''|=|T_2|-r(3l+12 \log n)-l|\mathcal{H}_0|$. Note that $Y_1\cap V(S_2'')=\{v_1\}$, so that $|Y_1\cap V(S_2'')|=1$.
Note that $R\cap V(S_2'')=\{a_H,b_H:H\in\HH_0\}\cup\{x_i,y_i:i\in[r]\}$, so that $|R\cap V(S_2'')|=2|\HH_0|+2r$.
 The paths~$P_{i,j}$, $i\in[r]$, $j\in[2]$, contain in total $2r(l-12\log n+2)$ vertices, though $4r$ of these vertices, the endvertices, $u_{i,j}$, $v_{i,j}$, $i\in[r]$, $j\in[2]$, are also in $R$. Combining all of this, and using (\ref{Y1RR}), we have
\begin{align*}
|V_2\cup W_2|&=n-|S_2''|-|Y_1|-|R|-|R''|-2r(l-12\log n)-l|\HH_0|+2|\HH_0|+2r+1\\
&=n-|S_2''|-|T_1|-2rl-12r\log n-l|\HH_0|+1\\
&=n-|T_1|-|T_2|+1+rl=|T_3|-1+rl.
\end{align*}
Let $v_2$ be the copy of $t_2$ in $S_2''$. Then, $|V_2\cup W_2\cup\{v_2\}|=|T_3|+rl$.

The tree $T_3$ contains at least $n/900k'$ vertex disjoint bare paths of length $k'$, so contains at least $\lfloor k'/(50\log n+3)\rfloor \cdot (n/900k')$ vertex disjoint bare paths of length $50\log n+1$. As $k'\geq k\geq 100\log n$, for sufficiently large $n$ this is at least $n/10^5\log n$ such paths, and we can use Property~\ref{R4}. That is, there is a copy $S_3$ of $T_3$ in $G[V_2\cup W_2\cup\{v_2\}]$ so that $t_2$ is copied to $v_2$, $(V_2\cup W_2)\setminus V(S_3)\subset W_1$, and there is a cycle, $Q$ say, with the vertex set $(V_2\cup W_2)\setminus V(S_3)$ in~$G$, where $Q$ then has $|(V_2\cup W_2)\setminus V(S_3)|=rl$ vertices (see Figure~\ref{spanpic7D4}).

\textbf{Stage 4.} To recap our situation (as shown in Figure~\ref{spanpic7D4}), we have embedded~$T_2$ except for some missing long and short paths, and also embedded~$T_3$. We have arranged the connectors in~$\HH_0$, ready to embed the missing short paths from~$T_2$. We have formed the remaining spare vertices (those vertices outside the structure we have found and outside the sets $R$, $R''$ and $Y_1$) exactly into a cycle~$Q$. To complete the embedding of~$T$ we will embed the missing long paths from~$T_2$ using sections of~$Q$, which we will attach into~$S_2''$ using some $(l,\gamma)$-connectors from $\HH_0$ and some new paths with vertices in~$R$. The (small proportion of) $(l,\gamma)$-connectors used in this way cannot then be used to embed their matching short paths from~$T_2$. We will instead embed these matching short paths using all the paths $P_{i,j}$, $i\in [r]$, $j\in [2]$, and some new paths with vertices in~$R$. By embedding the rest of the missing short paths from~$T_2$ using their matching $(l,\gamma)$-connectors, we will then complete the copy of~$T_2$. This will give a copy of $T_2\cup T_3$ that contains every vertex in the graph~$G$ except for the vertices in $Y_1\setminus \{v_1\}$ and some vertices in $R\cup R''$. Due to Property~\ref{R6}, we will be able to find a copy of~$T_1$ attached appropriately to~$v_1$ which covers exactly these remaining vertices and~$v_1$. This will complete the copy of~$T$.

Therefore, first divide $Q$ into paths $Q_i$, $i\in[r]$, with length $l-1$, and suppose each path $Q_i$ has endvertices $w_i$ and $z_i$. As $\{w_i,z_i:i\in[r]\}\subset W_1$ and $2r\leq m_\l$, by Property~\ref{R3} and Theorem~\ref{matchingtheorem} there is a matching from $\{w_i,z_i:i\in[r]\}$ into $\HH_0^-$ in the graph $L$. For each $i\in[r]$, let $H_{i,1}, H_{i,2}\in \HH_0$ be such that $H_{i,1}^-$ and $H_{i,2}^-$ are matched to $w_i$ and $z_i$ respectively under this matching (see Figure~\ref{spanpic7D5}).

Let $\HH_1=\{H_{i,1},H_{i,2}:i\in[r]\}$. Consider the following pairs of vertices from the set $R_0$, taken over $i\in [r]$.
\[
(x_i,b_{H_{i,1}}),  (b_{H_{i,2}},y_i), (a_{H_{i,1}},u_{i,1}),
(v_{i,1},c_{H_{i,1}}), (a_{H_{i,2}},u_{i,2}), (v_{i,2},c_{H_{i,2}})
\]
In total, we have $6r$ pairs of vertices, all in $R_0$, so, by Property~\ref{R1}, there are vertex disjoint paths in $G[R]$, of length $6\log n$, connecting these vertex pairs, with internal vertices in $R\setminus R_0$. Thus, such paths avoid the vertices in $\{a_{H},c_H:H\in\HH_0\setminus\HH_1\}$.

For each $H\in\HH_0\setminus\HH_1 $, we can find the required $a_H,c_H$-path with length $l+1$ by taking an appropriate path through $H$, as described previously.
For each $i\in[r]$ and $j\in[2]$, take the $a_{H_{i,j}},u_{i,j}$-path and the $v_{i,j},c_{H_{i,j}}$-path in $G[R]$, and the $u_{i,j},v_{i,j}$-path $P_{i,j}$ and combine them to get an $a_{H_{i,j}},c_{H_{i,j}}$-path with length $l+1$ (see Figure~\ref{spanpic7D6}). Along with each $a_H,c_H$-path, $H\in\HH_0\setminus\HH_1$, use these to add the required $|\HH_0|$ bare paths with length $l+1$ to $S_2''$. To complete the copy of~$T_2$ we need to find the remaining~$r$ missing bare paths with length $3l+12\log n+1$ joining the vertex pairs $(x_i,y_i)$, $i\in [r]$, and also add these to~$S_2''$.

For each $i\in[r]$, as $w_iH^-_{i,1}$ is an edge in $L$, the graph formed in developing the Stage 3 properties, there is some vertex $w_i'\in H^-_{i,1}$ so that $w_iw_i'$ is an edge in~$G$. Similarly, we can find $z_i'\in H_{i,2}^-$, $b_{i,1}\in H_{i,1}^+$ and $b_{i,2}\in H_{i,2}^+$ so that $z_iz_i'$, $b_{i,1}b_{H_{i,1}}$ and $b_{i,2}b_{H_{i,2}}$ are edges in~$G$. Using the definition of an $(l,\gamma)$-connector, find a $b_{i,1},w_i'$-path covering $H_{i,1}$, and a $b_{i,2},z_i'$-path covering $H_{i,2}$. Combining these paths with the path $Q_i$, the $x_i,b_{H_{i,1}}$-path and the $b_{H_{i,2}},y_i$-path from $G[R]$, and the edges mentioned, gives a path with length $3l+12\log n+1$ between $x_i$ and $y_i$. This allows us to add the required $r$ bare paths with length $3l+12\log n+1$ to $S_2''$, and thus complete the copy,~$S_2$ say, of $T_2$.

Finally, let $U$ consist of all the vertices in the $6r$ paths in $G[R]$ that we have just found (including their endvertices) together with the vertices $a_H$ and $c_H$ for each $H\notin \HH_1$. Then $U\subset R$ and
$|U|=36r\log n+6r+2(|\HH_0|-2r)=36r\log n+2|\HH_0|+2r$. Therefore, by Property~\ref{R6}, there is a copy, $S_1$ say, of $T_1$ with the vertex set $Y_1 \cup (R\setminus U)\cup R''$ in which $t_1$ is copied to $v_1$ (see Figure~\ref{spanpic7D7}). Taken with~$S_2$ and $S_3$, this completes the copy of ~$T$.
\oof



\section*{Acknowledgements}
The author would like to thank Andrew Thomason for his guidance and encouragement, and Oliver Riordan for making many suggestions to improve the presentation and accuracy of this paper.

\begin{figure}[h]
\centering
    \begin{subfigure}[b]{0.9\textwidth}
        \centering
        \resizebox{\linewidth}{!}{
            \input{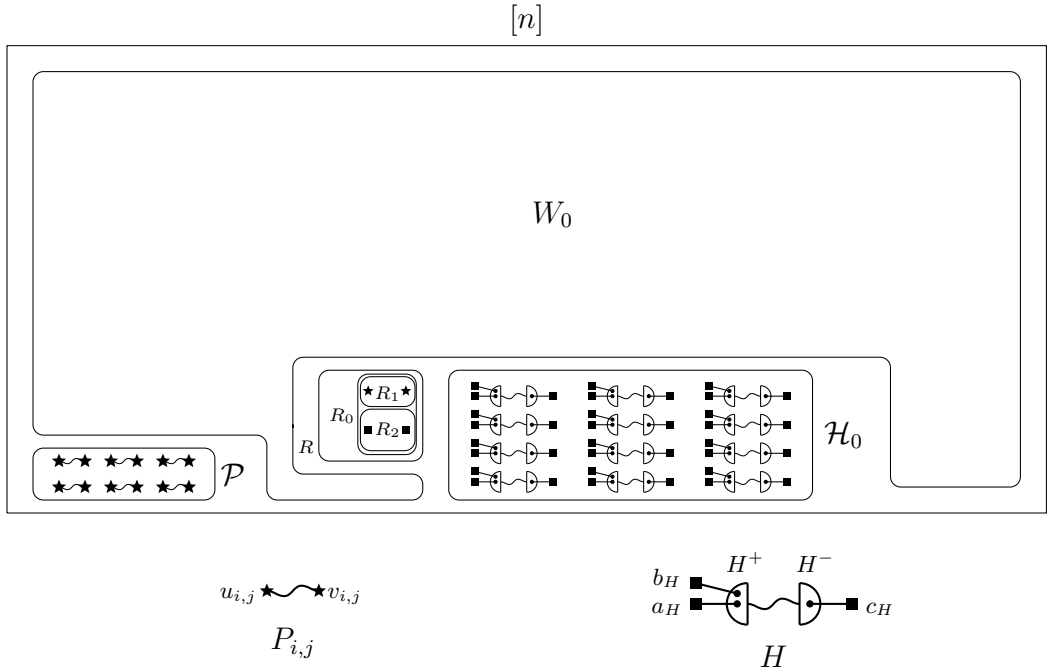}
        }
\label{spanpic7A00}
    \end{subfigure}

\vspace{-0.5cm}

\caption{The sets and subgraphs found at the start of the development of the Stage 3 properties in the proof of Theorem~\ref{unithres} in Cases~C and~D. The paths in $\PP$ are the paths $P_{i,j}$, $i\in[r]$ and $j\in[2]$, with a typical such path copied under $[n]$ and labelled. A typical $(l,\gamma)$-connector $H$ from $\HH_0$ is also copied below $[n]$ and labelled, along with the vertices $a_H$, $b_H$ and $c_H$ which each have a neighbour in~$H^+$ or~$H^-$ as depicted. The sets $W_0$,~$R$, $V(H)$, with $H\in \HH_0$, and $V(P_{i,j})\setminus \{u_{i,j},v_{i,j}\}$, with $i\in [r]$ and $j\in [2]$, form a partition of $[n]$. The vertices $u_{i,j}$, $v_{i,j}$,~$a_H$,~$b_H$, and~$c_H$, with $i\in[r]$, $j\in[2]$ and $H\in\HH_0$, are drawn along with their respective path~$P_{i,j}$ or connector~$H$, but are contained within either the set $R_1$ or the set $R_2$, as indicated by their shape of a star or a square, respectively.}\label{spanpic7A0}
\end{figure}

\newpage

\begin{figure}[p!]
\centering
    \begin{subfigure}[b]{0.9\textwidth}
        \centering
        \resizebox{\linewidth}{!}{
            \input{spanpic7A1}
        }
\label{spanpic7A1}
    \end{subfigure}

\vspace{-0.5cm}

\caption{The sets and subgraphs found in the random graph $G$ in the proof of Theorem~\ref{unithres} in Cases~C and~D. The paths in $\PP$ are the paths $P_{i,j}$, $i\in[r]$ and $j\in[2]$, with a typical such path copied under $V(G)$ and labelled. A typical $(l,\gamma)$-connector $H$ from $\HH_0$ is also copied below $V(G)$ and labelled, along with the vertices $a_H$, $b_H$ and $c_H$ which each have a neighbour in~$H^+$ or~$H^-$ as depicted. The sets $R$, $R'$, $Z_1$, $Z_2$, $Z_3$, $Z_4$, $W_2$, $V(H)$, with $H\in \HH_0$, and $V(P_{i,j})\setminus \{u_{i,j},v_{i,j}\}$, with $i\in [r]$ and $j\in [2]$, form a partition of $V(G)$. The vertices $u_{i,j}$, $v_{i,j}$,~$a_H$,~$b_H$, and~$c_H$, with $i\in[r]$, $j\in[2]$ and $H\in\HH_0$, are drawn along with their respective path $P_{i,j}$ or connector~$H$, but are contained within either the set $R_1$ or the set $R_2$, as indicated by their shape of a star or a square, respectively.}\label{spanpic7A}

\vspace{1.5cm}

\centering
    \begin{subfigure}[b]{0.8\textwidth}
        \centering
        \resizebox{\linewidth}{!}{
            \input{spanpic7B1}
        }
\label{spanpic6B1}
    \end{subfigure}

\vspace{-1.75cm}

\caption{The structure found in the tree $T$ before the start of the embedding in the proof of Theorem~\ref{unithres} in Cases~C and D. The long vertex disjoint bare paths depicted in $T_2$ will be particularly important in our embedding.} \label{spanpic7B}
\end{figure}

\newpage
\begin{figure}[p]
\centering
    \begin{subfigure}[b]{0.9\textwidth}
        \centering
        \resizebox{\linewidth}{!}{
            \input{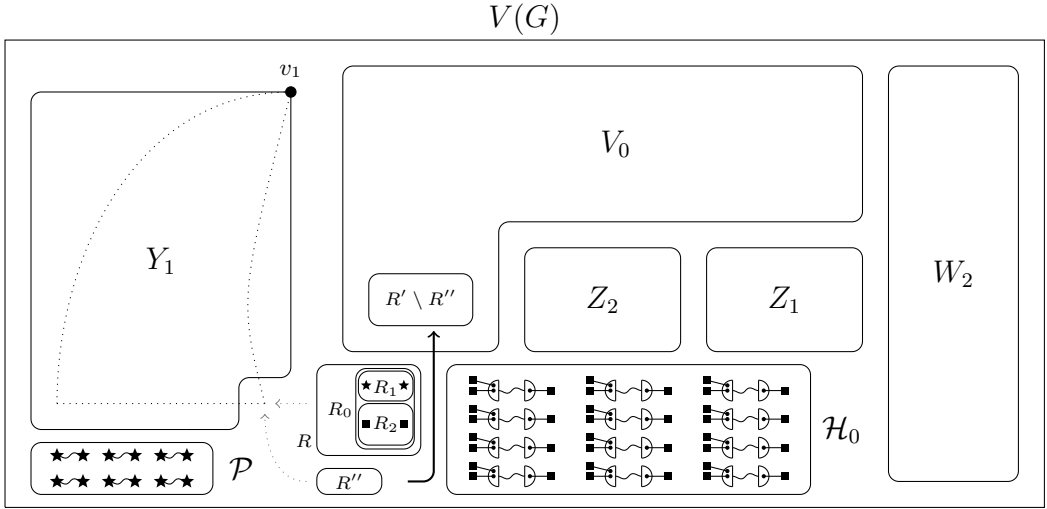}
        }
\label{spanpic7C1}
    \end{subfigure}

\vspace{-0.35cm}

\caption{Using the structure depicted in Figure~\ref{spanpic7A}, and the properties associated with them, we find a set $Y_1\subset Z_3\cup Z_4$ and a vertex $v_1\in Y_1$ such that $R\cup R'$ is made into a reservoir by $(G[Y_1\cup R\cup R'],v_1,T_1,t_1)$. That is, given the appropriate number of vertices from $R\cup R'$, we can cover exactly those vertices and those in $Y_1$ by a copy of $T_1$ in which $t_1$ is copied to $v_1$ (this property is represented by the dotted grey tree and arrows). This completes Stage 1. We then collect the vertices in $Z_3\cup Z_4$ not used in $Y_1$, along with most of the vertices in $R'$ (those not in the newly created subset $R''$), and form the set $V_0$.}
\label{spanpic7D1}
\end{figure}

\begin{figure}[p]
\centering
    \begin{subfigure}[b]{0.9\textwidth}
        \centering
        \resizebox{\linewidth}{!}{
            \input{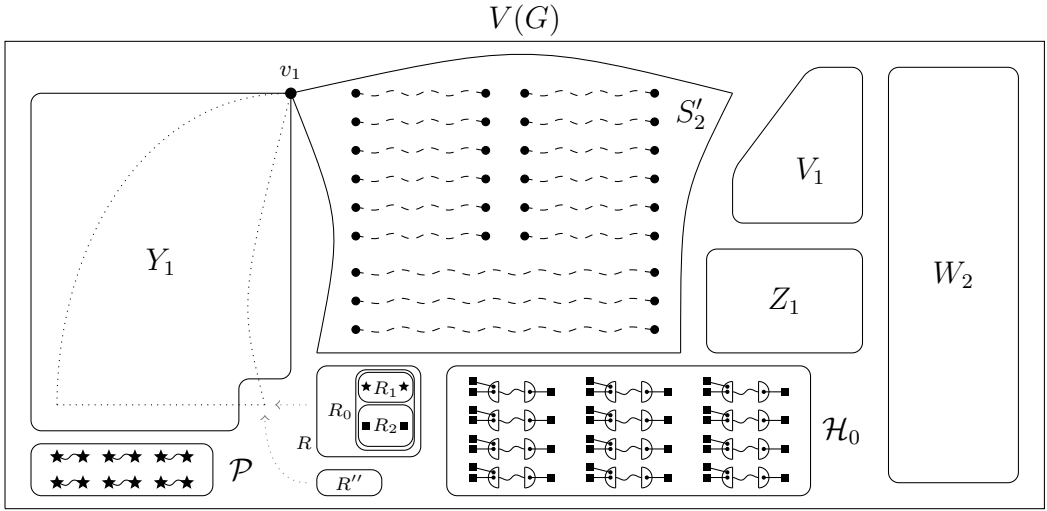}
        }
\label{spanpic7C2}
    \end{subfigure}

\vspace{-0.35cm}

\caption{Having removed many long bare paths from $T_2$ (those depicted in Figure~\ref{spanpic7B}) we use~$V_0\cup Z_2\cup \{v_1\}$ to find a copy $S_2'$ of the resulting subgraph $T_2'$ of $T_2$. The missing paths are depicted by dashed lines. In the following figures several different structures will be used to replace the missing paths. As there is not room in the figures to label all the vertices that will be used, we rely instead on the labelling in Figure~\ref{spanpic7F}, as well as the labelling in Figure~\ref{spanpic7A}.}
\label{spanpic7D2}
\end{figure}

\newpage
\begin{figure}[p]
\centering
    \begin{subfigure}[b]{0.9\textwidth}
        \centering
        \resizebox{\linewidth}{!}{
            \input{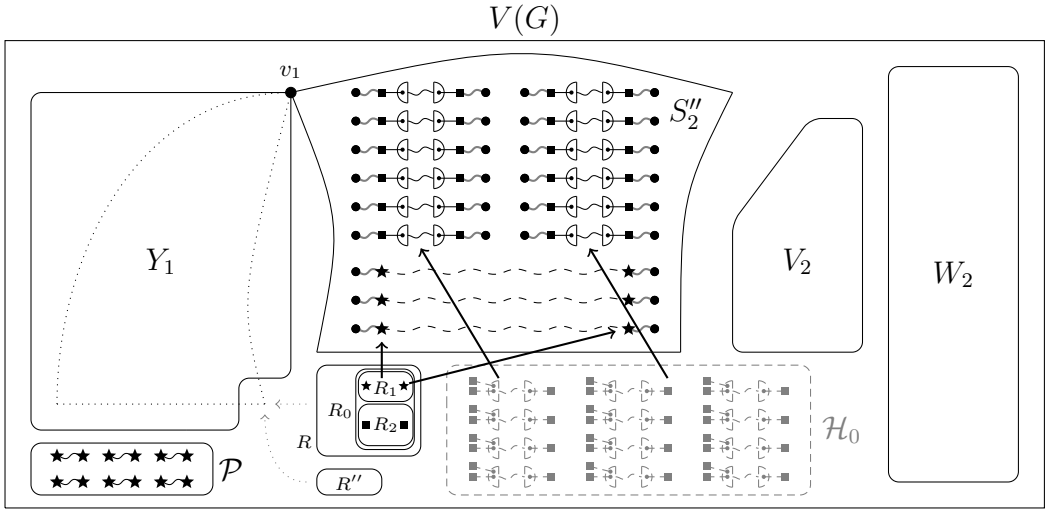}
        }
\label{spanpic7C3}
    \end{subfigure}

\vspace{-0.35cm}

\caption{Using the vertices in $V_1$ and $Z_1$, we find the paths in grey connecting the vertices associated with the $(l,\gamma)$-connectors into the subgraph $S_2'$. We also find the paths in grey connecting some vertices from~$R_1$ into this subgraph, forming in total the subgraph $S_2''$. This completes Stage 2. The vertices in~$V_1$ and $Z_1$ not used to find these paths are formed into the set $V_2$. The $(l,\gamma)$-connectors are placed ready to complete the embedding of the shorter paths removed from $T_2$, but some of these $(l,\gamma)$-connectors will be moved again and used to embed the longer paths instead.}
\label{spanpic7D3}
\end{figure}

\begin{figure}[p]
\centering
    \begin{subfigure}[b]{0.9\textwidth}
        \centering
        \resizebox{\linewidth}{!}{
            \input{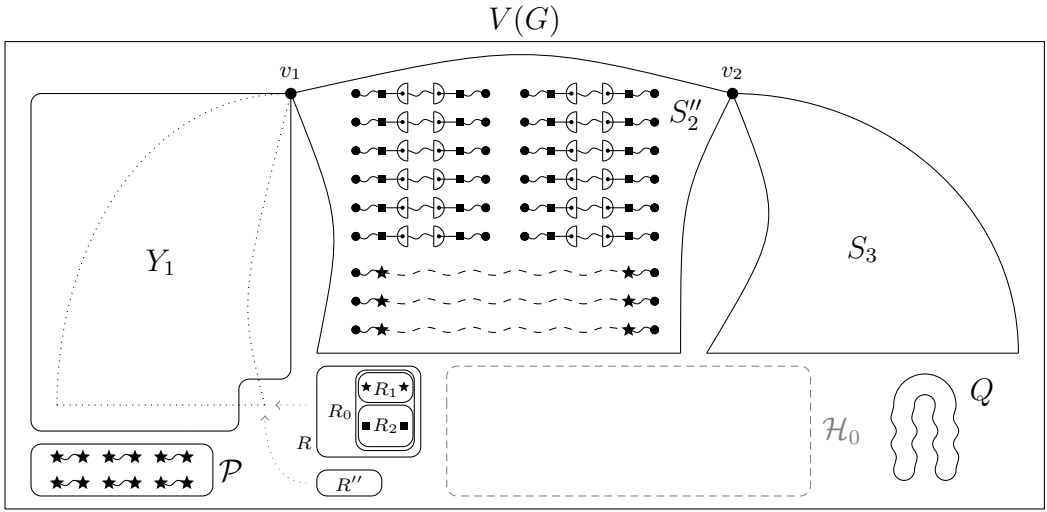}
        }
\label{spanpic7C4}
    \end{subfigure}

\vspace{-0.35cm}

\caption{Using the vertices in $V_2$ and $W_2$, we find a copy $S_3$ of the tree $T_3$, appropriately attached to~$v_2$, so that the remaining vertices in $V_2\cup W_2$ can be made into the cycle $Q$. Stage 3 is now completed.}
\label{spanpic7D4}
\end{figure}

\newpage
\begin{figure}[p]
\centering
    \begin{subfigure}[b]{0.9\textwidth}
        \centering
        \resizebox{\linewidth}{!}{
            \input{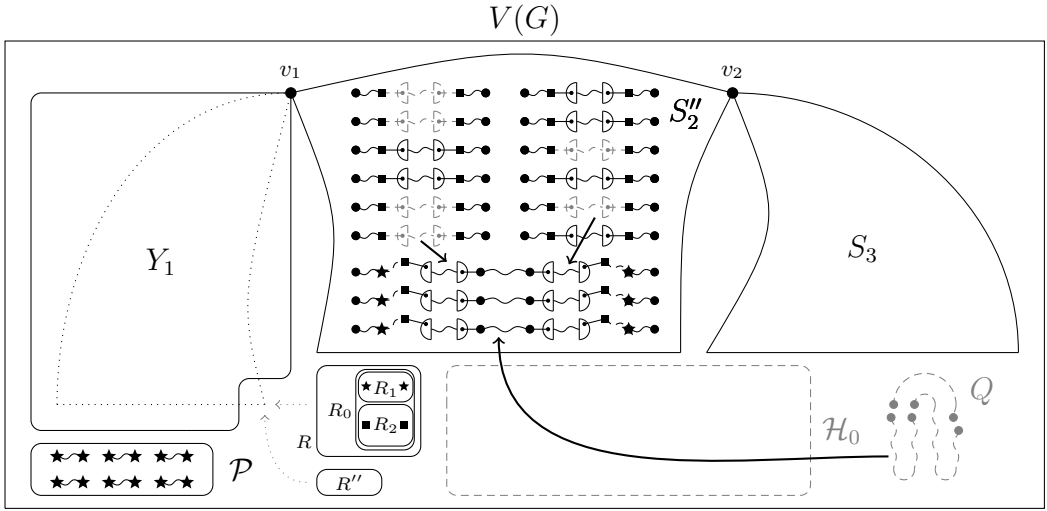}
        }
\label{spanpic7C5}
    \end{subfigure}

\vspace{-0.35cm}

\caption{Breaking the cycle $Q$ into pieces, we find a matching from the endvertices of the subpaths of~$Q$ into the $(l,\gamma)$-connectors, and arrange these paths and matched $(l,\gamma)$-connectors where they will form the majority of the embedding of the longer paths that were deleted from~$T_2$.}
\label{spanpic7D5}
\end{figure}

\begin{figure}[p]
\centering
    \begin{subfigure}[b]{0.9\textwidth}
        \centering
        \resizebox{\linewidth}{!}{
            \input{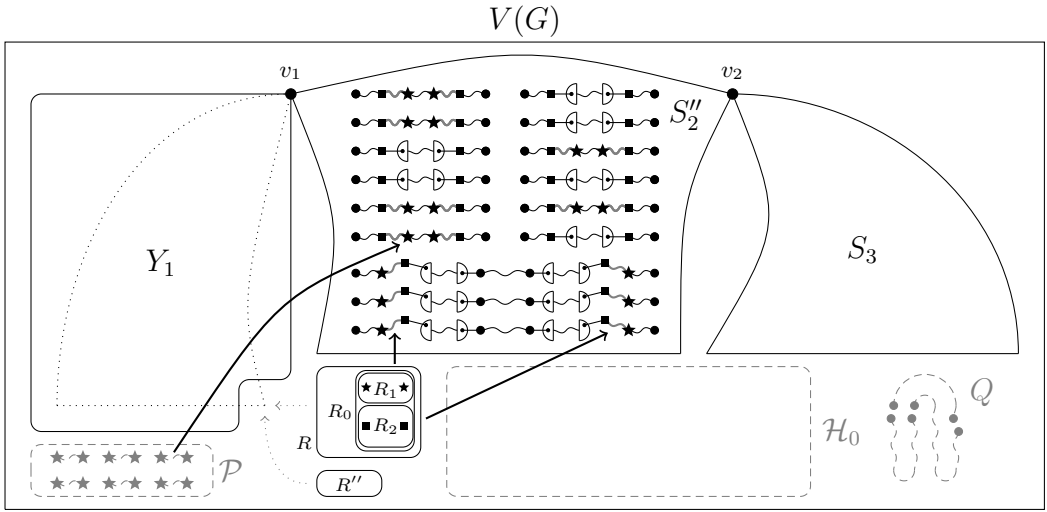}
        }
\label{spanpic7C6}
    \end{subfigure}

\vspace{-0.35cm}

\caption{We now move the paths $P_{i,j}$, $i\in [r]$ and $j\in [2]$, to occupy the space where the matched $(l,\gamma)$-connectors were, and use vertices from $R\setminus R_0$ to construct the paths depicted in grey. Note that the endvertices of these grey paths are shaped as a star, or a square, which, as noted in Figure~\ref{spanpic7A}, represents that these vertices are in $R_1$, or $R_2$, respectively. Where the grey paths join a path $P_{i,j}$, $i\in [r]$, $j\in [2]$, into the graph $S_2''$ this allows us to embed the matching deleted path from $T_2$. We will complete the embedding of all the other deleted paths using the properties of the $(l,\gamma)$-connectors, as depicted in Figure~\ref{spanpic7D7}.}
\label{spanpic7D6}
\end{figure}

\newpage

\begin{figure}[p]
\centering
    \begin{subfigure}[b]{0.9\textwidth}
        \centering
        \resizebox{\linewidth}{!}{
            \input{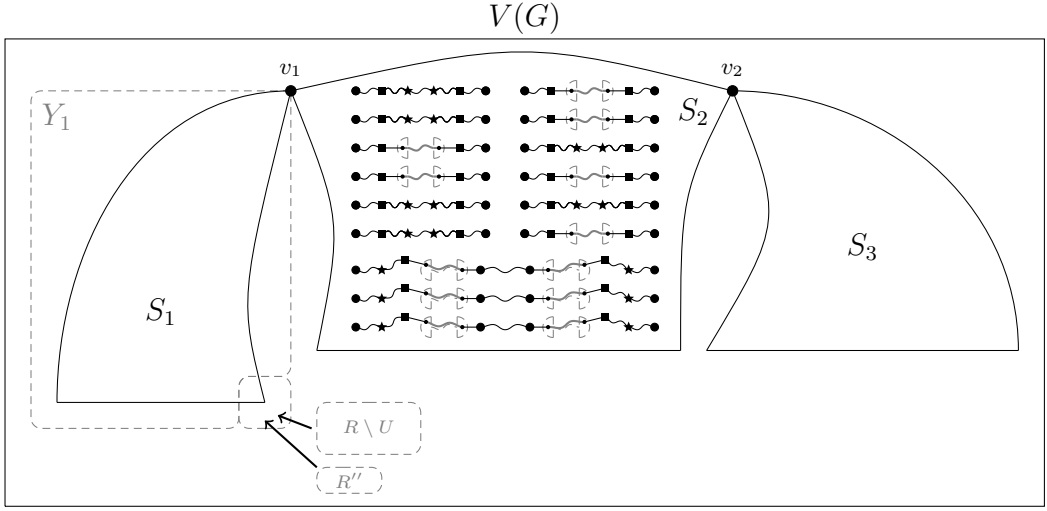}
        }
\label{spanpic7C7}
    \end{subfigure}

\vspace{-0.35cm}

\caption{By the properties of the $(l,\gamma)$-connectors, we can find the paths in grey through the $(l,\gamma)$-connectors to finish embedding the remaining paths to complete $S_2$, a copy of $T_2$. Letting $U$ be the set of vertices in $R$ used to form the paths in Figure~\ref{spanpic7D6}, we take the vertices in $R\setminus U$,~$R''$ and~$Y_1$ and use them to find $S_1$, a copy of $T_1$ attached appropriately to $v_1$. This completes the embedding of $T$.}
\label{spanpic7D7}
\end{figure}

\begin{figure}[p]
\centering
    \begin{subfigure}[b]{0.7\textwidth}
        \centering
        \resizebox{\linewidth}{!}{
            \input{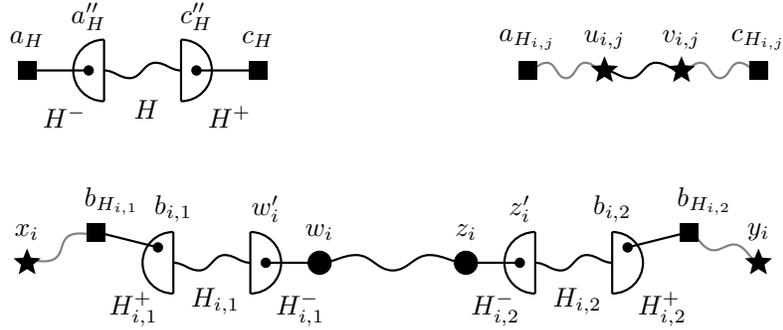}
        }
\label{spanpic7E}
    \end{subfigure}

\vspace{-0.5cm}

\caption{Towards the end of Stage~2 in the proof of Theorem~\ref{unithres} in Cases~C and~D, to complete the embedding of $T_2$ we need to find vertex disjoint $a_H,c_H$-paths, $H\in\HH_0$, with length $l+1$, and vertex disjoint $x_i,y_i$-paths, $i\in [r]$, with length $3l+12\log n+1$. We use three different constructions to find these paths, as depicted in Figures~\ref{spanpic7D1} to~\ref{spanpic7D7} and labelled in this figure. Here, $i\in [r]$, $j\in[2]$ and $H\in \HH_0\setminus \HH_1$. By the properties of the $(l,\gamma)$-connectors we can find spanning paths through the connectors from $a_H''$ to $c_H''$, $b_{i,1}$ to $w_i'$, and $z_i'$ to $b_{i,2}$ as appropriate, to form an $a_H,c_H$-path, an~$a_{H_{i,j}},c_{H_{i,j}}$-path and an $x_i,y_i$-path.}
\label{spanpic7F}
\end{figure}

\clearpage




\end{document}